\documentclass[11pt]{amsart}
\usepackage[margin=2.23cm]{geometry}
\addtolength{\textheight}{6mm}
\usepackage{amsmath}
\usepackage{amssymb}
\usepackage{amsfonts}
\usepackage{mathrsfs}
\usepackage{amsthm}
\usepackage{epsfig,epic}
\usepackage[english]{babel}
\usepackage[T1]{fontenc}

\usepackage{graphics}
\usepackage{graphicx}

\usepackage{eucal}
\usepackage{amscd}
\usepackage{amssymb}
\usepackage{verbatim}
\usepackage[hang]{subfigure}
\usepackage{hyperref}
\usepackage[capitalize]{cleveref}

\usepackage{eucal}

\usepackage{todonotes}

\newcommand{\col}[1]{\mathrm{\nu_{col}}(#1)}

\usepackage{textcmds}
\usepackage{enumerate}
\usepackage[figuresleft]{rotating}
\usepackage{caption}

\captionsetup{font=small,labelfont=bf}

%

\newtheorem{thm}{Theorem}[section]
\newtheorem{lemma}[thm]{Lemma}
\newtheorem{prop}[thm]{Proposition}
\newtheorem{cor}[thm]{Corollary}
\theoremstyle{definition}
\newtheorem{rem}[thm]{Remark}

\newtheorem{defn}{Definition}[section]
\numberwithin{equation}{section}

\newcommand{\be}{\begin{equation}}
\newcommand{\ee}{\end{equation}}
\newcommand{\bes}{\begin{equation*}}
\newcommand{\ees}{\end{equation*}}

\newcommand{\ud}{\mathrm{d}}
\newcommand{\BS}[2]{S_{#2}(#1)}

\newcommand{\N}{\mathbb{N}}

\newcommand{\RL}[1]{B_{#1}}

\newcommand{\R}{\mathcal{R}}

\newcommand{\Z}{\mathcal{Z}}
\newcommand{\RLp}[1]{B^{(#1)}}

\newcommand{\cA}{\mathcal{A}}

\newcommand{\ov}[1]{\overline{#1}}
\newcommand{\vep}{\varepsilon}
\newcommand{\Leb}[1]{Leb\left( #1\right)}

\newcommand{\MDCo}[1]{\ensuremath{{M D C}\left( #1 \right)}}
\newcommand{\MDCt}[3]{\ensuremath{{M D C}\left( #1,#2, #3 \right)}}

\newcommand{\RDCt}[3]{\ensuremath{{R D C}\left( #1,#2, #3   \right)}}
\newcommand{\RDCo}[1]{\ensuremath{{R D C}\left( #1  \right)}}

\newcommand{\triple}{\hat{T}}

\newcommand{\hatmu}{\ensuremath{\hat{\mu}}}
\newcommand{\muR}{\ensuremath{\mu}}

\newcommand{\modulo}[1]{| #1 |}

\title[Multiple mixing in area preserving flows]{Multiple mixing and parabolic divergence in  smooth area-preserving flows on higher genus surfaces.}

\author{Adam Kanigowski \and Joanna Ku\l{}aga-Przymus \and Corinna Ulcigrai}

\date{}

\makeindex

\begin{document}
\bibliographystyle{siam}
\maketitle

\begin{abstract}
We consider typical area preserving flows on higher genus surfaces and prove that the flow restricted to mixing minimal components is mixing of all orders, thus answering affimatively to Rohlin's multiple mixing question in this context. The main tool is a variation of the Ratner property originally proved by Ratner for the horocycle flow, i.e.\ the \emph{switchable} Ratner property introduced by Fayad and Kanigowski for special flows over rotations. This property, which is of independent interest, provides a quantitative description of the parabolic behaviour of these flows and has implication to joinings classification. The main result is formulated in the language of special flows over interval exchange transformations with asymmetric logarithmic singularities. 
We also prove a strengthening of one of Fayad and Kanigowski's main results, by showing that Arnold's flows are mixing of all oders for almost every location of the singularities.  

\end{abstract}
\tableofcontents
\section{Introduction and main results.}

In this paper we give a contribution to the ergodic theory of area-preserving flows and, more in general to the study of parabolic dynamical systems. Since the origins of the study of dynamics, with Poincar{\'e}, flows on surfaces have been one of the basic examples of dynamical systems. We consider smooth flows which preserve a smooth area form, also known as locally Hamiltonian flows (see Section~\ref{sec:locHam}).  In this context, we  address Rokhlin question on multiple mixing (see Section~\ref{sec:RokhlinMixing}) and prove a version of Ratner's property on parabolic divergence ((see Section~\ref{sec:ParDiv}).

\subsection{Locally Hamiltonian flows}\label{sec:locHam}
Denote by   $S$ a smooth closed connected orientable surface of genus $g\geq 1$, endowed with the standard area form $\omega$ (obtained as pull-back of the area form $\ud x \ud y$ on $\mathbb{R}^2$). We will consider a smooth flow $(\varphi_t)_{t\in\mathbb{R}}$ on $S$ which preserves a measure $\mu$ given  integrating a smooth density with respect to $\omega$. We will assume that the area is normalized so that  $\mu(S) =1$. As explained in Section~\ref{sec:locHam}, smooth area preserving flows  are in one to one correspondence with smooth closed real-valued differential $1$-forms and are  \emph{locally Hamiltonian flows}, also known as \emph{multi-valued Hamiltonian flows}. A lot of interest in the study of multi-valued Hamiltonians and the associated flows -- in particular, in their ergodic and mixing properties -- was sparked 
by Novikov \cite{No:the} in connection with problems arising in solid-state
physics (i.e.\ the motion of an electron in a metal under the action of a magnetic field) 
and in pseudo-periodic topology (see e.g.\ the survey by Zorich \cite{Zo:how}).

When $g\geq 2$, the  (finite) set of fixed points of $(\varphi_t)_{t\in\mathbb{R}}$ is always non-empty.  A \emph{generic} locally Hamiltonian flow (in the sense of Baire category, with respect to the topology given by considering perturbations of closed smooth $1$-forms by 
 (\emph{small}) {closed} smooth $1$-forms)  
 has only non-degenerate fixed points, i.e.\ \emph{centers} (see Figure \ref{island}) and \emph{simple saddles} (see Figure \ref{simplesaddle}), as opposed to degenerate \emph{multi-saddles} which have $2k$ separatrixes for $k>2$ (see Figure \ref{multisaddle}). From the point of view of topological dynamics (as proved independently  by Maier \cite{Ma:tra}, Levitt \cite{Le:feu} and Zorich \cite{Zo:how}), each smooth area-preserving flow can be decomposed into  {periodic components} and {minimal components}: a \emph{periodic component} is a subsurface (possibly with boundary) on which all orbits are closed and periodic (see for example Figure \ref{island} or Figure \ref{cylinder}); \emph{minimal components} (there are not more than $g$ of them) are subsurfaces (possibly with boundary) on which the flow is \emph{minimal} in the sense that all semi-infinite trajectories are dense (see Figure \ref{decomposition}).

 \begin{figure}[h!]
  \subfigure[ \label{island}]{
  \includegraphics[width=0.23\textwidth]{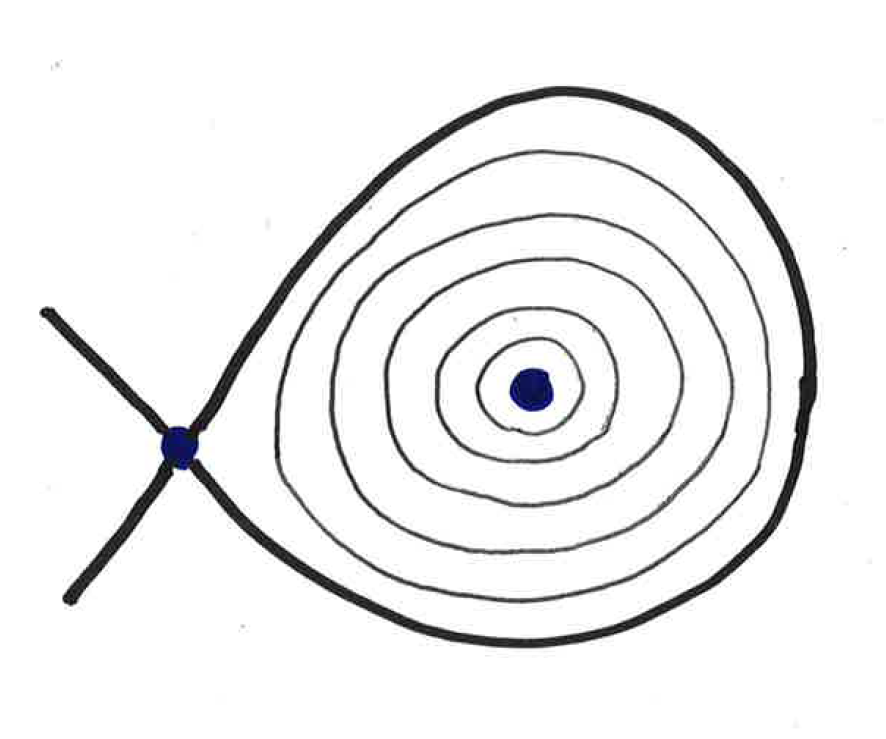} 	} \hspace{3mm} \subfigure[ \label{simplesaddle}]{ \includegraphics[width=0.18\textwidth]{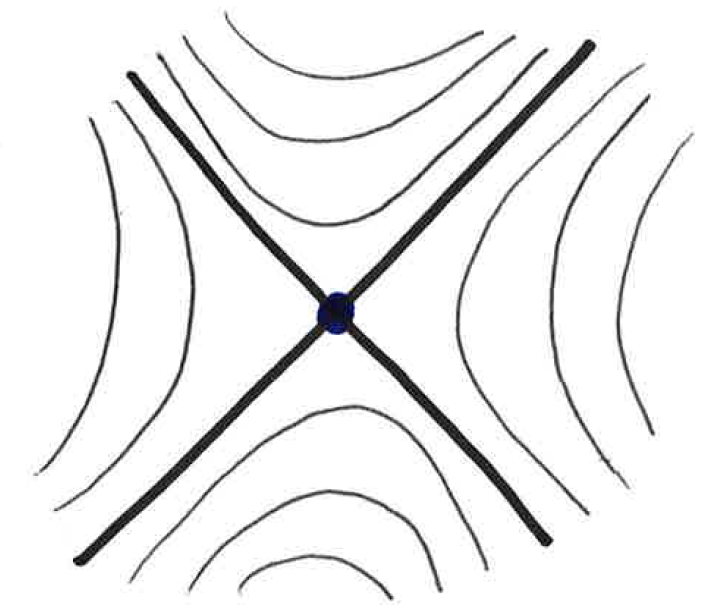}}\hspace{3mm}
\subfigure[\label{multisaddle}]{
\includegraphics[width=0.18\textwidth]{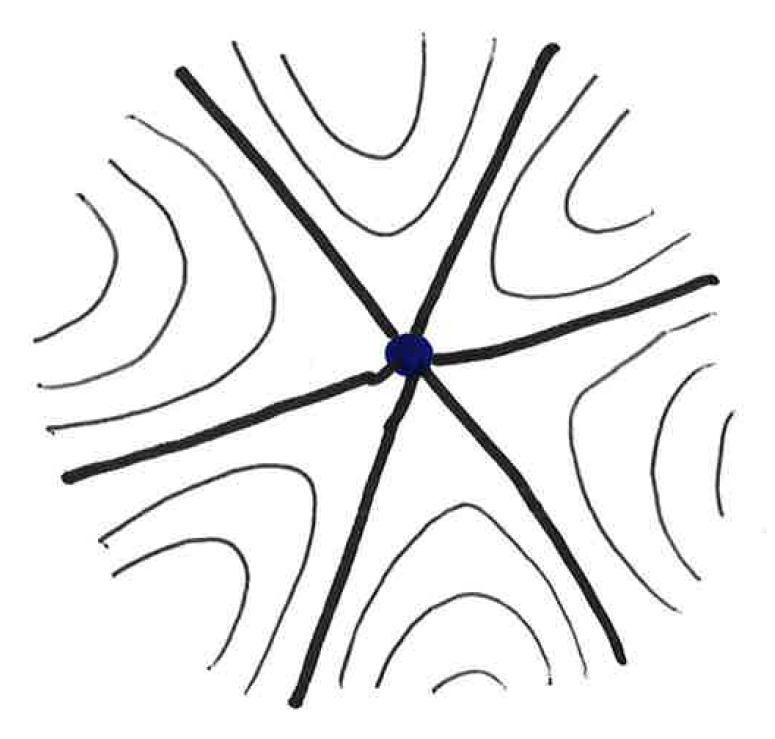} }	\hspace{3mm}  \subfigure[ \label{cylinder}]{	\includegraphics[width=0.23\textwidth]{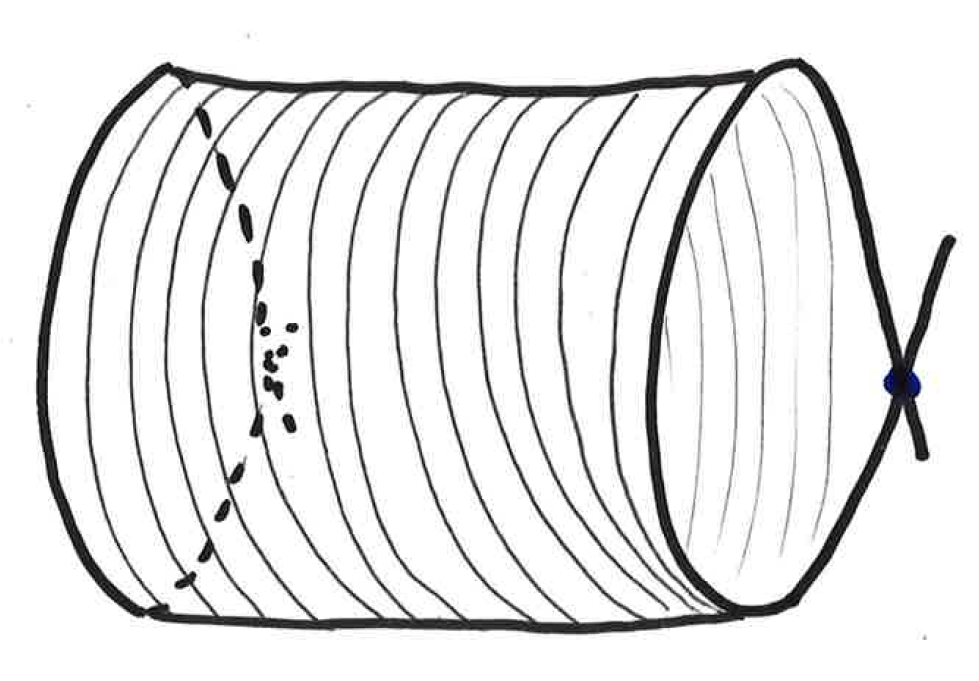}}
 \caption{Type of fixed points and periodic components in an area-preserving flow.\label{saddles}}
\end{figure}

\begin{figure}[h!]
  \includegraphics[width=0.6\textwidth]{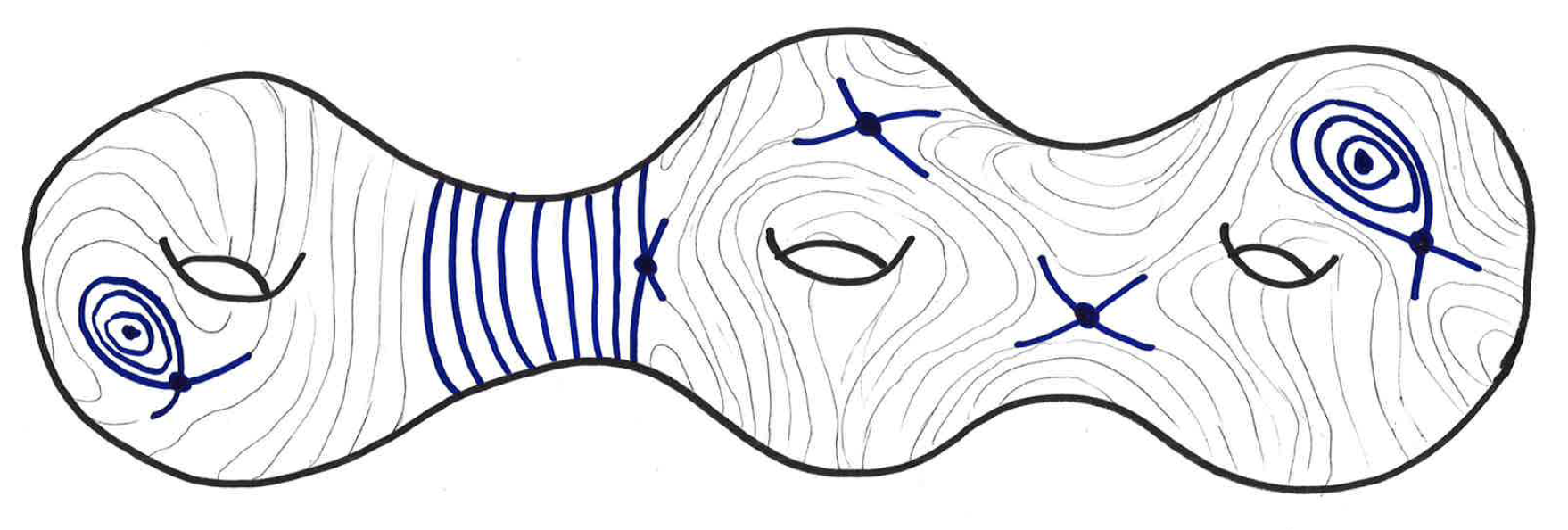}
\caption{Decomposition in periodic components filled by closed orbits (two islands around centers and a cylinder in blue in the Figure) and minimal components (one of of genus one and one of genus  two in the example).\label{decomposition}}
\end{figure}

We will focus on the ergodic properties of a \emph{typical} locally Hamiltonian flow in the sense of measure-theory. In particular, one can define a \emph{measure class} on locally Hamiltonian flows (sometimes called \emph{Katok fundamental class}, see  Section~\ref{sec:locHam} for the definition): when we say \emph{typical}, we mean full measure with respect to this measure class. One can divide locally Hamiltonian flows into two open sets (in the topology given by  perturbations by closed smooth $1$-forms, see Section~\ref{sec:locHam} for more details): in the first open set, which we will denote by $\mathscr{U}_{min}$, the typical flow is \emph{minimal} (in particular there are no centers and there is a unique minimal component) and \emph{ergodic} (i.e.\ there are no measureable flow-invariant sets $A \subset S$ such that $\mu(A)\notin \{0,1\}$). On the other open set that we will call $\mathscr{U}_{\neg min}$ there are periodic components (bounded by saddle loops homologous to zero), but minimal components of typical flows are still minimal and uniquely ergodic. Both results can be deduced from the representation of minimal components as special flows described in Section \ref{sec:special_flows_rep} below and the classical results by Keane \cite{Ke:int} and Masur and Veech (see \cite{Ma:int, Ve:gau}) respectively concerning of minimality and ergodicity of typical interval exchange transformations.

\subsection{Mixing, Rokhlin's question and multiple mixing}\label{sec:RokhlinMixing}
Stronger chaotic properties than ergodicity are mixing and multiple (or higher order) mixing. A flow $(\varphi_t)_{t\in\mathbb{R}}$ preserving a probability measure $\mu$ is \emph{mixing} (or strongly mixing) if any pair of measurable sets $A,B$ become asymptotically independent under the flow, i.e.\ $\lim_{t \to \infty} \mu (\varphi_t (A) \cap B)= \mu(A) \mu(B)$. More in general, $(\varphi_t)_{t\in\mathbb{R}}$ is 
\emph{mixing of order $k\geq 2$} if for any $k$ measurable sets $A_1, A_2, \dots, A_k $, 
\bes
 \lim_{t_2, \dots, t_k \to \infty} \mu ( A_1 \cap \varphi_{t_2}(A_2) \cap \varphi_{t_2+t_3}(A_3) \cdots \cap \varphi_{t_2+\dots +t_k}(A_k)  = \mu (A_1) \mu(A_2) \dots \mu(A_k).
\ees 
Clearly, for $k=2$ the above definition reduces to \emph{mixing}. If a flow is mixing of order $k$ for any $k\geq 2$, we say that it is \emph{mixing of all orders}.

Arnold in  \cite{Ar:top} conjectured that when $g=1$ and there is at least one periodic component (i.e.\ one is in  $\mathscr{U}_{\neg min}$), the typical locally Hamiltonian flow restricted to the minimal component is mixing. Following \cite{FK:mul}, we  will call \emph{Arnold flow} the restriction of such a flow to its minimal component. 
 This conjecture was proved by  Khanin and Sinai~\cite{SK:mix}  (see also further works by Kochergin \cite{Ko:nonI, Ko:nonII, Ko:som, Ko:wel}). On the other hand, in  $\mathscr{U}_{min}$, when $g=1$, the typical flow is not mixing (this can be deduced either from the work \cite{Ko:abs} by Kochergin or by classical KAM theory).   Mixing on surfaces of higher genus (i.e.\ when $g\geq 2$) was investigated by the last author. She showed~\cite{Ul:abs} that in the open set  $\mathscr{U}_{min}$, the typical flow, which is minimal and ergodic, is \emph{not} mixing (see also \cite{Sch:abs} for an independent proof of the same result when $g=2$), even though it is \emph{weakly mixing} \cite{Ul:wm}. On the other hand, Ravotti~\cite{Ra:mix}, by generalizing  the main  result proved by the last author in~\cite{Ul:mix} in the context of special flows, recently showed that typical flows in 
  $\mathscr{U}_{\neg min}$ have mixing minimal components and provided quantitative bounds on the speed of mixing for smooth observables (showing that mixing happens at subpolynomial rates).   Let us also recall that in the $1970s$ Kochergin \cite{Ko:mix} proved mixing when there are degenerate saddles (that is, in a non-generic case) and  that very recently Chaika and Wright~\cite{ChW:mix} showed the existence of mixing flows in  $\mathscr{U}_{min}$ (which by \cite{Ul:abs} consistute a measure zero exceptional set). 
 
\smallskip
A famous and still widely open problem in ergodic theory is the \emph{question by Rokhlin}  
whether mixing implies mixing of all orders \cite{MR0030709}. In the context of area-preserving flows, Fayad and the first author recently proved in \cite{FK:mul} that when $g=1$ flows with a mixing minimal compoment (as well as some mixing Kochergin flows with degenerate saddles) are indeed mixing of all orders, thus verifying Rokhlin's question in this context. 

\smallskip
Our main result is that mixing implies mixing of all orders for typical smooth area-preserving flows on surfaces of any genus.
\begin{thm} \label{thm:main}
For any fixed genus $g\geq 1$, consider locally Hamiltonian flows on a surface $S$ of genus $g$ with  non-degenerate fixed points and at least one periodic component. For a typical flow $(\varphi_t)_{t \in \mathbb{R}}$ in an open and dense set, the restriction of $(\varphi_t)_{t \in \mathbb{R}}$ to any of its minimal components  is  mixing of all orders.
\end{thm}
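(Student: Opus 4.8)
The plan is to pass to the representation of minimal components as special flows, to establish a \emph{switchable Ratner property} for these special flows, and then to deduce mixing of all orders from mixing by the joining machinery of \cite{FK:mul}. By the representation of minimal components as special flows (Section~\ref{sec:special_flows_rep}), the restriction of a flow as in \cref{thm:main} to any of its minimal components is measurably isomorphic to a special flow $T^f$ over an interval exchange transformation (IET) $T$ — the first-return map of the flow to a suitable transversal — with roof function $f$ smooth except over the images of the simple saddles on the boundary of the component, where it has \emph{logarithmic singularities}. Since the flow has a periodic component, bounded by a saddle loop homologous to zero, at least one of these singularities is \emph{asymmetric}, meaning that the two one-sided coefficients of $-\log$ differ; this is exactly the class for which Ravotti's theorem \cite{Ra:mix} (generalizing \cite{Ul:mix}) gives mixing of $T^f$, hence of the original minimal component. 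As measurable isomorphism preserves mixing of all orders, \cref{thm:main} reduces to showing that, for a full-measure set of such data, the special flow $T^f$ with asymmetric logarithmic singularities is mixing of all orders.

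I would formulate, following \cite{FK:mul}, the switchable Ratner property — a relaxation of Ratner's parabolic-divergence property in which orbits of most pairs of nearby points are only required to diverge in an almost-isometric way, up to a bounded shift along the flow, either in forward or in backward time — and prove that $T^f$ satisfies it along a suitable sequence of times, for a.e.\ IET $T$ (with respect to the Masur--Veech measure) and a.e.\ choice of singularity data. The core is a quantitative estimate on parabolic divergence: if $x'$ lies at distance $\delta$ to the right of $x$, the relative displacement of the two orbits after $n$ returns is governed by $S_nf(x')-S_nf(x)\approx \delta\, S_nf'(\xi)$, and one must exhibit a set of pairs $(x,x')$ of measure bounded below on which this quantity remains in a prescribed small window over a long orbit segment while \emph{drifting monotonically}, in one of the two time directions, at the expected parabolic rate. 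The asymmetry is essential: for symmetric logarithmic singularities the contributions of consecutive close visits to a saddle cancel — the mechanism behind the absence of mixing in \cite{Ul:abs} — whereas an asymmetric singularity produces a genuine one-sided drift. Making this precise requires the Rauzy--Veech/Zorich renormalization: Diophantine control of return times, Denjoy--Koksma-type bounds for Birkhoff sums of $f$ and of $f'$, and control of the distribution of orbits near the singularities, in order to isolate the visits responsible for the asymmetric drift and bound the remaining symmetric and smooth contributions.

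Granting that $T^f$ is mixing (by \cite{Ra:mix}) and satisfies the switchable Ratner property, I would apply the criterion of \cite{FK:mul}: the switchable Ratner property forces a classification of the ergodic self-joinings of the flow (each being a product or a finite extension of an off-diagonal joining), and mixing then eliminates the nontrivial possibilities, from which mixing of all orders follows by the standard inductive argument on the number of factors. The mild extra hypotheses of this criterion — ergodicity of the time-$t$ maps for a.e.\ $t$, regularity of $f$ off the singularities — hold in our setting. Transporting the conclusion back through the isomorphism of the first step, and noting that the flows to which the argument applies form a full-measure subset of the open and dense set of flows with non-degenerate fixed points and at least one periodic component, completes the proof of \cref{thm:main}.

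The principal difficulty is the second step: upgrading the switchable Ratner property from special flows over \emph{rotations} — the setting of \cite{FK:mul}, where there is a single frequency and the sharpest Denjoy--Koksma cancellations are available — to special flows over \emph{interval exchange transformations} of arbitrarily many intervals, as required when $g\geq 2$. The combinatorics of the renormalization, the coexistence of several singular points on one minimal component, and the need to produce a positive-measure family of orbit pairs with a controlled one-directional parabolic drift make the divergence analysis substantially more delicate than in the rotation case, and this is where the bulk of the work lies.
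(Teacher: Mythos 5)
Your proposal takes essentially the same route as the paper: represent each minimal component as a special flow over an IET with asymmetric logarithmic singularities, prove the switchable Ratner property for such special flows for a.e.\ IET via Rauzy--Veech renormalization and quantitative Birkhoff-sum estimates for $f'$, transport the property back through the isomorphism, and conclude mixing of all orders from mixing (Ravotti/Ulcigrai) plus the joining criterion of Fayad--Kanigowski. You also correctly locate the crux in upgrading the divergence analysis from rotations to general IETs; the paper's novel contribution there is the \emph{Ratner Diophantine Condition} together with the backward/forward dichotomy for orbit distances from singularities, which your outline anticipates without naming.
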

More precisely, the open and dense set of flows with at least one periodic component in the statement is the same set in $\mathscr{U}_{\neg min}$ for which one can also prove that typically minimal components are mixing (see \cite{Ra:mix}).  
In particular, since  typical flows on $\mathscr{U}_{min}$ are \emph{not} mixing by \cite{Ul:abs}, 
it follows that for a typical locally Hamiltonian flow mixing (of one of its minimal components) implies mixing of all orders. 

\smallskip
This result is deduced from a more general result in the language of \emph{special flows} (see Theorem \ref{prop:main_special_flows} below).  
Consider a segment transverse to a minimal component of an area-preserving  flow and the associated Poincar{\'e} first return map (i.e.\ the map that sends a point to the first point along its flow orbit that hits the same segment again). Poincar{\'e} maps of smooth area preserving flows, in suitable coordinates, are \emph{interval exchange transformations} (for short, IETs), which are piecewise isometries of an interval $I$ (see Section~\ref{IETsdefsec} for the definition). Given an IET  $T\colon I \to I $  which occurs as a Poincar{\'e} section of the surface and the return time function (which is an integrable  function $f\colon I \to \mathbb{R}$ defined at all but finitely many points) one can recover the flow as follows. Let  $X_f$ consist of points below the graph of $f$, that is  $X_f \doteqdot \{ (x,y) \in \mathbb{R}^2 :  x \in I, \, 0\leq y < f(x) \}$. Under the action of the \emph{special flow} $(\varphi_t)_{t \in \mathbb{R}}$ \emph{over the map} $T$ \emph{under the roof} $f$ a point $(x,y) \in X_f$ moves  with unit velocity along the vertical line up to the point $(x,f(x))$, then jumps instantly to the point $\left( T(x),0 \right)$, according to the base transformation and afterward it continues its motion along the vertical line until the next jump and so on. The formal definition is given in Section~\ref{suspflowsdefsec}. Since $T$ preserves the Lebesgue measure, any special flow over $T$ preserves the restriction of the two dimensional Lebesgue measure to $X_f$. It is well known that the original flow on the minimal component is measure-theoretically isomorphic to the described special flow and hence has the same ergodic and mixing properties.

Each minimal component of a locally Hamiltonian flow can be represented as a special flow over an IET. The corresponding roof function is not defined at a subset of discontinuities of the IET which correspond to points that hit a saddle before their first return, see Section~\ref{sec:reduction}. Since the flow is smooth, these discontinuities are singularities of the roof function (we have $f(x) \to +\infty$ as $x$ approches one or both sides of such a discontinuity). It turns out that non-degenerate simple saddles give rise to \emph{logarithmic singularities} of $f$, i.e.\ points where $f$ blows up as the fuction $|\log(x)|$ near zero, in a sense made precise in Section~\ref{sec:reduction} (while degenerate saddles give rise to \emph{polynomial singularities}, which are the type of singularities considered by Kochergin in \cite{Ko:mix} and also in part of \cite{FK:mul}).  Furthermore, for a typical flow in the open set $\mathscr{U}_{\neg min}$, logarithmic singularities are \emph{asymmetric}, see Section~\ref{sec:reduction}.

Our main result in the set up of special flows is the following. Recall that an IET $T$ is given by a combinatorial datum $\pi $ and a lenght datum $\underline{\lambda}$ which describe respectively the order and lenghts of the exchanged subintervals (see Section~\ref{IETsdefsec}). We say that a result holds for \emph{almost every } IET if it holds for any irreducible $\pi$ and Lebesgue almost every choice of $\underline{\lambda}$ (see Section~\ref{IETsdefsec}). 
\begin{thm} \label{prop:main_special_flows} For {almost every} interval exchange transformation  $T\colon I \to I$ and every roof function $f\colon I \to \mathbb{R}^+$ with \emph{asymmetric logarithmic singularities}  at the discontinuities of $T$ (in the sense of Definition \ref{def_LogAsym}), the special flow $(\varphi_t)_{t\in\mathbb{R}}$ over $T$ under $f$ is mixing of all orders. 
\end{thm}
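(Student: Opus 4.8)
The plan is to deduce Theorem \ref{prop:main_special_flows} from two inputs. The first is that the special flow $(\varphi_t)_{t\in\mathbb{R}}$ over almost every IET $T$ under a roof $f$ with asymmetric logarithmic singularities enjoys the \emph{switchable Ratner property} of Fayad and Kanigowski; roughly, for most pairs of nearby points the two orbits remain $\varepsilon$-close, along a long window of times of density close to one, after applying to one of them a time shift lying in a fixed compact subset of $\mathbb{R}\setminus\{0\}$, with the ``switchable'' clause allowing the roles of the two points (equivalently the sign of the shift, equivalently the direction of time) to be interchanged from window to window. The second input is the abstract consequence of this property established in \cite{FK:mul} and its ancestors (Ratner, Ryzhikov, Thouvenot, del Junco--Rudolph): a mixing flow with the switchable Ratner property has the PID property, i.e.\ every pairwise independent self-joining of arbitrary order is the product joining. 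Granting these, multiple mixing is formal: the special flow is mixing for almost every $T$ and any roof with asymmetric logarithmic singularities by Ulcigrai \cite{Ul:mix} (and Ravotti \cite{Ra:mix}), so every weak-$*$ subsequential limit of the $k$-point correlation measures is a self-joining of order $k$ whose coordinates are pairwise independent, hence by PID equals $\mu^{\otimes k}$; since the limit is thereby forced, $k$-fold mixing holds for every $k$.

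The substance of the proof is therefore the switchable Ratner property, and this is where the full-measure hypothesis on $T$ and the asymmetry of the singularities enter. First I would isolate, via Rauzy--Veech renormalization and the Oseledets theorem for the Kontsevich--Zorich cocycle, a full-measure set of length data along which one has the standard quantitative control: Keane's condition, balancedness of the renormalized intervals, sub-exponential distortion, and the Diophantine-type bounds on return times that substitute for the Denjoy--Koksma inequality (which fails for general IETs). On this good set one studies the parabolic divergence of two nearby orbits $(\varphi_t(x,s))$ and $(\varphi_t(x',s'))$, $x'$ a small horizontal perturbation of $x$: after discarding excursions near the discontinuities of $T$ (on a set of points of measure close to one and a set of times of density close to one), the relative time displacement at combinatorial time $n$ is governed by the Birkhoff sum $(f^{(n)})'(\xi)\,(x-x')$ of the derivative $f'$, whose singularities are of order $1/(y-x_i)$ with side coefficients $C_i^+\neq C_i^-$ \emph{precisely because} the logarithmic singularities are asymmetric (Definition \ref{def_LogAsym}). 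The asymmetry leaves an uncancelled growth of order $n\log n$ in these sums, producing a genuine shear; but $(f^{(n)})'$ oscillates and can change sign, because left and right visits to a given singularity contribute with opposite signs, so one cannot pin the displacement to a fixed sign --- which is exactly why only the \emph{switchable} form of the Ratner property can hold. The remaining task is to exhibit a window of combinatorial times $[M,N]$ with $N/M$ large on which, off a set of density close to one, the displacement stays in a fixed compact subset of $\mathbb{R}\setminus\{0\}$ (with the sign chosen per window), and to convert this into the corresponding statement about flow times.

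The main obstacle is this last, quantitative control of the Birkhoff sums $(f^{(n)})'$ over an IET with several singularities. In the rotation case treated in \cite{FK:mul} a single singularity (or a matched pair) together with the continued fraction algorithm yield clean estimates and an exact Denjoy--Koksma cancellation; over an IET the renormalization is matrix-valued, there is no exact cancellation, and the contributions of the different singularities, each with its own asymmetry coefficients, interact through the combinatorics of the Rauzy--Veech towers. The heart of the proof is to upgrade Ulcigrai's mixing analysis --- which extracts the $n\log n$ shear only in an averaged sense, sufficient for mixing --- to pointwise, window-by-window control with explicit oscillation bounds: one needs not merely that typical orbits shear apart, but that along a definite proportion of a long time window they shear apart at a \emph{controlled, bounded} rate before escaping. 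A secondary but delicate technical point is the bookkeeping of the exceptional sets (points whose orbit segment meets a neighbourhood of a saddle; times at which the two orbits are ``cut'' by the roof) so that their measure, respectively density, can be made arbitrarily small uniformly over the window, as the joinings argument requires.
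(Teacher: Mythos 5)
Your top-level deduction matches the paper's: establish the switchable Ratner property (SR/SWR) for a full-measure set of IETs, use mixing from \cite{Ul:mix,Ra:mix}, and invoke the abstract principle that mixing plus a switchable Ratner-type property forces mixing of all orders. (The paper routes this through the FEJ property rather than directly through PID, but these are cosmetic variants of the same Ryzhikov--Thouvenot joinings machinery, and both are valid.) The proof of Theorem~\ref{prop:main_special_flows} in the paper is exactly this two-line reduction to Theorems~\ref{thm:Ratner_special_flows}, \ref{DCmixing} and \ref{thm:Ratmix}.

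Where your sketch departs from the paper is in the two load-bearing heuristics you give for the hard part (the SR-property). First, the explanation you offer for \emph{why} switchability is needed --- that $(S_n(f'))$ ``oscillates and can change sign'' because left- and right-hand visits to a singularity contribute with opposite signs --- is not the paper's reason, and is in fact misleading. Under the Mixing DC, for points outside the exceptional set $\Sigma_\ell^+$ the Birkhoff sum $S_r(f')(x)$ has a \emph{definite} sign (Proposition~\ref{mpd} gives the lower bound $(C^--C^+-\varepsilon^2)\,r\log r$, positive under $C^->C^+$); the variability of sign is absorbed into the choice of shift $p\in\{-1,1\}$, which is already part of the \emph{ordinary} (non-switchable) Ratner property. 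Switchability is needed for an entirely different reason: the sets $\Sigma_\ell^+$ one must remove to obtain these estimates at scale $\ell$ do \emph{not} have summable measures, so forward Birkhoff sums cannot be controlled for all scales simultaneously on a set of almost full measure. The paper's fix is a genuine forward/backward dichotomy (Proposition~\ref{forbac}): if the forward orbit of $x$ up to time $q_\ell$ comes too close to a singularity (closer than $\sim 1/q_{\ell+L}$ for a fixed $L$ built from the balance and positivity constants), then the backward orbit does \emph{not}; one then verifies case~(ii) of Definition~\ref{def:SWR} instead of case~(i). This dichotomy, not sign oscillation, is what necessitates and justifies the ``switchable'' clause --- recall also that the weak (non-switchable) Ratner property was shown in \cite{FK:mul} to actually fail for roofs with logarithmic singularities.

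Second, the Diophantine condition you gesture at (``Oseledets / Kontsevich--Zorich cocycle bounds'') is not what the paper uses and would not, by itself, deliver the estimate you need. The new tool introduced here is the Ratner DC (Definition~\ref{def:ratnerDC}), a summability condition on a set of ``bad'' induction times: on the complement $\widetilde K_T$ of the bad set the block norms $\|A_\ell\|\cdots\|A_{\ell+L}\|$ are below $\ell^\xi$, and the series $\sum_{\ell\notin\widetilde K_T}\lambda(\Sigma_\ell^+)$ converges (Corollary~\ref{summabilitySigmas}). This is exactly what turns ``the exceptional sets have measure tending to zero'' (enough for mixing) into ``the \emph{union over bad scales} of exceptional sets has small measure'' (needed for a Ratner-type property, which is an all-scales statement). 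Establishing full measure of the Ratner DC (Proposition~\ref{RDCfullmeasure}) in turn uses the full strength of the AGY exponential-tail estimates \cite{AGY:exp} together with a quasi-Bernoulli property of pre-compact cylindrical accelerations (Lemmas~\ref{QBforA}, \ref{QBforAcor}), none of which appear in your sketch. Without the Ratner DC and the forward/backward dichotomy, the ``bookkeeping of exceptional sets'' you flag as a ``secondary delicate technical point'' is in fact the main obstruction, and your proposed route to resolving it is not the one that works.
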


\subsection{Parabolic divergence and Ratner properties}\label{sec:ParDiv}

The result that we use as a crucial tool to prove multiple mixing is that the flows that we consider in Theorem~\ref{thm:main} and Theorem~\ref{prop:main_special_flows} satisfy a variation of the so called \emph{Ratner property of parabolic divergence}. 
We believe that this is a result of independent interest, since it describes a central feature which shows the parabolic behaviour of flows we study. 
Ratner introduced in \cite{Ra:ho} a property that she called $\mathcal{H}_p$-property and was later known as \emph{Ratner property} (\cite{Thou}). This property, whose formal definition we omit since it is rather technical (see Section~\ref{sec:Ratner} for a more general definition) describes in a precise  quantative way how fast nearby trajectories diverge. Ratner first verified this property for horocycle flows and used it to deduce some of the main rigidity properties of horocycle flows (such as very specific properties of joinings and measure rigidity). The horocycle flow can be considered as the main example in the class of \emph{parabolic flows}, i.e.\ it is a continuous dynamical systems in which nearby orbits diverge polynomially. The Ratner property as originally defined by Ratner holds by virtue of this polynomial divergence.


It is reasonable to expect that some quantitative form of parabolic divergence similar to the Ratner property should hold  and be crucial in proving analogous rigidity properties for other classes of parabolic flows.  Thus, the natural question arose, whether there are parabolic flows satisfying the Ratner property beyond the class of horocycle flows. A positive answer to this question was given by
K. Fr\k{a}czek and M. Lema{\'{n}}czyk in \cite{FL}. The authors showed that a  \emph{variant}  of Ratner's property is satisfied in the class of special flows over irrational rotations of constant type and under some roof functions of bounded variation ($f(x)=ax+b$, $a,b>0$ being the most important example). The new property, called  the {\em finite Ratner's property} in \cite{FL} (see Section~\ref{sec:Ratner} for the definition) was shown to imply  the same joining consequences as the original one.  The finite Ratner property was further weakend by the two authors in \cite{FL2} to {\em weak Ratner's property} (see Section~\ref{sec:Ratner}), which was shown two hold in the class of special flows over two--dimensional rotations and some roof functions of bounded variation ($f(x,y)=ax+by+c$, $a,b,c>0$ being one example). All the dynamical consequences of the original Ratner's property hold also for the weak Ratner property, \cite{FL2}. The assumption on the roof being of bounded variation was crucial in \cite{FL} and \cite{FL2} and unfortunately this assumption is not verified for special flow representations of Arnold flows and more in general locally Hamiltonian flows in higher genus (since the roof function has logarithmic singularities and hence is not of bounded variation).

 In presence of singularities such as the fixed points of smooth area-preserving flows, the Ratner property in its classical form, as well as the weaker versions defined by K. Fr\k{a}czek and M. Lema{\'{n}}czyk is currently expected to fail. 
 The heuristic problem for Arnold flows and more generally smooth area-preserving flows to enjoy the Ratner property (or its weaker versions) is that Ratner-like properties require a (polynomially) controlled way of divergence of orbits  of nearby points. If the orbits of two nearby points get too close to a singularity, their distance explodes in an uncontrolled manner. This intuition was shown to be correct in \cite{FK:mul} (see Theorem 1 in the Appendix B in \cite{FK:mul}), where the first author showed that special flows over irrational rotation of constant type, under a roof function of the form $f(x)= x^{\gamma}, -1<\gamma<0$ or $f(x)=-\log(x)$ do not satisfy the weak Ratner property. 
 
To deal with this issues,  in \cite{FK:mul}, B.~Fayad and the first author introduced a new modification of the weak Ratner property, so called \emph{SWR-property} (the acronym standing  for \emph{Switchable Weak Ratner}), according to which one is allowed to choose whether we see polynomial divergence of orbits in the future or in the past, depending on points (see Definition~\ref{def:SWR} in Section~\ref{sec:Ratner}). All the dynamical consequences of the Ratner property are still valid for SWR-property. In particular, a mixing flow with SWR-property is mixing of all orders (see  Section~\ref{sec:Ratner}). The main result in  \cite{FK:mul} in the language of special flows is the following. 

\begin{thm}\label{mainthm:FK}
Consider the special flow  $(\varphi_t)_{t\in\mathbb{R}}$  over a rotation $R_\alpha(x) = x+ \alpha \mod 1$ and under  a roof function $f\colon I \to \mathbb{R}^+$ with \emph{one asymmetric logarithmic singularity} at the zero. For almost every $\alpha \in [0,1]$, $(\varphi_t)_{t\in\mathbb{R}}$ satisfies the SWR-property and hence is mixing of all orders.  Furthermore, the same result holds if $f$  has several asymmetric logarithmic singularities, under a non resonance condition (of full Hausdorff dimension) between the positions of the singularities and the base frequency $\alpha$. 
 \end{thm}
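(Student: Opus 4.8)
The strategy is to verify that these special flows satisfy the SWR-property of Definition~\ref{def:SWR}; once this is done, mixing of all orders follows from the general implications recalled in Section~\ref{sec:Ratner} together with the fact that such flows are mixing (by Khanin--Sinai \cite{SK:mix}, the corresponding mixing statement holding also for several asymmetric logarithmic singularities). Note that the case of \emph{one} asymmetric logarithmic singularity and almost every $\alpha$ is already a special case of Theorem~\ref{prop:main_special_flows}: a rotation is precisely a $2$-interval exchange with the unique irreducible combinatorial datum, and Lebesgue-almost every length datum corresponds to Lebesgue-almost every rotation number. So the content not covered by Theorem~\ref{prop:main_special_flows} is the SWR-verification for roofs with \emph{several} asymmetric logarithmic singularities over a rotation, under the non-resonance condition, which we outline below.

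The first step is to translate the divergence of orbits into an arithmetic statement. Because the base map $R_\alpha$ is an isometry, two points $(x,s)$ and $(x+\xi,s)$ of the special flow at small level-set distance $\xi$ keep horizontal distance exactly $\xi$ for all time, and the orbit of the second point is, at every return to the base, the orbit of the first shifted in time by the Birkhoff difference $\Delta_n(x,\xi) = f^{(n)}(x+\xi)-f^{(n)}(x)$, where $f^{(n)}=\sum_{k=0}^{n-1}f\circ R_\alpha^k$ and $n$ is the number of returns up to the given time. Thus the second orbit is a parabolic time-reparametrisation of the first, and the SWR-property reduces to showing that, for $x$ in a compact set $Z$ with $\mu(Z)>1-\varepsilon$ and for $\xi$ of a suitably chosen size, along a set of return indices $n$ of density $>1-\kappa$ either in the future or in the past, the quantity $\Delta_{\pm n}(x,\xi)$ lies in a fixed compact subinterval of $\mathbb{R}\setminus\{0\}$ and varies slowly with $n$; up to lower-order errors $\Delta_{\pm n}(x,\xi)\approx \xi\,(f^{(\pm n)})'(x)$.

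The second step is the control of $\Delta_n$. Away from the singular points $f$ is $C^{1}$ with bounded derivative, so each term of the telescoping sum is $O(\xi)$ and only the indices $k$ for which $R_\alpha^k x$ falls within $O(\xi)$ of a singular point contribute significantly. Using Denjoy--Koksma-type estimates, the three-distance theorem, and control of the continued fraction (Ostrowski) expansion of $\alpha$ --- all valid for a.e.\ $\alpha$ --- one shows that, along a suitable subsequence of scales, for a.e.\ $x$ the orbit segment $\{R_\alpha^k x\}$ stays $\Omega(\xi)$-away from all singular points in at least one of the two time directions; in that direction $(f^{(\pm n)})'(x)$ grows at a controlled (essentially logarithmic) rate, and the \emph{asymmetry} of the logarithmic singularities forces this derivative sum to have a definite sign on a set of $x$ of large measure, so that $\Delta$ does not degenerate to $0$. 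When $f$ has several singular points $\beta_1,\dots,\beta_r$, the pattern of close approaches of the orbit to the different $\beta_i$ is governed by the quantities $\|\beta_i-\beta_j-k\alpha\|$ for moderate $k$; the non-resonance condition on $(\alpha,\beta_1,\dots,\beta_r)$ --- which holds on a set of full Hausdorff dimension --- exactly rules out that these be pathologically small for many $k$, hence prevents the approaches to distinct singularities from being synchronised, which is what allows the argument to be run one singularity at a time.

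The last step, and the main obstacle, is the \emph{switching} dichotomy made uniform on $Z$. The one thing that can go wrong is that the orbit of $x$ enters an $o(\xi)$-neighbourhood of some singular point at a moderate return index, making $\Delta$ explode; but by the recurrence estimates together with the non-resonance condition this cannot happen in both time directions at once, so one declares $x$ a backward (resp.\ forward) point in that case. Turning this heuristic into a quantitative statement --- producing $\varepsilon,\delta,\kappa$, the compact set $Z$ with $\mu(Z)>1-\varepsilon$, and the density-$(1-\kappa)$ set of good times, uniformly, and with the bookkeeping needed to handle several singularities simultaneously without losing the density of good times --- is where the real work lies; the rest is the (by now standard) deduction from Section~\ref{sec:Ratner} that a mixing flow with the SWR-property is mixing of all orders.
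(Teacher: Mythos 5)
Theorem \ref{mainthm:FK} is not proved in this paper: it is quoted from Fayad and Kanigowski \cite{FK:mul} as background for the results that \emph{are} proved here, so there is no internal proof against which to check your argument. Your opening reduction --- that the one-singularity case follows from Theorem \ref{prop:main_special_flows} by viewing $R_\alpha$ as a $2$-IET with the unique irreducible combinatorial datum and allowing the constants $C_\alpha^\pm$ at the other discontinuity to vanish, which Definition \ref{def_LogAsym} permits --- is correct and in fact gives the stronger SR-property. For the several-singularity case, however, your sketch is not a proof: you list the ingredients of the strategy in \cite{FK:mul} (Denjoy--Koksma estimates, the three-distance theorem, control of the Ostrowski expansion, decoupling of close approaches to distinct singular points via a non-resonance hypothesis on $\|\beta_i-\beta_j-k\alpha\|$) without establishing any of the quantitative estimates, you never formulate the non-resonance condition that the theorem actually asserts, and you explicitly concede that producing the set $Z$, the parameters $\varepsilon,\delta,\kappa$, and the density-$(1-\kappa)$ family of good return times ``is where the real work lies.'' That concession is the gap.

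It is also worth pointing out that the paper's own \emph{related} result, Corollary \ref{cor:genFK} (the SR-property for a.e.\ $\alpha$ and a.e.\ choice of singular positions), is obtained by a route quite different from your sketch: one adjoins the singular points $x_1,\dots,x_d$ as marked points so that $R_\alpha$ becomes a $(d+1)$-IET with fake discontinuities, and then applies Theorem \ref{thm:Ratner_special_flows} for a.e.\ IET together with a Fubini argument exploiting the linear relation between $(\alpha,\underline{x})$ and the IET length datum. This sidesteps the Diophantine/non-resonance bookkeeping entirely. Note that the measure statements differ --- Lebesgue full measure versus the measure-zero, full-Hausdorff-dimension non-resonance set of \cite{FK:mul} --- so neither of Theorem \ref{mainthm:FK} and Corollary \ref{cor:genFK} strictly contains the other; but in any case, the theorem you were asked to prove has no proof in this paper, and your sketch, as you acknowledge, does not supply one.
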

In particular, since for Arnold flows on tori (i.e. the restriction of a smooth area preserving flow on a surfaces of genus one to its minimal component), the base automorphism in the special flow representation of an  Arnold flow is an irrational rotation and the roof function has asymmetric logarithmic singularities,  it follows from the above Theorem that typically Arnold flows with one fixed point satisfy the SWR-property 
 and hence are typically mixing of all orders. 

\smallskip
In this paper, we prove that a generalization of the Ratner property  holds for minimal components of typical smooth area preserving flows in  $\mathcal{U}_{\neg min}$ for surfaces of \emph{any genus}. More precisely, we consider a stronger property, the {\em SR-property} (acronym for \emph{Switchable Ratner}, without the W for  \emph{Weak} in SWR). This property trivially implies SWR-property (see the definitions in Section~\ref{sec:SWRRatner}). 
Our main result in the language of special flows is then the following.


\begin{thm}\label{thm:Ratner_special_flows}
For almost every IET $T\colon I \to I$ and every roof function $f\colon I \to \mathbb{R}^+$ with \emph{asymmetric logarithmic singularities}  at the discontinuities of $T$ (in the sense of Definition \ref{def_LogAsym}), the special flow $(\varphi_t)_{t\in\mathbb{R}}$ over $T$ and under $f$ has the SR-property.
\end{thm}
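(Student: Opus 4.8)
The plan is to establish the SR-property directly, verifying the quantifiers of the definition recalled in Section~\ref{sec:SWRRatner}, and exploiting the fact that in the special flow coordinates the divergence of nearby orbits is governed by the derivative of the Birkhoff sums of the roof, namely
\be
(S_N f)'(x) = \sum_{k=0}^{N-1} f'(T^k x),
\ee
since an IET is a piecewise translation and hence $(T^k)'\equiv 1$ off the discontinuities. Concretely, if $(x,0)$ and $(x',0)$ lie on the same base fibre with $x'$ close to $x$ and not on the same orbit, and if the first $N$ iterates of the Rokhlin tower based at $[x',x]$ cross no discontinuity of $T$, then for the range of times $t$ with $S_Nf(x)\leq t< S_{N+1}f(x)$ one has $\varphi_t(x',0)=\varphi_{\Delta_N}\bigl(\varphi_t(x,0)\bigr)$ up to a horizontal error $|x-x'|$, where $\Delta_N:=S_Nf(x)-S_Nf(x')\approx (x-x')\,(S_Nf)'(x)$. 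Thus the SR-property will follow once we show that, off a set of small measure, for a positive-density set of scales $N$ one has $|(S_Nf)'(x)|\to\infty$ with controlled, essentially non-oscillatory growth, so that the reparametrisation $\Delta_N$ can be kept in a fixed compact subset of $\mathbb{R}\setminus\{0\}$ along an orbit window of length going to infinity, by tuning the initial distance $\delta=|x-x'|$ to the scale $N$.

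First I would set up the renormalisation machinery for a typical IET: Rauzy--Veech--Zorich induction, Oseledets behaviour of the associated cocycle, and the resulting sequence of \emph{good return times} $N=N(n)$ along which the induced IET on a small interval is balanced (all exchanged lengths comparable) and the roof $f$ has bounded distortion on the corresponding tower away from its singular fibres. Here I would invoke, adapted to the \emph{derivative} $f'$, the quantitative estimates on Birkhoff sums of logarithmically singular functions over typical IETs developed for the proofs of mixing of these flows (in the spirit of \cite{Ul:mix,Ra:mix}). The decisive point, and where the \emph{asymmetry} of the logarithmic singularities enters, is that for a symmetric singularity the two one-sided contributions $\sim\pm A/\mathrm{dist}$ to $(S_Nf)'$ nearly cancel and leave only a bounded (or at most logarithmic) residual, whereas asymmetry leaves a residual proportional to the difference $A^+-A^-$ of the one-sided coefficients, which by unique ergodicity and the balance of the tower grows to infinity in a sign-definite way and, crucially, does not change sign over a dyadic window of scales $N$. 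This is the step I expect to be the main analytic obstacle: marrying the renormalisation bookkeeping for IETs (several discontinuities, the Zorich cocycle, and, when $g\geq 2$, the more delicate geometry of minimal components, cf.\ Section~\ref{sec:reduction}) with the cancellation analysis of $\sum f'(T^kx)$ so as to extract genuine, uniformly controlled parabolic stretching on a set of measure $>1-\varepsilon$.

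Next comes the \emph{switchable} mechanism, following the strategy of \cite{FK:mul} but now over IETs. The control above breaks down if some iterate $T^kx$ or $T^kx'$, $|k|<N$, comes extremely close to a discontinuity of $T$, since then a single term of the Birkhoff sum dominates. To handle this I would prove a shrinking-target/Borel--Cantelli type estimate for typical IETs: the measure of points whose forward $N$-orbit and backward $N$-orbit both enter the $\delta_N$-neighbourhood of the set of discontinuities is small, for a scale $\delta_N$ adapted to the good return times. This uses unique ergodicity together with the equidistribution of the discontinuities under renormalisation, and lets one choose, for each pair $(x,x')$ in the good set and each good scale, a direction of time --- forward or backward --- along which no such close approach occurs; in that direction the estimates of the previous paragraph apply verbatim, yielding the sign-definite growth of $|(S_Nf)'|$ with no large exceptional term. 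One then has to interleave the two selections carefully, so that the ``good return times'' coming from renormalisation and the ``safe times'' avoiding close approaches can be taken along a common subsequence of positive density.

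Finally I would assemble the pieces. On the complement $Z$ of the exceptional sets produced above, which has measure $>1-\varepsilon$, given $x,x'\in Z$ on the same base fibre with $0<|x-x'|<\delta$ and not on the same orbit, one picks the safe time direction and the positive-density set of good times $N$; along these, $\varphi_t(x,0)$ and $\varphi_t(x',0)$ stay within $\varepsilon$ of a reparametrisation by $\Delta_N$, with $\Delta_N$ in a fixed compact subset of $\mathbb{R}\setminus\{0\}$ and essentially constant over the relevant orbit window, which is exactly the conclusion required by the SR-property of Section~\ref{sec:SWRRatner}. For points not on the same base fibre one first flows to the base fibre, which only introduces a small vertical offset and is harmless. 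The case of a rotation base recovers Theorem~\ref{mainthm:FK}; the genuinely new inputs for $g\geq 2$ are the IET renormalisation underlying the derivative Birkhoff sum estimates and the switchable close-approach bound in that setting.
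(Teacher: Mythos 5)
Your proposal correctly identifies the broad architecture — represent the SR-property in terms of Birkhoff sums of $f'$, use Rauzy--Veech renormalisation and the asymmetry to get sign-definite growth $\sim(C^--C^+)N\log N$, and use a forward/backward dichotomy to avoid close approaches to the singularities — but it misses the two ideas that make the paper's argument actually close.

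First, the obstruction that drives the whole proof is not acknowledged. Along a positive balanced acceleration $(n_\ell)$ of Rauzy--Veech, the estimate $(S_rf')(x)\asymp (C^--C^+)\,r\log r$ for $q_\ell\leq r<q_{\ell+1}$ holds only after removing a set $\Sigma^+_\ell$ of points coming too close to a singularity (Proposition~\ref{mpd}), and $\sum_\ell \lambda(\Sigma^+_\ell)$ is \emph{not} summable. Mixing only needs control along a density-one subsequence of scales, so throwing away $\Sigma^+_\ell$ at each scale is fine; the SR-property, by contrast, requires that for the chosen pair $x,y$ the estimate hold for \emph{every} $n\in[M,M+L]$. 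Your plan of interleaving ``good return times'' with ``safe times'' along a common subsequence of positive density does not deliver this: you end up with control on a positive-density set of scales, which is the wrong quantifier for Definition~\ref{def:SWR}. The paper's fix is the Ratner Diophantine Condition (\cref{def:ratnerDC}), a new full-measure condition on IETs asserting that the set of $\ell$ for which a block $\|A_\ell\|\cdots\|A_{\ell+L}\|$ is too large is so sparse that the corresponding measures $\lambda(\Sigma^+_\ell)$ \emph{are} summable when restricted to those $\ell$ (\cref{summabilitySigmas}). Proving this condition has full measure (\cref{RDCfullmeasure}) is the technical heart of the paper and rests on a quasi-Bernoulli estimate for pre-compact cylindrical accelerations of the natural extension (\cref{QBforA}) together with the full strength of the Avila--Gou\"ezel--Yoccoz exponential tails (\cref{AGY}); your invocation of ``Oseledets behaviour'' is not the right tool here and will not yield the summability you need.

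Second, your ``switchable mechanism'' is described as a shrinking-target/Borel--Cantelli estimate selecting, for most points, a direction of time with no close approach. The paper does something structurally different: \cref{forbac} is a \emph{deterministic} dichotomy, proved via balanced positive Rauzy--Veech times and the Hubert--Marchese--Ulcigrai separation lemma, asserting that for every $x$ outside a fixed $\varepsilon/8$-boundary layer, at least one of the two time directions keeps the $q_\ell$-orbit at distance $\gtrsim 1/q_{\ell+L}$ from the discontinuities. This has to hold for all points (not a.e.\ points) because in the proof of SR you first fix the good set $Z$ and then must handle every pair $x,y\in Z$. Moreover, the dichotomy is only used in one of two cases in the assembly (\cref{prop:Ratproof}): when $\ell\in K_T$ (the ``good'' induction times), the forward/backward control combined with \cref{prty} suffices; when $\ell\notin K_T$, the Ratner summability lets you excise the sets $\Sigma^+_\ell$ and $J_\ell$ once and for all with small total measure. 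Your proposal merges these into one probabilistic step and thereby loses the case distinction that actually closes the argument. In short, the missing ingredient is the Ratner DC and its full-measure proof; without a summability mechanism of this kind, the ``interleave good and safe times'' plan does not yield control on a full interval $[M,M+L]$ of scales.
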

As a Corollary (see Section~\ref{sec:conclusions}), we have the following. 
\begin{cor}\label{cor:Ratner_flows}
For any fixed genus $g\geq 1$, consider locally Hamiltonian flows on a surface $S$ of genus $g$ with  non-degenerate fixed points and at least one periodic component. For a typical flow $(\varphi_t)_{t \in \mathbb{R}}$ in an open and dense set, the restriction of $(\varphi_t)_{t \in \mathbb{R}}$ to any of its minimal components  has the SR-property.
\end{cor}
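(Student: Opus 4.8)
The strategy is a reduction to Theorem~\ref{thm:Ratner_special_flows} through the special flow representation of minimal components recalled in Section~\ref{sec:reduction}, together with the invariance of the SR-property under measure-theoretic isomorphism. Fix a genus $g\geq 1$ and a locally Hamiltonian flow $(\varphi_t)_{t\in\mathbb{R}}$ with non-degenerate fixed points and at least one periodic component, and let $S'\subset S$ be one of its minimal components. Taking a segment transverse to the flow inside $S'$ and the associated first return map, one obtains, in suitable coordinates, an IET $T\colon I\to I$ with combinatorial datum $\pi$ and length datum $\underline{\lambda}$, together with a return time $f\colon I\to\mathbb{R}^+$; the restriction $(\varphi_t)_{t\in\mathbb{R}}|_{S'}$ is then measure-theoretically isomorphic to the special flow over $T$ under $f$. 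Because simple saddles contribute logarithmic singularities to $f$ located at those discontinuities of $T$ which correspond to orbits hitting a saddle before returning, and because for flows in $\mathscr{U}_{\neg min}$ these singularities are asymmetric in the sense of Definition~\ref{def_LogAsym} (again Section~\ref{sec:reduction}), the pair $(T,f)$ falls under the hypotheses of Theorem~\ref{thm:Ratner_special_flows} as soon as $\pi$ is irreducible and $\underline{\lambda}$ avoids the relevant Lebesgue-null set.

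The only genuine point to check is therefore that, for a typical flow with respect to the Katok fundamental class, the induced length datum $\underline{\lambda}$ indeed lies outside that null set. Here I would argue exactly as in the proof that typical flows in $\mathscr{U}_{\neg min}$ have mixing minimal components (\cite{Ra:mix}, building on \cite{Ul:mix}): restricted to the open and dense subset of $\mathscr{U}_{\neg min}$ consisting of flows with non-degenerate fixed points and at least one saddle loop homologous to zero, the Katok measure class disintegrates over the combinatorial and topological data of the Poincar\'e section so that its conditional measures on the length parameter are equivalent to Lebesgue measure. A Fubini argument then shows that the set of flows for which $\underline{\lambda}$ hits a fixed Lebesgue-null set is itself null, and, intersecting over the finitely many minimal components and the finitely many combinatorial types, we conclude that Theorem~\ref{thm:Ratner_special_flows} applies to every minimal component of a full measure set of such flows. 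Since the SR-property defined in Section~\ref{sec:SWRRatner} is invariant under measure-theoretic isomorphism, it passes from the special flow to $(\varphi_t)_{t\in\mathbb{R}}|_{S'}$, which proves the Corollary on the stated open and dense set, openness and density being inherited from the set in \cite{Ra:mix}.

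I expect the main obstacle to lie precisely in the bookkeeping of the previous paragraph: making rigorous the passage between \emph{typical} in the Katok fundamental class on locally Hamiltonian flows of genus $g$ and \emph{almost every} in the sense of Lebesgue measure on IET length data --- including a careful account of which discontinuities of $T$ carry asymmetric logarithmic singularities in terms of the separatrix structure at the saddles, and of how the genus and number of handles of each minimal component enter the induced parametrization. This is where the higher genus case $g\geq 2$ differs substantially from the torus case of \cite{FK:mul}, and it explains why the statement is formulated for the specific open and dense set arising from \cite{Ra:mix} rather than for all of $\mathscr{U}_{\neg min}$; fortunately all the needed measure-theoretic input is already available there, so the Corollary really is a corollary once Theorem~\ref{thm:Ratner_special_flows} is in hand.
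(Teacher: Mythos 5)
Your proposal is correct and follows essentially the same route as the paper: reduce to Theorem~\ref{thm:Ratner_special_flows} via the special-flow representation over an IET with a roof in $AsymLogSing(T)$ on Ravotti's open dense subset $\mathcal{U}'_{\neg\min}$, pass from Lebesgue-a.e.\ length data to Katok-typical flows (the paper packages your Fubini/disintegration step as Remark~\ref{rk:coordinates}, which arranges the transversals so that the exchanged-interval lengths of every minimal component appear as distinct coordinates of the period map), and transport the SR-property through the measure-theoretic isomorphism via Lemma~\ref{lemma:iso_inv}.
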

It is from this result that we deduce Theorem \ref{prop:main_special_flows} on multiple mixing, since the SWR-property (and hence in particular the SR-property) allows us to automatically upgrade mixing to mixing of all orders and mixing for these flows is known by \cite{Ul:mix, Ra:mix}.  

Theorem  \ref{thm:Ratner_special_flows} and Corollary \ref{cor:Ratner_flows} have also implications on joining rigidity for the corresponding flows (since Ratner properties restrict the class of self-joinings for these flows, see Section~\ref{sec:Ratner}). Most crucially,  it shows the power and generality of the modification of the Ratner property introduced by Fayad and the second author in capturing the quantitative divergence behaviour for a  large class of parabolic flows with singularities.
A special case of Theorem \ref{prop:main_special_flows} also implies 
 (as shown in Section~\ref{sec:conclusions}) the following notable strenghtneing of the main result in \cite{FK:mul}, which was stated here as Theorem \ref{mainthm:FK}.
\begin{cor}\label{cor:genFK}
Consider the special flow  $(\varphi_t)_{t\in\mathbb{R}}$  over a rotation $R_\alpha(x) = x+ \alpha \mod 1$ and under  a roof function $f\colon I \to \mathbb{R}^+$ with \emph{asymmetric logarithmic singularities} at $0< x_1< \dots < x_d<1$ (in the sense of Definition \ref{def_LogAsym}).  For almost every $\alpha \in [0,1]$ and almost every choice of $(x_1, \dots, x_d)$ with respect to the Lebesgue measure on $[0,1]^d$, $(\varphi_t)_{t\in\mathbb{R}}$ satisfies the SR-property. Hence, in particular, it is mixing of all orders.  
 \end{cor}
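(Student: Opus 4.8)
The plan is to deduce this corollary directly from Theorem \ref{prop:main_special_flows} (or from Theorem \ref{thm:Ratner_special_flows}) by viewing a rotation together with several marked points as a special case of an interval exchange transformation, after a suitable cutting procedure. First I would observe that a rotation $R_\alpha$ of $[0,1)$ is a $2$-IET, but to make the singularities at $x_1 < \dots < x_d$ appear as discontinuities of the base map, I would instead regard the same dynamical system as an IET on a larger number of intervals by inserting extra cut points. Concretely, for a fixed $\alpha$ and fixed $(x_1,\dots,x_d)$, consider the partition of $I=[0,1)$ whose endpoints are $0$, the points $x_j$, and their backward images under the rotation up to the point where these points would be crossed by the relevant discontinuity; this produces an IET $T$ (with a combinatorial datum $\pi$ that depends on the cyclic order of the cut points and a length datum $\underline\lambda$ determined by $\alpha$ and the $x_j$) which is metrically isomorphic to $R_\alpha$, and whose discontinuity set now contains the $x_j$. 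Under this identification the roof function $f$ with asymmetric logarithmic singularities at the $x_j$ becomes a roof over $T$ with asymmetric logarithmic singularities precisely at (a subset of) the discontinuities of $T$, which is the hypothesis of Theorems \ref{prop:main_special_flows} and \ref{thm:Ratner_special_flows}.

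The second step is a Fubini argument to pass from ``almost every IET'' to ``almost every $\alpha$ and almost every $(x_1,\dots,x_d)$''. Theorem \ref{prop:main_special_flows} gives the conclusion for Lebesgue-almost every length vector $\underline\lambda$, for each irreducible $\pi$. I would show that the map $(\alpha, x_1, \dots, x_d) \mapsto \underline\lambda$ arising from the cutting construction is, on each region of $[0,1]\times[0,1]^d$ where the combinatorial type $\pi$ is constant, a piecewise smooth map with non-vanishing Jacobian (indeed it is piecewise affine in these coordinates), so that it pulls back Lebesgue-null sets to Lebesgue-null sets. Since there are only finitely many combinatorial types occurring and they partition the parameter space into finitely many pieces of positive measure, the set of $(\alpha,x_1,\dots,x_d)$ for which the associated IET falls into the exceptional null set of Theorem \ref{prop:main_special_flows} is itself Lebesgue-null. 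This yields the SR-property, hence the SWR-property, hence (combining with mixing, which holds here by \cite{SK:mix} or \cite{Ul:mix}) mixing of all orders, for a.e. $(\alpha,x_1,\dots,x_d)$.

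The main obstacle I anticipate is the bookkeeping in the cutting construction: one must be careful that inserting the backward images of the $x_j$ produces a genuine IET on \emph{finitely} many intervals whose length datum depends on $(\alpha,x_1,\dots,x_d)$ in a controlled (piecewise affine, non-degenerate) way, and that the resulting $\underline{\lambda}$ actually sweeps out a full-measure subset of the simplex as the parameters vary — otherwise the Fubini step transfers the null set trivially but gives no positive-measure conclusion. A clean way to handle this is to fix the discontinuity points of the target IET abstractly and check that, on each combinatorial cell, the parameter map is a submersion onto an open subset of the length simplex; alternatively, one can invoke the renormalization/Rauzy–Veech machinery to note that the full-measure statement in Theorem \ref{prop:main_special_flows} is invariant under such finite subdivisions, so that ``$2$-IET obtained from $R_\alpha$, subdivided at the $x_j$'' inherits the genericity directly. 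Either route reduces the corollary to a measure-transfer lemma plus the already-established Theorem \ref{prop:main_special_flows}.
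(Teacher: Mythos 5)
Your high-level strategy — regard the rotation together with the marked singular points $x_1,\dots,x_d$ as an IET on more intervals, then transfer the a.e.\ statement of Theorem~\ref{prop:main_special_flows}/\ref{thm:Ratner_special_flows} via Fubini — is exactly the paper's route. However, the key construction step is confused in a way that would matter if you tried to push the proof through.

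You propose to insert as cut points ``the points $x_j$, and their backward images under the rotation up to the point where these points would be crossed by the relevant discontinuity.'' No backward images are needed, and it is not clear what stopping rule you intend; a literal reading introduces an uncontrolled (potentially infinite) set of cut points. The correct and much simpler construction is: fix the generic position of $1-\alpha$ (the sole discontinuity of $R_\alpha$) among the $x_j$, say $x_{i-1}<1-\alpha<x_i$, and take as cut points precisely $\{0,\,x_1,\dots,x_d,\,1-\alpha\}$. This expresses $R_\alpha$ as an IET $T=(\underline\lambda,\pi^{rot,i})$ on finitely many intervals whose combinatorial datum is the explicit rotation-type permutation depending only on $i$, and whose length vector is an \emph{affine} function of $(\alpha,x_1,\dots,x_d)$ on the region where $i$ is fixed. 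The roof $f$ then lies in $AsymLogSing(T)$: its genuine logarithmic singularities are at the $x_j$, while $1-\alpha$ becomes a ``fake'' singularity with vanishing constants $C^\pm$, which Definition~\ref{def_LogAsym} explicitly allows. This dissolves the obstacle you flag about finiteness and controlled dependence.

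The Fubini step in the paper is also organized a bit differently and more efficiently than you suggest. Rather than trying to show the parameter-to-length map sweeps out a full-measure subset of the simplex, the paper argues by contradiction: if a positive-measure set of $(\alpha,\underline x)$ were bad, one could pick a Lebesgue density point, fix the combinatorics $\pi^{rot,i}$ in a neighborhood, and push forward through the affine (hence measure-nondegenerate) map to get a positive-measure set of length vectors $\underline\lambda$ for which the special flow fails the SR-property — contradicting Theorem~\ref{thm:Ratner_special_flows}. You do not need surjectivity onto a full-measure set; you only need that positive measure is sent to positive measure, which is automatic for an affine map with nonvanishing Jacobian.
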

We remark that the above Corollary generalizes Theorem \ref{mainthm:FK} in two directions: first of all, it shows that the flows considered in Theorem \ref{mainthm:FK} have the stronger SR-property instead than the SWR-property. Secondly, at  most importantly, our result holds for almost every location of the singularities, while in Corollary~\ref{cor:genFK} the location of the singularities $(x_1, \dots, x_d)$ was restricted to a subset of full Hausdorff dimension but Lebesgue measure zero. Notice though, that  the full measure set in Corollary~\ref{cor:genFK}  is not explicit, while the resonance condition in Theorem \ref{mainthm:FK} (see Definition 1.3. and Remark 1.4. in \cite{FK:mul} for details) may allow to construct explicit examples.


\subsection{Outline and structure of the paper}\label{sec:outline}
In this section we present an outline and some heuristic ideas used in the proof of Theorem \ref{thm:Ratner_special_flows}, namely parabolic divergence (in the precise form of the switchable Ratner property SR) for special flows over IETs under roof functions which have asymmetric logarithmic singularities (in the sense of Definition \ref{def_LogAsym}), since this is the key result from which the other results mentioned in the introduction are then deduced (see \cref{sec:conclusions}).  
Two of the  main ingredients used in the proof of Theorem \ref{thm:Ratner_special_flows} are a suitable full measure \emph{Diophantine condition on the IET} on the base and precise \emph{quantitative estimates on Birkoff sums of the derivatives} of the roof functions. We will first comment on these two parts. 

\smallskip
\emph{Diophantine conditions on IETs} are given through the \emph{Rauzy-Veech algorithm}, which can be thought of a generalization of the continued fraction algorithm for rotations (since rotations can be seen as IETs of two intervals). This a powerful and well studied tool to prove fine properties of IETs, which was developed by Rauzy \cite{Ra:ech} and Veech \cite{Ve:gau} and has been used very fruitfully in the past thirty years, for example, just to mention a few highlights,  to prove the main results in \cite{AF:wea, AGY:exp, AV:sim, Bu:dec, Bu:lim, Ch:dis, MMY:coh, MMY:lin, Ul:abs, Zo:dev} and many more.  The Rauzy-Veech algorithm associates to an IET of $d$ intervals a sequence of $d\times d$ integer valued matrices $A_\ell$ which can be thought of as entries of a multi-dimensional continued fraction algorithm. As Diophantine conditions for rotations are conditions on the growth of the continued fraction entries of the rotation number, Diophantine conditions for IETs can be expressed in terms of the growth of the norm of the matrices $A_\ell$. It is fruitful to consider accelerations of the original algorithm which are \emph{positive} (i.e. the matrices $A_\ell$ have strictly positive entries) and \emph{balanced} (i.e. times when the Rohlin towers in the associated dynamical representation of the initial IET as suspension  over an induced IET have approximately the same heights and widths). 

 One of the main points of this paper is the definition of a new Diophantine Condition for IETs, that we call the \emph{Ratner Diophantine Condition} (or Ratner DC for short). This implies by definition the Mixing DC and it was inspired by the Diophantine Condition for rotations introduced by Fayad and the first author in \cite{FK:mul} (see Remark \ref{FK_DC}). The proof that the Ratner DC is satisfied for a full measure set of IETs exploits subtle properties of Rauzy-Veech induction and its positive balanced accelerations, in particular a quasi-Bernoulli type of property and the full strenght of the deep exponential tails estimates given by Avila, Gouezel and Yoccoz in \cite{AGY:exp}.

\smallskip

Let us now give an intuitive explanation of why \emph{Birkhoff sums of derivatives} play an important role in both the proof of mixing and parabolic divergence for special flows over  IETs under roofs with logarithmic asymmetric singularities. Since $T $ is a piecewise isometry (hence $T'=1$ almost everywhere), by the chain rule we have that
$$\frac{\ud }{\ud x} S_r(f)(x) = S_r(f\rq{})(x) \text{ for \ a.e.\ }x, \text{\quad for each }r\in\mathbb{Z}.$$
Consider a small horizontal segment $J=[a,b] $ which undergoes exactly $r$ jumps when flowing for time $t$ under the roof $f$ and which is a continuity interval for $T^r$. By  calculating the explicit expression for the special flow iterates (see \cref{sec:special_flows_rep}), one can see that the image of the segment $J$ after time $t$ is given by 
$(T^r(x), S_r(f)(x)  )$ for  $ x \in J$. 
Thus, the Birkhoff sums $S_r(f\rq{})(x)$ for $x \in J$ describe the vertical slope of the image of $J$ after time $t$ under the flow. This slope contains information on the \emph{shearing phenomenon} which is crucial both to mixing and to parabolic divergence. For an heuristic explanation of how mixing can be deduced by shearing, we refer the reader to the outline of \cite{Ul:mix} or \cite{Ra:mix}; a reformulation of the SR-Property using estimates on Birkhoff sums of derivatives is presented in Section~\ref{sec:SWRRatner} and was already used  in~\cite{Ka-Ku} and in a special case in \cite{FK:mul}.

Note that if $f$ has logarithmic singularities, the derivative $f'$ is not in $L^1(\ud x)$, since it has singularities of type $1/x$, which are not integrable. Thus, one cannot  apply the Birkhoff ergodic theorem (which, for a function $g \in L^1(\ud x)$ guarantees that $S_r(g)/r $ converge pointwise almost everywhere to a constant and thus that $S_r(g)$ grows as $r$). One can indeed prove that, for a typical IET the growth of $S_r(f')$ when $f$ has asymmetric logarithmic singularities is of order $C r\log r$ where $C$ is a constant which describes the asymmetry of the singularities. This additional $\log r$ factor 
is responsible for the \emph{shearing phenomenon} at the base of mixing and parabolic divergence. Unfortunately, to control the growth of $S_r(f')$ precisely, one needs to throw away a set of initial points which changes with $r$: more precisely, if $r$ is between $q_{\ell}$ and $q_{\ell+ 1}$, where $q_\ell$ denotes the maximal heights of towers at step $n_\ell$ of the Rauzy-Veech acceleration, one needs to remove a set $\Sigma_\ell\subset [0,1]$ whose measure goes to zero as $\ell$ grows (see Proposition \ref{mpd} for the precise statement). 

We use these sharp estimates on Birkhoff sums of derivatives (in the form of Lemma \ref{prty}) to prove the SR-property of parabolic divergence. We face the problem, though, that  while mixing is an asymptotic property, and hence requires only that shearing can be controlled for arbitrarily large times $r$ outside of a set whose measure goes to zero (so it is enough to use that the measure of $\Sigma_\ell$ goes to zero with $\ell$), to control Ratner properties one needs to have shearing for \emph{all} arbitrarily large times for most points (i.e. on a set of arbitrarily large measure). If the series of the measures of $\Sigma_\ell$ were summable, tails would have arbitrarily small measures and thus one could throw away the union of the sets $\Sigma_\ell$ for $\ell $ large. Unfortunately, one can check that the measures of $\Sigma_\ell$ are \emph{not} summable. 

This is where the \emph{Ratner DC} helps, since, if $\widetilde{K}_T \subset \mathbb{N}$ denotes the set of induction times $\ell$ such that finite blocks of cocycle matrices starting with $A_\ell$ are not too large (not larger than a power of $\log q_\ell$, see \eqref{DCcon} in the Ratner DC definition for details), the Ratner DC  guarantees that times in $\mathbb{N}\setminus \widetilde{K}_T $ where this fails  are \emph{rare}, and hence it can be used to show that the sum of the   
measures of $\Sigma_\ell$ for $\ell \notin \widetilde{K}_T$ is finite (see Corollary \ref{summabilitySigmas}). Thus, these sets can be thrown for large $\ell$. One is then left to estimate the times $\ell \in \widetilde{K}_T$. This is where one exploits the versatility of the \emph{switchable} Ratner property, according to which, if the desired quantification of parabolic divergence does not hold for  \emph{forward} Birkhoff sums (see (i) in Definition \ref{def:SWR}), one can \emph{switch} the direction of time, i.e. prove quantiative divergence estimates on \emph{backward} Birkhoff sums (see (ii) in Definition \ref{def:SWR}).  Using properties of balanced times in Rauzy induction, we show that if an orbit of a point of lenght $q_\ell$ gets too close to a singularity in the future (where too close is of order $q_{\ell+L}$ for a fixed $L$), then it did not come that close to a singularity in the past (this is proved in Proposition \ref{forbac}).   
 Thus, using that if $\ell \in \widetilde{K}_T$ the norms of the cocycle matrices $A_\ell \cdots A_{\ell+L}$ is not too large (and throwing away  additional sets  of bad points whose measures are summable), one can show that if Birkhoff sums are not controlled in the future, they are controlled in the past (Lemma \ref{prty}). Thus, the control required by the switchable Ratner property holds for all times. 

\smallskip

\noindent {\bf Structure of the paper.} The following sections are organized as follows. In  Section~\ref{sec:background} we review some background material, in particular we give the precise definition of locally Hamiltonian flows (see Section~\ref{sec:locHam}) and of special flows over IETs (see Section~\ref{sec:special_flows_rep}) and explain the reduction of the former to the latter (in Section~\ref{sec:reduction}). In \cref{sec:reduction} we also give the precise definition of asymmetric logarithmic singularities (see  \cref{def_LogAsym}). We then define Ratner properties, in particular the SR-property we use (see Definitions \ref{def:SWR} and \ref{def:SWR} in \cref{sec:Ratner}). Finally, in \cref{Se:RV} we recall basic properties of the Rauzy-Veech algorithm and the definition of the associated cocycles. We then describe the acceleration that we use (see Section~\ref{RVacc}) and in Section~\ref{sec:exptails} we recall the exponential tail estimates given by \cite{AGY:exp}.

\smallskip
In Section~\ref{se4}, we define the Diophantine conditions on IETs which we use in this paper, in particular we first recall the Diophantine condition under which mixing was proved in \cite{Ul:mix} and  \cite{Ra:mix} (see the Definition~\ref{def:mixingDC} of Mixing DC in Section~\ref{existencebalancedtimessec}), then we define the Ratner DC  (see Definition~\ref{def:ratnerDC} in Section~\ref{sec:RatnerDC}) under which we prove multiple mixing and the SR-Ratner property. The main result of this section is that, for a suitable choice of parameters, the Ratner DC is satisfied by a full measure set of IETs (see Proposition \ref{RDCfullmeasure}, which is proved in Section~\ref{sec:fullmeasure} using the exponential tail estimates recalled in Section~\ref{sec:exptails} and the consequences of the QB-property of compact accelerations of the Rauzy-Veech cocycle proved in Section~\ref{sec:QB}). 

\smallskip
Birkhoff sums and their growth are the main focus of Section~\ref{sec:BS}. In Section~\ref{sec:SWRRatner}, we first recall a criterium (from \cite{FK:mul} and \cite{Ka-Ku}), which allows to reduce the proof of the SR-property for some special flows to the quantative study of Birkhoff sums of the roof function. In Section~\ref{sec:derivatives} we first state the estimates on Birkhoff sums of the derivatives proved in \cite{Ul:mix, Ra:mix} under the Mixing DC and then deduce estimates in form which will be convenient for us to prove the SW-Ratner property (see Lemma \ref{prty}). Finally, in Section~\ref{sec:summability} we exploit the Ratner DC for suitable  parameters to prove that the sets $\Sigma_l$ with $l \notin \widetilde{K}_T$ (see the above outline) have summable measures (see the Summability Condition in Definition \ref{RatDC2} and Corollary~\ref{summabilitySigmas}).

\smallskip
The proof of the switchable Ratner property and of the other results presented in this introduction are all given in Section \ref{sec:proof}. First, in Section~\ref{sec:backward_forward}, we prove  Proposition \ref{forbac} which allows to control the distance of orbits of most points from the singularities either in the past or in the future. This Lemma, together with the Diophantine conditions and estimates on Birkhoff sums, is the last ingredient needed for the proof of  Theorem \ref{thm:Ratner_special_flows} (i.e.~the SR-property for special flows), which is presented in \cref{sec:deducingRatner}. The proof, which is rather technical, is preceeded by an outline at the beginning of \cref{sec:deducingRatner}. The other results in this introduction are then proved in \cref{sec:conclusions}. 

\smallskip
The Appendix contains in Section~\ref{appendix:Ratner} the proof that Ratner properties are an isomorphism invariant and in Section~\ref{appendix:IETs} for convenience of the reader, the proof of a property of balance Rauzy-Veech acceleration times that was proved in \cite{HMU:lag} and used in Section~\ref{sec:backward_forward}.

\section{Background material}\label{sec:background}

\subsection{Locally Hamiltonian flows}\label{sec:locHam}

Let $(S, \omega)$ be a two-dimensional symplectic manifold, i.e.\ $S$ is a  closed connected orientable smooth surface of genus $g\geq 1$ and $\omega$ a fixed smooth area form.  Any smooth area preserving flow on $S$ is given by a smooth closed real-valued differential $1$-form $\eta$ as follows.   Let $X$ be the vector field determined by $\eta = i_X \omega =\omega( \eta, \cdot )$ and consider the flow $(\varphi_t)_{t\in\mathbb{R}}$ on $S$ associated to $X$. Since $\eta$ is closed, the transformations $\varphi_t$, $t \in \mathbb{R}$, are  area-preserving. Conversely, every smooth area-preserving flow can be obtained in this way. 
The flow $(\varphi_t)_{t\in\mathbb{R}}$  is known as the \emph{multi-valued Hamiltonian} flow associated to $\eta$. Indeed, the flow $(\varphi_t)_{t\in\mathbb{R}}$ is \emph{locally Hamiltonian}, i.e.\ \emph{locally} one can find coordinates $(x,y)$ on $S$ in which $(\varphi_t)_{t\in\mathbb{R}}$ is given by
 the solution to the  equations $\dot{x}={\partial H}/{\partial y}$, $\dot{y}=-{\partial H}/{\partial x}$ for some smooth  real-valued Hamiltonian function $H$.  A \emph{global}  Hamiltonian $ H$ cannot be in general be defined (see \cite{NZ:flo}, Section 1.3.4), but one can think of  $(\varphi_t)_{t\in\mathbb{R}}$ as globally given by a \emph{multi-valued} Hamiltonian function.

One can define a \emph{topology} on locally Hamiltonian flows by considering perturbations of closed smooth $1$-forms by smooth closed $1$-forms. We assume that $1$-form $\eta$  is \emph{Morse}, i.e.\ it is locally the differential of a Morse function.  Thus, all zeros of $\eta$ correspond to either centers or simple saddles.   This condition is generic (in the Baire cathegory sense) in the space of perturbations of closed smooth $1$-forms by closed smooth $1$-forms.  A \emph{measure-theoretical notion of  typical} is defined as  follows by using the \emph{Katok fundamental class} (introduced by Katok in \cite{Ka:inv}, see also \cite{NZ:flo}), i.e.\ the cohomology class of the 1-form $\eta$  which defines the flow.  Let $\Sigma$ be the set of fixed points of $\eta$  and let $k$ be the cardinality of $\Sigma$. Let $\gamma_1, \dots, \gamma_n$ be a base of the relative homology $H_1(S, \Sigma, \mathbb{R})$, where $n=2g+k-1$. The image of  $\eta$ by the period map $Per $ is $Per(\eta) = (\int_{\gamma_1} \eta, \dots, \int_{\gamma_n} \eta) \in \mathbb{R}^{n}$. The pull-back $Per_* Leb$ of the Lebesgue measure class by the period map gives the desired measure class on closed $1$-forms. When we use the expression \emph{typical} below, we mean full measure with respect to this measure class.



 Let us recall that a \emph{saddle connection} is a flow trajectory from a saddle to a saddle and a \emph{saddle loop} is a saddle connection from a saddle to the same saddle (see Figure \ref{island}). Let us remark that if the flow  $(\varphi_t)_{t\in\mathbb{R}}$ given by a closed $1$-form $\eta$ has a \emph{saddle loop homologous to zero} (i.e.\ the saddle loop is a \emph{separating} curve on the surface), then the saddle loop is persistent under small pertubations (see Section~2.1 in \cite{Zo:how} or Lemma 2.4 in \cite{Ra:mix}). In particular, the set of locally Hamiltonian flows which have at least one saddle loop is open. The open sets $\mathscr{U}_{min}$ and  $\mathscr{U}_{\neg min}$ mentioned in the introduction are defined  respectively as the open set $\mathscr{U}_{\neg min}$ that contains all locally Hamiltonian flows with saddle loops homologous to zero and the interior  $\mathscr{U}_{min}$ (which one can show to be non-empty) of the complement, i.e.\ the set of locally Hamiltonian flows without  saddle loops homologous to zero\footnote{Note that saddle loops non homologous to zero (and saddle connections) vanish after arbitrarily small perturbations and neither the set of 1-forms with saddle loops non homologous to zero (or saddle connections) nor its complement is open.} (see \cite{Ra:mix} for details).
 
 Let us recall from the introduction the topological decomposition of an area-preserving flow into   \emph{minimal components} and  \emph{periodic components}.  Unless the surface is of genus one and consists of a unique component, each component is bounded by saddle connections. Periodic components are  \emph{elliptic islands} around a center (see Figure \ref{island}) or \emph{cylinders} filled by periodic orbits (see Figure \ref{cylinder}). We remark that if the flow is \emph{minimal}, fixed points can be only saddles, since if there is a center, it automatically produces an island filled by periodic orbits and hence a periodic component.   In the open set $\mathscr{U}_{min}$ with  no saddle loops homologous to zero, a typical flow has no saddle connections and this  implies minimality  by a result of  Maier \cite{Ma:tra} (or, in the language of suspension flows introduced in the next section, by the result of Keane~\cite{Ke:int} on IETs). In the open set  $\mathscr{U}_{\neg min}$, periodic components are typically bounded by saddle loops. After removing all periodic components, one is typically left with components without saddle connections on which the flow is minimal (for example, in Figure \ref{decomposition}, after removing a cylinder and two island, one is left with two minimal components one of genus one and one of genus two).

\subsection{Special flows over IETs}\label{sec:special_flows_rep}
As we mentioned in the introduction, smooth flows on higher genus surfaces can be represented as special flows over interval exchange transformations. Let us first recall the definition of IETs and of special flows.

\paragraph{\textbf{Interval exchange transformations.}}\label{IETsdefsec}
Let $I= I^{(0)}=[0,1)$ be the unit interval. An \emph{interval exchange transformation} (IET) of $d$ subintervals $T\colon I \to I$  is determined by a \emph{combinatorial datum}\footnote{We are using here the notation for IETs introduced by Marmi-Moussa-Yoccoz in \cite{MMY:coh} and subsequentely used by most recent references and lecture notes.} $\pi=(\pi_t, \pi_b)$ which consists of a pair $(\pi_t, \pi_b)$ of bijections from $\mathcal{A}$ to $\{1,\dots,d\}$, where $d \geq 2$ and $\mathcal{A}$ is a finite set with $d$ elements ($t,b$ stay here for \emph{top} and \emph{bottom} permutations) and a \emph{length vector} ${\lambda}$ which belong to the simplex  $\Delta_{d}$ of vectors ${\lambda}\in \mathbb{R}_+^\mathcal{A}$ such that $\sum_{\alpha\in\mathcal{A}}\lambda_\alpha=1$. 
Informally, the interval $I^{(0)}$ is decomposed into $d$ disjoint intervals  $I_\alpha$ of lenghts given by $\lambda_\alpha$ for  $\alpha\in\mathcal{A}$. The  \emph{interval exchange transformation} $T$ given by  $({\lambda}, \pi)$ is a piecewise isometry that rearranges the subintervals of lengths given by ${\lambda}$ in  the order determined by $\pi$, so that the intervals \emph{before} the exchange, from left to right, are $I_{\pi_t^{-1}(1)}, \dots, I_{\pi_t^{-1}(d)} $, while the order from left to right \emph{after} the exchange is  $I_{\pi_b^{-1}(1)}, \dots, I_{\pi_b^{-1}(d)} $.  Formally, $T$, for which we shall often use the notation $T=({\lambda}, \pi)$,  is the map $T\colon I^{(0)}\rightarrow I^{(0)}$ given by
$$
Tx=x-\sum_{\pi_b(\beta)<\pi_b(\alpha)}\lambda_\beta + \sum_{\pi_t(\beta)<\pi_t(\alpha)}\lambda_\beta \quad \text{ for }x\in I_\alpha^{(0)}=[l_\alpha,r_\alpha), 
$$
where $l_\alpha=\sum_{\pi_t(\beta)<\pi_t(\alpha)}\lambda_\beta$ and $r_\alpha=l_\alpha+\lambda_\alpha$ for $\alpha\in\mathcal{A}$
(the sums in the definition are by convention zero if over the empty set, e.g.~for $\alpha$ such that $\pi_t(\alpha)=1$).

We say that $T$ is \emph{minimal} if the orbit of all points  are dense.  We say that $\pi=(\pi_t,\pi_b)$ is \emph{irreducible} if $\{1,\dots,j\}$ is invariant under $\pi_b\circ \pi_t^{-1}$ only for $j=d$. Irreducibility is a necessary condition for minimality. Recall that $T$ satisfies the \emph{Keane condition} if the orbits of all \emph{discontinuities} $l_\alpha$ for $\alpha$ such that $\pi_t(\alpha)\neq 1$ are infinite and disjoint. If $T$ satisfies this condition, then $T$ is minimal \cite{Ke:int}.

\paragraph{\textbf{Special flows.}}\label{suspflowsdefsec}
Let $T\colon I \to I$ be an IET.\footnote{One can define in the same way special flows over any measure preserving transformation $T$ of a probability space $(M, \mathscr{M}, \mu)$, see e.g.\ \cite{CFS:erg}.} Let $f\in L^1 (I, dx)$ be a strictly positive function 
with $\int_{I} f(x)\, dx =1$.  Let $X_f \doteqdot \{ (x,y) \in \mathbb{R}^2 :  x \in I^{(0)}, \, 0\leq y < f(x) \}$ be the set of points below the graph of the roof function $f$ and $\mu$ be the restriction to $X_f$ of the Lebesgue measure $d x\,d y$. 
Given  $x\in I$ and $r\in \mathbb{N}^+$ we denote by  
\be\label{def:BS}
\BS{f}{r}( x)  \doteqdot \begin{cases} \sum_{i=0}^{r-1} f(T^i(x)) & \text{if} \ r>0; \\ 0 & \text{if} \ r=0;  \\ -\sum_{i=r}^{-1} f(T^{i}(x)) & \text{if} \ r<0; \end{cases}
\ee
the $r^{th}$ non-renormalized \emph{Birkhoff sum} of $f$ along the trajectory of $x$ under $T$. 
Let $t>0$. Given $x\in I$, denote by $r(x,t)$ the integer uniquely defined by
$r(x,t)\doteqdot \max \{ r\in \mathbb{N} :  \BS{f}{r}(x) < t \}$.

The \emph{special flow built over}  $T$ \emph{under the roof function f} is a one-parameter group $(\varphi_t)_{t\in \mathbb{R}}$ of $\mu$-measure preserving transformations of $X_f$  
whose action is given, for $t>0$, by 
\be \label{flowdef}
\varphi_t(x,0) = \left( T^{r(x,t)}(x), t- \BS{f}{r(x,t)}(x)\right).
\ee
For $t<0$, the action of the flow is defined as the inverse map and $\varphi_0$ is the identity.  
 The integer $r(x,t)$ gives the number of \emph{discrete iterations} of the base transformation $T$ which the point $(x,0)$ undergoes when flowing up to time $t>0$. %

\subsection{Locally Hamiltonian flows as special flows over IETs}\label{sec:reduction}

Locally Hamiltonian flows can be represented as special flows over IETs under  roof functions with logarithmic singularities. We recall now some of the details of this reduction; for more information see~\cite{Ra:mix}.


\begin{defn}\label{def_LogAsym}
Let $T\colon I\to I$ be an IET. We say that $f\colon I \to \mathbb{R} \cup \{+\infty\}$ has \emph{logarithmic singularities} and we write $f \in LogSing(T)$ if: 
\begin{enumerate}[(a)]
\item
$f$ is defined on all of $I\setminus \{l_\alpha : \alpha\in\mathcal{A}\}$;
\item
$f\in \mathcal{C}^2(I \setminus \{l_\alpha : \alpha\in\mathcal{A}\})$;
\item
$f$ is bounded away from zero;
\item\label{logD}
there exist $C_\alpha^{+}, C_\alpha^{-}\geq 0$, $\alpha\in\mathcal{A}$ such that
$$
\lim_{x \to l_\alpha^{+}} \frac{f\rq{}\rq{}(x)}{(x -l_\alpha)^{-2}} = C_\alpha^{+}, \qquad \lim_{x \to r_\alpha^{-}} \frac{f\rq{}\rq{}(x)}{(r_\alpha - x)^{-2}} = C_\alpha^{-}.
$$
\end{enumerate}
Let $C^{+} \doteqdot  \sum_\alpha C_\alpha^{+}$ and $C^{-}\doteqdot  \sum_\alpha C_\alpha^{-}$; if $C^{+} \neq C^{-}$, we say that $f$ has \emph{asymmetric logarithmic singularities} and we write $f \in AsymLogSing(T)$.
\end{defn}
We remark that it follows from~\eqref{logD} that the local behaviour of $f$ close to the singularities is $f = C_\alpha^{+} \modulo{\log (x-l_\alpha)} + \rm{o}(1)$ for $x \to l_\alpha^{+}$ and $f = C_\alpha^{-} \modulo{\log (r_\alpha-x)} + \rm{o}(1)$ for $x \to r_\alpha^{-}$, hence we speak of logarithmic singularities. We remark that we allow the possibility that some $C_\alpha^+$ or $C_\alpha^-$ are zero, so $f$ could have a finite one-sided limit at some $l_\alpha$ or $r_\alpha$, but we assume that at least one of the singularities is indeed logarithmic.  

\smallskip

Let $S'$ be a minimal component of the flow $(\varphi_t)_{t\in\mathbb{R}}$ determined by $\eta$. 
Then we can find a segment $I$ transverse to the flow containing no critical point and suitable coordinates, such that the first return map $T\colon I\to I$ of $(\varphi_t)_{t\in\mathbb{R}}$ to $I$  is an interval exchange transformation $T=(\lambda, \pi)$ exchanging $d$ intervals, where $d$ is the number of saddle points of $\eta$ restricted to the minimal component. Since $S'$ is a minimal component, $\pi$ is irreducible. 

The following remark is useful to show that if a property holds for almost every IET on $d$ intervals, it holds for the flow given by a typical $\eta$ on $S$. 
\begin{rem}\label{rk:coordinates}
One can choose the transverse segment $I$ so that  
the lenght of each interval $I_\alpha$ exchanged by $T$ appears as one of the coordinates of $Per(\eta)$, where  we recall that $Per $ denotes the period map defined in Section~\ref{sec:locHam}. Furthermore, if $S_1, \dots, S_{\kappa}$ are distinct minimal components of the flow determined by $\eta$,  one can choose transverse segments $I_1, \dots, I_{\kappa}$ on each $S_i$ and coordinates in which  the first return maps $T_i \colon I_i\to I_i$ of $(\varphi_t)_{t\in\mathbb{R}}$ to $I_i$ are interval exchanges such that the lenghts of the intervals exchanged by $T_i$, for $1\leq i \leq \kappa$, all appear as distinct coordinates of $\Theta(\eta)$.
\end{rem}


The first return time function $f$ on $I$ (i.e.\ the roof function in the special flow representation of $(\varphi_t)_{t\in\mathbb{R}}$) has logarithmic singularities. Moreover, the condition that the singularities of $f$ are asymmetric 
 is open and dense: more precisely, one can show (see~\cite{Ra:mix}) that there exists an open and dense subset $\mathcal{U}'_{\neg min} \subset \mathcal{U}_{\neg min}$ such that all minimal components of locally Hamiltonian flow in  $\mathcal{U}'_{\neg min} \subset \mathcal{U}_{\neg min}$ can be represented as special flows under a roof in $AsymLogSing(T)$.

\subsection{Ratner properties of parabolic divergence} \label{sec:Ratner}
We recall now the definition of the switchable Ratner property. We state a more general definition which also includes the weak switchable Ratner property and then comment on the differences  with the original Ratner property (see Remark \ref{rk:Ratner_hist} below).  
The definition is rather technical and is followed by an intuitive explanation of its heuristic meaning (see Remark \ref{rk:Ratner_expl}).

\smallskip
Let $(X,d)$ be a $\sigma$-compact metric space, $\mathcal{B}$ the $\sigma$-algebra of Borel subsets of $X$, $\mu$ a Borel probability measure on $(X,d)$.  Let $(T_t)_{t\in\mathbb{R}}$ be an ergodic flow acting on $(X,\mathcal{B},\mu)$.
\begin{defn}[SWR-Property, see \cite{FK:mul}]\label{def:SWR}
Fix a compact set $P\subset \mathbb{R}\setminus\{0\}$ and $t_0>0$. We say that the flow $(T_t)_{t\in\mathbb{R}}$ has {\em $sR(t_0,P)$-property} if:

\smallskip
\noindent for every $\vep>0$ and $N\in\N$ there exist $\kappa=\kappa(\vep)$, $\delta=\delta(\vep,N)$ and a set $Z=Z(\vep,N)$ with $\mu(Z)>1-\vep$, such that:

\smallskip
\noindent for every $x,y\in Z$ with $d(x,y)<\delta$ and $x$ not in the orbit of $y$, there exist $p=p(x,y)\in P$ and $M=M(x,y),$ $L=L(x,y)\geq N$ such that $\frac{L}{M}\geq \kappa$ and  
at least one of the following holds:
\begin{enumerate}[(i)]
\item
$\frac{1}{L}\left|\{n\in [M,M+L] : d(T_{nt_0}(x),T_{nt_0+p}(y))<\vep\}\right| >1-\vep$,
\item
$\frac{1}{L}\left|\{n\in [M,M+L] : d(T_{(-n)t_0}(x),T_{(-n)t_0+p}(y))<\vep\}\right| >1-\vep$.
\end{enumerate}
We say that $(T_t)_{t\in\mathbb{R}}$ has the \emph{switchable weak Ratner property}, or, for short, the {\em SWR-property} (with the set $P$) if 
$\{t_0>0: (T_t)_{t\in\mathbb{R}} \text{ has }sR(t_0,P)\text{-property}\}$ is uncountable.
\end{defn}

\begin{defn}[SWR-Property, see \cite{FK:mul}]\label{def:SR}
 We say that the flow $(T_t)_{t\in\mathbb{R}}$ has \emph{switchable Ratner property}, or, for short, the {\em SR-property}, if 
$(T_t)_{t\in\mathbb{R}}$ has  the { SWR-property} with the set $P=\{1,-1\}$.
\end{defn}

\begin{rem}\label{rk:Ratner_expl}
Intuitively, the SR-property (or the SWR-property) mean that, for a large set of choices of nearby initial points (i.e. pairs of points in the set $Z$ which are $\delta$ close), the orbits of the  two points \emph{either in the past, or in the future} (according to wheather (i) or (ii) hold), diverge and then, after some arbitrarily large time ($M t_0$ or $-M t_0$) \emph{realign}, so that $T_{nt_0}(x)$ is close to a a \emph{shifted} point $T_{nt_0+p}(y)$ of the orbit of $y$ (where $p\in P$ denotes the temporal \emph{shift}), and the two orbits then \emph{stay close} for a \emph{fixed proportion} $\kappa$ of the time $M$. One can see that this type of phenomenon is possible only for parabolic systems, in which orbits of nearby points diverge with polynomial or subpolynomial speed.  
\end{rem}

\begin{rem}\label{rk:Ratner_hist}
The original definition of the Ratner property  differs from Definition \ref{def:SR} only in that for all $x,y\in Z$ (i) has to be satisfied. The possibility of \emph{choosing}, for a given pair of points, whether (i) or (ii) holds, is the reason why the property was called \emph{switchable} by B.~Fayad and the first author in \cite{FK:mul}: one can \emph{switch} between either considering the \emph{future} trajectories of the points (if (i) holds), or the \emph{past}  (if (ii) holds).

Let us also stress that in the Ratner property (some times also called \emph{two-point Ratner property})  $P=\{1,-1\}$. The generalizations given by K. Fr\k{a}czek and M. Lema{\'{n}}czyk in \cite{FL} and \cite{FL2} mentioned in the introduction, i.e. the \emph{finite Ratner property} and the \emph{weak Ratner property}, amounted to allowing $P$ to be any finite set or respectively any compact set $P\subset \mathbb{R}\setminus\{0\}$. Thus the weak Ratner property \cite{FL2} is analogous to Definition \ref{def:SWR} but with the restriction that for all $x,y\in Z$ (i) has to be satisfied. 
\end{rem} 

All the variants of the Ratner properties are defined so that the results in the following Theorem \ref{thm:Ratmix} and Remark \ref{rk:joinings}  hold. 
\begin{thm}[\cite{FK:mul}]\label{thm:Ratmix}
Let $(X,d)$ be a $\sigma$-compact metric space, $\mathcal{B}$ the $\sigma$-algebra of Borel subsets of $X$, $\mu$ a Borel probability measure on $(X,d)$. Let $(T_t)_{t\in\mathbb{R}}$ be a  flow acting on $(X,\mathcal{B},\mu)$. If $(T_t)_{t\in\mathbb{R}}$ is mixing and has the SWR-property, then it is mixing of all orders. 
\end{thm}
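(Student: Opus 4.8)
The plan is to run the scheme introduced by Ratner for the horocycle flow, in the switchable form due to Fayad and the first author \cite{FK:mul}: use the SWR-property to constrain the structure of the self-joinings of $(T_t)_{t\in\mathbb{R}}$, and then, together with mixing, bootstrap to all orders by induction on $k$. Recall that $(T_t)$ is mixing of order $k$ precisely when, for every choice of $t_2^{(n)},\dots,t_k^{(n)}\to\infty$, the $k$-fold off-diagonal self-joinings $\mu_n=(\mathrm{id},T_{s_2^{(n)}},\dots,T_{s_k^{(n)}})_*\mu$, where $s_j^{(n)}=t_2^{(n)}+\cdots+t_j^{(n)}$, converge weakly to $\mu^{\otimes k}$. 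Since the set of $k$-fold self-joinings is weak-$*$ compact, it suffices to show that an arbitrary weak-$*$ limit point $\lambda$ of $(\mu_n)$ equals $\mu^{\otimes k}$; decomposing $\lambda$ into ergodic components for $T_t\times\cdots\times T_t$, whose proper marginals are again products because each $\mu^{\otimes j}$ is ergodic (it is even weakly mixing, by mixing of $(T_t)$), we may assume $\lambda$ ergodic. The induction is on $k$, with base case $k=2$ supplied by the mixing hypothesis.

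Fix such a $\lambda$. First, every proper marginal of $\lambda$ is a product measure: the image of $\mu_n$ under projection to any set of coordinates $S\subsetneq\{1,\dots,k\}$ is again an off-diagonal joining in which the gaps between consecutive retained coordinates still tend to infinity, so by the inductive hypothesis (mixing of order $|S|\le k-1$) its limit is $\mu^{\otimes|S|}$. It remains to upgrade this pairwise (indeed $(k-1)$-wise) independence to full independence, and this is where the SWR-property enters. Fix $t_0>0$ for which $(T_t)$ has the $sR(t_0,P)$-property (such $t_0$ exist, since this set is uncountable), and disintegrate $\lambda$ over its marginal on the first $k-1$ coordinates, which is $\mu^{\otimes(k-1)}$, as $\lambda=\int_{X^{k-1}}\delta_{\bar x}\otimes\lambda_{\bar x}\,d\mu^{\otimes(k-1)}(\bar x)$, with each $\lambda_{\bar x}$ a probability on the last copy of $X$ having barycentre $\mu$. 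The aim is $\lambda_{\bar x}=\mu$ for a.e.\ $\bar x$. If this fails, then, using non-atomicity of $\mu$, on a set of $\bar x$ of positive measure one can choose $\lambda$-generic points $(x_1,\dots,x_{k-1},u)$ and $(x_1,\dots,x_{k-1},v)$ with the same first $k-1$ coordinates, with $0<d(u,v)<\delta$ arbitrarily small, and with $v$ outside the $(T_t)$-orbit of $u$ (orbits being $\mu$-null). Applying $sR(t_0,P)$ to the pair $(u,v)$ --- after partitioning the good set $Z$ according to whether alternative (i) or (ii) of Definition~\ref{def:SWR} occurs and passing to the larger piece --- yields times $M,L$ with $L/M\ge\kappa$ and a temporal shift $p\in P$ such that $T_{\pm nt_0}(u)$ lies $\varepsilon$-close to $T_{\pm nt_0+p}(v)$ for a proportion $>1-\varepsilon$ of $n\in[M,M+L]$.

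The core of the argument --- carried out in the spirit of \cite{Ra:ho,FL,FL2,FK:mul} --- is to transport this realignment back through the approximation $\mu_n\to\lambda$: one uses the ergodic theorem to turn the ``positive density of aligned times'' into a quantitative $L^2$ estimate, and exploits the off-diagonal structure of $\mu_n$ (every coordinate of a $\mu_n$-typical point lies on a single $(T_t)$-orbit) to conclude that, on a positive-measure set of base points $\bar x$, the conditional $\lambda_{\bar x}$ is forced to be concentrated on the $(T_t)$-orbit of one of the coordinates $x_j$ ($1\le j\le k-1$), at a temporal offset lying in $P$. But then, fixing the index $j$ on a positive-measure sub-collection, the two-dimensional marginal of $\lambda$ on the pair $(j,k)$ acquires a nonzero component charging the set of pairs lying on a common $(T_t)$-orbit, which is $(\mu\otimes\mu)$-null since $\mu$ is non-atomic --- contradicting the fact established above that this marginal equals $\mu\otimes\mu$. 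Hence $\lambda_{\bar x}=\mu$ a.e., i.e.\ $\lambda=\mu^{\otimes k}$, and the induction closes. The passage from continuous time to the progression $t_0\mathbb{N}$ and the approximation of the sets $A_i$ by continuity of the flow are routine.

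The step I expect to be the main obstacle is the transport described in the last paragraph: extracting rigidity of the joining $\lambda$ from the purely pointwise divergence/realignment statement of the SWR-property. This requires coupling the parameters $\varepsilon,N,\delta$ of the property to the index $n$ of the approximating joinings, the ergodic-theoretic upgrade of the density estimate to an $L^2$ bound, and --- because the future/past alternative in $sR(t_0,P)$ may depend on the pair of points --- careful bookkeeping of the induced partition of $Z$, using that $P$ (be it $\{1,-1\}$ or a general compact subset of $\mathbb{R}\setminus\{0\}$) behaves symmetrically under that choice. Everything else is classical joining theory or a routine approximation.
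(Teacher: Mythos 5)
The theorem is cited from \cite{FK:mul}, and the paper's own account (in Remark~\ref{rk:joinings}) factors the argument in two stages: first one shows that the SWR-property implies the \emph{finite extension of joinings} (FEJ) property --- a nontrivial structural statement about ergodic joinings of $(T_t)$ --- and then one invokes Ryzhikov's theorem that a mixing flow with FEJ is mixing of all orders. Your proposal has the right high-level ingredients (off-diagonal joinings, weak-$*$ limits, induction on $k$, disintegration over the first $k-1$ coordinates, Ratner realignment), but it collapses these two stages into one and, in doing so, runs into a genuine gap.

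The gap is in the sentence where you claim that the Ratner analysis forces the conditional measure $\lambda_{\bar x}$ to be \emph{concentrated on the $(T_t)$-orbit of one of the coordinates $x_j$, at a temporal offset lying in $P$}. This overstates what the SWR-property delivers. What the Ratner/Fr\k{a}czek--Lema\'nczyk/Fayad--Kanigowski machinery actually gives, after the $L^2$ upgrade of the density estimate, is that a non-product ergodic joining of $(T_t)$ with another ergodic flow must be a \emph{finite extension} of that flow --- the fibers $\lambda_{\bar x}$ are a.s.\ finitely supported --- but the points in the support need not lie on the $(T_t)$-orbits of the base coordinates. Relatedly, your appeal to ``the off-diagonal structure of $\mu_n$'' to locate $\lambda_{\bar x}$ on orbits is unsound: the fact that each $\mu_n$ concentrates on a graph over $(T_t)$-orbits does not persist under weak-$*$ limits (indeed, dispersal off the orbit graph is precisely what mixing produces). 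Consequently, the contradiction you derive --- that the two-dimensional $(j,k)$-marginal of $\lambda$ would charge the $(\mu\otimes\mu)$-null set of orbit pairs --- does not follow. To close the argument one still needs the second stage of the original proof: the separate Ryzhikov-type argument showing that a nontrivial finite extension is incompatible with $\lambda$ arising as a limit of off-diagonal joinings of a \emph{mixing} flow. Without that step the proposal does not prove the theorem; I'd suggest keeping the reduction to FEJ explicit, as in Remark~\ref{rk:joinings}, rather than trying to bypass it.
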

\begin{rem}\label{rk:joinings}
More precisely, one can show that if $(T_t)_{t\in\mathbb{R}}$  has the SWR-property (and hence in particular if it has the SR-property),  then it has a property the \emph{finite extension of joinings property} (shortened as FEJ property), \cite{FK:mul}, which is a rigidity property that restricts the type of self-joinings that $(T_t)_{t\in\mathbb{R}}$  can have \cite{Ryz-Tho,FL}. 
We refer the reader to \cite{Gla, Ryz-Tho} for the definition of joinings and FEJ. Furthemore, it is well known that if $(T_t)_{t\in\mathbb{R}}$  is mixing \emph{and} has the FEJ property, then it is automatically mixing of all orders, \cite{Ryz-Tho}.
\end{rem}

One can show that the SR-property, as well as other Ratner properties (with the set $P$ being finite), are an isomorphism invariant. We include the proof of this fact in the Appendix \ref{appendix:Ratner} (see Lemma \ref{lemma:iso_inv}). Let us remark that it is not known whether the weak Ratner property is an isomorphism invariant.

\subsection{Rauzy-Veech induction} \label{Se:RV}

The Rauzy-Veech algorithm and the associated Rauzy-Veech cocycle were originally introduced and developed in the works by Rauzy  and Veech \cite{Ra:ech, Ve:int, Ve:gau} and proved since then to be a powerful tool to study IETs. 
If $T=({\lambda}, \pi)$  satisfies  the Keane condition recalled in Section \ref{IETsdefsec}, which holds  for a.e.~IET by \cite{Ke:int},  the Rauzy-Veech algorithm produces a sequence of IETs  which are induced maps of $T$ onto a sequence of nested subintervals contained in $I^{(0)}$. The intervals are chosen so that the induced maps are again  IETs of the same number $d$ of exchanged intervals. 
For the precise definition of the algorithm, we refer e.g.\ to the recent lecture notes by Yoccoz \cite{Yo:con} or Viana \cite{Vi:IET}. We recall here only some basic definitions and properties needed in the rest of this paper. 

Let us denote by $|\cdot |$  the vector norm $|{\lambda}|=\sum_{\alpha\in \cA}\lambda_\alpha$. If $I'\subset I^{(0)}$ is the subinterval associated to one step of the algorithm and $T'$ is the corresponding induced IET, the \emph{Rauzy-Veech map}  $\R $ associates to $T$ the IET $\R (T)$  obtained by renormalizing $T'$ by $\Leb{I'}$ so that the renormalized IET is again defined on an unit interval.
The natural domain of definition of the map $\R$ is a full Lebesgue measure subset of the space $X\doteqdot  \Delta_{d} \times \R(\pi)$,  where $\R(\pi) $ is the \emph{Rauzy class} of $\pi$ (i.e.\ the subset  of all pairs of bijections $\pi'=(\pi'_t,\pi'_b)$ from $\cA$ to $\{1,\dots,d\}$ which appear as combinatorial data  of an IET $T'=({\lambda}', \pi')$ in the orbit under $\R$ of some IET $({\lambda}', \pi)$  with initial pair of bijections $\pi=(\pi_t,\pi_b)$). We will denote by $\Delta_\pi\doteqdot  \Delta_d \times \{\pi\}$ the copy of the simplex indexed by $\pi$. 

Veech proved in \cite{Ve:gau} that $\R$ admits an invariant measure $\mu= \mu_{\R}$ (we will usually simply write $\mu$ unless we want to stress is the invariant measure for $\R$ and not any of its accelerations defined below) which 
 is absolutely continuous with respect to Lebesgue measure, but infinite. 
Zorich showed in \cite{Zo:fin} that one can induce the map 
$\R $ in order to obtain an accelerated map $\Z$, which we call \emph{Zorich map}, that admits a  \emph{finite} invariant measure $\mu_{\Z}$. Both these measures have an absolutely continuous density with respect to the restriction of the Lebesgue measure on $\mathbb{R}^{d}$ to each copy $\Delta_\pi$ of the simplex $\Delta_{d}$, which we will denote by $Leb_{X}$. 
Let us also recall that both $\R$ and its acceleration $\Z$ are \emph{ergodic} 
 with respect to $\mu= \mu_{\R}$ and $\mu_{\Z}$ respectively \cite{Ve:gau}. 
 

\paragraph{Rauzy-Veech (lengths) cocycle.}   We will now recall the definition of the cocycle associated by the algorithm to the map $\R$.
For each $T= ({\lambda},\pi)$ for which $\R (T) $ is defined, we define the matrix $B=B(T)\in SL(d, \mathbb{Z})$  such that ${\lambda} = B \cdot  {\lambda}'$, where ${\lambda}' $ satisfies $\R (T)  = ({\lambda}'/|{\lambda}'|, \pi')$. In particular, $|{\lambda}'|$ is the length $Leb(I')$ of the inducing interval $I'$  on which $\R (T)$ is defined.   
 The map  $B^{-1}\colon X \rightarrow SL(d,\mathbb{Z})$ is a cocycle over $\R$, known as the \emph{Rauzy-Veech cocycle}, that describes how the lengths transform.  If $T = ({\lambda}, {\pi})$ satisfies the Keane condition so that  its Rauzy-Veech orbit $(\R^n (T))_{n\in \mathbb{N}}$ is infinite, we denote by  $T^{(n)}\doteqdot  \R^n (T) $ the IET  obtained at the $n^{th}$ step of Rauzy-Veech algorithm and by $(I^{(n)})_{n\in\mathbb{N}}$ the sequence of nested subintervals so that  $T^{(n)}$ is the first return map of $T$ to the interval $I^{(n)} \subset I^{(0)}$.  By construction, $T^{(n)}$ is again an IET of $d$ intervals; let $\pi^{(n)} \in \R(\pi)$ and $\lambda^{(n)} \in \Delta$ be the sequence of combinatorial and lengths data such that   $T^{(n)}=({\lambda}^{(n)}/|\lambda^{(n)}|,\pi^{(n)})$, where $| {\lambda}^{(n)}|= Leb(I^{(n)})$. 
If we define $\RL{n} = \RL{n} (T) \doteqdot  B(\R^n (T))$ and $\RLp{n}= \RLp{n}(T) \doteqdot \RL{0} \cdot \ldots \cdot \RL{n-1}$  and iterating the lengths relation, we get   
\begin{equation} \label{lengthsrelation}
 {\lambda} ^{(n)} = \left( \RLp{n} \right) ^{-1 }  {\lambda},  \text{ where  }  \R ^n (T) \doteqdot  \left(\frac{{\lambda}^{(n)}}{|\lambda^{(n)}|}, \pi^{(n)}\right) .
\end{equation}
For more general products with $m<n$, we use the notation $B^{(m,n)} \doteqdot \RL{m}\cdot   \RL{m+1}  \cdot\ldots\cdot \RL{n-1}$. The entries of $B^{(m,n)}$ have the following \emph{dynamical interpretation}: $B^{(m,n)}_{\alpha \beta}$ is equal to the number of visits of the orbit of any point $x\in I^{(n)}_\beta$ to the interval $I^{(m)}_\alpha$ under the orbit of $T^{(m)}$ up to its first return to $I^{(n)}$. In particular, $ \sum_{\alpha\in\cA} \RLp{n}_{\alpha\beta}$ gives the first return time of   $x\in I^{(n)}_\beta$ to $I^{(n)}$ under $T$.


\subsubsection*{Rohlin towers and  heights cocycle.}\label{towers} 
The action of the initial IET $T=T^{(0)}$ can be seen in terms of Rohlin towers over $T^{(n)}= \R^n(T)$ 
 as follows. Let ${h}^{(n)} \in \mathbb{N}^\cA$ be the vector such that $h^{(n)}_\beta$ gives the return time of any $x\in I^{(n)}_\beta$ to $I^{(n)}$, $\beta\in\mathcal{A}$. 
 By the above dynamical interpretation,   $h^{(n)}_\beta= \sum_{\alpha\in\cA} \RLp{n}_{\alpha\beta}$ is the norm of the $\beta^{th}$ column of $\RLp{n}$. 
Define the sets 
\begin{equation}\label{def:towers}
Z^{(n)}_\alpha \doteqdot \bigcup _{i=0}^{h^{(n)}_\alpha-1} T^i I^{(n)}_\alpha,\ \alpha\in\mathcal{A}.
\end{equation}
Each $Z^{(n)}_\alpha$ can be visualized as a tower over $I^{(n)}_\alpha\subset I^{(n)}$, of height  $h^{(n)}_\alpha$, 
whose floors are $T^i I^{(n)}_\alpha$.  Under the action of $T$ every floor but the top one, i.e.~if $0\leq i <h^{(n)}_\alpha-1$, moves one step up, while the image by $T$ of the last floor, corresponding to $i=h^{(n)}_\alpha-1$,   is $ T^{(n)} I^{(n)}_\alpha$. 

The \emph{height vector} ${h}^{(n)}$ which describes return time and heights of the Rohlin towers at step $n$ of the induction can be obtained by applying the \emph{dual cocycle} $B^T$, that we will call \emph{lenghts cocycle}, i.e., if ${h}^{(0)}$ is the column vector with all entries equal to $1$,  
\be\label{heightsrelation}
{h}^{(n)} = (B^T)^{(n)} {h}^{(0)}.
\ee
Let us denote by $\phi^{(n)}$ be the partition of $I^{(0)}$ into floors of step $n$, i.e.~intervals of the form $T^{i} I^{(n)}_\alpha$. When $T$ satisfies the Keane condition, the partitions $\phi^{(n)}$ converge as $n$ tends to infinity to the trivial partitions into points (see for example \cite{Yo:con, Vi:IET}).

\subsubsection*{Natural extension of the Rauzy-Veech induction.}
The \emph{natural extension} $\hat{\R}$ of the map $\R$ is an invertible map defined on a domain $\hat{X}$ (which admits a geometric interpretation in terms of the space of zippered rectangles,  see for example  \cite{Yo:con, Vi:IET}) 
such that there exists a projection $p \colon \hat{X} \rightarrow X$ for which $p \hat{\R} = \R p$. More precisely, for any $\pi$ in the Rauzy class, let $\Theta_\pi \subset \mathbb{R}_+^d$ be the set of vectors ${\tau}$ such that
$$
\sum_{\pi_t(\alpha)<j}\tau_j >0 \text{ and }\sum_{\pi_b(\alpha)<j}\tau_j < 0\text{ for } 1\leq j\leq d-1.
$$
Vectors in  $\Theta_\pi$ are called \emph{suspension data}. 
Points in $\hat{X}$ are triples 
$\hat{T}=(\tau, \lambda, \pi)$ such that 
\be\label{area_condition}
\sum_{\alpha}\lambda_\alpha h_\alpha =1, \text{ where }  h_\alpha \doteqdot  \sum_{\substack{ \pi_t (\beta)<\alpha,\\\pi_b (\beta)> \alpha } } \tau_\beta -  \sum_{\substack{ \pi_t (\beta)>\alpha,\\\pi_b (\beta)< \alpha } } \tau_\beta.
\ee
To each such triple  $\hat{T}=( \tau, \pi, \lambda)$  one can associate  a geometric object known as \emph{zippered rectangle}. We refer to  \cite{Yo:con, Vi:IET} for details.  The vector $h= (h_\alpha)_{\alpha \in \mathcal{A}}$ gives the heights of the rectangles and the vector $\lambda = (\lambda_\alpha)$ gives their lenghts (while $\tau$ contain information about how to \emph{zip} the vertical sides of the rectangles together). Thus,  
the above normalization condition \eqref{area_condition} guarantees that the associated zippered rectangle has area one. 

The projection $p$ is defined by  $p(\tau,\pi, \lambda) = (\pi, \lambda)$. 
The natural extension $\hat{\R}$ preserves a natural invariant measure $\hatmu$, whose push-forward $p_*{\hatmu}$  by the projection $p$ (i.e.\ the measure such that $p_*{\hatmu}(E)= {\hatmu}(p^{-1}E)$ for any measurable set on $X$) equals $\muR$.
 
Both  cocycles $B^{-1}$ and $B^T$ can be extended to  cocycles over $(\hat{X},  {\mu}_{\hat{\R}},\hat{\R})$ (for which we will use the same notation $B^{-1}$, $B^T$) by setting $B(\tau, \lambda, \pi) \doteqdot  B(\lambda, \pi)$ for any $(\tau, \lambda, \pi) \in \hat{X}$, i.e.\ the extended cocycles are constant on the fibers of $p$.

\subsubsection*{Cylinder sets.}
Let us define symbolic cylinders for the Rauzy-Veech map $\R$ and for its natural extension $\hat{\R}$. 
We will say that a finite sequence of matrices $B_0, \dots, B_{n}$ is a \emph{sequence of Rauzy-Veeech matrices} or, more precisely, a sequence of Rauzy-Veech matrices   \emph{starting at $\pi$} if there exists $T = ({\lambda}, \pi)$ for which $B_i= B_i( T) $ for all $0\leq i \leq n$. We will say that a matrix $B$ is a \emph{Rauzy-Veech product} (\emph{at} $\pi$) if  $B = B_0\cdot \ldots\cdot B_{n} $ where  $B_0, \dots, B_{n}$ is a sequence of Rauzy Veech matrices (starting at $\pi$). Furthermore, we will say that two Rauzy-Veech products $C,D$ \emph{can be concatenated} if $CD$ is also a Rauzy-Veech product.

We will say that $\Delta_B \subset \Delta_\pi$ is a \emph{Rauzy-Veech cylinder} if  $B$ is a Rauzy-Veech product at $\pi$ and
\bes
\Delta_{B } = \left\{ (\lambda',\pi) : \lambda'=  \frac{ B {\lambda} }{|B  {\lambda}|} , {\lambda} \in \Delta_d \right\} \subset \Delta_\pi.
\ees
One can see that  any $T=(\lambda, \pi)$ where $\lambda \in \Delta_B $  satisfies $ B_i( T) = B_i  $ for all $0\leq i < n$. 
Thus, $\Delta_B$  is a cylinder set for the symbolic coding of Rauzy-Veech induction given by the sequence of Rauzy-Veech matrices. 

One can analogously define symbolic cylinders for the natural extension $\hat{\R}$.  Let us first define the set $\Theta_C $ associated to a Rauzy-Veech product $C=C_1\cdot\ldots \cdot C_n$ starting at $\pi$ and ending at $\pi'$  to be the subset of suspension data $\tau \in \Theta_{\pi'}$ implicitely defined by
\bes
B^T \Theta_C = \Theta_{\pi}.
\ees
In other words, if $(\tau, \pi', \lambda ) \in \hat{X}$ belongs to  $\Theta_{\pi} \times \Delta_C  $, the past $n$ Rauzy-Veech matrices are prescribed by $C$, i.e. for $-n \leq i \leq -1$ we have $B_{i}(\tau, \pi', \lambda) =  C_{i+n+1}$. 


Cylinders in the space $\hat{X}$ have then the form $\Theta_C \times \Delta_D  \cap \hat{X}$,  
where $C$ and $D$ are Rauzy-Veech products that can be concatenated. Let us remark that vectors $\tau \in \Theta_\pi$ are \emph{not} normalized, while points $(\tau, \pi, \lambda )$ in $\hat{X}$ are such that the normalization condition \eqref{area_condition} holds. Thus, $\Delta_D \times \Theta_C$ is not contained in $\hat{X}$ and to obtain a cylinder for $\hat{R}$ one needs to intersect it with $\hat{X}$. We will use the notation
\bes
(\Theta_C \times \Delta_D )^{(1)}\doteqdot ( \Theta_C  \times \Delta_D ) \cap \hat{X}  
\ees
for cylinders to avoid explicitly writing the intersection with $\hat{X}$. 

It follows from the definitions that if $C= C_1\cdot\ldots\cdot C_n$ and $D=D_0 \cdot\ldots\cdot D_m$ where  $C_1, \dots, C_n$, $D_0, D_1,\dots, D_m$ is a sequence of Rauzy-Veech matrices, $(\pi, \lambda, \tau) \in   \Theta_C \times \Delta_D$ if and only if the cocycle matrices $B_i= B_i (\pi, \lambda, \tau) $ as $i$ ranges from $-m$ to $n$ are in order $C_1, \dots, C_n, D_0,D_1 \dots D_m$ (in other   words, $B_i=D_{i}$ for $0\leq i \leq m$ and $B_{i}=C_{i+n+1}$ for $-n\leq i \leq -1$), thus $(\Theta_C \times \Delta_D )^{(1)}$ are indeed symbolic cylinders for the natural extension.

Remark also that by definition we have   
\be\label{cylindersprop}
 (\Theta_C  \times \Delta_{\pi'})^{(1)}  = \hat{\R}^{-n} ( \Theta_{\pi}  \times \Delta_C )^{(1)}.
\ee 




\subsubsection*{Hilbert distance and projective diameter.}
Let us say that a matrix $C$ is \emph{positive} (resp. non negative) and let us write $C>0$ (resp. $C\geq 0$)
if all its entries  are strictly positive (resp. non negative). 

Consider on the simplex $\Delta_{d} $ the \emph{Hilbert distance} $d_H$, defined as follows.
\bes
d_H(\lambda, \lambda') \doteqdot \log \frac{\max_{i=1,\dots , d}\frac{\lambda_i}{\lambda'_i}}{\min_{i=1 , \dots ,d}\frac{\lambda_i}{\lambda'_i}}.
\ees
One can see that for any negative $d \times d$ matrix $A\geq 0$, the associated projective transformation $\psi_A(\lambda) = {A \lambda }/{|A \lambda |}$ of $\Delta_{d}$ is a contraction of the Hilbert distance, i.e.  $d_H(\widetilde{A} \lambda, \widetilde{A} \lambda' ) \leq d_H(\lambda, \lambda')$ for any $\lambda, \lambda' \in \Delta_d$. Furthermore, if $A>0$, then  it is a strict contraction.

Let us define the \emph{projective diamater} $diam_H(A)$ of $A\geq 0$ as the diameter with respect to $d_H$ of the image of $\psi_A$, namely
\be \label{diam_def}
diam_H(A) \doteqdot \sup_{\lambda, \lambda' \in \Delta_d} d_H( \psi_A( \lambda), \psi_A (\lambda' ) ) = 
\sup_{\lambda, \lambda' \in \Delta_d} d_H( A \lambda, A \lambda' ) ,
 \ee
 where the last equality follows from the definition of $d_H$.    

\begin{rem}\label{rk:finitediam} 
 We remark that $diam_H (A )$ is finite exactly when $A$ is a positive matrix, since $A>0$ is equivalent to $ \psi_A \left( \Delta_{d} \right)$ being pre-compact, which means that its closure is contained in $\Delta_{d}$ and its diameter with respect to $d_H$ is finite.
\end{rem}

\begin{rem}\label{rk:diam}
Notice that if, given two positive matrices $A, B$, if $\Delta_B \subset \Delta_A$ then clearly from the definition $diam_H(B) \leq diam_H(A)$. In particular,  since by definition of cylinders $\Delta_{AB} \subset \Delta_A$, we have that $diam_H(AB) \leq diam_H(A)$. Furthermore, since $\Delta_{AB}$ is the image of $\Delta_B$ by the  projective transformation $\psi_A(\lambda) = {A \lambda }/{|A \lambda |}$ which is a projective contraction, we also have that $diam_H(AB) \leq diam_H(B)$. 
\end{rem}

\subsection{Rauzy-Veech accelerations}\label{RVacc}
Let $T^{(n)}\doteqdot \R^n(T)$, $n\geq 0$, be the Rauzy-Veech orbit of $T=T^{(0)}$ satisfying the Keane condition. 

\subsubsection*{Accelerations of the Rauzy-Veech map.}
Given an increasing sequence $\{n_\ell\}_{\ell\in\N}$ of natural numbers, we can consider the corresponding \emph{acceleration} $\tilde{\R}$ of the Rauzy-Veech map, defined on $\{T^{(n_\ell)} : \ell\in\N\}$ by $\tilde{\R}(T^{(n_\ell)})=T^{(n_{\ell+1})}$, $\ell\in\N$.
 In other words, $\tilde{\R}^{(\ell)}(T)=\R^{(n_\ell)}(T)$, $\ell\in\N$. We will refer to $\{n_\ell\}_{\ell\in\N}$ as a \emph{sequence of induction times} for $T$. 

The sequence $\{n_\ell\}_{\ell\in\N}$ can be chosen e.g.\ by considering Poincar{\'e} first return map of  the Rauzy-Veech induction as follows. Fix a subset $Y \subset X = \Delta_d \times \mathcal{R}(\pi)$ of positive measure. By the ergodicity of $\R$, for $Leb_{\Delta_d}$ almost every ${\lambda}$ and for $\pi'\in\R(\pi)$, the corresponding IET $T=({\lambda},\pi')$ visits $Y$ under $\R$ infinitely often and this gives us immediately a sequence $\{n_\ell\}_{\ell\in\N}$ for a typical $T$. The corresponding acceleration of $\R$ in this case will be denoted by $\R_Y$ and is a map $\R_Y : Y \to Y$ defined a.e. on $Y$. 

\begin{rem}\label{rk:formJac}
Let us assume that $Y = \Delta_B$ is a Rauzy-Veech cylinder. One can see that $\R_Y $ is piecewise defined and locally given by maps of the form ${\lambda} \mapsto D {\lambda}/ |D {\lambda} |$ where $D$ is a matrix of the form $D = B C$ for some non-negative $C \in SL(d, \mathbb{Z})$.  The Jacobian of a map of this form  is $J_D ({\lambda}) =  |D {\lambda} |^{-d}$  (see Veech~\cite{Ve:int}, Proposition 5.2).
\end{rem}

Given an acceleration $\R_Y$ one can correspondingly define a cocycle $A_{Y}$ over $\R_Y$ obtained by accelerating the Rauzy-Veech cocycle. This cocycle is a.e. defined by setting
$A_{Y} (T) \doteqdot  B^{(n_Y(T))}(T)$, where  $n_Y(T)$ is the first return time of $T$ to $Y$. 





\subsubsection*{Accelerations of the Rauzy-Veech natural extension.}
Similarly, one can accelerate the natural extension  $\hat{\R}$ of $\R$. Given $\hat{T}\in\hat{X}$ and an increasing sequence $\{n_\ell\}_{\ell\in\N}$ of natural numbers,\footnote{If we want to accelerate also the backward iterations of $\hat{\R}$, we need an increasing sequence of integers, indexed by $\mathbb{Z}$.} we define $\tilde{\hat{\R}}$ on $\{\hat{T}^{(n_\ell)} : \ell\in\N\}$ by $\tilde{\hat{\R}}(\hat{T}^{(n_\ell)})=\hat{T}^{(n_{\ell+1})}$, $\ell\in\N$. In other words, $\tilde{\hat{\R}}^{(\ell)}(\hat{T})=\hat{\R}^{(n_\ell)}(\hat{T})$. The corresponding accelerated cocycle $A=A(\hat{T})$ is given by $A^{(\ell,\ell+1)}(\hat{T})\doteqdot B^{(n_\ell,n_{\ell+1})}(\hat{T})$, $\ell\in\N$, where $B$ is the cocycle associated to~$\hat{\R}$.

As before, the sequence $\{n_\ell\}_{\ell\in\N}$ can be chosen e.g.\ by considering Poincar{\'e} first return map of natural extension of the Rauzy-Veech induction: given a subset $\hat{Y}\subset \hat{X}$ of positive measure, $\{n_\ell\}_{\ell\in\N}$ is defined as the sequence of visits of $\hat{T}$  to $\hat{Y}$ under $\hat{\R}$ (and it is well defined for a typical $\hat{T}$). The corresponding acceleration will be denoted by $\hat{\R}_{\hat{Y}}$ and the corresponding accelerated cocycle will be denoted by $A_{\hat{Y}}$ and is explicitly given by
\begin{equation}\label{acccocycle_def}
A_{\hat{Y}} (\hat{T}) \doteqdot  B^{\left(n_{\hat{Y}}(\hat{T})\right)}(\hat{T}),
\end{equation}
where $n_{\hat{Y}}(\hat{T})$ is  the first return time of $\hat{T}$ to $\hat{Y}$ (this is well defined for  ${\mu}_{\hat{\R}}$ almost every $T \in \hat{Y}$).

\begin{defn}
Let us say that an acceleration $\hat{\R}_{\hat{Y}}$ of the natural extension $\hat{\R}$ is a \emph{cylindrical acceleration} if
 $\hat{Y}$ is a \emph{finite union of  Rauzy-Veech cylinders} for the natural extension $\hat{\R}$.
\end{defn}

\begin{rem}\label{rk:sequence_indep_lift}
Let $T=( {\lambda}, \pi)$ be an IET and  $(\tau, {\lambda}, \pi )$ any of its lifts in $p^{-1}(\pi, {\lambda}) \in \hat{X}$. If $\hat{\R}_{\hat{Y}}$ is a cylindrical acceleration, the sequence $\{n_\ell\}_{\ell\in\N}$  of first returns to $\hat{Y}$ of the orbit of  $(\pi, {\lambda}, {\tau})$ under $\hat{\R}$ depends on $T$ only, apart from possibly finitely many initial terms. More precisely, if each cylinder $\Delta_{F_i} \times \Theta_{E_i} $  in $\hat{Y}$ is such that $E_i$ is product of at most $\ell_0$  Rauzy-Veech matrices, then $n_\ell$ for $\ell \geq \ell_0$ is uniquely determined by $T$. Indeed the sequence  $B_n = B(\hat{\R}^n (\pi, {\lambda}, {\tau}) )$ of Rauzy-Veech matrices for  $n \in \mathbb{N}$ depends on 
$T$ only since by definition of extended Rauzy-Veech cocycle and natural extension $B(\hat{\R}^n (\pi, {\lambda}, {\tau}) )= B({\R}^n (\pi, {\lambda}) )$. Remark now that to decide whether $\hat{\R}^{n_\ell}(\pi, {\lambda}, {\tau})$ belongs to $\hat{Y}$, by definition of $\ell_0$ as maximal cylindrical lenght, it is enough to know the matrices $B_{n_\ell - \ell_0}, \cdots B_{n_\ell + \ell_0}$ and if $\ell \geq \ell_0$, since $n_{\ell}\geq \ell \geq \ell_0$, these matrices are uniquely determined by $T$. 
\end{rem}

\subsection{Positivity, balance, pre-compactness and exponential tails}\label{sec:exptails}
Let $T^{(n)}\doteqdot \R^n(T)$, $n\geq 0$, be the Rauzy-Veech orbit of $T=T^{(0)}$ satisfying the Keane condition. 

\begin{defn}[Positive times]
Sequence $\{n_\ell\}_{\ell \in \mathbb{N}}$ is called a \emph{positive sequence of induction times} for $T$ if for any $\ell \in \mathbb{N}$ all entries of $B^{(n_\ell, n_{\ell+1})}=B^{(n_\ell, n_{\ell+1})}(T)$ are strictly positive: $B^{(n_\ell, n_{\ell+1})}>0$.
\end{defn}

\begin{rem}\label{positvegrowth}
It follows from \eqref{lengthsrelation} that along a sequence $\{n_\ell\}_{\ell\in\N}$ of positive times, we have ${\lambda}^{(n_{\ell})} \geq d^{k}  \lambda^{(n_{\ell+k})}$, $\ell\geq 1$.
\end{rem}


\begin{defn}[Balanced times]\label{balti}
If, for some $\ell\geq 1$ and $\nu>0$, we have
\be
\frac{1}{\nu} \leq \frac{\lambda_\alpha^{(n_\ell)}}{\lambda_\beta^{(n_\ell)}} \leq \nu, \qquad
\frac{1}{\nu} \leq \frac{h_\alpha^{(n_\ell)}}{h_\beta^{(n_\ell)}} \leq \nu, \quad  \forall \alpha,\beta\in\cA,
\ee
we say that $n_\ell$ is $\nu$-balanced. If $n_\ell$ is $\nu$-balanced for each $\ell\geq 1$ (with the same $\nu>0$), we say that $\{n_\ell\}_{\ell\in \mathbb{N}}$ is a $\nu$-\emph{balanced} (or simply a \emph{balanced}) \emph{sequence of induction times} for $T$.
\end{defn}
\begin{rem}\label{balancedcomparisons}
 If $n_\ell$ is $\nu$-balanced then lengths and heights of the Rohlin towers
are approximately of the same size, i.e.\ 
$$
 \frac{1}{d\nu }{\lambda^{(n_\ell)} } \leq  \lambda^{(n_\ell)}_\alpha\leq \lambda^{(n_\ell)}\text{ and }
\frac{1}{\nu\lambda^{(n_\ell)} }  \leq  h^{(n_\ell)}_\alpha \leq \frac{\nu}{ \lambda^{(n_\ell)} } \text{ for each }\alpha\in\cA. %
$$
\end{rem}

\subsubsection*{Pre-compactness, cylindrical accelerations and bounded distorsion.}
We will consider a special class of accelerations, which are cylindrical and \emph{precompact} in the following sense.
\begin{defn}
We say that an acceleration ${\R}_{{Y}}$ of $\R$ is \emph{pre-compact} whenever ${Y}$ is pre-compact in $X$ and we say that an acceleration $\hat{\R}_{\hat{Y}}$  of the natural extension $\hat{\R}$ is \emph{pre-compact} whenever $\hat{Y}$ is pre-compact in $\hat{X}$.
\end{defn}

A Rauzy-Veech cylinder $\Delta_C$ 
is pre-compact in $X$ 
if and only if $C$ is a \emph{positive} matrix. 
Thus,  a \emph{cylindrical acceleration} is \emph{precompact} if and only if each of the cylinders in the finite union defining the acceleration is given by a positive Rauzy-Veech product. 
Similarly,  there are simple conditions on  Rauzy-Veech product matrices $C, D$ that guarantee that a cylinder $\Delta_D \times \Theta_C$ for $\hat{\R}$ is pre-compact (see for example the notion of strongly positive matrix in \cite{AGY:exp}).

\begin{rem}[see, e.g., \cite{Ul:mix}]\label{rk:automcylin}
If $\hat{\R}_{\hat{Y}}$ is a pre-compact acceleration then for any $\hat{T}$, for which the corresponding sequence of induction times $\{n_\ell\}_{\ell\in\N}$ is well-defined, $\{n_\ell\}_{\ell\in\N}$ is automatically balanced. Furthermore, if the pre-compact acceleration is \emph{cylindrical} (i.e.\ the inducing set $\hat{Y}$ is a finite union of pre-compact cylinders for $\hat{\R}$), then each resulting $\{n_\ell\}_{\ell\in\N}$ is automatically positive for the corresponding $\hat{T}$.
\end{rem}

One of the reason why pre-compact accelerations are important is that they enjoy the \emph{bounded distorsion property}. More precisely, if we set $Y=\Delta_C$ where $C$ is a \emph{positive} Rauzy-Veech product and consider the corresponding cylindrical acceleration, $\R_Y$ is \emph{strictly expanding} and has \emph{bounded distorsion}, i.e.\ there exists a constant $\nu_Y$ such for any inverse branch of $\R_Y$, which is a map of the form ${\lambda} \mapsto D{\lambda}/ |D {\lambda} |$, where $D$ is a matrix of the form described in Remark \ref{rk:formJac},  the Jacobian $J_D ({\lambda})  $ of the inverse branch satisfies 
$| {J_D (\lambda)} |/|{ J_D (\lambda')}| \leq \nu_Y$ for all  $\lambda, \lambda'  \in Y$. 
This property follows from a remark by Veech (see \cite{Ve:int}, Section~5) and can be found for example in \cite{Mo:ren} (see Lemma 3.4) or \cite{AGY:exp} (see Lemma 4.4). 

To control distorsion, it is useful to introduce the following quantity (see for example Section~2 in \cite{Bu:dec}).  Given a $d \times d$ positve matrix $C$, let us define $\col{C}$ to be 
\begin{equation}\label{def:col}
\col{C}\doteqdot  \max_{1\leq i,j,k\leq d} \frac{C_{ij}}{C_{ik}}.
\end{equation}
Let us remark that (see also Proposition 2 in \cite{Bu:dec}) if $C$ is a $d \times d$ matrix with non negative entries and $D$ is a $d\times d$ matrix with positive entries (so that in particular $CD$ has positive entries and $\col{CD}$ is well defined) one has
\begin{equation}\label{Colproduct}
\col{CD}\leq \col{D}.
\end{equation} 
Then one has the following lemma (see also Corollary 1.7 in \cite{Ke:sim} and equation (15) in \cite{Bu:dec}). 

\begin{lemma}[distorsion]\label{rk:bounded_distorsion}  
If $C$ is  a $d \times d$ positive matrix,  the Jacobian $J_C$ of the map $\lambda \mapsto C \lambda / |C \lambda|$ satisfies
$$
\sup_{\lambda, \lambda' \in \Delta_d} \left|  \frac{J_C (\lambda)}{ J_C (\lambda')} \right| \leq \col{C}^d .
$$
\end{lemma}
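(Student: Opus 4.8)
The plan is to estimate the Jacobian ratio by writing the Jacobian explicitly and bounding the length factor $|C\lambda|$ uniformly in $\lambda \in \Delta_d$. By Remark~\ref{rk:formJac} (Veech's formula), the map $\lambda \mapsto C\lambda/|C\lambda|$ on the simplex $\Delta_d$ has Jacobian $J_C(\lambda) = |C\lambda|^{-d}$. Hence
$$
\left| \frac{J_C(\lambda)}{J_C(\lambda')} \right| = \left( \frac{|C\lambda'|}{|C\lambda|} \right)^{d},
$$
and the whole problem reduces to bounding $|C\lambda'|/|C\lambda|$ by $\col{C}$ for all $\lambda, \lambda' \in \Delta_d$.

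First I would expand $|C\lambda| = \sum_{i}(C\lambda)_i = \sum_i \sum_j C_{ij}\lambda_j = \sum_j \lambda_j \left(\sum_i C_{ij}\right)$, so writing $s_j \doteqdot \sum_i C_{ij}$ for the $j$-th column sum we have $|C\lambda| = \sum_j \lambda_j s_j$, which is a convex combination of the column sums $s_j$ since $\lambda \in \Delta_d$. Therefore $\min_j s_j \leq |C\lambda| \leq \max_j s_j$ for every $\lambda \in \Delta_d$, whence
$$
\frac{|C\lambda'|}{|C\lambda|} \leq \frac{\max_j s_j}{\min_k s_k} = \frac{\max_j \sum_i C_{ij}}{\min_k \sum_i C_{ik}}.
$$
It remains to see that this ratio of column sums is at most $\col{C} = \max_{i,j,k} C_{ij}/C_{ik}$. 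For each fixed row index $i$ we have $C_{ij} \leq \col{C}\, C_{ik}$ by definition of $\col{C}$; summing over $i$ gives $\sum_i C_{ij} \leq \col{C} \sum_i C_{ik}$, i.e.\ $s_j \leq \col{C}\, s_k$ for all $j,k$. Taking $j$ achieving the maximum and $k$ achieving the minimum yields $\max_j s_j \leq \col{C} \min_k s_k$, so $|C\lambda'|/|C\lambda| \leq \col{C}$ and raising to the $d$-th power finishes the proof.

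There is no real obstacle here; the only point requiring a little care is invoking the correct normalization in Veech's Jacobian formula (that the density of the simplex is constant so the Jacobian is exactly $|C\lambda|^{-d}$, with no extra factor depending on $C$), which is precisely what Remark~\ref{rk:formJac} records, citing \cite{Ve:int}. The rest is an elementary manipulation of convex combinations together with the elementary product-type inequality for $\col{\cdot}$ recorded in \eqref{Colproduct} (though for this particular lemma only the definition \eqref{def:col} is actually used).
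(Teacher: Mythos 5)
Your proof is correct and follows essentially the same route as the paper's: both reduce via Veech's Jacobian formula $J_C(\lambda)=|C\lambda|^{-d}$ to bounding $\sup_{\lambda,\lambda'}|C\lambda'|/|C\lambda|$, which both identify as the max-over-min ratio of the column sums of $C$ (you via the convex-combination observation, the paper via the equivalent remark that the vertices of $\Delta_d$ map to the columns), and both then bound that ratio by $\col{C}$ directly from the definition. The only cosmetic difference is that your write-up spells out the convexity step explicitly where the paper compresses it into one line.
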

\begin{proof}
Since  $\Delta_d$ is generated by its vertices, whose image under the map $\lambda \mapsto C\lambda$ are the columns of $C$, we have
$$
\sup_{\lambda, \lambda' \in \Delta_d} \frac{|C\lambda'|}{|C\lambda|} = \max_{1\leq j ,k\leq d } \frac{\sum_{i=1}^d C_{ij} }{\sum_{i=1}^d C_{ik} }\leq \max_{1\leq j,k \leq d }  \frac{\sum_{i=1}^d \left(\frac{C_{ij}}{C_{ik}}\right) C_{ik} }{\sum_{i=1}^d C_{ik} }\leq \col{C} . 
$$
Thus, the estimate follows from the explicit form of $J_C (\lambda)$ (see Remark \ref{rk:formJac}).
\end{proof}

\subsubsection*{Exponential tails.} 
The main technical tool for us is the result proved  by Avila, Gou\"{e}zel and Yoccoz in \cite{AGY:exp} (in order to show exponential mixing of the Teichmueller flow), i.e.\ the existence of pre-compact accelerations for which the return time has exponential tails.  Using the  terminology introduced so far, one can rephrase the main result proved in \cite{AGY:exp} as follows.
\begin{thm}[Theorem 4.10 in \cite{AGY:exp}]\label{AGY}
For every $\delta>0$, there exists a cylindrical pre-compact acceleration  $\R_{\hat{Y}_\delta}$ 
(corresponding to returns to a set $\hat{Y}_\delta\subset \hat{X}$  which is finite union of cylinders for $\hat{\R}$) 
 such that the corresponding accelerated cocycle $A_{\hat{Y}_\delta}$ given by  \eqref{acccocycle_def} satisfies
\be \label{integrabilityAGY}
\int_{\hat{Y}_\delta  } \| A_{\hat{Y}_\delta} (\hat{T}) \|^{1-\delta} \, d {\mu}_{\hat{\R}}(\hat{T}) < \infty.
\ee
\end{thm}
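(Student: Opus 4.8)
This statement is established in \cite{AGY:exp} (it is Theorem 4.10 there), so the plan is not to reprove it from scratch, but rather (i) to explain how its conclusion translates into the language of cylindrical accelerations of $\hat{\R}$ set up in Section \ref{Se:RV}, and (ii) to recall the mechanism behind the tail estimate, since that mechanism is what forces the precise exponent $1-\delta$ rather than $1$. For (i): the Poincar\'e section for the Teichm\"uller flow used in \cite{AGY:exp} can be taken to be a finite union of cylinders $(\Theta_{C_i}\times\Delta_{D_i})^{(1)}$ for $\hat{\R}$, where the $C_i,D_i$ are the strongly positive Rauzy--Veech products used there to force uniform hyperbolicity; in particular each $C_i,D_i$ is a positive matrix, so the associated acceleration $\hat{\R}_{\hat{Y}_\delta}$ is cylindrical by construction and pre-compact (each defining product being positive, cf.\ Remark \ref{rk:finitediam} and the discussion in Section \ref{sec:exptails}), whence its induced times are automatically positive and balanced by Remark \ref{rk:automcylin}. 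The accelerated cocycle $A_{\hat{Y}_\delta}$ is, by definition \eqref{acccocycle_def}, the Rauzy--Veech product $B^{(r)}$ over the first-return time $r=r_{\hat{Y}_\delta}$, so the integrability claim \eqref{integrabilityAGY} is precisely the tail estimate on this product proved in \cite{AGY:exp}, rewritten in our notation.

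For (ii), the heuristic I would recall is as follows. Using the bounded distortion property of pre-compact accelerations (Lemma \ref{rk:bounded_distorsion}), $\mu_{\hat{\R}}(\|A_{\hat{Y}_\delta}\|>n)$ is comparable to the total Lebesgue measure of the Rauzy--Veech cylinders whose first return to $\hat{Y}_\delta$ produces a cocycle of norm at least $n$. A long excursion away from the bounded-geometry region $\hat{Y}_\delta$ forces one of the exchanged subintervals to become extremely short, i.e.\ forces the orbit of $\R$ into a cusp of $\Delta_d\times\R(\pi)$; near each such cusp the induced dynamics is modelled, up to a bounded combinatorial factor, on a Gauss-type continued fraction map, whose ``time to return to a compact part'' has tail of order $1/n$. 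Summing the contributions of the finitely many cusps, and using the explicit form of the $\R$- and $\hat{\R}$-invariant densities to control the mass accumulating at each cusp, one obtains $\mu_{\hat{\R}}(\|A_{\hat{Y}_\delta}\|>n)\lesssim n^{-1}$ up to logarithmic corrections, which yields $\int_{\hat{Y}_\delta}\|A_{\hat{Y}_\delta}\|^{1-\delta}\,d\mu_{\hat{\R}}<\infty$ for every $\delta>0$.

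The reason this is a genuinely hard input, and not a soft recurrence fact, is that the exponent is \emph{critical}. Since $\|A_{\hat{Y}_\delta}(\hat{T})\|\gtrsim r_{\hat{Y}_\delta}(\hat{T})$ (the $\ell^1$-operator norm of the cocycle dominates the Rohlin tower heights, which grow at least linearly in the number of Rauzy--Veech steps) and, by Kac's formula together with $\mu_{\hat{\R}}(\hat{X})=\infty$, one has $\int_{\hat{Y}_\delta}r_{\hat{Y}_\delta}\,d\mu_{\hat{\R}}=\infty$, the cocycle $\|A_{\hat{Y}_\delta}\|$ is itself \emph{not} integrable; no loss beyond $\delta$ in the exponent is affordable, so one must produce the sharp $n^{-1}$ decay (up to logs) uniformly over all Rauzy classes. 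Making the cusp analysis above rigorous to that precision is precisely the technical core of \cite{AGY:exp}, which is the part I would cite rather than reprove; the only elementary point left to check on our side is that the section $\hat{Y}_\delta$ can be taken to be a \emph{finite} union of cylinders, which holds because the strongly positive combinatorial patterns occur at bounded combinatorial depth in the Rauzy diagram.
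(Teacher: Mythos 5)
Your proposal treats this statement exactly as the paper does: as a citation of a deep external result, with a translation into the language of cylindrical accelerations rather than a reproof, and your Kac-formula observation (that $\int_{\hat{Y}_\delta}\|A_{\hat{Y}_\delta}\|\,d\hat\mu=\infty$, so the exponent $1-\delta$ is critical) is a correct and illuminating addition. The one place where you under-state a genuine step is the claim that \eqref{integrabilityAGY} is AGY's tail estimate "rewritten in our notation": Theorem~4.10 of \cite{AGY:exp} asserts the integrability of $e^{(1-\delta)\,r_{\hat Y_\delta}}$, where $r_{\hat Y_\delta}$ is the \emph{continuous-time} first-return time under the Teichm\"uller (Veech) flow, not the discrete Rauzy--Veech return time $n_{\hat Y}$ entering \eqref{acccocycle_def}. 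Passing from that statement to \eqref{integrabilityAGY} requires the (standard but non-vacuous) comparison $e^{\,r_{\hat Y_\delta}}\asymp\|A_{\hat Y_\delta}\|$, which comes from the fact that the Teichm\"uller flow return time measures the total length contraction $|\lambda|/|\lambda^{(n)}|$ across the excursion and that this ratio is comparable to the cocycle norm along a pre-compact section. The paper explicitly flags this reduction and defers it to \cite{Ul:mix}; your write-up would be tighter if it did the same rather than presenting the two formulations as synonymous. Also note that in the second half of your proof you slide between two different uses of $r_{\hat Y_\delta}$ — once as the exponent in $B^{(r)}$ (discrete), once implicitly as AGY's continuous return time — and it is worth keeping the two explicitly distinct, since in one case the relation to $\|A\|$ is linear ($\|A\|\gtrsim n$) and in the other it is exponential ($\|A\|\asymp e^{r}$).
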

Note that by \cref{rk:automcylin} we immediately obtain that the times in the sequence $\{n_\ell\}_{\ell\in\N}$ corresponding to the acceleration $\hat{\R}_{\hat{Y}_\delta}$ are positive and balanced.
The original statement of Theorem $4.10$  in \cite{AGY:exp} claims the integrability of $e^{(1-\delta)r_{\hat{Y}_\delta}(\hat{T})}$, where $r_{\hat{Y}_\delta}(\hat{T})$ is the first return time of $\hat{T} \in \hat{Y}_\delta$ under the Veech flow. For the reduction to this formulation, see \cite{Ul:mix} and recall the notation introduced above.
We recall that Bufetov, by different techniques, obtained in \cite{Bu:dec} a result analogous to (\ref{integrabilityAGY}) for {some} $\delta\in(0,1)$. We will need the full strenght of the result of \cite{AGY:exp}, i.e.\ for any $\delta\in (0,1)$.

\section{Diophantine conditions for IETs}\label{se4}

As mentioned in the introduction, 
Diophantine conditions for IETs can be expressed in terms of the growth behavior of the Rauzy-Veech cocycle matrices along a sequence of positive times. In this section, we first define a \emph{Diophantine condition} for IETs (in Definition~\ref{def:mixingDC}) which holds for a full measure set of IETs and that was used by the third author in \cite{Ul:mix} and by Ravotti in \cite{Ra:mix} to prove mixing  for special flows over IETs (more precisely, this condition allows to prove that the Birkhoff sums $\BS{f}{r}$  of a  function $f$ with asymmetric logarithmic singularities and its derivatives satisfy precise quantitative estimates, see Proposition~\ref{mpd}).  We then define a stronger Diophantine Condition (see Definition~\ref{def:ratnerDC} in Section~\ref{sec:RatnerDC}) which will allow us to prove a quantitative form of parabolic divergence and, as a consequence, mixing of all orders for the same type of special flows whose base IET enjoys this stronger Diophantine property. The main result  of this section is that this condition is satifsied by a full measure set of IETs (see Proposition \ref{RDCfullmeasure}).

\subsection{Mixing Diophantine condition} \label{existencebalancedtimessec}
 The following Diophantine condition was introduced by the third author in \cite{Ul:mix} to show mixing for the class of special flows under a roof function with only one asymmetric singularity. Ravotti in~\cite{Ra:mix}  extends this result and shows that, in fact, the same condition implies mixing also when the roof function has several asymmetric logarithmic singularities (see Theorem \ref{DCmixing} below).

\begin{defn}[Mixing DC, see \cite{Ul:mix}]\label{def:mixingDC}
We say that an IET $T$ satisfies the \emph{mixing Diophantine condition} (or, for short, satisfies the \emph{mixing DC}) with integrability power $\tau$  if $1< \tau < 2$ and there exist $\overline{\ell} \in \mathbb{N}$, $\nu > 1$ and a sequence $\{ n_\ell\}_{\ell\in \mathbb{N}}$ of \emph{balanced} induction times such that:
\begin{itemize}
\item
the subsequence  $\{ n_{\overline{\ell} k} \}_{k \in \mathbb{N}}$ is \emph{positive}, 
\item
the matrices $ B^{(n_{\ell\phantom{\overline{\ell}}},n_{\ell+\overline{\ell}})}$ have  \emph{uniformly bounded diameter} with respect to the Hilbert metric (recall the definition in \eqref{diam_def}), i.e. there exists $D>0$ such that $diam_H(B^{(n_{\ell\phantom{\overline{\ell}}},n_{\ell+\overline{\ell}} )})\leq D$ for any $\ell\geq 1$.
\item setting $A_\ell\doteqdot B^{(n_\ell,n_{\ell+1})}$, the following \emph{integrability} condition holds:
\be \label{integrability} 
\lim _{\ell \rightarrow + \infty} \frac {\|A_\ell \|}{\ell^{\tau}} =  \lim _{\ell \rightarrow + \infty} \frac {\|B^{(n_\ell,n_{\ell+1})} \|}{\ell^{\tau}} =0 .
\ee
\end{itemize}
We denote by $\MDCt{\tau}{\overline{\ell}}{\nu}$ the set of IETs which satisfy the {mixing DC} with integrability power $1< \tau<2$ and parameters $\overline{\ell} \in \mathbb{N}$ and $\nu > 1$ and we denote by  $\MDCo{\tau}$ the IETs which satisfy the mixing DC with integrability power $\tau$, that is the  union over $\overline{\ell} \in \mathbb{N}$ and $\nu > 1$ of $\MDCt{\tau}{\overline{\ell}}{\nu}$.
If $T \in \MDCt{\tau}{\overline{\ell}}{\nu}$ for some $1< \tau<2$, $\overline{\ell} \in \mathbb{N}$ and $\nu > 1$, we simply say that $T$ satisfies the mixing DC. 
\end{defn}
\begin{rem}[\cite{Ul:mix}]
We remark that (\ref{integrability}) implies for $d=2$ the Diophantine condition used for rotations in \cite{SK:mix} to prove mixing for typical minimal components in genus one (i.e.\ $k_\ell = \rm{o}( \ell^{\tau})$, where $\{ k_\ell \}_{\ell \in \mathbb{N}}$ are the entries of the continued fraction and the exponent $\tau$ satisfies the same assumption $1 < \tau <2$).  
\end{rem}


The following result was proved by the third author in \cite{Ul:mix} when $f$ has only one singularity, then extended to several singularities by Ravotti \cite{Ra:mix}. 

\begin{thm}[\cite{Ul:mix, Ra:mix}]\label{DCmixing}
If  $T\colon I \to I$ satisfies the Mixing DC, for  every roof function $f\colon I \to \mathbb{R}^+$ with \emph{asymmetric logarithmic singularities}  at the discontinuities of $T$ (in the sense of Definition \ref{def_LogAsym}), the special flow $(\varphi_t)_{t\in\mathbb{R}}$ over $T$ under $f$ is mixing.
\end{thm}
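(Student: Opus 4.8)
The plan is to reduce mixing to a quantitative \emph{shearing} estimate for Birkhoff sums of the roof and of its derivative, and then to feed this estimate into a mixing criterion for special flows of the shearing type used in \cite{Ul:mix, Ra:mix} (going back to \cite{SK:mix}). Such a criterion can be stated as follows: $(\varphi_t)_{t\in\mathbb{R}}$ is mixing provided that for every $\vep>0$ there is $t_0>0$ such that for all $|t|>t_0$ there is a set $G_t\subset I$ with $\Leb{I\setminus G_t}<\vep$ which can be partitioned into intervals $J$ on each of which (i)~the number of jumps $r(x,t)\equiv r_J$ is constant, so that $T^{r_J}|_J$ is an isometry; (ii)~one has the \emph{uniform stretch} $|J|\cdot\inf_{x\in J}|\BS{f'}{r_J}(x)|\to\infty$ as $t\to\infty$; and (iii)~the relative oscillation of $\BS{f'}{r_J}$ on $J$ stays bounded. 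Indeed, under (i)--(iii) the shear forces $r(\cdot,t)$ to run through a long block of consecutive integers on each small base interval, so the corresponding base translates $T^{r_J}(J)$ trace out a long $T$-orbit segment and hence equidistribute in $I$ by unique ergodicity of the IET, while on each $J$ the fibre component of $\varphi_t(J\times\{0\})$ winds around the fibres of $X_f$ an increasing number of times; combining these two facts gives that $\varphi_t$ sends horizontal segments to sets equidistributing in $X_f$, and decomposing arbitrary measurable $A$, $B$ into horizontal boxes yields $\mu(\varphi_t(A)\cap B)\to\mu(A)\mu(B)$.

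The substance of the proof is to establish (i)--(iii) from the Mixing DC, which is precisely the content of Proposition~\ref{mpd}. Fix the sequence $\{n_\ell\}$ of balanced positive induction times provided by the Mixing DC, let $q_\ell$ be the maximal tower height at level $n_\ell$, and take $r$ with $q_\ell\le r<q_{\ell+1}$. I would decompose $\BS{f'}{r}(x)$ along the tower partition $\phi^{(n_\ell)}$: the contribution of the stretches of the $T$-orbit of $x$ that stay away from the discontinuities of $T$ is of order $r$, controlled by a bounded-distortion and averaging argument using balance together with the uniformly bounded Hilbert diameter of the blocks $B^{(n_\ell,n_{\ell+\overline{\ell}})}$; the dominant contribution comes from the few steps at which the orbit lands in a cell adjacent to some discontinuity $l_\alpha$ (resp.\ $r_\alpha$), where $f'$ behaves like $-C_\alpha^{+}/(x-l_\alpha)$ (resp.\ $C_\alpha^{-}/(r_\alpha-x)$). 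Summing these closest-approach terms gives a harmonic-type sum of size $\asymp\log r$ per unit of orbit length, and — this is the crux — the symmetric part of the singularities cancels between left and right approaches, so what survives is $\BS{f'}{r}(x)=c\,(C^{+}-C^{-})\,r\log r\,(1+o(1))$ with $c\neq 0$; this is nonzero exactly because the singularities are \emph{asymmetric}. The price is that one must discard a set $\Sigma_\ell$ of points whose $T$-orbit comes too close to a discontinuity within $q_{\ell+1}$ steps, and balance together with the growth bound $\|A_\ell\|=o(\ell^{\tau})$ from \eqref{integrability} is used to show $\Leb{\Sigma_\ell}\to 0$. A parallel, much easier estimate based on $f\in L^1$ and the Birkhoff ergodic theorem along balanced times gives $\BS{f}{r}(x)=r\int_I f\,(1+o(1))$ outside a small set, which yields $r(x,t)\approx t/\!\int_I f$ and hence~(i); and the analogous control of $\BS{f''}{r}$, using the second-order bounds built into Definition~\ref{def_LogAsym}, provides the bounded relative oscillation required in~(iii).

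Combining the two ingredients then finishes the argument: choosing the partition scale at level $\ell$ comparable to the common tower width $\lambda^{(n_\ell)}\asymp 1/q_\ell$, on each good interval $J$ one has $|J|\cdot|\BS{f'}{r_J}|\asymp\lambda^{(n_\ell)}\cdot r\log r\gtrsim\log r\to\infty$ since $r\ge q_\ell\asymp 1/\lambda^{(n_\ell)}$, which is exactly the uniform stretch~(ii), and feeding this into the criterion gives mixing. I expect the main obstacle to be Proposition~\ref{mpd} itself, that is, extracting the precise $(C^{+}-C^{-})\,r\log r$ asymptotics uniformly outside a set of measure tending to $0$: this rests on a delicate analysis of the distribution of the closest-approach distances of IET orbits to the discontinuities, genuinely exploiting positivity, balance and the bounded Hilbert diameter supplied by the Mixing DC, with the secondary technical difficulty of the second-order ($f''$) control needed to keep the sheared segments from folding back on themselves.
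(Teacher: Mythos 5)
The paper does not prove Theorem~\ref{DCmixing} at all; it states the result as a citation of \cite{Ul:mix} (one singularity) and \cite{Ra:mix} (several singularities), and the only ingredient it imports explicitly is Proposition~\ref{mpd}. Your sketch correctly identifies the overall strategy of those references --- a Kochergin/Sinai--Khanin-type shearing criterion for special flows, fed by the sharp $r\log r$ asymptotics of $S_r(f')$ outside the exceptional set $\Sigma_\ell^+$, with the nonvanishing of $C^{-}-C^{+}$ giving the genuine stretch --- and you are right that establishing Proposition~\ref{mpd} is the main technical burden.

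There is, however, a genuine inconsistency in the mixing criterion as you state it, which would make the argument as written fail. Your condition (i) requires $r(x,t)\equiv r_J$ to be \emph{constant} on $J$, while (ii) asks that $|J|\cdot\inf_{x\in J}|S_{r_J}(f')(x)|\to\infty$. These two cannot both hold. If $r(\cdot,t)\equiv r_J$ on $J$, then for every $x\in J$ the fiber coordinate $t-S_{r_J}(f)(x)$ lies in $[0,f(T^{r_J}x))$, so the total variation of $S_{r_J}(f)$ over $J$, which by the mean value theorem is comparable to $|J|\cdot|S_{r_J}(f')|$, is bounded by the local roof height; for most points that is $O(1)$, not tending to infinity. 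In other words, as soon as the shear exceeds the roof, $r(\cdot,t)$ \emph{must} jump, so the maximal intervals of constancy of $r(\cdot,t)$ shrink like $1/|S_r(f')|$. The correct formulation (the one used in \cite{Ul:mix,Ra:mix}, going back to \cite{SK:mix}) takes $J$ to be a continuity interval for $T^n$, $0\le n\le r$, on which $r(\cdot,t)$ ranges over a long block $\{r_J,\dots,r_J+K\}$ of consecutive integers precisely \emph{because} of the large variation of $S_r(f)$; the pieces on which $r(\cdot,t)$ is constant then project under $T^{r(\cdot,t)}$ onto a long segment of a $T$-orbit of a shrinking interval, and unique ergodicity of $T$ gives the equidistribution. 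So the stretch quantity in (ii) controls the \emph{number} of integer values of $r(\cdot,t)$ met on $J$ rather than a uniform slope on an interval of constancy. Your later heuristic paragraph in fact describes this correctly, contradicting (i); the criterion as stated should be replaced by the Kochergin-type version in which (i) is the weaker requirement that $J$ be a continuity interval for the relevant range of $T$-iterates, and (ii) is the statement that the variation of $S_r(f)$ over $J$ diverges. With that repair, your reduction and the identification of Proposition~\ref{mpd} as the crux are the right reconstruction of the cited proofs.
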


The following Proposition is proved by the third author in \cite{Ul:mix} (see the proof of Proposition 3.2\footnote{In the statement of Proposition 3.2 in \cite{Ul:mix}
it is only claimed that for $1< \tau < 2$ the set of IETs which satisfies the Mixing DC with integrability power $\tau$, i.e. what we here call  $\MDCo{\tau}$ has full measure. By reading the actual proof of Proposition 3.2 in \cite{Ul:mix}, though, one can see that  $\overline{\ell}\in\mathbb{N}$ and $\nu>1$ and chosen at the beginning of the proof and the full measure set of IETs constructed all share the same parameters $\overline{\ell}$ and $\nu$, i.e. the proof does show indeed that the result here cited holds.}).
\begin{prop}[\cite{Ul:mix}]\label{existencebalancedtimes} 
For any $1< \tau < 2$, there exists  $\overline{\ell}\in\mathbb{N}$ and $\nu>1$ such that the set $\MDCt{\tau}{\overline{\ell}}{\nu}$ has full measure, i.e.  for each irreducible combinatorial datum $\pi$ and for Lebesgue a.e.\  $\underline{\lambda} \in  \Delta_{d}$, the corresponding IET $T= (\underline{\lambda}, \pi) $ belongs to $\MDCt{\tau}{\overline{\ell}}{\nu}$. 
\end{prop}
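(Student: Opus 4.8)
The plan is to deduce the statement from the Avila--Gou\"{e}zel--Yoccoz exponential-tail estimate (Theorem~\ref{AGY}): each of the four requirements in Definition~\ref{def:mixingDC} will be read off from the structure of a suitable pre-compact cylindrical acceleration of the Rauzy--Veech map. Fix $1<\tau<2$ and set $\delta:=1-\tfrac{1}{\tau}\in(0,\tfrac{1}{2})$, the value for which $\tfrac{1}{1-\delta}=\tau$. Apply Theorem~\ref{AGY} with this $\delta$ --- separately in each of the finitely many Rauzy classes of irreducible data $\pi$, and then pass to the maxima of the finitely many resulting constants --- to obtain a cylindrical pre-compact acceleration $\hat{\R}_{\hat{Y}_\delta}$, where $\hat{Y}_\delta=\bigsqcup_i(\Theta_{C_i}\times\Delta_{D_i})^{(1)}$ is a finite union of pre-compact cylinders for $\hat{\R}$ (so all $C_i,D_i$ are positive Rauzy--Veech products), and whose accelerated cocycle satisfies $\int_{\hat{Y}_\delta}\|A_{\hat{Y}_\delta}\|^{1-\delta}\,d\mu_{\hat{\R}}<\infty$. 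By Remark~\ref{rk:automcylin}, for $\mu_{\hat{\R}}$-a.e.\ $\hat{T}$ the associated return sequence $\{n_\ell\}_{\ell\in\N}$ to $\hat{Y}_\delta$ is automatically \emph{positive} and $\nu$-\emph{balanced}, with $\nu=\nu(\delta)$ depending only on $\hat{Y}_\delta$, hence only on $\tau$; in particular every product $B^{(n_\ell,n_{\ell'})}$ ($\ell<\ell'$) is a positive matrix, so the first two bullets of Definition~\ref{def:mixingDC} hold for an arbitrary choice of $\overline{\ell}$.

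For the integrability bullet~\eqref{integrability}, note that the induced map $\hat{\R}_{\hat{Y}_\delta}$ preserves the finite measure $\mu_{\hat{\R}}|_{\hat{Y}_\delta}$ (finite by pre-compactness) and is ergodic, being an induced map of $\hat{\R}$, which is ergodic and conservative as the natural extension of the ergodic map $\R$. Since $\|A_{\hat{Y}_\delta}\|^{1-\delta}\in L^1(\mu_{\hat{\R}}|_{\hat{Y}_\delta})$, Birkhoff's pointwise ergodic theorem yields, for $\mu_{\hat{\R}}$-a.e.\ $\hat{T}\in\hat{Y}_\delta$, convergence of the Ces\`{a}ro averages $\tfrac{1}{N}\sum_{\ell=0}^{N-1}\|A_\ell(\hat{T})\|^{1-\delta}$, where $A_\ell(\hat{T}):=A^{(\ell,\ell+1)}(\hat{T})=B^{(n_\ell,n_{\ell+1})}(\hat{T})$ is exactly the matrix denoted $A_\ell$ in Definition~\ref{def:mixingDC}. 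Since the summands are nonnegative, convergence of their averages forces the general term to be $\mathrm{o}(\ell)$, i.e.\ $\|A_\ell(\hat{T})\|^{1-\delta}=\mathrm{o}(\ell)$ and hence $\|A_\ell(\hat{T})\|=\mathrm{o}(\ell^{1/(1-\delta)})=\mathrm{o}(\ell^{\tau})$ by the choice of $\delta$. This quantity depends only on $T:=p(\hat{T})$ and on the tail of its return sequence, which by Remark~\ref{rk:sequence_indep_lift} is intrinsic to $T$.

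For the bounded-Hilbert-diameter bullet: whenever the orbit of $\hat{T}$ returns to $\hat{Y}_\delta$ at time $n_\ell$ landing in the $i$-th piece, the length data of $\R^{n_\ell}(T)$ lies in the pre-compact cylinder $\Delta_{D_i}$, so the Rauzy--Veech word $B^{(n_\ell,n_{\ell+\overline{\ell}})}$ begins with $D_i$ as soon as its length $n_{\ell+\overline{\ell}}-n_\ell$ exceeds the (uniformly bounded) number of matrices composing any of the finitely many $D_i$; that is, $\Delta_{B^{(n_\ell,n_{\ell+\overline{\ell}})}}\subseteq\Delta_{D_i}$. Choosing the fixed integer $\overline{\ell}$ large enough --- it depends only on the finitely many defining cylinders, hence only on $\tau$ --- Remark~\ref{rk:diam} gives $diam_H(B^{(n_\ell,n_{\ell+\overline{\ell}})})\le\max_i diam_H(D_i)=:D<\infty$ uniformly in $\ell$ (consistently with $\|A_\ell\|$ being unbounded, since a positive matrix can have large norm and yet small, or even shrinking, Hilbert diameter).

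It remains to upgrade ``$\mu_{\hat{\R}}$-a.e.\ $\hat{T}\in\hat{Y}_\delta$'' to the stated full-measure conclusion on IETs. Let $G\subseteq\hat{Y}_\delta$ be the full-$\mu_{\hat{\R}}$-measure subset on which the two preceding paragraphs apply. By Theorem~\ref{AGY} the return time to $\hat{Y}_\delta$ is a.e.\ finite, so by ergodicity of $\hat{\R}$ a.e.\ $\hat{T}'\in\hat{X}$ visits $\hat{Y}_\delta$ infinitely often, and by ergodicity of the induced map $\hat{\R}_{\hat{Y}_\delta}$ its orbit eventually enters $G$, say at its $k_0$-th return; then the return sequence $\{m_k\}_{k\in\N}$ of $\hat{T}'$ to $\hat{Y}_\delta$ is positive, balanced and has blocks $B^{(m_k,m_{k+\overline{\ell}})}$ of Hilbert diameter $\le D$ for \emph{every} $k$ (every passage through the pre-compact cylindrical set has these properties), while $\|B^{(m_k,m_{k+1})}\|=\mathrm{o}(k^{\tau})$ --- an asymptotic statement --- follows from $\hat{\R}^{m_{k_0}}(\hat{T}')\in G$ together with $k/(k-k_0)\to1$; hence $p(\hat{T}')\in\MDCt{\tau}{\overline{\ell}}{\nu}$. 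Since $p_*\mu_{\hat{\R}}=\mu_{\R}$ is equivalent to $Leb_{\Delta_d}$ on each copy $\Delta_\pi$ of the simplex, and since Keane's condition holds Lebesgue-a.e.\ (so the algorithm runs forever), projecting this full-measure set through $p$ gives precisely: for every irreducible $\pi$ and Lebesgue-a.e.\ $\underline{\lambda}\in\Delta_d$, the IET $T=(\underline{\lambda},\pi)$ lies in $\MDCt{\tau}{\overline{\ell}}{\nu}$. I expect the third step to be the most delicate: obtaining a genuinely \emph{uniform} bound on the Hilbert diameters of the blocks requires a careful analysis of how a full return to $\hat{Y}_\delta$ sits inside the Rauzy--Veech word, and in particular arranging (possibly by enlarging the fixed $\overline{\ell}$) that every such block straddles a complete positive defining sub-word; by contrast, once $\delta$ is calibrated so that $1/(1-\delta)=\tau$, the integrability step is essentially immediate.
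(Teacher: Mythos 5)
Your proof is correct and follows essentially the same route the paper ascribes to \cite{Ul:mix} (see the summary in Remark~\ref{rk:sequencemixingDC}): calibrate $\delta = 1-1/\tau$ so that the Avila--Gou\"{e}zel--Yoccoz exponential tail gives exactly $L^1$-integrability of $\|A_{\hat{Y}_\delta}\|^{1/\tau}$, read off positivity and balance from the cylindrical pre-compact structure via Remark~\ref{rk:automcylin}, get $\|A_\ell\|=\mathrm{o}(\ell^\tau)$ from the standard Birkhoff-theorem consequence $g(S^\ell x)/\ell\to 0$, and project through $p$ using Remark~\ref{rk:sequence_indep_lift}. Your closing worry about the Hilbert-diameter step is unfounded: since each return to $\hat{Y}_\delta$ costs at least one Rauzy--Veech step, one has $n_{\ell+\overline{\ell}}-n_\ell\geq\overline{\ell}$, so taking $\overline{\ell}$ at least as large as the maximal number of elementary matrices composing any of the finitely many defining positive words $D_i$ guarantees that every block $B^{(n_\ell,n_{\ell+\overline{\ell}})}$ has $D_i$ as a prefix and hence $\Delta_{B^{(n_\ell,n_{\ell+\overline{\ell}})}}\subseteq\Delta_{D_i}$; combined with Remark~\ref{rk:diam} this gives the uniform diameter bound with no further delicacy.
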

\begin{rem}\label{rk:sequencemixingDC}
The key ingredient in the proof of the above result in~\cite{Ul:mix} is the main estimate on exponential tails from \cite{AGY:exp} which we recall in \cref{AGY}.  More specifically, the sequence $\{ n_{\ell} \}_{\ell \in \mathbb{N}}$ from the definition of the mixing DC is constructed from the sequence of induction times corresponding to the finite union of cylinders $\hat{Y}_\delta$, $\delta=1-\tau^{-1}$ from~\cref{AGY} (recall from Remark \ref{rk:sequence_indep_lift} that this sequence essentially depends on the IET only). The first two conditions in Definition \ref{def:mixingDC} follow easily from the fact that this acceleration is cylindrical and positive and the integrability condition  can be deduced from the exponential tails condition in  \cref{AGY} (see the proof of Proposition 3.2 in \cite{Ul:mix}).  
\end{rem}

\subsection{Ratner Diophantine condition}\label{sec:RatnerDC}
In this section we introduce a Diophantine condition which we will later use (see in particular in sections \ref{sec:summability} and \ref{sec:deducingRatner})  to quantify parabolic divergence and prove the switchable Ratner property for suspension flows with asymmetric logarithmic singularities. For this we will need some notation. Let $T$ be an IET satisfying the Keane condition. Recall that  $B^{-1}$ denotes the Rauzy-Veech cocycle and that $\underline{h}^{(n)}=(h^{(n)}_\alpha)_{\alpha\in\cA}$ is the vector of heights (see Section~\ref{Se:RV}). Given a sequence $\{n_\ell\}_{\ell\in\N}$ of induction times, we define
\begin{equation}\label{CF_like_notation}
A_\ell = A_\ell(T) \doteqdot  B^{(n_\ell, n_{\ell+1})} (T),\qquad  q_\ell \doteqdot \max_{\alpha\in\cA} h^{(n_\ell)}_\alpha \text{ for $\ell\geq 1$}
\end{equation}
(equivalently, $q_\ell$ is the norm of the largest column of the transpose of the matrix $A^{(\ell)}=B^{(n_\ell)}$). The above notation is chosen this way to resemble the standard notation related to the continued fraction expansion algorithm, since $A_\ell$ and $q_\ell$   play  an analogous role to entries and denominators of the convergents of the continued fraction algorithm, in the context of the Rauzy-Veech multidimensional continued fraction algorithm.  We remark that since we extended the Rauzy-Veech cocycle $B^{-1}$ to a cocycle over the natural extension $\hat{\R}$ of $\R$ (see Section~\ref{Se:RV}), we can define $A_\ell$ also for \emph{negative} indexes $\ell$, by setting:
\begin{equation}\label{CF_like_notationRhat} A_\ell = A_\ell(\hat{T}) \doteqdot B^{(n_\ell,n_{\ell+1})}(\hat{T}), \qquad \ell\in\mathbb{Z}, \quad  \hat{T}=( \tau, \pi, \lambda) \in \hat{X}.
\end{equation}

From now on we assume that $\overline{\ell}, \nu$ are so that the conclusion of Proposition \ref{existencebalancedtimes} holds.
\begin{defn}[Ratner DC]\label{def:ratnerDC}
We say that an IET $T$ satisfies the \emph{Ratner Diophantine condition} if $T$ satisfies the Mixing DC $\MDCt{\tau}{ \overline{\ell}}{ \nu}$ along a subsequence $(n_\ell)_{\ell\in\mathbb{N}}$ for some  $\nu>1$ and $\overline{l}\in \mathbb{N}$ and 
there exists $\xi<1, \eta<1$ such that $A_\ell$  and $q_\ell $ defined in \eqref{CF_like_notation} satisfy 
\be \label{RatnerDC:eq}
\sum_{\ell\in\N \ \text{s.t.\ } \|A_\ell\| \|A_{\ell+1}\|\ldots\|A_{\ell+L}\|> \ell^\xi } \frac{1}{(\log q_\ell)^\eta } < + \infty, \text{ where }  L=\overline{\ell}(1+ \left[ \log_d (2 \nu^2) \right]).
\ee
(Here $[ \cdot ]$ denotes the integer part).  In this case, we say that $T \in \RDCt{\tau}{ \xi}{ \eta}$. We also write $\RDCo{\tau}$ for the union of all $\RDCt{\tau}{ \xi}{ \eta}$ over $\xi<1$ and  $\eta<1$. 
\end{defn}



We remark that in the definition of $\RDCt{\tau}{ \xi}{ \eta}$ we do not record  the explicit  dependence on $\nu>1$ and $\overline{l}\in \mathbb{N}$, it is sufficient that the Ratner DC holds with respect to some such $\eta$ and $\overline{l}$. In the rest of the paper, we will use the Ratner DC   $\RDCt{\tau}{ \xi}{ \eta}$ only for values $1< \tau < 16/15$. 

\begin{rem}\label{FK_DC}
The Ratner DC should be compared with the Diophantine Condition introduced by Fayad and the first author in \cite{FK:mul}. In \cite{FK:mul} the authors define

$$\mathcal{E}\doteqdot \left\{\alpha\in [0,1) :  \sum_{i\notin K_\alpha}\frac{1}{\log^{7/8}q_i}<+\infty\right\},$$
where $K_\alpha=\{i\in \mathbb{N} :  q_{i+1}< q_i\log^{7/8}q_i\}$, and show that $\lambda(\mathcal{E})=1$ (see Proposition 1.7 in \cite{FK:mul}). This corresponds to Ratner DC with $\xi=\eta=7/8$.

Notice that if an IET $T$ is of bounded type (which form a $0$ measure set) i.e. for all $\ell\in\mathbb{N}$, $\|A_\ell(T)\|<C$, then Ratner DC is automatically satisfied (we sum only finitely many terms). Ratner DC means that the times $\ell$, where $A_\ell(T)$ is large are not too frequent. In a sense if an IET satisfies Ratner DC, it behaves like an IET of bounded type modulo some error with small density (as a subset of $\mathbb{N}$), but, as Proposition \ref{RDCfullmeasure} shows, this relaxation allows the property to hold for a full measure set of IETs. 
\end{rem}

The main result of this section is that Ratner DC has full measure for a suitable choice of the parameters.
\begin{prop}[full measure of Ratner DC]\label{RDCfullmeasure}
For any $\tau\in(1,16/15)$, $\xi\in(11/12,1)$ and $\eta> \frac{1}{2}$, the set $\RDCt{\tau}{ \xi}{ \eta}$ has full measure, so in particular
for each irreducible combinatorial datum $\pi$ and for Lebesgue a.e.\  $\underline{\lambda} \in  \Delta_{d}$, the corresponding IET $T= (\underline{\lambda}, \pi) $ belongs to $\RDCt{\tau}{\xi}{\eta}$ and hence satisfies the Ratner DC. 
\end{prop}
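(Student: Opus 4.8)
\textbf{Proof plan for Proposition \ref{RDCfullmeasure}.} The strategy is to reduce the claim to a quantitative estimate on the tails of the norm of the accelerated Rauzy–Veech cocycle and then invoke the Borel–Cantelli lemma. Fix $\tau\in(1,16/15)$ and take $\overline{\ell},\nu$ so that Proposition \ref{existencebalancedtimes} applies; thus almost every IET already lies in $\MDCt{\tau}{\overline{\ell}}{\nu}$ along a sequence $\{n_\ell\}$, and the extra condition \eqref{RatnerDC:eq} is what has to be established. Recall from Remark \ref{rk:sequencemixingDC} that $\{n_\ell\}$ is built from the cylindrical pre-compact acceleration $\hat\R_{\hat Y_\delta}$ of \cref{AGY} with $\delta=1-\tau^{-1}$, so that the matrices $A_\ell$ are the successive accelerated cocycle matrices $A_{\hat Y_\delta}$ and, crucially, \eqref{integrabilityAGY} gives $\int \|A_{\hat Y_\delta}\|^{1-\delta}\,d\mu_{\hat\R}<\infty$. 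First I would pass from the natural extension to the one-sided system: since $\delta$-pre-compact cylindrical accelerations enjoy bounded distortion and the invariant measure is equivalent to Lebesgue on each $\Delta_\pi$, the event $\{\|A_\ell\|\cdots\|A_{\ell+L}\|>\ell^\xi\}$ has measure controlled by the measure of the corresponding event for the one-sided acceleration $\R_{Y_\delta}$, up to a multiplicative constant, using the quasi-Bernoulli (QB) property of the accelerated cocycle alluded to in the introduction (Section~\ref{sec:QB}) to decouple the $L+1$ consecutive blocks.

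The heart of the argument is a tail bound. From $\int\|A_{\hat Y}\|^{1-\delta}\,d\mu<\infty$, Chebyshev's inequality gives, for each $j$, $\mu(\|A_j\|>t)\ll t^{-(1-\delta)}$, and by stationarity this is uniform in $j$. Since $L$ is a fixed integer, the event $E_\ell\doteqdot\{\|A_\ell\|\cdots\|A_{\ell+L}\|>\ell^\xi\}$ is contained in $\bigcup_{j=\ell}^{\ell+L}\{\|A_j\|>\ell^{\xi/(L+1)}\}$, so $\mu(E_\ell)\ll (L+1)\,\ell^{-\xi(1-\delta)/(L+1)}$. This decays polynomially but far too slowly to be summable, so a direct first Borel–Cantelli on the $E_\ell$ fails — and indeed it must, since the set $\mathbb{N}\setminus\widetilde K_T$ is genuinely infinite. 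The point of the Ratner DC is milder: one only needs $\sum_{\ell\in E_\ell}(\log q_\ell)^{-\eta}<\infty$ almost surely. Here I would use that along balanced positive times the heights grow at least geometrically — by Remark \ref{positvegrowth} and Remark \ref{balancedcomparisons}, $q_{\ell+k}\ge d^{\,k}q_\ell$ along the positive subsequence $\{n_{\overline{\ell}k}\}$, so $\log q_\ell\ge c\,\ell$ for some $c>0$ and all large $\ell$ (this is exactly the kind of lower bound on denominators that the Mixing DC already forces). Consequently $(\log q_\ell)^{-\eta}\le C\ell^{-\eta}$, and it suffices to show
\be
\sum_{\ell} \ell^{-\eta}\,\mathbf{1}_{E_\ell} < \infty \quad\text{a.e.}
\ee
Taking expectations, $\mathbb{E}\big[\sum_\ell \ell^{-\eta}\mathbf{1}_{E_\ell}\big]=\sum_\ell \ell^{-\eta}\mu(E_\ell)\ll \sum_\ell \ell^{-\eta-\xi(1-\delta)/(L+1)}$, which is finite as soon as $\eta+\xi(1-\delta)/(L+1)>1$. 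Plugging in the constraints $\tau<16/15$ (so $\delta=1-\tau^{-1}<1/16$, i.e.\ $1-\delta>15/16$), $\xi>11/12$, $\eta>1/2$, and noting $L=\overline{\ell}(1+[\log_d(2\nu^2)])$ is a fixed constant, one checks this inequality holds — this is the one numerical verification the proof needs, and it is exactly where the specific numerology $16/15$, $11/12$, $1/2$ enters; the exponent $L$ must be absorbed, which is why one wants the QB/decoupling step to keep $L$ from degrading the tail exponent beyond a constant. Once the expectation is finite, Tonelli gives $\sum_\ell \ell^{-\eta}\mathbf{1}_{E_\ell}<\infty$ almost everywhere, hence \eqref{RatnerDC:eq} holds a.e., which is the assertion $T\in\RDCt{\tau}{\xi}{\eta}$ for a.e.\ $T$.

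The main obstacle I anticipate is the decoupling step: the cocycle matrices $A_\ell,\dots,A_{\ell+L}$ are \emph{not} independent, so one cannot naively multiply tail probabilities, and moreover the sequence $\{n_\ell\}$ depends on the point (though by Remark \ref{rk:sequence_indep_lift} only through finitely many initial terms). The resolution is to work with the QB (quasi-Bernoulli) property of compact accelerations of the Rauzy–Veech cocycle — the statement that the measures of cylinders $\Delta_{C}$ and of concatenations $\Delta_{CD}$ satisfy $\mu(\Delta_{CD})\asymp \mu(\Delta_C)\mu(\Delta_D)$ up to a uniform constant — which is precisely what is set up in Section~\ref{sec:QB} and lets one treat the $L+1$ consecutive blocks as ``almost independent'', at the cost of a multiplicative constant depending only on the acceleration. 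With that in hand, $\mu(E_\ell)$ is bounded by a constant times a sum/product of single-block tails, each of which is controlled by \eqref{integrabilityAGY} via Chebyshev, and the Borel–Cantelli argument above goes through. A secondary technical point is ensuring the lower bound $\log q_\ell\gtrsim \ell$: this follows from positivity and balance of $\{n_\ell\}$ together with the fact (Remark \ref{rk:automcylin}) that the acceleration from \cref{AGY} is automatically positive and balanced, so no extra work beyond bookkeeping is required there.
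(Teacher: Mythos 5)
Your overall strategy — establish a tail bound for $\hat\mu\bigl(\{\|A_\ell\|\cdots\|A_{\ell+L}\|>\ell^\xi\}\bigr)$, combine it with the exponential lower bound $\log q_\ell\gtrsim\ell$, and conclude by summability — is the right one and matches the paper in outline, but there is a genuine gap in your key quantitative step, and the ``numerical verification'' you state is false as written. The union bound $E_\ell\subset\bigcup_{j=\ell}^{\ell+L}\{\|A_j\|>\ell^{\xi/(L+1)}\}$ plus Chebyshev only gives $\mu(E_\ell)\ll\ell^{-\xi(1-\delta)/(L+1)}$, and the resulting condition $\eta+\xi(1-\delta)/(L+1)>1$ does \emph{not} follow from the hypotheses: $L=\overline{\ell}\bigl(1+[\log_d(2\nu^2)]\bigr)$ is dictated by the acceleration and can be arbitrarily large, so $\xi(1-\delta)/(L+1)$ can be arbitrarily small, while $\eta$ is only assumed to exceed $1/2$. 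You hint that QB ``keeps $L$ from degrading the exponent,'' but you never state, let alone prove, what you actually need, namely $\mu(E_\ell)\le c(L)\,\ell^{-\xi(1-\delta)}$ with $L$ relegated to the constant. The paper obtains this by induction on the number of factors in the product, peeling off one matrix per step using Lemma \ref{QBforA} — this is the heart of the argument and cannot be replaced by a union bound.

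That said, once the correct tail bound is in hand, the conclusion you propose — Tonelli / first-moment Borel–Cantelli — is genuinely simpler than the paper's route and does work: $\sum_\ell\ell^{-\eta}\hat\mu(\mathscr E_\ell)\ll\sum_\ell\ell^{-\eta-\xi(1-\delta)}<\infty$ since $\eta+\xi(1-\delta)>\tfrac12+\tfrac{11}{12}\cdot\tfrac{15}{16}>1$, hence $\sum_\ell\ell^{-\eta}\mathbf 1_{\mathscr E_\ell}<\infty$ a.e., and $(\log q_\ell)^{-\eta}\ll\ell^{-\eta}$ finishes. The paper instead runs a second-moment argument: it estimates $\hat\mu(\mathscr E_m\cap\mathscr E_n)$ for $|m-n|>L$ via Corollary \ref{QBforAcor}, shows that the event of seeing more than $2L$ bad indices in a window $[j^2,(j+1)^2]$ has summable measure, applies Borel–Cantelli to those window events, and only then uses $\eta>\tfrac12$ to sum $\sum_j 2L/j^{2\eta}$. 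Your route avoids the pairwise-correlation estimate entirely and would even tolerate weaker $\eta$, at the price of leaning harder on the one-step tail bound — which, again, you must actually prove via the QB induction. One last bookkeeping item you should make explicit: the argument runs on the return system $(\hat Y,\hat\R_{\hat Y},\hat\mu|_{\hat Y})$; to get a full-measure statement about IETs one must descend through the projection $p$ (Remark \ref{rk:sequence_indep_lift}) and saturate by the Rauzy–Veech orbit using ergodicity, as the paper does at the end of its proof.
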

\noindent The proof of this result will be presented in Section~\ref{sec:fullmeasure}. In the next Section~\ref{sec:QB} we introduce some tools needed in the proof.
%

\subsection{Quasi-Bernoulli property}\label{sec:QB}
The bounded distorsion property of pre-compact accelerations of the Rauzy-Veech map $\R$ (see Remark \ref{rk:bounded_distorsion}) guarantees a quasi-Bernoulli kind of property (see also Corollary 1.2 in \cite{Ke:sim} or Proposition 3 in \cite{Bu:dec}).


\begin{lemma}[QB-property]\label{lemma:preQB}
For every $d_0>0$ there exists a constant $\nu=\nu(d_0)>1$ such that for any two positive  Rauzy-Veech products $E, F$ which can be concatenated (i.e. such that $EF$ is  also a Rauzy-Veech product) and such that  $\col{E}\leq d_0$ and the projective diameters $diam_H(E), diam_H(F)$ are bounded by $d_0$, we have 
\bes
 \frac{1}{\nu} \, \muR(\Delta_E)  \muR(\Delta_F) < \muR (\Delta_{E F})< \nu \, \muR(\Delta_E)  \muR(\Delta_F).
\ees
\end{lemma}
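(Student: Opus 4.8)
The plan is to reduce everything to the bounded distortion estimate of Lemma \ref{rk:bounded_distorsion} together with the structure of the Rauzy-Veech measure $\muR$ as having a density with respect to $Leb_X$ on each copy $\Delta_\pi$ of the simplex. First I would recall that $\Delta_{EF}$ is the image of $\Delta_F$ under the projective map $\psi_E(\lambda) = E\lambda/|E\lambda|$: indeed, by definition of cylinders, $(\lambda', \pi) \in \Delta_{EF}$ iff $\lambda' = E F \mu / |EF\mu|$ for some $\mu \in \Delta_d$, which is exactly $\psi_E$ applied to a point of $\Delta_F$. Hence the Lebesgue measure of $\Delta_{EF}$ is obtained by integrating the Jacobian of $\psi_E$ over $\Delta_F$, i.e.
\[
Leb_X(\Delta_{EF}) = \int_{\Delta_F} J_E(\lambda)\, d Leb_X(\lambda),
\]
where (using Remark \ref{rk:formJac}) $J_E(\lambda) = |E\lambda|^{-d}$ up to the combinatorial factor coming from the change of the fixed hyperplane, which is a constant we do not need to track precisely since it cancels in the ratios.

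Next I would apply bounded distortion: by Lemma \ref{rk:bounded_distorsion}, since $\col{E} \leq d_0$, the Jacobian $J_E$ satisfies $\sup_{\lambda,\lambda'} J_E(\lambda)/J_E(\lambda') \leq d_0^d =: \nu_0$. Therefore
\[
\frac{1}{\nu_0}\, J_E(\lambda_*)\, Leb_X(\Delta_F) \;\leq\; Leb_X(\Delta_{EF}) \;\leq\; \nu_0\, J_E(\lambda_*)\, Leb_X(\Delta_F)
\]
for any fixed reference point $\lambda_* \in \Delta_d$. Taking $F$ to be the trivial product (the full simplex $\Delta_\pi$) in the same computation gives $Leb_X(\Delta_E) = \int_{\Delta_d} J_E \,d Leb_X$, which by the same bounded distortion estimate is comparable to $J_E(\lambda_*)$ up to the factor $\nu_0$ (since $Leb_X(\Delta_d)$ is a fixed constant). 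Combining these two comparisons eliminates the unknown quantity $J_E(\lambda_*)$ and yields
\[
\frac{1}{\nu_0^{3}}\, Leb_X(\Delta_E)\, Leb_X(\Delta_F) \;\leq\; Leb_X(\Delta_{EF}) \;\leq\; \nu_0^{3}\, Leb_X(\Delta_E)\, Leb_X(\Delta_F),
\]
after absorbing the constant $Leb_X(\Delta_d)$ and a few powers of $\nu_0$ into the multiplicative constant.

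Finally I would pass from $Leb_X$ to $\muR$. Since $\muR$ has a density $\rho$ with respect to $Leb_X$ which, by Veech's work, is bounded above and below by positive constants on any pre-compact subset of $X$, and since the hypotheses $diam_H(E), diam_H(F) \leq d_0$ force the cylinders $\Delta_E, \Delta_F, \Delta_{EF}$ to lie in a fixed pre-compact region of $X$ (a set of cylinders of bounded Hilbert diameter is pre-compact, cf.\ Remark \ref{rk:finitediam}), the ratios $\muR(\Delta)/Leb_X(\Delta)$ for $\Delta \in \{\Delta_E, \Delta_F, \Delta_{EF}\}$ are all within a fixed factor of each other. This converts the $Leb_X$-comparison into the claimed $\muR$-comparison, with a final constant $\nu = \nu(d_0)$ depending only on $d_0$ (through $\nu_0 = d_0^d$ and the density bounds on the relevant pre-compact set). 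The main obstacle is the bookkeeping in the second step — making sure that the Jacobian factor $J_E(\lambda_*)$ genuinely cancels and that one only ever invokes bounded distortion for the matrix $E$ (whose $\nu_{col}$ is controlled by hypothesis), never for $F$ or $EF$, for which we only assume a diameter bound; the role of the diameter bounds is solely to keep all cylinders in a common pre-compact set so that the density of $\muR$ is two-sided bounded there.
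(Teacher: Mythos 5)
Your proposal follows the same route as the paper's proof: express $\Delta_{EF}$ as the image of $\Delta_F$ under $\psi_E$, use the change-of-variables formula together with Lemma~\ref{rk:bounded_distorsion} to compare $Leb_X(\Delta_{EF})$ with $Leb_X(\Delta_E)Leb_X(\Delta_F)$ up to a factor controlled by $\col{E}^d$, and then pass to $\muR$ using absolute continuity with respect to $Leb_X$. The $Leb_X$-level part of your argument is carried out correctly and agrees with the paper.

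The gap is in the last step. You claim that the hypotheses $diam_H(E),\, diam_H(F)\leq d_0$ force $\Delta_E,\Delta_F,\Delta_{EF}$ to lie in a \emph{fixed} pre-compact region of $X$, citing Remark~\ref{rk:finitediam}. That remark only says that an individual cylinder $\Delta_A$ is pre-compact iff $A>0$ iff $diam_H(A)<\infty$; it gives no uniform control on \emph{where} the cylinder sits, and bounded Hilbert diameter does not confine a family of cylinders to a single compact set. For $d=2$, the products $F_n=\bigl(\begin{smallmatrix}1&0\\1&1\end{smallmatrix}\bigr)^n\bigl(\begin{smallmatrix}1&1\\0&1\end{smallmatrix}\bigr)=\bigl(\begin{smallmatrix}1&1\\n&n+1\end{smallmatrix}\bigr)$ are positive Rauzy-Veech products with $\col{F_n}\to 1$ and $diam_H(F_n)\to 0$, yet the cylinders $\Delta_{F_n}$ (roughly the intervals $[\tfrac{1}{n+2},\tfrac{1}{n+1}]$) slide to $\partial\Delta_d$. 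Because $\muR$ is \emph{infinite} while $Leb_X$ is finite, the density of $\muR$ diverges near $\partial\Delta_d$, so $\muR(\Delta_{F_n})/Leb_X(\Delta_{F_n})\to\infty$; hence the three ratios you compare are \emph{not} within a $d_0$-dependent factor of one another, and the conversion from the $Leb_X$-inequality to the $\muR$-inequality does not go through as written. The paper's own concluding sentence is similarly brief on this point; what actually supplies the needed uniformity in the only place the lemma is invoked (the proof of Lemma~\ref{QBforA}) is that the cylinders appearing there lie inside one of the \emph{finitely many} fixed cylinders defining the return set $\hat{Y}$, on which the density of $\muR$ is genuinely two-sided bounded. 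Your ``fixed pre-compact region'' is trying to capture exactly this, but it has to come from $\hat{Y}$ (or from an added hypothesis), not from the diameter bounds alone.
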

\begin{proof}
Remark first that $\Delta_{E} $ is by definition the image of $\Delta_{d}$ under the map $\psi_E$ 
given by $\psi_E (\lambda) = E \lambda / |E \lambda|$.  Thus, by the change of variable formula (recalling that we denote by $Leb_X$ the measure which coincides with the restriction of the Lebesgue measure on $\mathbb{R}^d$ to  the simplex $\Delta_d$ for each of the copies $\Delta_\pi = \Delta_{d} \times \{\pi\}$) we have that  
$Leb_X(\Delta_{E}) = \int_{\Delta_{d}} {J_E (\lambda)}\, d \lambda$, where $J_E$ denotes the Jacobian of the map $\psi_E$. Similarly, since $\Delta_{EF} $   is the image under  $\psi_E$ of  $\Delta_F$,    $Leb_X(\Delta_{EF}) = \int_{\Delta_{F}} {J_E (\lambda)}\, d \lambda$. 
Thus, from  Lemma \ref{rk:bounded_distorsion}, we have that
\bes
 \frac{1}{\col{E}^d} \, \frac{ Leb_X (\Delta_{E})} { Leb_X (\Delta_{d})} < \frac{ Leb_X (\Delta_{EF})} { Leb_X (\Delta_F)}<  \col{E}^d \, \frac{ Leb_X (\Delta_{E})} { Leb_X (\Delta_{d})}. 
\ees
The claim of the lemma hence follows   by  remarking that $\muR$ is absolutely continuous with respect to $Leb_X$ with density which is bounded on compact sets. Indeed, 
 since $\Delta_E$, $\Delta_F$ and $\Delta_{EF}$ (which is contained in $\Delta_E$) are pre-compact and of diameter bounded by $d_0$, $\muR(E)$, $\muR(F)$ and $\muR(EF)$ are comparable to $Leb_X(E)$, $Leb_X(F)$ and $Leb_X(EF)$ respectively with constants which depend on $d_0$ only. 
\end{proof}

The technical results that we need in order to prove that IETs with the Ratner DC have full measure are the following Lemma and consequent Corollary, which are both applications of the above quasi-Bernoulli property. They show that correlations between events described by prescribing matrices of an acceleration of the Rauzy-Veech induction can be estimated if the acceleration is pre-compact.

\begin{lemma}\label{QBforA}
Let $A=A_{\hat{Y}}$ be a pre-compact cylindrical acceleration of the Rauzy-Veech natural extension $\hat{\R}$. Then there exists a constant $c=c(\hat{Y})>1$ such that for any integers $0\leq l<m<n$ and any matrices $C_0,C_1 \in SL(d,\mathbb{Z})$ we have that
\begin{multline}\label{qb1}
\hat{\mu}\left( \hat{T}\in \hat{Y} :\  A_l A_{l+1}\cdots A_{m-1}(\hat{T})=C_0 ,\ A_m A_{m+1} \cdots A_{n}(\hat{T})=C_1 \right) \\
\leq c\cdot \hat{\mu}\left(\hat{T}\in \hat{Y} : \ A_l A_{l+1}\cdots A_{m-1}(\hat{T})=C_0 \right)\cdot \hat{\mu}\left(\hat{T}\in \hat{Y} :\ A_m A_{m+1} \cdots A_{n}(\hat{T})=C_1 \right).
\end{multline}
\end{lemma}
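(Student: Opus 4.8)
The plan is to use the quasi-Bernoulli property (Lemma \ref{lemma:preQB}) as the essential engine, after re-expressing the events in terms of Rauzy-Veech cylinders for the natural extension $\hat{\R}$. First I would observe that, because $A = A_{\hat{Y}}$ is a \emph{cylindrical} pre-compact acceleration, there is a finite list of positive Rauzy-Veech products $E_1,\dots,E_s$ (the cylinders whose union is $\hat{Y}$), all with projective diameters and $\nu_{col}$-quantities bounded by some $d_0 = d_0(\hat{Y})$. The event $\{\hat{T}\in\hat{Y}:\ A_l\cdots A_{m-1}(\hat{T}) = C_0\}$ then decomposes, up to applying the appropriate power of $\hat{\R}$ and using \eqref{cylindersprop}, as a finite union of cylinders $(\Theta_{F}\times\Delta_{G})^{(1)}$ in which the prescribed block of forward/backward cocycle matrices equals $C_0$; and the conjunction with $\{A_m\cdots A_n(\hat{T}) = C_1\}$ similarly prescribes a longer concatenated block. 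The key structural point is that $C_0$ being fixed means the ``$A_l\cdots A_{m-1}$'' portion is a single (positive) Rauzy-Veech product $E$ obtained by concatenating $C_0$ with one of the finitely many boundary cylinder matrices, and $C_1$ prescribes a product $F$ in the same way, with $E$ and $F$ concatenable precisely on the event in question.

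The core estimate is then: $\hat{\mu}$ of the joint event is comparable (with the constant $\nu(d_0)$ from Lemma \ref{lemma:preQB}) to $\mu_{\R}(\Delta_{EF})$, which by quasi-Bernoulli is $\le \nu\,\mu_{\R}(\Delta_E)\,\mu_{\R}(\Delta_F)$, and these two factors are in turn comparable to $\hat\mu$ of the two individual events. To make this precise I would: (1) pass from $\hat\mu$ to $\mu_{\R} = p_*\hat\mu$ for the forward-time information, using that the cocycle over $\hat\R$ is constant on the fibers of the projection $p$ (so the forward block of matrices is a function on $X$), and keep track of the bounded-density comparison between $\hat\mu$-measures of cylinders and $Leb_X$/$\mu_{\R}$-measures as in the proof of Lemma \ref{lemma:preQB}; (2) use \eqref{cylindersprop} and the $\hat\R$-invariance of $\hat\mu$ to translate the events at levels $[l,m)$ and $[m,n]$ into events at levels $[0,m-l)$ and $[0,n-m+1)$ respectively, so that only the lengths of the blocks and the concatenation structure matter; (3) sum over the finitely many choices of boundary cylinders $E_i$, absorbing the factor $s$ into the final constant $c(\hat Y)$. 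Throughout, the hypotheses of Lemma \ref{lemma:preQB} — $\nu_{col}(E)\le d_0$ and $diam_H(E), diam_H(F)\le d_0$ — are guaranteed because each $A_\ell$ is, by definition of a pre-compact cylindrical acceleration, a positive Rauzy-Veech product lying inside one of the finitely many defining cylinders, and Remark \ref{rk:diam} gives $diam_H(EF)\le diam_H(E)$ while $\nu_{col}$ is controlled by \eqref{Colproduct}.

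I expect the main obstacle to be purely bookkeeping rather than conceptual: carefully matching the ``prescribe the block of cocycle matrices equals $C_i$'' description of the events with the ``cylinder $\Delta_B$'' description, since an event like $\{A_l\cdots A_{m-1} = C_0\}$ is not literally a single cylinder but a finite union indexed by which cylinder of $\hat Y$ the orbit sits in at the relevant return times (the ambiguity in the ``tails'' $B_{n_\ell - \ell_0},\dots$ discussed in Remark \ref{rk:sequence_indep_lift}). Handling this cleanly requires fixing, once and for all, the finite decomposition of $\hat Y$ into cylinders and arguing that prescribing $C_0$ together with membership in a fixed defining cylinder pins down a genuine Rauzy-Veech product, so that quasi-Bernoulli applies term by term. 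A secondary technical point is that Lemma \ref{lemma:preQB} is stated for $\mu_{\R}$ on the space $X$, so one must invoke the absolute continuity of $\hat\mu$ (resp. $\mu_{\R}$) with respect to Lebesgue with densities bounded above and below on the relevant pre-compact sets — exactly the argument used at the end of the proof of Lemma \ref{lemma:preQB} — to transfer the comparison back to $\hat\mu$; this contributes only a further multiplicative constant depending on $\hat Y$.

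Assembling these pieces yields \eqref{qb1} with $c = c(\hat Y)$ depending only on $d_0(\hat Y)$, the number $s$ of defining cylinders, and the density bounds, and in particular independent of $l, m, n$ and of $C_0, C_1$, as required.
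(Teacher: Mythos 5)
Your overall strategy agrees with the paper's: reduce to $l=0$ by $\hat\mu$-invariance, decompose $\hat Y$ into finitely many positive cylinders $(\Theta_{E_k}\times\Delta_{F_k})^{(1)}$, re-express the events as natural-extension cylinders, convert to $\mu_{\R}$ via $p_*\hat\mu=\mu_{\R}$ and \eqref{cylindersprop}, apply the QB estimate from Lemma \ref{lemma:preQB}, and sum over the finitely many cylinder choices.

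There is, however, a real gap in the middle step. After the upper QB inequality you obtain $\hat\mu(\text{joint event}) = \mu_{\R}(\Delta_{DC_0C_1F}) \le \nu_{\hat Y}\,\mu_{\R}(\Delta_{DC_0})\,\mu_{\R}(\Delta_{C_1F})$, and you then assert that $\mu_{\R}(\Delta_{DC_0})$ and $\mu_{\R}(\Delta_{C_1F})$ ``are in turn comparable to $\hat\mu$ of the two individual events,'' attributing this comparison to the density bounds between $\hat\mu$, $\mu_{\R}$ and $Leb_X$. That is not where the difficulty lies. The individual events have measures $\hat\mu(\mathcal C_0)=\mu_{\R}(\Delta_{D_0C_0F_{k_1}})$ and $\hat\mu(\mathcal C_1)=\mu_{\R}(\Delta_{D_1C_1F_{k_2}})$, and neither of these is directly comparable, by absolute-continuity alone, to $\mu_{\R}(\Delta_{DC_0})$ or $\mu_{\R}(\Delta_{C_1F})$: the cylinders on the two sides differ by the ``missing'' boundary factors $F_{k_1}$ (a suffix) and $D_1$ (a prefix), and by the possible mismatch $D\ne D_0$ in the $\Theta$-component. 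Closing this gap is not mere bookkeeping: it requires two further, \emph{lower-bound}, applications of Lemma \ref{lemma:preQB} to attach $F_{k_1}$ to $DC_0$ and $D_1$ to $C_1F$, then lower bounds on $\mu_{\R}(\Delta_{F_{k_1}})$ and $\mu_{\R}(\Delta_{D_1})$ over the finite set of defining cylinders, and finally the inclusion $(\Theta_D\times\Delta_{C_0F_{k_1}})^{(1)}\subset\mathcal C_0$ (and the analogue for $\mathcal C_1$), which in turn rests on the precise structure constraints the paper records in \eqref{D1ass}, \eqref{C1Fass}, \eqref{Dass}. Note also that $\Theta_D\subset\Theta_{D_0}$ does \emph{not} give $\Delta_{DC_0}\subset\Delta_{D_0C_0}$ — cylinders with longer prefixes are nested the other way — so a naive monotonicity argument fails here, which is exactly why the paper compares to $\mathcal C_0$ via $(\Theta_D\times\Delta_{C_0F_{k_1}})^{(1)}$ rather than through $D_0$. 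You should make these second-round QB applications, and the cylinder-containment checks that license them, explicit; they are the conceptual core of the estimate, not a technical afterthought.
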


\begin{cor}\label{QBforAcor}
For any pre-compact acceleration $A=A_{\hat{Y}}$ and any $N \in \mathbb{N}$  there exists $c=c(\hat{Y}, N)$ such that for any choice of integers $l_1< l_2 <  \cdots < l_N$ and $i_1, \dots, i_N $ we have that
\begin{multline}\label{qb1} 
\hat{\mu}\left( \hat{T}\in \hat{Y} : \|A_{l_1}(\hat{T}) \|=i_1,\ \|A_{l_2}(\hat{T})\|=i_2 , \dots, \ \|A_{l_N}(\hat{T})\|=i_N  \right) \\
\leq c \ \cdot \hat{\mu}\left(\hat{T}\in \hat{Y} : \|A_{l_1}(\hat{T})\|=i_1\right)\cdots \hat{\mu}\left(\hat{T}\in \hat{Y} : \|A_{l_N}(\hat{T})\|=i_N\right).
\end{multline}
\end{cor}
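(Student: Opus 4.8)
The plan is to derive Corollary~\ref{QBforAcor} from Lemma~\ref{QBforA} by an iteration over the $N$ indices, replacing the conditions on norms $\|A_{l_j}\|$ by unions over the finitely many matrices having a given norm. First I would observe that for a pre-compact cylindrical acceleration the cocycle matrix $A_{l}$ is bounded on each cylinder of $\hat{Y}$, but more importantly that only finitely many matrices $C$ arise as values of a single accelerated cocycle step with $\|C\|=i$; in fact $\{C\in SL(d,\mathbb{Z}): C\geq 0,\ \|C\|=i\}$ is a finite set for each $i$. Hence each event $\{\hat{T}\in\hat{Y}:\|A_{l_j}(\hat{T})\|=i_j\}$ is a finite disjoint union of events of the form $\{A_{l_j}(\hat{T})=C^{(j)}\}$.

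Next I would set up the iteration. Writing $E_j(C)\doteqdot\{\hat{T}\in\hat{Y}: A_{l_1}A_{l_1+1}\cdots A_{l_j-1}\cdots(\hat{T})=\dots\}$ is awkward because Lemma~\ref{QBforA} is phrased in terms of \emph{products} of consecutive matrices $A_l\cdots A_{m-1}$, not individual matrices at prescribed times. To bridge this, note that the event $\{A_{l_j}(\hat{T})=C^{(j)}\}$ can be rewritten by summing over the (finitely many, again by pre-compactness and the dynamical interpretation of the cocycle) possible values of the intermediate products $A_{l_j+1}\cdots A_{l_{j+1}-1}$; equivalently, one applies Lemma~\ref{QBforA} with the blocks $[l_1,l_2)$, $[l_2,l_3)$, \dots\ playing the roles of the two blocks there, iterated $N-1$ times. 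Concretely: apply Lemma~\ref{QBforA} with $l=l_1$, $m=l_2$, $n=l_N$ (taking $C_0$ ranging over products realizing $\|A_{l_1}\|=i_1$, $C_1$ over products of $A_{l_2}\cdots A_{l_N}$ realizing the remaining constraints), to split off the first factor up to a constant $c(\hat{Y})$; then recurse on the block $[l_2,l_N]$. After $N-1$ applications one obtains the bound with constant $c(\hat{Y})^{N-1}$, and one absorbs into the constant the (bounded, by pre-compactness) ratios between $\hat{\mu}(\{A_{l_j}(\hat{T})\cdots=C\})$ summed appropriately and $\hat{\mu}(\{\|A_{l_j}(\hat{T})\|=i_j\})$.

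The one genuine subtlety — and the step I expect to be the main obstacle — is the bookkeeping needed to pass between ``a product of consecutive cocycle matrices equals a fixed matrix $C$'' (the form in Lemma~\ref{QBforA}) and ``the single accelerated matrix at time $l_j$ has norm $i_j$'' (the form in the Corollary), while keeping all the intermediate sums finite and the constants depending only on $\hat{Y}$ and $N$. This is handled by the observation that, for a pre-compact acceleration, the matrices $A_{l}(\hat{T})$ take values in a set in which, for each fixed norm, there are only finitely many possibilities, and by Remark~\ref{rk:diam} and the QB-property (Lemma~\ref{lemma:preQB}) the measure of a union of cylinders $\Delta_{C_0 D}$ over $D$ in a prescribed finite set of continuations is comparable, with constant depending only on $\hat{Y}$, to $\hat\mu(\Delta_{C_0}\cap\hat{Y})$. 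I would therefore structure the proof as: (i) reduce norm-events to finite disjoint unions of matrix-events; (ii) apply Lemma~\ref{QBforA} inductively to the $N$ blocks to factor the joint matrix-event probability up to a constant $c(\hat{Y})^{N-1}$; (iii) re-sum the matrix-events back into norm-events, using that each re-summation costs only a bounded factor; and (iv) collect all constants into a single $c=c(\hat{Y},N)$.
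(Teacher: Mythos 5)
Your strategy follows the paper's proof of Corollary~\ref{QBforAcor} closely: decompose the norm events into matrix events, apply Lemma~\ref{QBforA} repeatedly to factor the joint probability, and then re-sum. Two remarks are in order.

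First, your claim that the intermediate products $A_{l_j+1}\cdots A_{l_{j+1}-1}$ take only \emph{finitely} many values (attributed to ``pre-compactness and the dynamical interpretation of the cocycle'') is false. Pre-compactness of $\hat Y$ does \emph{not} bound the individual accelerated cocycle matrices $A_k$ — the return time to $\hat Y$ is unbounded, so $\|A_k\|$ and hence these intermediate products range over an infinite set. (Your other finiteness claim, that for fixed $i$ the set $\{C\in SL(d,\mathbb Z): C\ge 0,\ \|C\|=i\}$ is finite, is true, but it is the norm constraint, not pre-compactness, that gives this.) Fortunately the argument does not need finiteness at all: for fixed $j$ the events $\{A_{l_j+1}\cdots A_{l_{j+1}-1}=D\}$ are \emph{pairwise disjoint} subsets of $\hat Y$, so $\sum_D \hat\mu(\cdot)\le\hat\mu(\hat Y)<\infty$, and this is exactly how the paper absorbs the intermediate sum into the constant. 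You should replace the spurious finiteness claim by this disjointness-plus-finite-measure observation.

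Second, a genuine (minor) difference in bookkeeping: you apply Lemma~\ref{QBforA} $N-1$ times, splitting at $l_2,\dots,l_N$ and keeping each block $A_{l_j}\cdots A_{l_{j+1}-1}$ intact, then re-sum \emph{within} each block over the pairs $(C_j,D_j)$ with $\|C_j\|=i_j$. The paper instead fully factors the product into $C_1D_1C_2D_2\cdots C_N$ via $2N-1$ applications of the lemma before re-summing. Your version is slightly more economical and equally correct; both yield a constant depending only on $\hat Y$ and $N$.
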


Let us first give the proof of Corollary \ref{QBforAcor}, then the one of Lemma \ref{QBforA}.

\begin{proof}[Proof of Corollary \ref{QBforAcor}]
Let $C_1, \dots, C_N$ be Rauzy-Veech matrices and $D_1, \dots, D_{N-1} $ Rauzy-Veech products such that for some $\hat{T} \in \hat{Y}$ we have
$$A_{l_i}(\hat{T}) = C_i\quad  \text{for}\, 1\leq i \leq N, \qquad D_i = A_{l_i+1}\cdots A_{l_{i+1}-1} \quad \text{for}\, 1\leq i \leq N-1, $$ 
So that in particular $A_{l_1}A_{l_1+1}\cdots A_{l_N}(\hat{T}) = C_1 D_1 C_2D_2 \cdots C_{N-1} D_{N-1}C_{N}$. By applying Lemma \ref{QBforA} $2N-1$ times in order to split up the product into a product of matrices $C_i$ and $D_i$ (more precisely applying it to $C_i$ and $D_{i}\cdots C_N$ or $D_i$ and $C_{i+1} \cdots C_N$ for $i=1,\dots, N-1$) we have that
\begin{multline}\nonumber
\hat{\mu}\left( \hat{T}\in \hat{Y} : A_{l_1}A_{l_1+1}\cdots A_{l_N}(\hat{T}) = C_1 D_1 C_2D_2 \cdots C_{N-1} D_{N-1}C_{N} \right) \\
\leq c(\hat{Y})^{2N-1} \cdot \Pi_{i=1}^N\hat{\mu}\left(\hat{T}\in \hat{Y} : A_{l_i}(\hat{T})=C_i\right)\, \Pi_{i=1}^{N-1} \hat{\mu}\left(\hat{T}\in \hat{Y} : A_{l_i+1} \cdots  A_{l_{i+1}-1}(\hat{T})=D_i  \right).
\end{multline}
For brevity let us denote by $\mathcal{C}_i(D_i)$ the event $\{ \hat{T}\in \hat{Y} : A_{l_i+1} \cdots  A_{l_{i+1}-1}(\hat{T})=D_i \}$. Then, summing over all possible choices of the matrices $D_1, \dots, D_{N-1} $ and using that the events $\mathcal{C}_i(D_i))$ for $1\leq i\leq N-1$ are disjoint, we get that
$$
\sum_{D_1, \dots, D_{N-1}} \Pi_{i=1}^{N-1} \hat{\mu}( \mathcal{C}_i(D_i)) \leq  \Pi_{i=1}^{N-1} \left( \sum_{D_1, \dots, D_{N-1}} \hat{\mu}( \mathcal{C}_i(D_i)) \right) = \Pi_{i=1}^{N-1} \mu \left( \cup_{i=1}^{N-1} \mathcal{C}_i(D_i) \right) \leq \hat{\mu}(Y)^{N-1}
$$ 
Summing up over all possible choices of matrices $C_1, \dots, C_N$ and $D_1, \dots, D_{N-1} $ as above and such that $\|C_{j}\|=i_j$ for  $j=1,\dots, N$ 
we get \eqref{qb1} for $c=c(\hat{Y},N)\doteqdot c(\hat{Y})^{2N-1} \hat{\mu}(\hat{T})^{N-1}$.
\end{proof}

The proof of  Lemma \ref{QBforA} is based on the remark that in a pre-compact cylindrical acceleration $A=A_{\hat{Y}}$,  every return of the orbit of ${\triple}$ to $\hat{Y}$, i.e. every $l$ such that $\hat{\R}_{\hat{Y}}^l (\triple) \in \hat{Y}$, corresponding to visits of $\hat{\R}_{\hat{Y}}^l (\triple)$ to a cylinder $(\Theta_C \times \Delta_D )^{(1)}$ where $C,D$ are positive matrices with uniformely bounded Hilbert diameter and $\nu_{col}$. Equivalently, this means that one sees the block $CD$ appearing in the cocycle products centered at time $l$, i.e. the cocycle matrices $ A_{l+L} \cdots A_{l-2} A_{l-1}$ end with $C$  and  $A_l A_{l+1}, \dots A_{l+L}$ start with $D$ for some $L$\footnote{One can make special choices of cylindrical accelerations (see for example Avila-Gouezel-Yoccoz \cite{AGY:exp} which guarantee that $A_l$ (resp. $A_{l-1}$) is sufficiently long so that it has to start (resp. end) with some specific matrices $D$ (resp. $C$). In general, it might happen though that to see an occurrence of $C$ or $D$ one needs to consider several successive steps, which complicates the writing of the proof)}. This hence  allows to use the QB-property. 

\begin{proof}[Proof of Lemma \ref{QBforA}]
Remark first that by definition $$A_l A_{l+1}\cdots A_{m-1} ({\triple} ) = A_0 A_{1}\cdots A_{m-l-1} (\hat{\R}_{\hat{Y}}^{l} (\pi, \lambda, \tau) ), $$ so that,
since the measure $\hat{\mu}$ is invariant under $\R$ and hence its restriction to $\hat{Y}$ is invariant under the Poincar{\'e} map $\R_{\hat{Y}}$, it is enough to prove the Lemma statement for $l=0$ (where $0<m<n$ play the role of the former $0<m-l<n-l$).   
Since $A=A_{\hat{Y}}$  is a cylindrical acceleration of $\hat{\R}$, $\hat{Y}$ is the union of finitely many cylinders for $\hat{\R}$, that we will denote by $(\Theta_{E_k} \times \Delta_{F_k} )^{(1)}$ for $1\leq k\leq N$.  Furtheremore  since $A$ is precompact all matrices $E_i$ and $F_i$ are positive and hence have finite Hilbert diameter by Remark \ref{rk:finitediam}. Let $d_{\hat{Y}}$ be the maximum of the projective diameters $diam_H(E_i), diam_H(F_i)$ for $1\leq i \leq N$ and of $\col{E_i}$ for $1\leq i \leq N$. Let $\nu_{\hat{Y}}\doteqdot  \nu(d_{\hat{Y}})$ be the constant  given by Lemma \ref{lemma:preQB}.  
Let $n_0$ be the maximal number of Rauzy-Veech  matrices produced to obtain any of the matrices $E_i$ or $F_i$ in the definition of the cylinders.   Notice that it is enough to prove the statement of the Lemma under the assumption that either $m\geq n_0$ or $n-m\geq n_0$,  since the possible Rauzy-Veech products of $n_0$ Rauzy-Veech  matrices are finitely many and  hence the finite number of possibilities with   $m\leq n_0$ and  $n \leq m + n_0 \leq 2 n_0$ only change the constant $c$. 

The cocycle $A=A_{\hat{Y}}$ associated to the first return map  $\hat{\R}_{\hat{Y}}$ to $\hat{Y}$ is locally constant and, when restricted to one of the cylinders $(\Theta_{E_k} \times \Delta_{F_k})^{(1)} $ in $\hat{Y}$, the set where it takes as a value a fixed given Rauzy-Veech product $C$ of $p$ matrices and, after $p$ iterations of $\hat{\R}_{\hat{Y}}$ one lends to $(\Theta_{E_l} \times \Delta_{F_l})^{(1)} $, i.e. the set
\bes
\mathcal{C}=\mathcal{C}(C,k,l )\doteqdot  \left\{ {\triple} \in (\Theta_{E_k} \times \Delta_{F_k})^{(1)} \ s.t. \ A ({\triple}) =C, \ \hat{\R}_{\hat{Y}}^p ({\triple}) \in (\Theta_{E_l} \times \Delta_{F_l})^{(1)}  \right\}
\ees
is a Rauzy-Veech cylinder of the form  $(\Theta_{D} \times \Delta_{CF_l} )^{(1)}$, where  $\Theta_D \subset  \Theta_{E_k} $ and $\Delta_{CF_k} \subset \Delta_{F_k}$ (which explicitly means that the product $CF_l $ \emph{starts with} $F_k$, i.e. it has the form $F_k C'$ for some non-negative $C'$, and the product $D$ \emph{ends with} $E_k$ i.e. it has the form $D' E_k$ for some non negative $D'$) and, since by definition of the cocycle and cylinders (see in particular \eqref{cylindersprop}),  recalling that $p$ is the number of Rauzy-Veech matrices produced to get $C$,   we have that

\bes
\hat{\R}_{\hat{Y}}^p \left((\Theta_{D} \times \Delta_{CF_l})^{(1)}\right) = (\Theta_{DC} \times \Delta_{F_l})^{(1)} \subset (\Theta_{E_l} \times \Delta_{F_l})^{(1)},
\ees
the matrix $DC$ ends with $E_l$: more precisely, either $D=E_k$ and hence $DC= E_k C = C''E_l $ for some non negative $C''$ so that $\Theta_{DC} \subset \Theta_{E_l}$ (which happens if the matrix $C$ is sufficently large so that $E_k C$ is a longer Rauzy-Veech product than $E_l$), or otherwise $\Theta_{D} $ is a strict subset of $\Theta_{E_k} $ chosen so that $DC=E_l$ and hence $\Theta_{DC}=\Theta_{E_l}$. 

Let $C_0, C_1$ be any two matrices in $SL(d, \mathbb{Z})$. Remark that we can assume that $C_0, C_1$ are Rauzy-Veech products  that  can be concatenated, i.e. that $C_0=A_0 A_1 \cdots A_{m-1}( {\triple} ) $ and $C_1=A_m\cdots A_{n-1}( {\triple} )$ for some (a positive measure set of) $\hat{T}=(\tau,  \lambda, \pi) \in \hat{Y}$), since otherwise in the statement either the RHS or both sides are zero and the statement is trivially true. 
Let $\mathcal{C}_0$ and $\mathcal{C}_1$  
 be the cylinders as described above on which the cocycle $A$ is locally equal to $C_0$ and $C_1$ respectively and such that $\hat{R}_{\hat{Y}}^m \mathcal{C}_0 \cap \mathcal{C}_1 \neq \emptyset$. In virtue of the remark above, we write for some $1\leq k_0,k_1,k_2\leq N$,  
\bes
\mathcal{C}_i = ( \Theta_{D_i} \times \Delta_{C_i F_{k_i}})^{(1)} \subset (\Theta_{E_{k_i}} \times \Delta_{F_{k_i}})^{(1)} \quad \text{for} \ i=0,1  
\ \text{where}
\ees
\be\label{D1ass}
\text{either} \ D_1=E_{k_1}, \qquad  \text{or} \ D_1 \ \text{ends\ with}\ E_{k_1} \ \text{and} \ D_1 C_1=E_{k_2}  . 
\ee
Similarly, since $C_0$ is the product of $m$ Rauzy-Veech matrices, one can see that 
\bes
\mathcal{C}_0 \cap \hat{R}_{\hat{Y}}^{-m} \mathcal{C}_1 = 
  (\Theta_{D} \times \Delta_{C_0C_1 F} )^{(1)}, 
\ees 
where (according to which between $C_1F_{k_2}$ or $F_{k_1}$  is a  longer Rauzy-Veech product) 
\be\label{C1Fass}
\text{either} \ F=F_{k_2} \ \text{and} \ C_1F \  \text{starts\ with}\ F_{k_1},\qquad  \text{or}  \ C_1F=F_{k_1}\ \text{and} \  F \ \text{starts\ with}\ F_{k_2}, 
\ee
and $D$ is such that
  \be\label{Dass}
	\begin{split}
 & \text{either} \ D = E_{k_0},   \ \text{and} \ DC_0  \ \text{and} \ DC_0C_1 \ \text{end \ with}\ E_{k_1} \ \text{and} \ E_{k_2} \ \text{respectively}, \\
&\text{or} \ DC_0 = E_{k_1}, \ \text{and} \ D  \ \text{and} \ DC_0C_1 \ \text{end \ with}\ E_{k_0} \ \text{and} \ E_{k_2} \ \text{respectively};
\end{split}
\ee 
(the last possibility, i.e.~that $DC_0C_1 = E_{k_2}$ is excluded since the assumption that either $m\geq  n_0$ or $n-m\geq n_0$ ensure that $C_0C_1$ is a sufficiently long Rauzy-Veech product to begin with $E_{k_2}$). 

Let us now compare the measures of $\mathcal{C}_0, \mathcal{C}_1 $ and $\mathcal{C}_0 \cap \hat{R}_{\hat{Y}}^{-m} \mathcal{C}_1$.  Using that $\hat{\mu}$ is $\hat{\R}$ invariant, the definition of cylinders (see in particular \eqref{cylindersprop}) and that $\hat{\mu}= p_* \muR$, we have that for $l=0,1$, if $D_l$ is a Rauzy-Veech product starting at $\pi_l$
\be\label{measures}
\hatmu (\mathcal{C}_l)
= \hatmu \left((\Theta_{D_l} \times \Delta_{C_l F_{k_{l+1}}})^{(1)} \right) 
= \hatmu \left( (\Theta_{\pi_l} \times \Delta_{D_lC_j F_{k_{l+1}}})^{(1)} \right) = {\muR} \left( \Delta_{D_{l}C_l F_{k_{l+1}}} \right).
\ee
Similarly, 
$\hatmu (\mathcal{C}_0 \cap \hat{R}_{\hat{Y}}^{-m} \mathcal{C}_1)  = 
\muR (\Delta_{DC_0 C_1 F})$. 
We can now get that 
\bes
\hatmu (\mathcal{C}_0 \cap \hat{R}_{\hat{Y}}^{-m} \mathcal{C}_1 )
  \leq \nu_{\hat{Y}} \muR(\Delta_{D C_0}) \, \muR(  \Delta_{C_1 F}). 
\ees
by applying the upper inequality  in Lemma \ref{lemma:preQB} 
and recalling the definition of $\nu_{\hat{Y}}$: the assumptions of the Lemma holds since $\col{D C_0}\leq \col{E_{k_1}}$ by \eqref{Dass} and~\eqref{Colproduct}, $diam_H(DC_0) \leq   \max \{ diam_H (E_{k_0}), diam_H(E_{k_1}) \}$  by \eqref{Dass} and  Remark \ref{rk:diam} and   $diam_H(C_1F)\leq diam_H(F_{k_1})$ by \eqref{C1Fass} and  Remark \ref{rk:diam}.  Now, dividing and multiplying by $\muR (\Delta_{F_{k_1}})$ and $\muR (\Delta_{D_1})$ and by remarking that by \eqref{D1ass} either $\Delta_{D_1}=\Delta_{E_{k_1}}$ or otherwise $\Delta_{D_1} \supset \Delta_{D_1 C_1}=\Delta_{E_{k_2}}$ and hence in both cases $\mu(D_1) \geq \inf_{1\leq k\leq N} \mu(\Delta_{E_k})$, we get that
\be\label{estimate:product} \hatmu (\mathcal{C}_0 \cap \hat{R}_{\hat{Y}}^{-1} \mathcal{C}_1 )  \leq \nu_{\hat{Y}} \frac{ \muR (\Delta_{DC_0}) \muR (\Delta_{F_{k_1}}) \muR (\Delta_{D_1})  \muR(  \Delta_{C_1F})}{\inf_{1\leq i\leq N} \muR(\Delta_{F_i} ) \inf_{1\leq i\leq N} \muR (\Delta_{E_i} )  }.
\ee
Applying now the lower inequality in Lemma  \ref{lemma:preQB} (which can be applied thanks to \eqref{D1ass}, \eqref{C1Fass} and the definition of $\nu_{\hat{Y}}$, which give that $\col{D_1}\leq \col{E_{k_1}}$ by \eqref{Colproduct} and allow to estimate diameters using  Remark \ref{rk:diam}), and then remarking that \eqref{C1Fass} implies that  $\Delta_{D_1 C_1 F} \subset \Delta_{D_1C_1F_{k_2}}$ and using \eqref{measures} for $l=1$,   we have that
\be\label{est2} 
\muR (\Delta_{D_1})  \muR(  \Delta_{C_1 F}) \leq  \nu_{\hat{Y}} \muR(\Delta_{D_1 C_1 F})\leq  \nu_{\hat{Y}} \muR(\Delta_{D_1 C_1 F_{k_2}}) =  \nu_{\hat{Y}} \hatmu(\mathcal{C}_1).
\ee
Similarly, again by the lower inequality in Lemma  \ref{lemma:preQB} (which can be used this time thanks to \eqref{Dass}, \eqref{Colproduct} and Remark \ref{rk:diam}, which in particular yield $\col{DC_0}\leq \col{E_{k_1}}$), reasoning as in \eqref{measures} and then remarking that $\Theta_{D} \subset \Theta_{E_{k_0}}$ by \eqref{Dass}, we also have that
\be\label{est1}
\muR(\Delta_{DC_0}) \muR (\Delta_{F_{k_1}}) \leq  \nu_{\hat{Y}} \muR(\Delta_{DC_0 {F_{k_1}}}) =  \nu_{\hat{Y}} \hatmu \left(( \Theta_{D}\times \Delta_{C_0 F_{k_1}})^{(1)}\right)  \leq  \nu_{\hat{Y}} \hatmu (\mathcal{C}_0). 
\ee
Combining  \eqref{estimate:product}, \eqref{est2} and \eqref{est1},  
 we finally get
\be\label{Cestimate}
\hatmu ( \mathcal{C}_0 \cap \hat{R}_{\hat{Y}}^{-m} \mathcal{C}_1 ) \leq   c \, \hatmu (  \mathcal{C}_0 ) \ \hatmu (\mathcal{C}_1 ), \qquad \text{where}\ c=c(\hat{Y})\doteqdot  {\nu_{\hat{Y}}^3}/({\inf_{1\leq i\leq N} \muR (\Delta_{F_i} ) \inf_{1\leq i\leq N} \muR(\Delta_{E_i} ) }).
\ee
One can now conclude the proof of the Lemma by 
summing over all possible choices of  symplexes $\mathcal{C}_0, \mathcal{C}_1$ as above, namely by summing over all choices of cylinders $\mathcal{C}(C_1,k_1,l_1 )$ and $\mathcal{C}(C_2,k_2,l_2 )$ for $1\leq k_1,k_2,l_1, l_2 \leq N$.  
\end{proof}

\subsection{Full measure of the Ratner DC}\label{sec:fullmeasure}
In this section we will prove \cref{RDCfullmeasure}, by showing that the Ratner Diophantine condition for a suitable choice of the parameters $\tau, \xi $ and $\eta$ is satisfied by a full measure set of interval exchange transformations.

\begin{proof}[Proof of \cref{RDCfullmeasure}]

Set $\delta\doteqdot 1-\tau^{-1}$ and consider the set $\hat{Y}\doteqdot \hat{Y}_\delta$ given by \cref{AGY}, so that the corresponding acceleration $A\doteqdot A_{\hat{Y}}$ has the exponential tails property (see \cref{AGY}).  Since by the choice of $\tau$ we in particular have that $1< \tau<2$,  by  \cref{existencebalancedtimes}  (see also \cref{rk:sequencemixingDC}), there exists a subset $\hat{Y}' \subset \hat{Y}$ with $\hatmu(\hat{Y}')= \hatmu(\hat{Y})$ such that for any $\hat{T} \in \hat{Y}'$ the sequence $(n_l)_l$ of returns of $\hat{T}$ to $\hat{Y}'$ satisfies  the properties in the Definition \ref{def:mixingDC} of the  Mixing DC property with integrability power $\tau$ for some fixed $\overline{\ell} \in \mathbb{N}$ and $\eta >1$. Recall that $L=\overline{\ell}(1+ \left[ \log_d (2 \nu^2) \right])$ (see \eqref{RatnerDC:eq}). Fix $k\in \N$.


\smallskip
\textbf{Claim.}
There exists a constant $c=c(L)>0$ such that for every $R>0$ and every $0\leq J\leq L$ we have 
\begin{equation}\label{upperbound2}
\hatmu({\triple} \in \hat{Y} : \|A_{k}({\triple})\|\ldots\|A_{k+J}({\triple}) \|>R)<\frac{c}{R^{1-\delta}}.
\end{equation}
\smallskip

The proof of the Claim goes by induction on $J$. For $J=0$ by the integrability condition \eqref{integrabilityAGY} of $A$  (given by Theorem \ref{AGY}) and by invariance of $\hatmu$ under $\hat{\R}$, for any $R \in \mathbb{N}$ we have that
$$R^{1-\delta}\  \hatmu({\triple} \in \hat{Y} \, s.t.\, \| A_{k}({\triple}) \|> R ) \leq \sum_{i> R} i^{1-\delta}\  \hatmu({\triple} \in \hat{Y} \, s.t.\, \| A({\triple}) \|=i ) \leq  \int_{\hat{Y}_\delta  } \| A  \|^{1-\delta} \, d {\hatmu}\doteqdot c_0.$$
Assume that the Claim holds for $J<L$. We will show that it holds for $J+1$.
By summing the  QB property for the cocycle $A$ proved in \cref{QBforA}  over the set of $j$ such that $j> R/i$, we have that
\begin{align}\label{splt}
\begin{split}
&\hatmu({\triple} \in \hat{Y}\; :\; \|A_{k}({\triple}) \|\ldots\|A_{k+J+1}({\triple}) \|>R)\leq \\
&\sum_{i=1}^R \hatmu({\triple} \in \hat{Y}\; :\; \|A_{k+J+1} ({\triple}) \|=i\text{ and }\|A_{k}({\triple})\|\ldots\|A_{k+J}({\triple})\|>R/i)
\leq\\
& c_{\hat{Y}}\ \sum_{i=1}^R \hatmu({\triple} \in \hat{Y}\; : \;  \|A_{k+J+1}({\triple})\|=i) \, \hatmu({\triple} \in \hat{Y}\; :\; \|A_{k}({\triple})\|\ldots\|A_{k+J}({\triple})\|>R/i).
\end{split}
\end{align}
By the induction assumption (i.e. using the Claim for $J$ and $R/i$, $i=1\ldots R$) we get that
$$
\hatmu({\triple} \in \hat{Y} \,:\, \|A_{k}({\triple})\|\ldots\|A_{k+J}({\triple})\|>R/i)\leq c \left(\frac{i}{R}\right)^{(1-\delta)}, \qquad \text{for}\ i=1,\ldots,R.
$$

Therefore, denoting by  $\hat{Y}_i$ the set of  $\hat{T}= (\tau, \lambda, \pi) \in \hat{Y}$ such that $ \|A(\hat{T})\|=i$, we have by the integrability condition \eqref{integrabilityAGY} of $A$  (given by Theorem \ref{AGY}) and by invariance of $\hatmu$ under $\hat{\R}$
\begin{align*}
\hatmu(\hat{T} : \|A_{k}(\hat{T})\|\ldots\|A_{k+J+1}(\hat{T})\|>R)&\leq \frac{c_{\hat{Y}} c}{R^{1-\delta}} \sum_{i=1}^R i^{1-\delta}\, \hatmu(\hat{T}  : \|A_{k+J+1}(\hat{T})\|=i)\\ & 
 =\frac{c_{\hat{Y}} c}{R^{1-\delta}}\sum_{i=1}^R \int_{\hat{Y}_i}i^{1-\delta} \leq \frac{c_{\hat{Y}} c}{R^{1-\delta}}\int_{\hat{Y}}\|A \|^{1-\delta} \leq \frac{c_{\hat{Y}} c^2}{R^{1-\delta}}.
\end{align*}
This finishes the proof of Claim. Using \eqref{upperbound2} for $J=L$ and $R=k^\xi$, we get in particular  
\begin{equation}\label{upperbound}
\hatmu({\triple} \in \hat{Y} : \|A_{k}({\triple})\|\ldots\|A_{k+L}({\triple}) \|>k^\xi)<\frac{c}{k^{\xi(1-\delta)}}.
\end{equation}

Now we will show that for $\hatmu$-almost every  $\hat{T}\in \hat{Y}$,
 in each interval of the form $[j^2, (j+1)^2]$ ($j$ suff. large) there are at most $2L$ indexes $k\in[j^2,(j+1)^2]$ such that $\|A_{k}(\hat{T})\|\ldots\|A_{k+L}(\hat{T}) \|>k^\xi$. This follows by Corollary \ref{QBforAcor} as the events $\|A_{k}(\hat{T})\|\ldots\|A_{k+L}(\hat{T})\|>k^\xi$, $\|A_{l}(\hat{T})\|\|A_{l+L}(\hat{T})\|>l^\xi$ are almost independent for $l\notin \{k-L,\ldots, k+L\}$, i.e. there exists $c=c(L)>0$ such that
\be\label{almostindep}
\begin{split}
& \hat{\mu}({\triple} \in \hat{Y}  :   \|A_{k}(\hat{T})\|\ldots\|A_{k+L}({\triple})\|>k^\xi,  \|A_{l}({\triple})\|\ldots\|A_{l+L}({\triple})\|>l^\xi)\\ & \leq c \hat{\mu}({\triple} \in \hat{Y} \,:\, \|A_{k}({\triple})(\hat{T})\|\ldots\|A_{k+L}({\triple})\|>k^\xi)  \hat{\mu}({\triple} \in \hat{Y} \,:\,  \|A_{l}({\triple})\|\ldots\|A_{l+L}({\triple})\|>l^\xi) .
\end{split}
\ee
To show this notice that we can decompose the LHS analogously to \eqref{splt} and then, since $l\notin \{k-L,\ldots,k+L\}$, use Corollary \ref{QBforAcor}.



For $k\geq 1$, denote by $\mathscr{E}_k$ the set of $\hat{T} \in \hat{Y}$ such that $\|A_{k}(\hat{T})\|\ldots\|A_{k+L}(\hat{T})\|>k^\xi$. We are interested in 
$$
\mathscr{F}_j = \{{\triple} \in \hat{Y} \, s.t.\,  {\triple} \in  \mathscr{E}_k \text { holds for at least $2L+1$ numbers $k$ in }[j^2,(j+1)^2] \}, j\geq 1.
$$
By the definition of the set $\mathscr{F}_j$, it follows that 
\begin{equation}\label{bmn}
\mathscr{F}_j\subset \sum_{m,n\in[j^2,(j+1)^2], n\notin\{m-L,\ldots,m+L\}}\mathscr{E}_m  \cap \mathscr{E}_n .
\end{equation}

\noindent Now by \eqref{upperbound} and \eqref{almostindep}, we have that for any $m$ and $n\notin\{m-L,\ldots,m+L\}$, 
$$
\hat{\mu}(\mathscr{E}_m  \cap \mathscr{E}_n)\leq \left(\frac{1}{mn}\right)^{\xi(1-\delta)}\leq \frac{1}{j^{4\xi(1-\delta)}}.
$$
Therefore, by \eqref{bmn}, we have 
$$
\hat{\mu}(\mathscr{F}_j)\leq \frac{4j^2}{j^{4\xi(1-\delta)}} \ll \frac{1}{j^{1+1/100}} \text{ for }\delta<\frac{1}{16}\text{ and }\xi\geq>11/12.
$$
Hence, $\sum_{j\geq 1}\hat{\mu}(\mathscr{F}_j)< +\infty$. Thus, it follows from the Borel-Cantelli Lemma that there exists a set $\hat{Y}''\in \hat{Y}$ with $\hatmu( \hat{Y}'') = \hatmu( \hat{Y})$ such that  any  ${\triple} \in \hat{Y}''$ belongs only a finite number of times to $\bigcup_{j\geq 1}\mathscr{F}_j$. Denote by $n_{\triple} \in \mathbb{N}$ the last visit of ${\triple}$ to $\bigcup_{j\geq 1}\mathscr{F}_j$. Notice, that by the definition of $\mathscr{F}_j$ and $\mathscr{E}_k$, for $j\geq n_{\triple}^\frac{1}{2}$ and writing $A_\ell $ for $A_\ell(\hat{T})$, we have
$$
\sum_{\ell\in[j^2,(j+1)^2] \ \text{s.t.\ } \|A_\ell\|... \|A_{\ell+L}\|> \ell^\xi } \frac{1}{(\ell)^\eta}\leq \frac{2L}{j^{2\eta}}.
$$
Hence, since $\eta>1/2$ 
$$
\sum_{\ell\geq n_{\triple} \ \text{s.t.\ } \|A_\ell\|... \|A_{\ell+L}\|> \ell^\xi } \frac{1}{\ell^\eta}\leq \sum_{j\geq n_{\triple}^{\frac{1}{2}}}\frac{2}{j^{2\eta}}<+\infty.
$$
Therefore and since $(q_\ell)_{\ell\in\N}$ grows exponentially, we have the following (for some constants $C_T,c_T>0$)
$$
\sum_{\ell\in\N \ \text{s.t.\ } \|A_\ell\|... \|A_{\ell+L}\|> \ell^\xi } \frac{1}{(\log q_\ell)^\eta }\leq
C_T+ \sum_{\ell\geq n_{\triple} \ \text{s.t.\ } \|A_\ell\|... \|A_{\ell+L}\|> \ell^\xi } \frac{c_T}{\ell^\eta}<+\infty
$$
and so that equation \eqref{RatnerDC:eq} in the Ratner DC  holds for any  ${\triple}\in \hat{Y}''$.




Thus,  we showed so far that for every   ${\triple} \in \hat{Y}' \cap \hat{Y}''$, along the sequence $(n_l)_l $  of returns to $\hat{Y}$, both the Mixing DC for  $T \in \MDCt{\tau}{\overline{\ell}}{\nu}$  and \eqref{RatnerDC:eq} in the Ratner DC holds. Thus,  since $\hatmu(\hat{Y}')=\hatmu(\hat{Y}'') = \hatmu(\hat{Y})$,  
 the Ratner Diophantine condition  (see Definition \ref{def:ratnerDC}) holds for $\hatmu$-almost every ${\triple} \in \hat{Y}$.  
Since by Remark \ref{rk:sequence_indep_lift} this sequence does not eventually depend on the lift $\hat{T}=(\tau, \lambda, \pi)$ of the IET $T=(\lambda, \pi)$, but on $T$ only and $p_* \hatmu = \mu$, it follows that the set $Y' \subset Y$ of IETs for which such a sequence exists 
 has measure $\mu (Y') = Y$. Consider the set  $\mathcal{DC}_R$ of $T\in X$ such that there exists $\overline{n}$, for which $\R^{\overline{n}} T\in Y' $. We claim that this is the set of full measure of IETs which satisfy the desired Ratner Diophantine condition.  To see that $\mathcal{DC}_R$ has full  measure, it is enough to use ergodicity of $\Z$ and the fact that $\mu_{\Z} (Y' )>0$, remarking  that $\Z$ orbits are subsets of $\R$ orbits.  If  $T\in \mathcal{DC}_R$, the sequence  $n_l \doteqdot \widetilde{n}_{l}+\overline{n}$, where $\widetilde{n}_l$ is the sequence associated to $\R^{\overline{n}}T$ clearly also satisfy the Ratner DC definition property. 
Finally, the formulation in Proposition \ref{existencebalancedtimes} follows by absolute continuity of $\mu_{\Z}$ w.r.t. Lebesgue. 
\end{proof}

\section{Birkhoff sums of roof functions with logarithmic singularities}\label{sec:BS}
In this section, we state precise estimates on the growth of Birkhoff sums of the derivative of functions with logarithmic asymmmetric singularities under the Diophantine conditions introduced in \cref{se4}.  These estimates, as explained in the outline in \cref{sec:outline}, are a crucial tool to prove mixing and parabolic divergence of the corresponding special flows. In  \cref{sec:SWRRatner}
 we first recall a criterium which allows to reduce the proof of the SR-property to a statement about Birkhoff sums of the roof function. In Section~\ref{sec:derivatives} we state the estimates on Birkhoff sums of the derivatives proved in \cite{Ul:mix, Ra:mix} under the Mixing DC and deduce estimates in form which will be convenient for us to prove the SW-Ratner property in the next section. Finally, in Section~\ref{sec:summability} we show that the Ratner DC for a certain range of parameters implies the convergence of a series (see the Summability Condition in Definition \ref{RatDC2}) which is useful when proving parabolic divergence estimates.

\subsection{Ratner properties for special flows (over IETs) via Birkhoff sums}\label{sec:SWRRatner}

In this section we recall a criterion which implies the SR-property in the class of special flows over an ergodic automorphism. It was studied in~\cite{FK:mul} in the case the base automorphism is an irrational rotation and, in the general case in~\cite{Ka-Ku}.
\begin{prop}\label{cocy}
Let $(X,d)$ be a $\sigma$-compact metric space, $\mathcal{B}$ the $\sigma$-algebra of Borel subsets of $X$, $\mu$ a Borel probability measure on $(X,d)$. Let $T$ be an ergodic automorphism acting on $(X,\mathcal{B},\mu)$ and let $f\in L^1(X,\mathcal{B},\mu)$ be a positive function bounded away from zero. Let $\mathcal{T}=(T_t^f)_{t\in\R}$ be the corresponding special flow. Let $P=\{-1,1\}$. Assume that
\begin{center}
\parbox{0.8\textwidth}{
for every $\vep>0$ and $N\in \N$ there exist $\kappa=\kappa(\vep)>0$, $\delta=\delta(\vep,N)>0$ and a set $X'=X'(\vep,N)$ with $\mu(X')>1-\vep$, such that for every $x,y\in X'$ with $0<d(x,y)<\delta$ there exist $M=M(x,y),L=L(x,y)\geq N$ with $\frac{L}{M}\geq \kappa$ and $p=p(x,y)\in P$
}
\end{center}
such that one of the following holds:
\begin{enumerate}[(i)]
\item\label{posi}
$d(T^nx,T^ny)<\vep\;\text{and }\; |S_{n}(f)(x)-S_{n}(f)(y)-p|<\vep\text{ for every }n\in [M,M+L]$,
\item\label{nega}
$d(T^{-n}x,T^{-n}y)<\vep\;\text{and }\;|S_{-n}(f)(x)-S_{-n}(f)(y)-p|<\vep\text{ for every }n\in [M,M+L]$.
\end{enumerate}
Then $\mathcal{T}$ has the SR-property.
\end{prop}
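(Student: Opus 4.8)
The plan is to deduce the SR-property (Definition~\ref{def:SR}) by showing that $\mathcal T=(T^f_t)$ has the $sR(t_0,\{1,-1\})$-property for \emph{every} $t_0>0$ for which the time-$t_0$ map $T^f_{t_0}$ is ergodic. An ergodic flow has at most countably many eigenvalues, and $T^f_{t_0}$ is ergodic unless $t_0\in\tfrac{2\pi}{\lambda}\mathbb Z$ for a non-zero eigenvalue $\lambda$; hence the admissible set of $t_0$ is co-countable, in particular uncountable, which is exactly what Definition~\ref{def:SWR} demands. So fix such a $t_0$, equip $X_f=\{(x,s):0\le s<f(x)\}$ with the metric $\bar d\big((x,s),(y,u)\big)=d(x,y)+|s-u|$, and write $\mu^f$ for the special-flow measure. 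The backbone is the change of variables between ``jump count'' and ``flow time'': from $T^f_t(x,0)=\big(T^{r(x,t)}x,\ t-S_{r(x,t)}(f)(x)\big)$ one reads off that a flow-orbit point has the form $(T^ax,\alpha)$ with $\alpha\in[0,f(T^ax))$ the height in the $a$-th floor. The heuristic, which the proof makes precise, is: if $x,y$ stay $\varepsilon'$-close under $T$ and $S_n(f)(x)-S_n(f)(y)=p+\mathrm{o}(1)$ along a long block of $n$'s, then $T^f_{nt_0}(x,0)$ and $T^f_{nt_0-p}(y,0)$ agree up to an $\mathrm{o}(1)$ vertical error \emph{unless} $T^f_{nt_0}(x,0)$ falls in a thin horizontal collar about the floors/roof, and the frequency of such exceptional $n$ is controlled by Birkhoff's theorem for the ergodic map $T^f_{t_0}$.

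\textbf{Constants and sets.} Given targets $\varepsilon>0$, $N\in\mathbb N$, one fixes, in this order (to keep the choices non-circular): a small auxiliary $\varepsilon'=\varepsilon'(\varepsilon)$ making the collar $B\doteqdot\{(v,r)\in X_f:r<2\varepsilon'\text{ or }r>f(v)-2\varepsilon'\}$ satisfy $\mu^f(B)<\varepsilon/3$ (possible since $\inf f>0$, so $\mu^f(B)\le 4\varepsilon'/\!\int f\,d\mu$); the constant $\kappa_0=\kappa_0(\varepsilon')$ supplied by the hypothesis (which, crucially, depends on $\varepsilon'$ only) and a suitable small multiple $\kappa=\kappa(\varepsilon)$ of it; an Egorov tolerance $\varepsilon''$ with $(1+2/\kappa)\varepsilon''<\varepsilon/3$; then, using $f\in L^1(\mu)$, a cut-off $C_\varepsilon<\infty$ with $\mu(f>C_\varepsilon)<\varepsilon'$, a threshold $n_0$ and sets $X_1\subset X$, $Z_0\subset X_f$ of measure $>1-\varepsilon'$ on which the Birkhoff averages of $f$ under $T$ and of $\mathbf 1_B$ under $T^f_{t_0}$ are, for $\ge n_0$ iterations, uniformly within $\varepsilon''$ of $\int f\,d\mu$ resp.\ $\mu^f(B)$ (ergodicity of $T^f_{t_0}$ is what makes the latter limit the \emph{constant} $\mu^f(B)$); and finally the hypothesis applied with $(\varepsilon',N')$ for $N'$ large (in terms of $n_0,C_\varepsilon,\inf f,t_0$), giving $\delta_0=\delta(\varepsilon',N')$ and $X'=X'(\varepsilon',N')$. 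One then sets $\delta\doteqdot\min(\delta_0,\varepsilon')$, $X''\doteqdot X'\cap X_1\cap\{f\le C_\varepsilon\}$ and $Z\doteqdot Z_0\cap\{(x,s)\in X_f:x\in X''\}$; uniform integrability of $f\in L^1$ makes the tube over $X''$, hence $Z$, have $\mu^f$-measure $>1-\varepsilon$ once $\varepsilon'$ is small.

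\textbf{Verification.} Take $z=(x,s),w=(y,u)\in Z$ with $\bar d(z,w)<\delta$ and $z$ not on the $\mathcal T$-orbit of $w$; the last condition forces $x\ne y$, so $x,y\in X'$ with $0<d(x,y)<\delta_0$, and the hypothesis yields $M,L\ge N'$, $p\in\{1,-1\}$ and one of (i),(ii); one treats case (i), case (ii) being identical with the flow reversed. Set $M'\doteqdot\lceil S_M(f)(x)/t_0\rceil+1$ and $M'+L'\doteqdot\lfloor(S_{M+L}(f)(x)-f(x))/t_0\rfloor$; using $\inf f>0$, $f(x)\le C_\varepsilon$, $S_M(f)(x)\le 2M\!\int\!f\,d\mu$ (as $x\in X''$, $M\ge n_0$) and $L/M\ge\kappa_0$, a routine estimate gives $M',L'\ge N$ and $L'/M'\ge\kappa$. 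For each $n\in[M',M'+L']$ one has $S_M(f)(x)\le nt_0+s<S_{M+L}(f)(x)$, hence $a\doteqdot r(x,nt_0+s)\in[M,M+L-1]$, so the hypothesis gives $d(T^ax,T^ay)<\varepsilon'$ and $|S_a(f)(x)-S_a(f)(y)-p|<\varepsilon'$ (likewise for $a+1$). A short computation shows that if $T^f_{nt_0}(z)\notin B$, i.e.\ the height $\alpha=nt_0+s-S_a(f)(x)$ lies in $[2\varepsilon',f(T^ax)-2\varepsilon']$, then the $-p$ time-shift absorbs the gap between the Birkhoff sums of $x$ and $y$ up to an error $<2\varepsilon'$ that is too small to change the jump count, so $r(y,nt_0-p+u)=a$ as well, and
\[
\bar d\Big(T^f_{nt_0}(z),\ T^f_{nt_0-p}(w)\Big)\ \le\ d(T^ax,T^ay)+\big|(u-s)-\big(S_a(f)(x)-S_a(f)(y)-p\big)\big|\ <\ 3\varepsilon'<\varepsilon.
\]
Thus the bad $n\in[M',M'+L']$ all satisfy $T^f_{nt_0}z\in B$, and by the $T^f_{t_0}$-Birkhoff estimate valid on $Z$ together with $L'/M'\ge\kappa$ their density in $[M',M'+L']$ is at most $\mu^f(B)+(1+2/\kappa)\varepsilon''<\varepsilon$. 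Taking $p(z,w)\doteqdot-p\in\{1,-1\}$, this is precisely condition (i) of $sR(t_0,\{1,-1\})$, and the Proposition follows.

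\textbf{Main obstacle.} The delicate step is exactly this last transfer: one has no pointwise control of the return-time cocycle $n\mapsto r(x,nt_0+s)$, so the identification $r(x,nt_0+s)=r(y,nt_0-p+u)$ genuinely fails for some $n$; the argument survives only because the failure is confined to the thin collar $B$ and the frequency of visits of the arithmetic orbit $\{nt_0\}$ to $B$ is governed by ergodicity of $T^f_{t_0}$ --- which is also the reason one can only get a \emph{density} ($>1-\varepsilon$) conclusion, as in the switchable \emph{weak} Ratner scheme, and why the ordering $\varepsilon'\to\kappa_0\to\kappa\to\varepsilon''\to n_0\to N'$ of the choices matters. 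Two secondary technical nuisances are that $f$ is unbounded (so ``$X'$ has large base measure'' does not by itself make the tube over $X'$ large in $\mu^f$-measure, hence the appeal to uniform integrability of $f\in L^1$) and that $L'/M'$ must be kept bounded below (hence the extra Birkhoff-good set $X_1$ controlling $S_M(f)$ on the base).
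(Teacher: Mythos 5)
The paper itself does not prove this proposition: it is stated as a recalled criterion, with the reader referred to \cite{FK:mul} (rotation base) and \cite{Ka-Ku} (general ergodic base), so there is no in-text argument to compare against. Your blind reconstruction is, as far as I can check, correct, and it follows what I would expect is the same strategy as those references: (1) work only with the co-countable, hence uncountable, set of $t_0$ for which $T^f_{t_0}$ is ergodic, since Definition~\ref{def:SWR} only requires uncountably many admissible $t_0$; (2) translate the jump-count window $[M,M+L]$ given by the hypothesis into a flow-time window $[M',M'+L']$, with $L'/M'$ bounded below by a quantity $\kappa(\varepsilon)$ via $\inf f>0$ and a Birkhoff-Egorov set on the base controlling $S_M(f)(x)$; (3) prove the pointwise lemma that when $T^f_{nt_0}(z)$ avoids the collar $B$ of width $2\varepsilon'$ about floors and roof, the time-shift $-p$ forces $r(y,nt_0-p+u)=r(x,nt_0+s)$, yielding $\bar d\big(T^f_{nt_0}(z),T^f_{nt_0-p}(w)\big)<3\varepsilon'$; and (4) bound the density of the exceptional $n$ by the frequency of visits of $\{T^f_{nt_0}(z)\}$ to $B$, which requires Birkhoff's theorem for the \emph{time-$t_0$ map} and hence its ergodicity — this is exactly where step (1) is used. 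I checked the collar inequalities (the strict $>0$ holds because $\alpha\ge 2\varepsilon'$, $f(T^ax)-\alpha\ge 2\varepsilon'$ while $|u-s|<\varepsilon'$ and $|S_a(f)(x)-S_a(f)(y)-p|<\varepsilon'$ are strict), the jump-count translation (your $\lfloor (S_{M+L}(f)(x)-f(x))/t_0\rfloor$ endpoint correctly accounts for the offset $s<f(x)\le C_\varepsilon$), and the parameter ordering $\varepsilon'\to\kappa_0\to\kappa\to\varepsilon''\to n_0\to N'$: the dependencies come out as $\kappa=\kappa(\varepsilon)$, $\delta=\delta(\varepsilon,N)$, $Z=Z(\varepsilon,N)$, as Definition~\ref{def:SWR} demands. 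I found no genuine gap.
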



\noindent The following Lemma provides conditions to  verifying~\eqref{posi} and~\eqref{nega} in Proposition \ref{cocy} above in case of special flows over IETs.
\begin{lemma}\label{njz}
Let $T\colon I\to I$ be an IET. Fix $\vep>0$ and $N\in\N$ and assume that for some $x,y\in I$ with $x<y$, $y-x<\vep$ and some $M,L \geq N$ we have:
\begin{equation}\label{betai}
\{\ell_\alpha : \alpha\in\mathcal{A}\}\cap (\bigcup_{i=0}^{M+L}T^i[x,y])=\emptyset,
\end{equation}
\begin{equation}\label{cont}
\parbox{0.8\textwidth}{
$[M,M+L]\times[x,y]\ni (n,\theta)\mapsto sign (S_n(f')(\theta))$ is constant for every $\theta\in [x,y]$ and every $n\in[M,M+L]$,
}
\end{equation}
\begin{equation}\label{der}
(1-\vep)(y-x)^{-1}<|S_n(f')(\theta)|<(1+\vep)(y-x)^{-1}.
\end{equation}
Then $x,y$ satisfy \eqref{posi} (with $M,L,\vep$). Analogously, if 
\begin{equation}
\{\ell_\alpha : \alpha\in\mathcal{A}\}\cap (\bigcup_{i=1}^{M+L}T^{-i}[x,y])=\emptyset,
\end{equation}
\begin{equation}
\parbox{0.8\textwidth}{
$[M,M+L]\times[x,y]\ni (n,\theta)\mapsto sign (S_{-n}(f')(\theta))$ is constant for every $\theta\in [x,y]$ and every $n\in[M,M+L]$,
}
\end{equation}
\begin{equation}
(1-\vep)(y-x)^{-1}<|S_{-n}(f')(\theta)|<(1+\vep)(y-x)^{-1},
\end{equation}
then $x,y$ satisfy \eqref{nega}.
\end{lemma}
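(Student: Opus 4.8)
The plan is to verify directly that the hypotheses \eqref{betai}, \eqref{cont}, \eqref{der} force the two conditions \eqref{posi} in Proposition~\ref{cocy}, namely closeness of orbits $d(T^n x, T^n y)<\vep$ and closeness of the Birkhoff sum differences $|S_n(f)(x)-S_n(f)(y)-p|<\vep$ for a suitable $p\in\{-1,1\}$, uniformly for $n\in[M,M+L]$; the backward statement is then entirely symmetric (replace $T$ by $T^{-1}$, which has the mirror IET structure, and apply the forward argument). First I would observe that \eqref{betai} says none of the intervals $T^i[x,y]$ for $0\le i\le M+L$ contains a discontinuity $\ell_\alpha$ of $T$; since $T$ is a piecewise isometry that is continuous and length-preserving on each continuity interval, this means $T^i$ restricted to $[x,y]$ is an isometry onto $T^i[x,y]$ for each such $i$, so in particular $|T^n x - T^n y| = |x-y| = y-x < \vep$, giving $d(T^n x, T^n y)<\vep$ for all $n\in[M,M+L]$. (Here I am using that $d$ on $I$ is just the usual distance on the interval.) This also means $T^i$ is affine (indeed a translation) on $[x,y]$, so $f\circ T^i$ is $\mathcal{C}^1$ on $(x,y)$ with $(f\circ T^i)'=f'\circ T^i$ (using $f\in\mathcal{C}^2$ away from the discontinuities and the chain rule, with $(T^i)'=1$), hence $S_n(f)$ is $\mathcal{C}^1$ on $[x,y]$ with $\frac{d}{dx}S_n(f)=S_n(f')$, exactly the chain-rule identity already recorded in Section~\ref{sec:outline}.

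Next I would apply the mean value theorem: for each $n\in[M,M+L]$ there is $\theta_n\in(x,y)$ with
\[
S_n(f)(y)-S_n(f)(x) = S_n(f')(\theta_n)\,(y-x).
\]
By \eqref{cont} the sign of $S_n(f')(\theta)$ is the same for all $\theta\in[x,y]$ and all $n\in[M,M+L]$; call this common sign $\sigma\in\{+1,-1\}$ (if $S_n(f')$ vanished identically the derivative bound \eqref{der} would fail, so $\sigma$ is well defined). Set $p\doteqdot \sigma\in P=\{-1,1\}$. Then by \eqref{der},
\[
\bigl|\,S_n(f)(y)-S_n(f)(x)\,\bigr| = |S_n(f')(\theta_n)|\,(y-x) \in \bigl((1-\vep),\,(1+\vep)\bigr),
\]
and the sign of $S_n(f)(y)-S_n(f)(x)$ is $\sigma=p$, so $S_n(f)(y)-S_n(f)(x)$ lies within $\vep$ of $p$, i.e.
\[
\bigl|\,S_n(f)(y)-S_n(f)(x)-p\,\bigr|<\vep \quad\text{for all } n\in[M,M+L].
\]
Combining this with the orbit-closeness from the previous paragraph shows $x,y$ satisfy \eqref{posi} with the given $M,L,\vep$ and this choice of $p$.

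For the backward case, I would note that $T^{-1}$ is again an IET (with mirrored combinatorial datum), its discontinuities are the points $r_\alpha$, and $S_{-n}(f)(x) = -\sum_{i=-n}^{-1}f(T^i x)$; the hypotheses in the second half of the statement are precisely \eqref{betai}, \eqref{cont}, \eqref{der} transported to $T^{-1}$, so the same mean-value-theorem argument (now with the identity $\frac{d}{dx}S_{-n}(f)=S_{-n}(f')$, again from the chain rule) yields closeness of $T^{-n}$-orbits and a sign $p\in\{-1,1\}$ for which $|S_{-n}(f)(x)-S_{-n}(f)(y)-p|<\vep$ on $[M,M+L]$, which is exactly \eqref{nega}. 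I do not anticipate a serious obstacle here: the only points requiring care are (a) that $x<y$ with $y-x<\vep$ makes both $d(T^{\pm n}x,T^{\pm n}y)$ equal to $y-x<\vep$ because the maps are isometries on the relevant intervals — so the absence of discontinuities in \emph{all} the iterates up to step $M+L$ is genuinely needed — and (b) that the constancy of the sign in \eqref{cont} is what lets a single $p$ work simultaneously for every $n$ in the window, rather than a possibly $n$-dependent sign. Everything else is the chain rule plus the mean value theorem.
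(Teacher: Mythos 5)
Your proof is correct and follows essentially the same route as the paper: deduce from \eqref{betai} that each $T^n$ acts as an isometry on $[x,y]$, apply the mean value theorem to $S_n(f)$ via the chain-rule identity $\frac{d}{dx}S_n(f)=S_n(f')$, and use \eqref{cont}, \eqref{der} to pin down a single $p\in\{-1,1\}$ working on the whole window. One small sign slip: \eqref{posi} requires $|S_n(f)(x)-S_n(f)(y)-p|<\vep$ (not $|S_n(f)(y)-S_n(f)(x)-p|<\vep$), so you should take $p=-\sigma$ rather than $p=\sigma$ — exactly as the paper does, choosing $p=-1$ when the common sign is positive — but this is cosmetic and does not affect the argument.
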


%
%
%
\begin{proof} We will assume that \eqref{betai}, \eqref{cont} and \eqref{der} hold and show \eqref{posi} (the proof of the other part of the assertion is analogous). Notice first that by \eqref{betai},
for every $n\in [M,M+L]$, 
$$
d(T^nx,T^ny)=d(x,y)=y-x<\vep,
$$
so the first part of \eqref{posi} holds. Moreover, by \eqref{betai} and the fact that $f\in \mathcal{C}^2(\mathbb{T}\setminus\{\ell_\alpha : \alpha\in\mathcal{A}\})$, for every $n\in [M,M+L]$, we have 
\begin{equation}\label{ivt}
S_{n}(f)(x)-S_{n}(f)(y)=(x-y)S_{n}(f')(\theta_n) \text{ for some }\theta_n\in[x,y].
\end{equation}
By~\eqref{cont}, we can assume WLOG that for every $n\in [M,M+L]$ and every $\theta\in[x,y]$, $S_{n}(f')(\theta_n)>0$ (the opposite case is analogous). Then, by \eqref{ivt}, for every $n\in[M,M+L]$, using \eqref{der}, we obtain
$$
|S_{n}(f)(x)-S_{n}(f)(y)+1|= |(y-x)S_{n}(f')(\theta_n)-1|\leq \vep,
$$
so \eqref{posi} holds with $p=-1\in P$.
\end{proof}

%

\subsection{Growth of Birkhoff sums of derivatives}  \label{sec:derivatives}
Throughout this section, let $T$  be an IET and  let $f \in AsymLogSing(T)$.  We will also assume that IET $T$ satisfies Mixing DC.
Before stating quantative results on the growth of the Birkhoff sums $S_r(f\rq{}\rq{})(x)$ of the derivative $f'$ of the function over $T$, we will introduce some notation and definitions. 
Define the following sequence $\{ \sigma_{\ell} \}_{\ell \in \mathbb{N}}$, used in the proof of Proposition \ref{mpd} below as a threshold to determine whether $r$ is closer to $q_l$ or to $q_{l+1}$.
Let $\tau'$ be such that $\tau/2 < \tau' < 1$, where $\tau$ is the Diophantine exponent in \eqref{integrability} given by Proposition \ref{existencebalancedtimes} and $\tau '$ is well defined since $\tau <2$. Let
\be \label{sigmadef}
\sigma_\ell = \sigma_\ell (T) \doteqdot \left( \frac{ \log \| A_{\ell} \|} {\log q_\ell}  \right)^{\tau'} ,  \qquad \frac{\tau}{2} < \tau' < 1,
\ee
Clearly $\sigma_{\ell}$ depends on the IET $T$ we start with, since the sequence $\{n_\ell\}_{\ell \in \mathbb{N}}$ does.

The following set is the set of points which one needs to \emph{throw away} in order to get estimates on Birkhoff sums for times between $q_\ell$ and $q_{\ell+1}$ (see Proposition \ref{mpd}). These are points that get too close to the singularities, so that the corresponding Birkhoff sums can be arbitrarily large. 
\begin{defn} Let $\Sigma^+_{\ell}=\Sigma^+_\ell (T)$ be the following set, where $[\cdot]$ denotes the fractional part:
\be \label{badsetforBSu}
\Sigma^+_\ell (T) 
\doteqdot \bigcup_{\alpha\in\cA} \bigcup_{i=0}^{[\sigma_{\ell} q_{\ell+1}]} T^{-i} [-\sigma_\ell I^{(n_\ell)}+\ell_\alpha,l_\alpha+ \sigma_\ell I^{(n_\ell)}].
\ee
\end{defn}
\begin{rem}\label{mes.sigma}
Notice that since $n_\ell$ is a $\nu$-balanced time (see Remark \ref{balancedcomparisons})
$$
\lambda(\Sigma^+_\ell)\leq 2|\cA|\nu\sigma_\ell^2\frac{q_{\ell+1}}{q_\ell}\leq 2|\cA|\nu^2\sigma_\ell^2\|A_\ell\|.
$$
\end{rem}

 For $\alpha\in \cA$, let  us denote by  $u_\alpha(x)=\frac{1}{x-r_\alpha}$ and by $v_\alpha(x)=\frac{1}{l_\alpha-x}$. Then the following functions $U(r,x)$ and $V(r,x)$ give the largest contribution in a Birkhoff sum of length $r$ starting from $x$ given respectively to visits of the orbit which are close to a singularity form the right or from the left respectively:
$$U(r,x)= \max_{\alpha\in\cA}\max_{0\leq i<r}u_\alpha(T^ix),
\;\;\;\; V(r,x)=\max_{\alpha\in\cA}\max_{0\leq i<r}v_\alpha(T^ix).$$

For points outside the set $\sigma_\ell$ one has the following precise estimates, which were proved by the third author in \cite{Ul:mix} for $f$ with one asymmetric logarithmic singularity and then extended by Ravotti in \cite{Ra:mix}  to general $f \in AsymLog(T)$.

\begin{prop}[Growths of derivatives, \cite{Ul:mix, Ra:mix}]\label{mpd} 
Let $T$ be an IET which satisfies the Mixing DC along a subsequence $(n_\ell)$ of induction times. Let $f\in AsymLogSing(T)$. Assume WLOG that $C^->C^+$. 
There exists $M>0$ such that for every $\vep>0$ there exists $\ell_1\in \N$  such that for every $\ell\geq \ell_1$
$$
q_{\ell}\leq r<q_{\ell+1}\text{    and    } x\notin \Sigma_\ell^+,
$$
we have
$$
(C^--C^+-\vep^2)r\log r\leq S_r(f')(x)\leq (C^--C^++\vep^2)r\log r +M(U(r,x)+V(r,x)).
$$
\end{prop}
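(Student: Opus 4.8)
The statement is due to \cite{Ul:mix} (for a single asymmetric singularity) and \cite{Ra:mix} (in general), and the plan is to reproduce the strategy of those proofs. Since $T$ is a piecewise isometry we have $\frac{d}{dx}S_r(f)=S_r(f')$ a.e., and the crux is that, although $f'\notin L^1$ — it has Cauchy-type singularities $\sim\pm C^{\pm}_\alpha/\mathrm{dist}$ — its Birkhoff sums are nevertheless controlled by the balance built into the Diophantine condition. First I would isolate the singular part: using the $\mathcal C^2$ control in Definition~\ref{def_LogAsym}\eqref{logD} one writes $f'=g+R$, where $g$ is the explicit model, namely a linear combination of the Cauchy functions $u_\alpha,v_\alpha$ cut off near the discontinuities, with coefficient $-C^{+}_\alpha$ in front of $1/(x-l_\alpha)$ and $+C^{-}_\alpha$ in front of $1/(r_\alpha-x)$, and where $R$ is bounded away from the singularities and is $\mathrm{o}(1/\mathrm{dist})$ at each singularity. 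The contribution of $R$ will be lower order: its bounded part gives $O(r)=\mathrm{o}(r\log r)$, and for the $\mathrm{o}(1/\mathrm{dist})$ part one runs the same dyadic counting as for $g$ but with coefficient $\mathrm{o}(1)$ at scale $\to 0$, so $S_r(R)(x)=\mathrm{o}(r\log r)+O(U(r,x)+V(r,x))$.

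The heart of the argument is the estimate for $g$, whose mechanism is a simultaneous equidistribution at all dyadic scales. Fix an interior discontinuity $p=l_\gamma=r_\delta$; for $q_\ell\le r<q_{\ell+1}$ and $k$ ranging over a dyadic window from a fixed constant down to $\approx\log r$, let $N^{R}_k(x)$ and $N^{L}_k(x)$ count the $i\in[0,r)$ with $T^i x$ at distance in $[2^{-k-1},2^{-k})$ to the right, resp.\ the left, of $p$. Using that the induction times $n_\ell$ are balanced (Remark~\ref{balancedcomparisons}) together with the subpolynomial control $\|A_\ell\|=\mathrm{o}(\ell^{\tau})$ coming from the Mixing DC, one shows $N^{R}_k(x)=r2^{-k}(1+\mathrm{o}(1))=N^{L}_k(x)$ for every $k$ with $r2^{-k}\to\infty$, while the finitely many finest scales contribute only $O(r)$ in total. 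On the scale-$k$ annulus, $g(T^ix)$ equals $-C^{+}_\gamma 2^{k}(1+\mathrm{o}(1))$ just to the right of $p$ and $+C^{-}_\delta 2^{k}(1+\mathrm{o}(1))$ just to the left, so the two sides partially cancel and the net contribution of $p$ per dyadic scale is $(C^{-}_\delta-C^{+}_\gamma)r(1+\mathrm{o}(1))$, hence $(C^{-}_\delta-C^{+}_\gamma)r\log_2 r(1+\mathrm{o}(1))$ in total. Summing over all singularities of $f$ — the interior discontinuities together with the two edge points $0,1$, which the orbit approaches from one side only — and using $C^{\pm}=\sum_\alpha C^{\pm}_\alpha$, the totals telescope to $C^{-}-C^{+}$, giving $S_r(g)(x)=(C^{-}-C^{+})r\log r(1+\mathrm{o}(1))$ uniformly over $x\notin\Sigma^+_\ell$.

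The purpose of removing $\Sigma^+_\ell$ is precisely to keep the orbit at controlled distance from the singularities: by its definition and Remark~\ref{balancedcomparisons}, a point $x\notin\Sigma^+_\ell$ has $\mathrm{dist}(T^i x,l_\alpha)\ge\sigma_\ell|I^{(n_\ell)}|$ for all the relevant $i$ and $\alpha$, so no single term of $S_r(f')$ can exceed roughly $(\sigma_\ell|I^{(n_\ell)}|)^{-1}$. This bounds the error produced by the one possible borderline-close visit by $M(U(r,x)+V(r,x))$ in the upper bound, and for the lower bound it rules out a large negative term from a close approach on a $C^{+}$-side (which, once $C^{-}>C^{+}$, is the only way to violate $S_r(f')\ge(C^{-}-C^{+}-\vep^2)r\log r$). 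Finally, fixing $\vep>0$ and choosing $\ell_1$ large enough, all the $\mathrm{o}(1)$ and $\mathrm{o}(r\log r)$ terms above are absorbed into $\vep^2 r\log r$ for $\ell\ge\ell_1$, and the bad-visit contributions are collected into the $M(U+V)$ error; this yields the two displayed inequalities.

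The step I expect to be the main obstacle is the uniform dyadic equidistribution count, $N^{R}_k(x),N^{L}_k(x)=r2^{-k}(1+\mathrm{o}(1))$ with error uniform in $\ell$ and with the left/right split accurate enough for the cancellation to survive down to the scales that produce the $\log r$. Proving it requires a careful induction along the Rauzy--Veech acceleration, combining positivity and balance of the times $n_\ell$, a Denjoy--Koksma-type estimate for $f'$ over a single Rauzy--Veech tower, and the polynomial bound on $\|A_\ell\|$; this is the genuine analytic content of the proposition and is exactly what is carried out in \cite{Ul:mix, Ra:mix}.
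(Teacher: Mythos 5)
The paper does not prove Proposition~\ref{mpd}: it imports the result from \cite{Ul:mix} (one singularity) and \cite{Ra:mix} (several), so there is no in-paper argument to compare against. Your reconstruction captures the correct mechanism from those sources: split $f'$ into the model singular part (coefficient $-C^+_\alpha$ on $1/(x-l_\alpha)$, coefficient $+C^-_\alpha$ on $1/(r_\alpha-x)$) plus a lower-order remainder, count orbit visits at each distance scale, exploit the partial cancellation of the right-hand ($-C^+$) and left-hand ($+C^-$) contributions to obtain the net $(C^--C^+)\,r\log r$, and use $\Sigma_\ell^+$ to cap the single closest approach (for the lower bound) and to fold the overshoot from that approach into the $M(U(r,x)+V(r,x))$ term (for the upper bound). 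You also correctly locate the analytic core in the uniform scale-by-scale visit count, which is precisely where the Mixing DC (balance, positivity, $\|A_\ell\|=o(\ell^\tau)$) is consumed in \cite{Ul:mix, Ra:mix}.

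Two organizational points deserve a note. The proofs in \cite{Ul:mix, Ra:mix} do not literally partition visits into dyadic annuli: they decompose the orbit segment $\{T^i x : 0\leq i<r\}$ into complete Rauzy--Veech towers at intermediate steps $n$ with $n_\ell\leq n\leq n_{\ell+1}$ and estimate the induced special Birkhoff sums of $f'$ tower by tower, because the scales at which equidistribution is actually controlled (via positivity and balance) are tower widths, not dyadic scales; the dyadic picture is the right heuristic but not the literal structure of the argument. Your annulus count also carries a constant-factor slip: counting $\approx r\,2^{-k}$ points each contributing $\approx 2^{k}$ gives $\approx r$ per scale and hence $\approx r\log_2 r$ in total (which you indeed wrote), whereas the claim is $r\log r$; the precise per-annulus contribution, obtained by integrating the equidistributed density $r$ against $1/t$ over $[2^{-k-1},2^{-k})$, is $r\ln 2$, which restores $r\ln r$. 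These are corrigible bookkeeping issues rather than gaps in the idea; given that the paper itself defers the proof to the cited references, the level of detail in your sketch is appropriate and faithful to the strategy there.
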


We present now some estimates on Birkhoff sums (Lemma \ref{prty}) which can be deduced by Proposition \ref{mpd} and are given in a form which will  be convenient for us to prove the SW-Ratner property. In order to prove the quantitative estimates on  
Birkhoff sums in Lemma \ref{prty}, we need the following technical Lemma. 

\begin{lemma}\label{techest} 
If $\tau,\tau',\xi,\eta$ are such that
$$
   \tau\in \left(1,{16}/{15}\right),\;\;\;\tau'\in \left({15}/{16},1\right),\;\;\;
	 \eta\in \left(3/4,2\tau '-\tau \right),\;\;\;
	 \xi\in \left(\max(11/12,\tau ' \eta),\tau '\right),
$$
then  the following hold:
\begin{equation}\label{nr1}
 \lim_{\ell\to+\infty}\sigma_\ell(\log q_\ell)^\xi=0;
\end{equation}

\begin{equation}\label{nr2}
\lim_{\ell\to+\infty}\sigma_\ell^{2-\eta}\ell^\tau=0;
\end{equation}
\begin{equation}\label{nr3} 
 \lim_{\ell\to+\infty}\frac{\log \|A_\ell\|}{(\log q_\ell)^\xi\sigma^\eta_\ell}=0;
\end{equation}
\end{lemma}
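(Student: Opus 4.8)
The plan is to reduce all three limits to a single elementary observation about the sizes of $\|A_\ell\|$ and $q_\ell$ under the Mixing DC, and then to check a handful of inequalities among the exponents. First I would record two preliminary bounds. From the integrability condition \eqref{integrability} one has $\|A_\ell\|=\mathrm{o}(\ell^\tau)$, hence $\log\|A_\ell\|\le\tau\log\ell$ for all large $\ell$ (and $\log\|A_\ell\|\ge0$ always; the indices with $\|A_\ell\|=1$, for which $\sigma_\ell=0$, are harmless and can be ignored). On the other hand, since the sequence $\{n_\ell\}_{\ell\in\N}$ is balanced and a subsequence of spacing $\overline{\ell}$ is positive, Remarks \ref{positvegrowth} and \ref{balancedcomparisons} give $|{\lambda}^{(n_\ell)}|\le d^{-\lfloor \ell/\overline{\ell}\rfloor}$ and $q_\ell\asymp 1/|{\lambda}^{(n_\ell)}|$; thus $(q_\ell)$ grows at least geometrically and there is $c_0>0$ with $\log q_\ell\ge c_0\ell$ for all large $\ell$ (the geometric growth of $(q_\ell)$ is already used in the proof of Proposition~\ref{RDCfullmeasure}).

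Next, using the definition \eqref{sigmadef} $\sigma_\ell=(\log\|A_\ell\|/\log q_\ell)^{\tau'}$, I would rewrite the three quantities as products of powers of $\log\|A_\ell\|$ and $\log q_\ell$: the left side of \eqref{nr1} equals $(\log\|A_\ell\|)^{\tau'}(\log q_\ell)^{\xi-\tau'}$, that of \eqref{nr2} equals $(\log\|A_\ell\|)^{\tau'(2-\eta)}(\log q_\ell)^{-\tau'(2-\eta)}\ell^\tau$, and that of \eqref{nr3} equals $(\log\|A_\ell\|)^{1-\tau'\eta}(\log q_\ell)^{\tau'\eta-\xi}$. In each case the exponent of $\log\|A_\ell\|$ — namely $\tau'$, $\tau'(2-\eta)$, and $1-\tau'\eta$ — is positive: the last because $\tau'<1$ and $\eta<2\tau'-\tau<1$ (the bound $\eta<1$ uses $\tau>1$), so $\tau'\eta<1$; hence by the first preliminary bound that factor is $\lesssim$ a fixed power of $\log\ell$. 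The exponent of $\log q_\ell$ is negative in each case — $\xi-\tau'<0$ in \eqref{nr1}, $-\tau'(2-\eta)<0$ in \eqref{nr2}, $\tau'\eta-\xi<0$ in \eqref{nr3}, these being exactly the hypotheses $\xi<\tau'$, $\eta<2$, $\xi>\tau'\eta$ — so by the second preliminary bound that factor is $\lesssim\ell^{(\cdot)}$. Collecting exponents, \eqref{nr1} is $\lesssim(\log\ell)^{\tau'}\ell^{\xi-\tau'}$, \eqref{nr3} is $\lesssim(\log\ell)^{1-\tau'\eta}\ell^{\tau'\eta-\xi}$, and \eqref{nr2} is $\lesssim(\log\ell)^{\tau'(2-\eta)}\ell^{\tau-\tau'(2-\eta)}$, where $\tau-\tau'(2-\eta)=\tau-2\tau'+\tau'\eta<\tau-2\tau'+\eta<0$ using $\tau'\eta<\eta$ and $\eta<2\tau'-\tau$. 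In every case the power of $\ell$ is negative and dominates the fixed power of $\log\ell$, so each limit is $0$.

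I expect no genuine obstacle: this is a bookkeeping lemma whose content is simply that $\sigma_\ell$ decays like a negative power of $\ell$ up to logarithmic factors, while the roughly ergodic-type factor $\ell^\tau$ in \eqref{nr2} is overpowered by $\sigma_\ell^{2-\eta}$ for $\eta$ small enough. The one point to check carefully — and the reason for the specific numerical constants $16/15$, $15/16$, $11/12$, $3/4$ in the statement — is that the sign conditions on the exponents can hold simultaneously, i.e.\ that the stated intervals for $\eta$ and $\xi$ are nonempty: one has $2\tau'-\tau>2\cdot\tfrac{15}{16}-\tfrac{16}{15}=\tfrac{97}{120}>\tfrac34$, leaving room for $\eta$, and $\tau'>\tfrac{15}{16}>\tfrac{11}{12}$ together with $\tau'\eta<\tau'$ (since $\eta<1$), leaving room for $\xi$ strictly between $\max(\tfrac{11}{12},\tau'\eta)$ and $\tau'$. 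Once these ranges are fixed, the computation above goes through verbatim, and the remaining estimates on $\log\ell$ versus powers of $\ell$ are routine.
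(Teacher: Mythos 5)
Your proof is correct and follows essentially the same route as the paper: both use the integrability bound $\log\|A_\ell\|\lesssim\log\ell$ together with the exponential growth $\log q_\ell\gtrsim\ell$, expand $\sigma_\ell$ via \eqref{sigmadef} into powers of $\log\|A_\ell\|$ and $\log q_\ell$, and verify the sign conditions on the exponents. (One minor quibble: your aside that indices with $\|A_\ell\|=1$ are ``harmless'' is slightly off for \eqref{nr3}, where $\sigma_\ell=0$ would give $0/0$ rather than $0$; but such indices do not in fact occur since $A_\ell$ is a nontrivial product of Rauzy--Veech matrices, so nothing is lost.)
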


\begin{proof} Notice first that there exists a constant $c>0$ such that 
\begin{equation}\label{exp.ql}
q_\ell\geq c^\ell.
\end{equation}
For sufficiently large $\ell\in \N$, by the integrability condition \ref{integrability}, $\|A_\ell\|\leq \ell^\tau$. Since $\xi<\tau'$ we get (see \eqref{sigmadef})
$$\sigma_\ell(\log q_\ell)^\xi\leq \frac{\tau \log \ell}{(\log q_\ell)^{\tau'-\xi}}.
$$
This and \eqref{exp.ql} give \eqref{nr1}.  To prove \eqref{nr2} notice that by \eqref{sigmadef}, the integrability condition \eqref{integrability} and \eqref{exp.ql} for sufficiently large $\ell$ we have (for some constant $C>0$)
\begin{equation}\label{firstest}
\sigma_\ell^{2-\eta}\ell^\tau \leq C \frac{(\tau \log \ell)^{(2-\eta)\tau'}}{\ell^{(2-\eta)\tau'-\tau}}.
\end{equation}
Since $\eta\tau'<\eta<2\tau'-\tau$ by the assumptions on $\eta,\tau,\tau'$, we have that $(2-\eta)\tau'-\tau>0$.
Thus, \eqref{nr2} follows from \eqref{firstest}.  
Next, notice that
$$
\frac{\log \|A_\ell\|}{(\log q_\ell)^\xi\sigma^\eta_\ell}\leq 
\frac{\tau \log \ell}{(\log q_\ell)^{\xi-\eta\tau'}},
$$
this finishes the proof of \eqref{nr3} since, by assumptions, 
$\xi-\eta\tau'>0$ and \eqref{exp.ql} holds.
\end{proof}

\smallskip
\noindent \textbf{Assumption.} From now on we make a standing assumption\footnote{We remark that the condition on $\eta$ we ask for here is on purpose more restrictive of the condition required in Lemma \ref{techest}, since this will be useful later.} on $\tau,\tau',\xi,\eta$, namely
\begin{equation}\label{asucons}
   \tau\in \left(1,\frac{16}{15}\right),\;\;\;\tau'\in \left(\frac{15}{16},1\right),\;\;\;
	 \eta\in \left(3/4,\tau'(2\tau '-\tau)\right),\;\;\;
	 \xi\in \left(\max(11/12,\tau ' \eta),\tau '\right).
\end{equation}
One can verify that all intervals are indeed non-empty, so that such a choice exists.
 Next lemma allows us to control forward (backward) Birkhoff sums of the derivative for points whose forwards (backwards) orbits do not come to close to singularities. It says that if orbit of a point stays away from singularity, then Birkhoff sums of the derivative are (up to a small error) equal to the main contribution (comming from sums along the orbit). The main tool is Proposition \ref{mpd}.  
\begin{lemma}[Growth of derivatives for SR-property]\label{prty} For every $\vep>0$ there exists $\ell_1\in \N$  such that for every $\ell\geq \ell_1$ we have the following: 
\begin{enumerate}[(A)]
\item if
\begin{equation}\label{fra}U(q_{\ell+1},x), V(q_{\ell+1},x)\leq 2 q_\ell(\log q_\ell)^\xi ,
\end{equation}
then for $q_\ell\leq r<q_{\ell+1}$ we have
\begin{equation}\label{est.ex}
(C^--C^+-\vep^2)r\log r\leq S_r(f')(x)\leq (C^--C^++\vep^2)r\log r;
\end{equation}
\item if
\begin{equation}\label{fra1}U(q_{\ell+1},T^{-q_{\ell+1}}x), V(q_{\ell+1},T^{-q_{\ell+1}}x)\leq 2 q_\ell(\log q_\ell)^\xi ,
\end{equation}
then for $h^{(n_l)}\leq r<h^{(n_{l+1})}$ we have
\begin{equation}\label{est.ex2}
(C^--C^+-\vep^2)r\log r\leq -S_{-r}(f')(x)\leq (C^--C^++\vep^2)r\log r.
\end{equation}
\end{enumerate}
\end{lemma}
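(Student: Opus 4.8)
The plan is to deduce Lemma~\ref{prty} directly from Proposition~\ref{mpd} in two steps: first I would show that the hypotheses \eqref{fra} and \eqref{fra1} force the relevant point to lie \emph{outside} the exceptional set $\Sigma_\ell^+$, so that Proposition~\ref{mpd} applies; then I would show that under the same hypotheses the extra error term $M(U(r,x)+V(r,x))$ appearing there is negligible compared with $r\log r$.

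\emph{Part (A).} First I claim that \eqref{fra} implies $x\notin\Sigma_\ell^+$ for all large $\ell$. Indeed, if $x\in\Sigma_\ell^+$ then by \eqref{badsetforBSu} there are $\alpha\in\cA$ and $0\le i\le[\sigma_\ell q_{\ell+1}]$ (note $[\sigma_\ell q_{\ell+1}]<q_{\ell+1}$ once $\sigma_\ell<1$) with $T^ix$ at distance at most $\sigma_\ell|I^{(n_\ell)}|$ from $\ell_\alpha$; according to the side on which $T^ix$ falls, one of $v_\alpha(T^ix)$, or $u_\beta(T^ix)$ with $r_\beta=\ell_\alpha$, is at least $(\sigma_\ell|I^{(n_\ell)}|)^{-1}$, so that $\max\{U(q_{\ell+1},x),V(q_{\ell+1},x)\}\ge(\sigma_\ell|I^{(n_\ell)}|)^{-1}$. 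Since $n_\ell$ is $\nu$-balanced, Remark~\ref{balancedcomparisons} gives $|I^{(n_\ell)}|=|\lambda^{(n_\ell)}|\le\nu/q_\ell$, hence this maximum is $\ge q_\ell/(\nu\sigma_\ell)$; but by \eqref{nr1} of Lemma~\ref{techest} (which holds under the standing assumption \eqref{asucons}) we have $\sigma_\ell(\log q_\ell)^\xi\to0$, so $q_\ell/(\nu\sigma_\ell)>2q_\ell(\log q_\ell)^\xi$ for $\ell$ large, contradicting \eqref{fra}. Having established $x\notin\Sigma_\ell^+$, I apply Proposition~\ref{mpd} with $\vep$ replaced by $\vep/2$: for $q_\ell\le r<q_{\ell+1}$,
\[(C^--C^+-\tfrac{\vep^2}{4})\,r\log r\ \le\ S_r(f')(x)\ \le\ (C^--C^++\tfrac{\vep^2}{4})\,r\log r+M(U(r,x)+V(r,x)).\]
Since $r<q_{\ell+1}$ we have $U(r,x)\le U(q_{\ell+1},x)$ and $V(r,x)\le V(q_{\ell+1},x)$, so by \eqref{fra} the error term is at most $4Mq_\ell(\log q_\ell)^\xi$; dividing by $r\log r\ge q_\ell\log q_\ell$ bounds it by $4M(\log q_\ell)^{\xi-1}\cdot r\log r$, and $4M(\log q_\ell)^{\xi-1}\to0$ because $\xi<1$ and $q_\ell\to\infty$. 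Hence for $\ell$ large the error is $\le\tfrac{3\vep^2}{4}r\log r$ and \eqref{est.ex} follows.

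\emph{Part (B).} I would reduce this to Part~(A) by reversing time, i.e.\ by applying the argument above (equivalently, Proposition~\ref{mpd}) to the inverse IET $T^{-1}$ and to the roof $g=f\circ T^{-1}$. One checks that $g\in AsymLogSing(T^{-1})$ and, crucially, that $g$ has the \emph{same} asymmetry constant $C^--C^+$: the one-sided constants $C_\alpha^\pm$ are merely permuted among the discontinuities by the isometry $T$, while $C^+=\sum_\alpha C_\alpha^+$ and $C^-=\sum_\alpha C_\alpha^-$ are unchanged. One also needs that $T^{-1}$ satisfies the Mixing DC along the sequence of induction times given by the \emph{backward} returns of the natural-extension lift $\hat T$ to the inducing set $\hat Y$ — this holds for a.e.\ $\hat T$ since $\hat\R$ is invertible and $\hat\mu$-preserving, so the backward returns have the same tail behaviour as the forward ones — and for this sequence the role of $q_\ell$ is played by the maximal height of the corresponding backward Rohlin towers, which is why the range in \eqref{est.ex2} is written $h^{(n_\ell)}\le r<h^{(n_{\ell+1})}$. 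Using $(T^{-1})'=1$ a.e.\ one has $-S_{-r}(f')(x)=S^{T^{-1}}_r(g')(x)$, and the hypothesis \eqref{fra1}, which controls the backward orbit $\{T^{-j}x:1\le j\le q_{\ell+1}\}=\{T^i(T^{-q_{\ell+1}}x):0\le i<q_{\ell+1}\}$, is precisely the control of the forward $T^{-1}$-orbit of $x$ required to run the argument of Part~(A). Then \eqref{est.ex2} follows exactly as \eqref{est.ex} did.

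The substantive point, and the only place where the precise choice of exponents enters in this proof, is the first step of Part~(A): one needs $\sigma_\ell(\log q_\ell)^\xi\to0$ (to push $x$ out of $\Sigma_\ell^+$) together with $\xi<1$ (so that the $U+V$ error is swallowed by $r\log r$), both of which are secured by the standing assumption \eqref{asucons} through Lemma~\ref{techest}. The backward case introduces no new difficulty; the only mild care required is the bookkeeping of the one-sided limits of $g=f\circ T^{-1}$ at the discontinuities of $T^{-1}$ and the identification of $T^{-1}$'s Rauzy–Veech data with the backward data of $T$.
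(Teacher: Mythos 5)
Part~(A) of your proposal is essentially the paper's argument (contrapositive to show $x\notin\Sigma_\ell^+$, then Proposition~\ref{mpd} with the $U+V$ error absorbed using $\xi<1$), and that part is fine.

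Part~(B) is where there is a genuine gap. The paper does \emph{not} pass to the inverse IET $T^{-1}$ with its own Rauzy--Veech apparatus. Instead, it uses the cocycle identity $-S_{-r}(f')(x)=S_r(f')(T^{-r}x)$ to reduce the backward sum to a \emph{forward} Birkhoff sum of the same $T$ at the shifted point $T^{-r}x$, and then applies the same Proposition~\ref{mpd}. The crux, which your proposal skips entirely, is showing $T^{-r}x\notin\Sigma_\ell^+$. That is not a formal mirror of the Part~(A) step: $\Sigma_\ell^+$ involves preimages $T^{-i}$ over $0\le i\le[\sigma_\ell q_{\ell+1}]$, and the forward $T$-orbit of $T^{-r}x$ of length $\sigma_\ell q_{\ell+1}$ is \emph{not} a priori contained in the backward orbit of $x$ controlled by \eqref{fra1}. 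The paper resolves this by first deducing from \eqref{fra1}, the meaning of $q_{\ell+1}$ as a return time (every length-$q_{\ell+1}$ orbit enters $I^{(n_{\ell+1})}$, hence gets within $|I^{(n_{\ell+1})}|$ of the singularity $0$), and balance, that $q_{\ell+1}/q_\ell\lesssim(\log q_\ell)^\xi$, whence $\sigma_\ell q_{\ell+1}<q_\ell$ for $\ell$ large; this shrinks $\Sigma_\ell^+$ to a union of preimages over $i<q_\ell\le r$, which \emph{is} inside the controlled backward orbit, and $T^{-r}x\notin\Sigma_\ell^+$ follows. This observation --- that \eqref{fra1} \emph{forces} the Rauzy--Veech step $\ell\to\ell+1$ to be small --- is the heart of Part~(B), and your proposal has no substitute for it.

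Your route via $T^{-1}$ also has independent problems. You invoke ``a.e.\ $\hat T$'' to get a Mixing DC for $T^{-1}$, but the lemma must hold deterministically for the fixed $T$ supplied with the Mixing/Ratner DC; nothing in the hypotheses gives you a Diophantine condition for $T^{-1}$. Moreover, even granting such a condition, the quantities $q_\ell$, $\sigma_\ell$, $\Sigma_\ell^+$ in the statement are all defined from the \emph{forward} Rauzy--Veech data of $T$ along the chosen sequence $(n_\ell)$, so you would additionally need to show that the forward RV data of $T^{-1}$ (or the backward data of $\hat T$) reproduces exactly these $q_\ell$ and $\sigma_\ell$; you gesture at this (``the role of $q_\ell$ is played by the maximal height of the corresponding backward Rohlin towers'') but it is not established, and in fact the $h^{(n_\ell)}$ in the statement of \eqref{est.ex2} are just the forward heights, i.e.\ $q_\ell$ again, not a new backward quantity. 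Finally, Proposition~\ref{mpd} is only stated for $T$ and $f\in AsymLogSing(T)$; applying it to $T^{-1}$ and $f\circ T^{-1}$ would require reproving it (or showing it is preserved under time-reversal), which is another missing ingredient. In short, the paper's argument is deterministic, elementary, and stays entirely within the forward data of $T$, while your approach replaces a concrete inequality with an unjustified symmetry between $T$ and $T^{-1}$.
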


\begin{proof}
Fix $\vep>0$. Let us first show (A). From Lemma \ref{techest}, for sufficiently large $\ell$ (depending on  $\vep$) we have 
\begin{equation}\label{jetd} (\log q_\ell)^\xi <
\frac{\vep}{\sigma_\ell}
\end{equation}
Let $M$ be the constant given by Proposition \ref{mpd}. Notice that for every $i\in [0, \sigma_\ell q_{\ell+1}]\subset [0,q_{\ell+1}]$ and every $\alpha\in\cA$, using \eqref{fra} and \eqref{jetd} (since $n_\ell$ is a balanced time and $\vep$ is small)
$$
d(T^ix,l_\alpha)\geq \frac{1}{2 q_\ell(\log q_\ell)^\xi}\geq \sigma_\ell I^{(n_\ell)}.
$$
Therefore  $x\notin \Sigma_\ell^+$ (see \eqref{badsetforBSu}).  
Moreover, if $\ell$ is sufficiently large (since $\xi<1$), using \eqref{fra}, we have for every $r\in [q_\ell,q_{\ell+1})$,
\begin{equation}\label{noi}
\frac{\vep}{2M}r\log r\geq\frac{\vep}{2M}q_\ell\log q_\ell\geq 2q_\ell(\log q_\ell)^\xi\geq U(q_{\ell+1},x)\geq U(r,x), 
\end{equation}
(the same estimates hold for $V(r,x)$).

 Since $x\notin \Sigma_\ell^+$,  we can use estimates in Proposition \ref{mpd} for $\frac{\vep}{2}$ and using \eqref{noi} we know that for $\ell\geq \ell_1$ ($\ell_1$ depending on $\vep$), 
$$
(C^--C^+-\vep)r\log r\leq S_r(f')(x)\leq (C^--C^++\vep)r\log r.
$$
This gives \eqref{est.ex}.

Now we show \eqref{est.ex2}. Fix $r\in [q_\ell,q_{\ell+1})$. We will show that $T^{-r}x\notin \Sigma_\ell^+$. For this aim note first that if $\ell$ is sufficiently large, then 
\begin{equation}\label{mjf}
\sigma_\ell q_{\ell+1}< q_\ell.
\end{equation}
Indeed, by the definition of $q_{\ell+1}$ it follows that for every $z\in\mathbb{T}$, 
\begin{equation}\label{jv} 
\{z,\ldots,T^{q_{\ell+1}}z\}\cap I^{n_{\ell+1}}\neq \emptyset.
\end{equation}
In particular, for $z=T^{-q_{\ell+1}}x$, by the fact that $n_{\ell+1}$ is a balanced time, \eqref{jv} and \eqref{fra1}, it follows that
$$
\nu q_{\ell+1}\leq \frac{1}{I^{(n_{\ell+1})}}\leq U(q_{\ell+1},T^{-q_{\ell+1}}x)\leq 2 q_{\ell}(\log q_{\ell})^\xi.
$$
This and \eqref{jetd} gives \eqref{mjf} for sufficiently large $\ell$. By \eqref{mjf} it follows that 
\begin{equation}\label{sim}
\Sigma_\ell^+\subset\bigcup_{\alpha\in\cA} \bigcup_{i=0}^{q_{\ell}-1}T^{-i}
[-\sigma_\ell I^{(n_\ell)}+l_\alpha,l_\alpha+\sigma_\ell I^{(n_\ell)}].
\end{equation}
Notice however, that by \eqref{fra1}, for every $i\in [0,q_\ell)$ and every $\alpha\in\cA$ ($q_{\ell+1}>r\geq q_\ell$), we have

\begin{multline*}d(T^i(T^{-r}x),l_\alpha)=d(T^{i-r}x,l_\alpha)
\geq \sup_{-q_{\ell+1}<s<0}d(T^sx,l_\alpha)\geq\\
 \frac{1}{\max(U(q_{\ell+1},T^{-q_{\ell+1}}x),
V(q_{\ell+1},T^{-q_{\ell+1}}x))}\geq \frac{1}{2q_{\ell}(\log q_{\ell})^\xi}\geq
\sigma_\ell I^{(n_\ell)},\end{multline*}
the last inequality by \eqref{jetd} (and balance).
Therefore, 
$$T^{-r}x\notin \bigcup_{\alpha\in\cA} \bigcup_{i=0}^{q_{\ell}-1}T^{-i}
[-\sigma_\ell I^{(n_\ell)}+l_\alpha,l_\alpha+\sigma_\ell I^{(n_\ell)}],
$$
and by \eqref{sim}, $T^{-r}x\notin \Sigma_\ell^+$. Notice that
\begin{equation}\label{sie1}
U(r,T^{-r}x)\leq U(q_{\ell+1},T^{-q_{\ell+1}}x)\text{  and  }V(r,T^{-r}x)\leq V(q_{\ell+1},T^{-q_{\ell+1}}x).
\end{equation}
Moreover, by the cocycle identity $-S_{-r}(f')(x)=S_r(f')(T^{-r}x)$. Therefore, we use Proposition \ref{mpd} for $r$ and $T^{-r}x$ and use \eqref{sie1} and \eqref{fra1}, to get
$$
(C^--C^+-\vep)r\log r\leq S_{r}(f')(T^{-r}x)\leq (C^--C^++\vep)r\log r.
$$
This finishes the proof of \eqref{est.ex2}.
\end{proof}

\subsection{Ratner Summability Condition}\label{sec:summability}
In this section we will deduce from the Ratner DC a summability condition, which is very convenient when studying Birkhoff sums since it allows to estimate the measure of the set of points which we need to throw away to get the control of the growth of Birkhoff sums to prove the SR-condition (see the heuristic discussion in the outline in \cref{sec:outline}).

\begin{defn}\label{RatDC2} We say that an IET $T$ that satisfies the mixing DC with power $\tau$ satisfies the {\em Ratner Summability Condition} (or for short, the \emph{summability condition}) with 
 exponents $(\tau', \xi', \eta')$ if 
\begin{equation}\label{DCcon}
\sum_{\ell\notin K_T}\sigma_\ell^{\eta'}<+\infty,\qquad \text{where} \quad  K_T\doteqdot \Big\{\ell\in\N:\; q_{\ell+L}\leq
\frac{q_{\ell}}{\sigma_\ell^{\xi'}}\Big\}.
\end{equation} 
\end{defn}
We remark that the dependence on $\tau'$ in the definition is through $\sigma_\ell$ which appears in the definition of $K_T$, since as one can see in \eqref{sigmadef}, $\sigma_\ell$ depends on $\tau'$.


\begin{lemma}\label{equiv:DC} 
Under the assumption \eqref{asucons} on the parameters 
 $\tau,\tau', \xi, \eta$, if 
$T \in \RDCt{\tau}{\xi}{\eta}$, 
 for any  $\xi'>\frac{\xi}{\tau'}$ and $\eta'>\frac{\eta}{\tau'}$, $T$ satisfies the Ratner Summability Condition with  exponents $(\tau', \xi', \eta')$. 
\end{lemma}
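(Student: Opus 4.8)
The plan is to reduce the Summability Condition to the Ratner DC by a direct comparison of index sets together with a pointwise domination of summands. Write $G_T\doteqdot\{\ell\in\N:\|A_\ell\|\|A_{\ell+1}\|\cdots\|A_{\ell+L}\|>\ell^\xi\}$ for the set over which $T\in\RDCt{\tau}{\xi}{\eta}$ guarantees $\sum_{\ell\in G_T}(\log q_\ell)^{-\eta}<+\infty$. I will show that, for all sufficiently large $\ell$, (a) $\ell\notin K_T$ implies $\ell\in G_T$, and (b) $\sigma_\ell^{\eta'}\le(\log q_\ell)^{-\eta}$. Granting (a) and (b), the sum $\sum_{\ell\notin K_T}\sigma_\ell^{\eta'}$ splits as a finite sum over small indices plus $\sum_{\ell\notin K_T,\ \ell\ \mathrm{large}}\sigma_\ell^{\eta'}\le\sum_{\ell\in G_T}(\log q_\ell)^{-\eta}<+\infty$, which is exactly \eqref{DCcon}.

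The first ingredient is a comparison between heights and products of cocycle norms. By \eqref{heightsrelation} one has $\underline h^{(n_{\ell+L})}=(A_\ell A_{\ell+1}\cdots A_{\ell+L-1})^T\underline h^{(n_\ell)}$, and since the matrices $A_j$ have non-negative integer entries and all norms on $d\times d$ matrices are equivalent, there is a constant $C=C(d)\ge 1$ with $q_{\ell+L}\le C^L q_\ell\,\|A_\ell\|\cdots\|A_{\ell+L-1}\|\le C^L q_\ell\,\|A_\ell\|\cdots\|A_{\ell+L}\|$ (using $\|A_{\ell+L}\|\ge 1$). I also record the elementary bounds $\log q_\ell\ge(\log c)\,\ell$ with $c>1$ from \eqref{exp.ql} (valid since the induction times are positive), and $\log\|A_\ell\|\le\tau\log\ell$ for $\ell$ large from the integrability condition \eqref{integrability}; in particular $\sigma_\ell\to 0$, and $\log\|A_\ell\|\le 2\tau\log\log q_\ell$ for $\ell$ large.

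For (a): if $\ell\notin K_T$ then $q_{\ell+L}>q_\ell\,\sigma_\ell^{-\xi'}$ by definition of $K_T$, so the comparison above yields $\|A_\ell\|\cdots\|A_{\ell+L}\|>C^{-L}\sigma_\ell^{-\xi'}$. Now $\sigma_\ell^{-\xi'}=(\log q_\ell/\log\|A_\ell\|)^{\tau'\xi'}\ge\big((\log c)\,\ell/(\tau\log\ell)\big)^{\tau'\xi'}$, and since $\xi'>\xi/\tau'$ forces $\tau'\xi'>\xi$, this lower bound exceeds $C^L\ell^\xi$ for all $\ell$ large; hence $\|A_\ell\|\cdots\|A_{\ell+L}\|>\ell^\xi$, i.e.\ $\ell\in G_T$. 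For (b): for $\ell$ large, $\sigma_\ell^{\eta'}=(\log\|A_\ell\|/\log q_\ell)^{\tau'\eta'}\le\big(2\tau\log\log q_\ell/\log q_\ell\big)^{\tau'\eta'}$, and since $\eta'>\eta/\tau'$ gives $\tau'\eta'-\eta>0$ while $(\log\log q_\ell)^{\tau'\eta'}=o\big((\log q_\ell)^{\tau'\eta'-\eta}\big)$ as $\ell\to\infty$, we get $\sigma_\ell^{\eta'}\le(\log q_\ell)^{-\eta}$ for all $\ell$ large.

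The only genuinely computational point is the height–cocycle comparison in the first step, and it is routine; everything else is bookkeeping with exponents. The role of the hypotheses $\xi'>\xi/\tau'$ and $\eta'>\eta/\tau'$ is precisely to absorb the logarithmic discrepancy between $\log\|A_\ell\|=O(\log\ell)$ and $\log q_\ell\asymp\ell$, so I do not anticipate any serious obstacle.
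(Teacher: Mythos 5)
Your proof is correct and follows essentially the same route as the paper's: you reduce the Summability Condition to the Ratner DC by showing (a) $\ell\notin K_T$ forces $\ell$ into the index set where Ratner DC applies (via the height--cocycle comparison $q_{\ell+L}\le C^L q_\ell\|A_\ell\|\cdots\|A_{\ell+L-1}\|$ together with $\sigma_\ell^{-\xi'}\gg\ell^\xi$, which uses $\tau'\xi'>\xi$), and (b) $\sigma_\ell^{\eta'}\le(\log q_\ell)^{-\eta}$ for large $\ell$ (using $\tau'\eta'>\eta$ and $\log\|A_\ell\|=O(\log\ell)=O(\log\log q_\ell)$). The paper carries out the very same two comparisons, phrasing the bound on $\sigma_\ell$ as $\sigma_\ell\le(\log q_\ell)^{-\tau''}$ for any $\tau''<\tau'$ and then choosing $\tau''$ appropriately in each step; your version just makes the comparison with $\ell^\xi$ directly and is a bit more explicit about the multiplicative constant $C^L$ in the height estimate, which is a harmless (and arguably slightly more careful) variation rather than a different argument.
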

\begin{proof}
Notice first that, since $\log q_\ell\geq c\ell$ (for some constant $c>0$), for any $\epsilon>0$, $\log\|A_\ell \|/(\log q_\ell)^\epsilon$ tends to zero as $\ell $ grows by the integrability condition \eqref{integrability}. Thus,  
recalling the definition \eqref{sigmadef} of $\sigma_\ell$, we have that for any $\tau''< \tau'$, for any $\ell$ sufficiently large,
\begin{equation}\label{sigmaest}
{\sigma_\ell}= \frac{ {\left( \log \| A_{\ell} \|\right)}^{\tau'}}{(\log q_\ell)^{\tau'-\tau''}} \frac{1}{(\log q_\ell )^{\tau''}}
\leq 
   \frac{1}{(\log q_\ell)^{\tau''}}.
\end{equation}
Notice that $\xi'> \frac{\xi'\tau'+\xi}{2}> \xi$. Therefore and by \eqref{sigmaest} we have 
(for $\ell$ sufficiently large), we have
$$\frac{1}{\sigma_\ell^{\xi'}}\geq (\log q_\ell)^{\frac{\xi'\tau'+\xi}{2}}> \ell^\xi.
$$
Therefore, if $\ell\notin K_T$, then, writing $A_\ell$ for  $A_\ell(T)$ and taking $L$ as in \eqref{RatnerDC:eq}, we have that
\begin{equation}\label{sumovercomparision}
\|A_\ell \| \|A_{\ell+1} \|\cdots \|A_{\ell+L}\|\geq \frac{q_{\ell+L}}{q_{\ell}} > \frac{1}{\sigma_\ell^{\xi'}}> \ell^\xi.
\end{equation}
Moreover, since $\eta'\tau'>\eta$, thanks to \eqref{sigmaest}  we also have that $\sigma_\ell^{\eta'}<1/(\log q_\ell)^\eta$. Hence
\begin{equation}\label{finiteseries}
\sum_{\ell\notin K_T}\sigma_\ell^{\eta'}<
\sum_{\ell\in\N \ \text{s.t.\ } \|A_\ell\| \|A_{\ell+1}\|\cdots \|A_{\ell+L}\|> \ell^\xi } \sigma_\ell^{\eta'} <
\sum_{\ell\in\N \ \text{s.t.\ } \|A_\ell\| \|A_{\ell+1}\|\cdots \|A_{\ell+L}\|> \ell^\xi } \frac{1}{(\log q_\ell)^\eta } < + \infty.
\end{equation}
\end{proof}

The Ratner Summability Condition, as a corollary of Lemma \ref{techest}, implies in particular that the series of measures of the sets $\Sigma^+_\ell(T)$ of points not controlled in Proposition \ref{mpd} (see Definition \ref{badsetforBSu}) is \emph{summable} if one restricts the sum only to $ \ell \in \N$ such that  $\|A_{\ell} \| \| A_{\ell+1}\| \cdots\|A_{\ell+L}\| > \ell^\xi$, which will be useful to prove the SR-condition (see the outline in the introduction).

\begin{cor}\label{summabilitySigmas}
Under the assumptions of Lemma \ref{equiv:DC}, 
 we have that
\begin{equation}\label{sersum}
\sum_{\ell\notin K_T}  \lambda {\left(\Sigma_\ell^+(T) \right)}\leq 
\sum_{\ell\notin \widetilde{K}_T}
 \lambda {\left(\Sigma_\ell^+(T) \right)}<\infty,
\end{equation}
where $K_T$ is defined as in \eqref{DCcon} and $\widetilde{K}_T\doteqdot  \{ \ell \in \N:\; \|A_{\ell} \| \| A_{\ell+1}\| \cdots\|A_{\ell+L}\| \leq \ell^\xi\}$.
\end{cor}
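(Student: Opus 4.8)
The plan is to combine the measure estimate for $\Sigma^+_\ell(T)$ from Remark \ref{mes.sigma} with the summability coming out of the Ratner DC (Lemma \ref{equiv:DC}). First I would recall the bound from Remark \ref{mes.sigma}, namely
\begin{equation*}
\lambda(\Sigma^+_\ell(T)) \leq 2|\cA|\nu^2\, \sigma_\ell^2\, \|A_\ell\|.
\end{equation*}
Since the integrability condition \eqref{integrability} gives $\|A_\ell\| \leq \ell^\tau$ for all $\ell$ large enough, this reads $\lambda(\Sigma^+_\ell(T)) \ll \sigma_\ell^2 \ell^\tau$. I want to show this is summable over the complement of $\widetilde K_T$.

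The first observation is the inclusion $\N\setminus K_T \subset \N\setminus \widetilde K_T$ (equivalently $\widetilde K_T \subset K_T$), which justifies the first inequality in \eqref{sersum}: indeed, by \eqref{sumovercomparision} in the proof of Lemma \ref{equiv:DC}, if $\ell\notin K_T$ then $\|A_\ell\|\cdots\|A_{\ell+L}\| > \ell^\xi$, i.e. $\ell\notin\widetilde K_T$. So it suffices to bound $\sum_{\ell\notin\widetilde K_T}\lambda(\Sigma^+_\ell(T))$. Next I would split each summand as $\sigma_\ell^2\ell^\tau = \sigma_\ell^{2-\eta'}\ell^\tau\cdot \sigma_\ell^{\eta'}$, where $\eta'$ is chosen with $\frac{\eta}{\tau'} < \eta' $ and also $\eta' < \tau'(2\tau'-\tau)/(\text{something})$ — more precisely I would pick $\eta'$ small enough (but $>\eta/\tau'$, which is possible since the standing assumption \eqref{asucons} forces $\eta/\tau' < 2\tau'-\tau$, so $\eta/\tau'<1$ with room to spare) that $(2-\eta')\tau'-\tau>0$; then by exactly the computation \eqref{firstest} in the proof of Lemma \ref{techest} (with $\eta$ replaced by $\eta'$) one has $\sigma_\ell^{2-\eta'}\ell^\tau\to 0$, hence $\sigma_\ell^{2-\eta'}\ell^\tau$ is bounded by a constant. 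Therefore $\lambda(\Sigma^+_\ell(T)) \ll \sigma_\ell^{\eta'}$ for all large $\ell$, uniformly in $\ell$.

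Finally I would invoke Lemma \ref{equiv:DC}: since $T\in\RDCt{\tau}{\xi}{\eta}$ and $\eta'>\eta/\tau'$ (and picking correspondingly $\xi'>\xi/\tau'$), $T$ satisfies the Ratner Summability Condition with exponents $(\tau',\xi',\eta')$, i.e. $\sum_{\ell\notin K_T}\sigma_\ell^{\eta'}<\infty$; moreover the chain \eqref{finiteseries} in that proof shows in fact $\sum_{\ell\notin\widetilde K_T}\sigma_\ell^{\eta'}<\infty$ since the sum over $\ell\notin\widetilde K_T$ is dominated by the sum over $\{\ell : \|A_\ell\|\cdots\|A_{\ell+L}\|>\ell^\xi\}$. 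Combining with the previous paragraph, $\sum_{\ell\notin\widetilde K_T}\lambda(\Sigma^+_\ell(T)) \ll \sum_{\ell\notin\widetilde K_T}\sigma_\ell^{\eta'} + (\text{finitely many terms}) < \infty$, which gives \eqref{sersum}.

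The only mild subtlety — the point I would be most careful about — is the bookkeeping on exponents: checking that one can pick $\eta'$ simultaneously satisfying $\eta' > \eta/\tau'$ (needed for Lemma \ref{equiv:DC}) and $(2-\eta')\tau' - \tau > 0$ (needed so that $\sigma_\ell^{2-\eta'}\ell^\tau\to 0$). Under \eqref{asucons} we have $\eta < \tau'(2\tau'-\tau)$, so $\eta/\tau' < 2\tau'-\tau$, and the open interval $(\eta/\tau',\, 2 - \tau/\tau')$ — note $2\tau'-\tau>0$ gives $2-\tau/\tau'>\dots$ — hmm, one checks $2-\tau/\tau' > \eta/\tau'$ iff $2\tau' - \tau > \eta$, which is again \eqref{asucons}; so the interval is nonempty and any $\eta'$ in it works. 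Everything else is a routine application of the cited results, so this exponent check is the whole content of the verification.
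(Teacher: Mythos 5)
Your proposal is correct and follows essentially the same route as the paper: bound $\lambda(\Sigma^+_\ell)$ via Remark \ref{mes.sigma} and $\|A_\ell\|\leq\ell^\tau$, deduce $\widetilde K_T\subset K_T$ from \eqref{sumovercomparision}, compare $\sigma_\ell^2\ell^\tau\ll\sigma_\ell^{\eta'}$ for a well-chosen $\eta'>\eta/\tau'$, and conclude from \eqref{finiteseries}. The only cosmetic difference is that the paper keeps $\eta'\in(3/4,2\tau'-\tau)$ in order to quote \eqref{nr2} of Lemma \ref{techest} verbatim, whereas you re-derive the relevant computation \eqref{firstest} and only need the slightly weaker constraint $\eta'<2-\tau/\tau'$; both choices are available under \eqref{asucons} and lead to the same bound.
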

\begin{proof} 
The inequality between the two series in \eqref{sersum} follows from \eqref{sumovercomparision}, which shows that 
if $\ell \notin K_T$ then 
$\ell \notin \widetilde{K}_T$.  Let us choose $\eta'$ such that, in addition to $\eta'>\eta/\tau'$, it also satisfies $ \eta' \in (3/4,2\tau '-\tau)$, which is possible since $\eta/\tau'< 2\tau'-\tau$ by the choice of $\eta$ (recall \eqref{asucons}).  Then,   
by \eqref{nr2} in Lemma \ref{techest} applied to $\eta'$, for sufficiently large $\ell$ we have
$\sigma_\ell^2\ell^\tau\leq {\sigma_\ell^{\eta'}}/{2}$.  
Therefore, since for sufficiently large $\ell$ we also have that $\|A_\ell\|\leq \ell^\tau$ by the Mixing DC (see Definition \ref{def:mixingDC}), by Remark \ref{mes.sigma}  there exists a constant $C>0$ such that 
$$ 
\sum_{\ell\notin \widetilde{K}_T} \lambda \left(\Sigma_\ell(T)\right) \leq  \sum_{\ell \notin \widetilde{K}_T}  2|\cA|\nu^2 \sigma_\ell^2\|A_\ell\| 
\leq C \sum_{\ell\notin \widetilde{K}_T} \sigma_\ell^{\eta'},
$$
which is finite by \eqref{finiteseries}. 
\end{proof}

One can show as a corollary of Lemma \ref{equiv:DC} that for a suitably chosen range of exponents $\tau'$,$\xi'$ and $\eta'$ the set of IET's satisfying the Summability condition  with exponents $(\tau',\xi',\eta')$ has full measure. 


\begin{cor}\label{RatvsPar} Let $\tau\in (1,16/15)$, $\tau'\in(15/16,1)$,  $\xi'>99/100$, $\eta'>3/4$.  For each irreducible combinatorial datum $\pi$ and for Lebesgue a.e.\  $\underline{\lambda} \in  \Delta_{d}$, the corresponding IET $T= (\underline{\lambda}, \pi) $ satisfies the Summability Condition with exponents $(\tau',\xi',\eta')$. 
\end{cor}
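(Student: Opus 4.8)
The plan is to simply combine Proposition~\ref{RDCfullmeasure} (full measure of the Ratner DC) with Lemma~\ref{equiv:DC} (Ratner DC implies the Summability Condition), choosing the auxiliary exponent $\eta$ appearing in the Ratner DC so that both the hypotheses of Proposition~\ref{RDCfullmeasure} and the hypotheses of Lemma~\ref{equiv:DC} are satisfied. First I would fix $\tau\in(1,16/15)$ and $\tau'\in(15/16,1)$ as in the statement, together with $\xi'>99/100$ and $\eta'>3/4$. By Proposition~\ref{RDCfullmeasure}, for any $\xi\in(11/12,1)$ and any $\eta>1/2$ the set $\RDCt{\tau}{\xi}{\eta}$ has full measure; moreover, recalling the standing assumption \eqref{asucons}, the parameters $\tau,\tau',\xi,\eta$ can be chosen jointly so that
$$
\eta\in\left(3/4,\tau'(2\tau'-\tau)\right),\qquad \xi\in\left(\max(11/12,\tau'\eta),\tau'\right),
$$
and all these intervals are non-empty.

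Next I would verify the matching of exponents. Given $\xi'>99/100$ and $\eta'>3/4$ from the statement, I need to exhibit $\xi,\eta$ with $\xi'>\xi/\tau'$ and $\eta'>\eta/\tau'$, i.e.\ $\xi<\tau'\xi'$ and $\eta<\tau'\eta'$, and at the same time $\xi,\eta$ must lie in the admissible windows of \eqref{asucons} and satisfy $\xi\in(11/12,1)$, $\eta>1/2$. Since $\tau'>15/16$ and $\xi'>99/100$, we have $\tau'\xi'>(15/16)(99/100)>11/12$, so the interval $(11/12,\tau'\xi')$ is non-empty and, shrinking if necessary, I can pick $\xi$ in $(\max(11/12,\tau'\eta),\min(\tau',\tau'\xi'))$ once $\eta$ is fixed small enough; similarly $\tau'\eta'>(15/16)(3/4)>1/2$, so I can choose $\eta\in(3/4,\min(\tau'(2\tau'-\tau),\tau'\eta'))$, which is non-empty because $\tau'(2\tau'-\tau)>3/4$ for the chosen ranges of $\tau,\tau'$ (this is an elementary numerical check: $2\tau'-\tau>2\cdot\tfrac{15}{16}-\tfrac{16}{15}>\tfrac{3}{4}/\tfrac{15}{16}$). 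With such $\xi,\eta$ fixed, Proposition~\ref{RDCfullmeasure} gives a full-measure set of IETs $T\in\RDCt{\tau}{\xi}{\eta}$, and for each such $T$ the hypothesis \eqref{asucons} holds, so Lemma~\ref{equiv:DC} applies with these $\xi,\eta$ and the chosen $\xi'>\xi/\tau'$, $\eta'>\eta/\tau'$, yielding that $T$ satisfies the Ratner Summability Condition with exponents $(\tau',\xi',\eta')$. This gives the full-measure conclusion (and, by absolute continuity of $\mu_{\Z}$ with respect to Lebesgue, the formulation in terms of a.e.\ $\underline{\lambda}\in\Delta_d$ for each irreducible $\pi$, exactly as in the last line of the proof of Proposition~\ref{RDCfullmeasure}).

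The only real content is bookkeeping of the constraints, so the main obstacle — if any — is checking that the various open intervals for $\tau,\tau',\xi,\eta$ are simultaneously non-empty and compatible with the further requirements $\xi<\tau'\xi'$, $\eta<\tau'\eta'$ coming from Lemma~\ref{equiv:DC}; this is a finite system of elementary inequalities in four real parameters, and the numerical thresholds $16/15$, $15/16$, $99/100$, $3/4$, $11/12$ in the statement are precisely chosen so that the system has solutions. I would record this verification explicitly (it is the same style of computation already carried out after \eqref{asucons}) and conclude. No new dynamical input is needed beyond Proposition~\ref{RDCfullmeasure} and Lemma~\ref{equiv:DC}.
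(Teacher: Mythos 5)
Your overall approach — combining Proposition~\ref{RDCfullmeasure} with Lemma~\ref{equiv:DC} after choosing auxiliary exponents $\xi,\eta$ — is exactly the paper's; the paper's proof takes $\xi\in(11/12,\xi'\tau')$ and $\eta\in(1/2,\eta'\tau')$ and then applies the two results.

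There is, however, a slip in your bookkeeping for $\eta$. You insist on $\eta>3/4$ (so that the standing assumption~\eqref{asucons} is literally satisfied) and claim the interval $(3/4,\min(\tau'(2\tau'-\tau),\tau'\eta'))$ is nonempty, justified by ``$\tau'\eta'>(15/16)(3/4)>1/2$''. But that inequality only shows $\tau'\eta'>45/64\approx 0.70$, not $\tau'\eta'>3/4$; if $\eta'$ is only barely above $3/4$ and $\tau'<1$, then $\tau'\eta'<3/4$ and your interval is empty. The paper avoids this by choosing $\eta$ in $(1/2,\eta'\tau')$, which is always nonempty under the stated hypotheses. This matches the lower bound $\eta>1/2$ required by Proposition~\ref{RDCfullmeasure}, and if you track through the actual proof of Lemma~\ref{equiv:DC} you will see that the only constraint on $\eta$ it really uses is $\eta<\eta'\tau'$ (together with the Ratner DC holding for $\eta$), so the lower bound $\eta>3/4$ from~\eqref{asucons} is not needed there. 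So your plan is sound, but you should drop the requirement $\eta>3/4$ and take $\eta\in(1/2,\eta'\tau')$ as the paper does; otherwise you would need to strengthen the hypotheses of the corollary to $\tau'\eta'>3/4$.
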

\begin{proof} Take any $\xi, \eta$ such that $11/12< \xi< \xi' \tau' $ and $ 1/2< \eta< \eta' \tau'$, which is possible by the assumptions on the parameters.  
Consider any IET $T$ in $RCD (\tau, \xi ,\eta)$. The set of such $T$ has full measure, see Proposition \ref{RDCfullmeasure}. Then, by Lemma \ref{equiv:DC}, $T$ satisfies the Summability Condition with exponents $(\tau', \xi', \eta')$. This finishes the proof.
\end{proof}
\begin{rem}\label{DC12} While the Ratner DC as formulated  is  useful when trying to prove that the set of IETs satisfying the Ratner DC for suitably chosen parameters has full measure (see Proposition \ref{RDCfullmeasure}), 
the Summability Condition  is useful for computations concerning quantitative divergence of nearby points (since it allows to throw sets which are tails of the converging series given by Corollary \ref{summabilitySigmas}).  
From now on, in the rest of the paper we will only use the Summability Condition and hence we will consider only IETs which satisfies the Ratner DC for a range of parameters which imply the Summability Condition. 
By Corollary \ref{RatvsPar}, this set of IETs has full measure for some choice of exponents. 
\end{rem}

From now on, in order to  to simplify notation, we will use exponents $(\tau, \xi, \eta)$ (instead of $(\tau', \xi', \eta'$) in the definition of the Summability Condition.

\section{Proof of the switchable Ratner property}\label{sec:proof}

In this section we will prove that special flows over IETs under functions with logarithmic asymmetric singularities have the  switchable Ratner property when the base IET satisfies the Ratner DC with an appropriate choice of parameters (Theorem \ref{thm:Ratner_special_flows}). In  Section~\ref{sec:backward_forward}, we first use balanced Rauzy-Veech times to show that one can control the distance of orbits of most points from the singularities either in the past or in the future. The proof of Theorem \ref{thm:Ratner_special_flows} is then given in \cref{sec:deducingRatner}. Finally, in \cref{sec:conclusions} we deduce the other results stated in the introduction. 

\subsection{Control of either backward or forward orbits distance from singularities}\label{sec:backward_forward}
In this section we show that balanced positive times of the Rauzy Veech induction allows us to control the distance of most  orbits from the discontinuities either \emph{backward} or \emph{forward}, i.e. points who get too close to a discontinuity in the future, do not get too close in the past (where too close is quantified in Proposition \ref{forbac}). This will provide a key step to prove the switchable Ratner property, since according to whether backward or forward orbits stay far from singularities, we will be able to verify the parabolic divergence estimates  either in the future of in the past. The main proposition that we prove in this section is Proposition \ref{forbac} stated here below.

Let us first recall that  $I_\alpha = [l_\alpha, r_\alpha)$ denote the intervals exchanged by $T=(\lambda, \pi)$. 
Given two sets $E, F \subset [0,1]$ let us denote by $d(E, F)$ the distance between the two sets, given by
\begin{equation}\label{dsets}
d(E, F) = \inf \{ |x-y|, \quad x \in E, \ y \in F\}.
\end{equation}
\begin{prop}[Backward or forward control]\label{forbac}
Let $T$ be an IET which satisfies the Keane condition and  let  $\{n_\ell\}_{\ell\in\N}$ be a sequence of $\nu$-balanced  induction times for $T$ such that $\{n_{\overline{\ell}k}\}_{k\in\N}$ is a positive sequence of times for some $\ov{\ell}\in\N$. Let $q_\ell$ denote the maximal height of towers of step $n_\ell$ (see \eqref{CF_like_notation}). 

There exists an integer $L\geq 1$, explicitly given by $L=\overline{\ell} (1+ \left[ \log_d (2 \nu^2) \right])$ (where $[\cdot]$ denotes the integer part and $\log_d$ the logarithm in base $d$), and $c>0$ such that for any $\vep>0$, there exists $\ell'=\ell'(\vep)\geq 1$ such that for $\ell\geq \ell'$ and $x\not \in [0,\vep/8)\cup (1-\vep/8,1)$, at least one of the following holds:
\begin{align}
d(\{l_\alpha,r_\alpha : \alpha\in\cA \}, \{T^i x : 0\leq i <q_\ell \})>\frac{c}{q_{\ell+L}},\label{j41}\\
d(\{l_\alpha,r_\alpha : \alpha\in\cA\}, \{T^i x : -q_\ell \leq i< 0 \})>\frac{c}{q_{\ell+L}}, \label{j42}
\end{align}
where $d(\cdot, \cdot)$ denotes the distance between sets defined in \eqref{dsets}.
\end{prop}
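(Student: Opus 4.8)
\textbf{Proof plan for Proposition \ref{forbac}.}

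The plan is to argue by contradiction: suppose that both \eqref{j41} and \eqref{j42} fail for some $x\notin [0,\vep/8)\cup(1-\vep/8,1)$ and some large $\ell$. Failure of \eqref{j41} means there is a forward time $0\le i<q_\ell$ and an endpoint $l_\alpha$ or $r_\alpha$ with $d(T^i x, \{l_\beta,r_\beta\})\le c/q_{\ell+L}$; failure of \eqref{j42} gives a backward time $-q_\ell\le j<0$ and an endpoint within $c/q_{\ell+L}$ of $T^j x$. The key geometric fact I would exploit is the structure of the Rohlin towers at step $n_{\ell+L}$: since $\{n_{\overline\ell k}\}$ is positive and the $n_\ell$ are $\nu$-balanced, the interval $I^{(n_{\ell+L})}$ has length comparable to $1/q_{\ell+L}$ (by Remark \ref{balancedcomparisons}), and the floors of the towers of step $n_{\ell+L}$ refine the orbit segments of length $q_\ell$ in a controlled way. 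The choice $L=\overline\ell(1+[\log_d(2\nu^2)])$ is exactly what is needed so that, by the positivity (Remark \ref{positvegrowth}, giving $\lambda^{(n_\ell)}\ge d^{L}\lambda^{(n_{\ell+L})}$ along multiples of $\overline\ell$) combined with balance, the tower widths at step $n_{\ell+L}$ are smaller than the tower widths at step $n_\ell$ by the factor $2\nu^2$; this is the lemma from \cite{HMU:lag} recalled in the Appendix (Section~\ref{appendix:IETs}).

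The core of the argument is then a combinatorial/counting step. If both a forward orbit point $T^i x$ ($0\le i<q_\ell$) and a backward orbit point $T^j x$ ($-q_\ell\le j<0$) are both within $c/q_{\ell+L}$ of the (finite) set of discontinuities, then by choosing $c$ small enough relative to the separation between distinct endpoints of $T^{(n_{\ell+L})}$ (which are $\gtrsim 1/q_{\ell+L}$ apart by Keane and balance), the points $T^i x$ and $T^j x$ would both lie in the same small neighbourhood $U$ of some endpoint, of size $\asymp c/q_{\ell+L}$. Then $T^{i-j}$ maps a subinterval containing $T^j x$ isometrically (since $i-j<2q_\ell$ and this is below the return time to $I^{(n_{\ell+L})}$, so $T^{i-j}$ has no discontinuity on the relevant piece — here I use that $2q_\ell$ is much smaller than the heights $h^{(n_{\ell+L})}_\alpha\asymp q_{\ell+L}$... wait, that is false, so instead I use that the orbit segment of length $q_\ell$ visits each step-$n_{\ell+L}$ tower at most a bounded number of times). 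I would reformulate: the segment $\{T^k x: -q_\ell\le k<q_\ell\}$ has length $<2q_\ell$, so it is covered by boundedly many floors of step-$n_{\ell+L}$ towers only if $q_\ell\ll q_{\ell+L}$ — but in general $q_\ell< q_{\ell+L}$ with no uniform gap, so the correct tool is that the orbit of a single point $x$ returns to the small interval $I^{(n_{\ell+L})}$ at most $\mathrm{O}(q_\ell/q_{\ell+L}\cdot \text{stuff})$... The cleanest route, and the one I would actually carry out, is: the points within $c/q_{\ell+L}$ of the discontinuity set are exactly the union over $\alpha$ of two tiny intervals of length $2c/q_{\ell+L}$, and by Keane's condition the backward orbit of these intervals up to time $q_\ell$ and the forward orbit up to time $q_\ell$ are disjoint once $c$ is small and $\ell$ large — this is a quantitative, finite-time version of the Keane condition, which holds uniformly for $x$ bounded away from $0$ and $1$ precisely because of balance: the preimages $T^{-i}(\text{endpoint})$ for $0\le i<q_\ell$ are among the endpoints of the partition $\phi^{(n_{\ell})}$-type refinements, hence separated, and similarly for forward images, so a single tiny interval cannot meet both a forward and a backward orbit piece of an endpoint.

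The main obstacle I anticipate is making this disjointness quantitative with the \emph{right} scale $c/q_{\ell+L}$ and uniform in $\ell$ — i.e. proving that the set $\bigcup_{0\le i<q_\ell}T^{-i}(\text{nbhd of endpoints})$ and the set $\bigcup_{0\le i<q_\ell}T^{i}(\text{nbhd of endpoints})$ are disjoint, which amounts to showing no endpoint's forward orbit of length $q_\ell$ meets its backward orbit of length $q_\ell$ within distance $c/q_{\ell+L}$. For this I would use that all these $2|\cA|q_\ell$ orbit points, together with the endpoints themselves, are points whose mutual distances are controlled from below by the length of the smallest atom of a Rauzy--Veech partition at a step where the orbit segment of length $q_\ell$ is resolved; balance at step $n_{\ell+L}$ (giving atoms of size $\asymp 1/q_{\ell+L}$) together with the fact that $q_\ell$-orbit segments of endpoints are subsets of the $n_{\ell+L}$-step partition structure when $L$ is chosen as above, yields exactly the lower bound $c/q_{\ell+L}$ with $c$ depending only on $|\cA|$ and $\nu$. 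The exceptional set $[0,\vep/8)\cup(1-\vep/8,1)$ and the dependence of $\ell'$ on $\vep$ enter because near the extreme endpoints $0$ and $1$ the balance estimates degenerate, so one removes a neighbourhood whose size is linked to $\vep$, and for $\ell\ge\ell'(\vep)$ the scale $c/q_{\ell+L}$ is small enough that the removed neighbourhood does not interfere with the counting. Finally, putting it together: if both \eqref{j41} and \eqref{j42} failed, a single point of the (finite, separated) neighbourhood-of-endpoints set would be hit by both a forward and a backward iterate of $x$ among the first $q_\ell$ steps, contradicting the disjointness just established; hence at least one of \eqref{j41}, \eqref{j42} must hold.
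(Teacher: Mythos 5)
Your high-level strategy (argue by contradiction, exploit balance, positivity giving the $d^{L}$ factor, and the Hubert--Marchese--Ulcigrai separation lemma) is indeed the same as the paper's, but the crux of the proof is missing, and the passage where you write "wait, that is false" and then "I would reformulate" exposes a genuine gap rather than a presentational choice.

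Specifically, the paper's proof does not try to show that $\bigcup_{0\le i<q_\ell}T^{-i}(U)$ and $\bigcup_{0\le i<q_\ell}T^{i}(U)$ are disjoint for a neighbourhood $U$ of the endpoint set (and your own hesitation shows why that route stalls: you cannot directly separate these sets because the orbit points of the endpoints are dense). Instead, the paper reduces the contradiction to a single composite map: if $T^{-i_1}x$ is near an endpoint $e_1$ and $T^{i_2}x$ is near an endpoint $e_2$ with $0\le i_1,i_2<q_\ell$, then $T^{i_1+i_2}$ carries a tiny half-neighbourhood of $e_1$ isometrically into a tiny half-neighbourhood of $e_2$, so the orbit segment $T^k(\text{nbhd of }e_1)$ for $0\le k< 2q_\ell$ would meet the discontinuity set. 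Ruling this out is precisely the content of Lemma~\ref{backforw}, and that lemma is where almost all the work lies. You cite Lemma~\ref{lemmaHMU} but do not actually assemble the tower argument that makes it bite, and you conflate two distinct scales that the paper carefully separates: step $n_{\ell+\ell_0}$ with $\ell_0=\overline{\ell}[\log_d(2\nu^2)]$, chosen so that \emph{all} tower heights exceed $2q_\ell$ (so a forward orbit segment of length $2q_\ell$ starting at a base floor is contained in at most two consecutive sub-towers), versus step $n_{\ell+L}$ with $L=\ell_0+\overline{\ell}$, used to get the separation $\gtrsim 1/(\nu q_{\ell+L})$ between discontinuities of $T^{(n_{\ell+\ell_0})}$ and its inverse. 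Without distinguishing these two steps and then tracking exactly which sub-towers contain the endpoints $\{l_\alpha,r_\alpha\}$ (and handling separately the boundary endpoints $0$ and $1$, which is where the $\vep/8$ cut-off is used), the "quantitative finite-time Keane condition" you invoke has no proof behind it. Your final reformulation also misstates the contradiction: it is not that a single neighbourhood point is hit by both a forward and a backward iterate of $x$, but that $x$ lies in both $T^{i_1}(\text{nbhd of }e_1)$ and $T^{-i_2}(\text{nbhd of }e_2)$; turning this into a contradiction requires the composite-iterate argument described above, not a disjointness of the two orbit tubes.
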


We will first state some auxiliary definitions and two Lemmas and then use them to prove Proposition \ref{forbac}. 
Given $T=(\lambda, \pi)$, remark that the interval endpoints $0$ and $1$ can be written as $0=l_{\alpha_{1,t}}$ and $1=r_{\alpha_{d,t}}$ where  $\alpha_{1,t}$ and $\alpha_{d,t}$ are defined respectively to be the letters in $\mathcal{A}$ such that $\pi_t(\alpha_{1,t})=1$ and $\pi_t(\alpha_{d,t})=d$,  so that $I_{\alpha_{1,t}}$  and $I_{\alpha_{d,t}}$ are respectively the first and last interval before the exchange. Moreover, if  $\alpha_{1,b}$, $\alpha_{d,b}$  are such that $\pi_b(\alpha_{1,b})=1$ and $\pi_b(\alpha_{d,b})=d$,  $I_{\alpha_{1,b}}$  and $I_{\alpha_{d,b}}$ are such that their image under $T$ are respectively the first and last interval after the exchange.
 
\begin{rem}\label{singularitiesT}
We remark that  for any $\alpha \in \mathcal{A}$ such that $\alpha \neq \alpha_{1,t}$, $l_\alpha = r_{\beta}$ where $\beta$ is such that $\pi_t(\beta)= \pi_t(\alpha)-1$, so that $I_\beta$ is the interval which preceeds $I_\alpha$ before the exchange. Similarly, for any $\alpha \in \mathcal{A}$ such that $\alpha \neq \alpha_{d,t}$, $r_\alpha = l_{\beta}$ where $\beta$ is such that $\pi_t(\beta)= \pi_t(\alpha)+1$, so that $I_\beta$ is the interval which follows $I_\alpha$ before the exchange. 
\end{rem}

Given an IET $T^{(n)}= (\lambda^{(n)}, \pi^{(n)})$ in the Rauzy-Veech orbit of $T$, we will denote by $I^{(n)}_\alpha = \left[ l_{\alpha,t}^{( n)}, r_{\alpha, t}^{(n)} \right)$ for $\alpha \in \mathcal{A}$  the intervals  exchanged by $T^{(n)}$ and we will denote their images under $T^{(n)}$ by $\left[ l_{\alpha,b}^{( n)}, r_{\alpha, b}^{(n)} \right)$. Explicitely, the endpoints are given by 
\begin{align}\label{disc_Tn}
&l_{\alpha,t}^{(n)}: = \sum_{\pi^{(n)}_t(\beta)<\pi_t^{(n)}(\alpha)}\lambda^{(n)}_\beta ,  &r_{\alpha,t}^{(n)}: = \sum_{\pi^{(n)}_t(\beta)\leq \pi_t^{(n)}(\alpha)}\lambda^{(n)}_\beta; \\
&  l_{\alpha,b}^{(n)}: = \sum_{\pi^{(n)}_b(\beta)<\pi_b^{(n)}(\alpha)}\lambda^{(n)}_\beta , & r_{\alpha,b}^{(n)}: = \sum_{\pi^{(n)}_b(\beta)\leq \pi_b^{(n)}(\alpha)}\lambda^{(n)}_\beta. 
\end{align}
We will also use the notation $\alpha^{(n)}_{1,t}$, $\alpha^{(n)}_{d,t}$, $\alpha^{(n)}_{1,b}$, $\alpha^{(n)}_{d,b}$  for the letters such that 
\begin{equation}\label{firstlastdef}
\pi_t^{(n)}(\alpha^{(n)}_{1,t})=1, \qquad \pi_t^{(n)}(\alpha^{(n)}_{d,t})=d, \qquad \pi_b^{(n)}(\alpha^{(n)}_{1,b})=1, \qquad \pi_b^{(n)}(\alpha^{(n)}_{d,b})=d. 
\end{equation}


A crucial step is given by the following Lemma, which is a small modification of  Corollary C.2 (see also Lemma C.1) in the Appendix of \cite{HMU:lag}. For completeness, we  include its short proof in the Appendix~\ref{appendix:IETs}.

\begin{lemma}[see Corollary C.2 in \cite{HMU:lag} and Appendix \ref{appendix:IETs}]\label{lemmaHMU}
Let $T$, $\{n_\ell\}_{\ell\in\N}$ and $q_\ell$ be as in Proposition \ref{forbac} and let $\alpha^{(n_\ell)}_{1,b}$ and $\alpha^{(n_\ell)}_{d,t}$ be as in 
 and \eqref{firstlastdef}. 
Then
\begin{align}\label{lemmaHMU1}
& \min \left\{ \ |l_{\alpha,t}^{(n_\ell)} -l_{\beta,b}^{(n_\ell)}| , \qquad \alpha \in \mathcal{A},\quad  \beta \in \mathcal{A} \backslash \{ \alpha^{(n_\ell)}_{1,b} \} \,  \right\} \geq \frac{1}{\nu }\lambda^{(n_{\ell+\overline{\ell}})},
\\ & \label{lemmaHMU2}
\min \left\{ \ |r_{\alpha,t}^{(n_\ell)} -r_{\beta,b}^{(n_\ell)}| , \qquad \alpha \in \mathcal{A} \backslash \{ \alpha^{(n_\ell)}_{d,t} \} ,\quad  \beta \in \mathcal{A}  \,  \right\} \geq \frac{1}{\nu }\lambda^{(n_{\ell+\overline{\ell}})}.
\end{align}
\end{lemma}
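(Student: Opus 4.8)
The plan is to follow the proof of Corollary~C.2 (and Lemma~C.1) of \cite{HMU:lag}, adapting the normalisations to the present setting; the details are carried out in Appendix~\ref{appendix:IETs}. Fix $\ell$ and, to lighten notation, set $m:=n_{\ell+\overline{\ell}}$. Recall that Rauzy--Veech induction removes subintervals \emph{from the right}, so $I^{(m)}\subseteq I^{(n_\ell)}$ share the left endpoint $0$, and that $T^{(m)}$ is the first return map of $T^{(n_\ell)}$ to $I^{(m)}$. We first treat \eqref{lemmaHMU1}.

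\emph{Step 1 (the singularities of $T^{(n_\ell)}$ come from genuine singularities of $T$).} Since $T^{(n_\ell)}$ is the first return map of $T$ to $I^{(n_\ell)}$ and $T^{(n_\ell)}|_{I^{(n_\ell)}_\alpha}=T^{h^{(n_\ell)}_\alpha}$ is a rigid translation, no discontinuity of $T$ lies in the interior of a floor $T^iI^{(n_\ell)}_\alpha$ with $0\le i<h^{(n_\ell)}_\alpha$. Hence for every $\alpha\neq\alpha^{(n_\ell)}_{1,t}$ there are a discontinuity $l_\gamma$ of $T$ and an integer $0\le k<h^{(n_\ell)}_\alpha\le q_\ell$ with $T^k\bigl(l^{(n_\ell)}_{\alpha,t}\bigr)=l_\gamma$; moreover, by balance, $l^{(n_\ell)}_{\alpha,t}$ and its iterates $T^il^{(n_\ell)}_{\alpha,t}$, $0\le i<k$, stay at distance $\gtrsim 1/q_\ell$ from the discontinuities of $T$, so $T^k$ is a rigid translation on a $(1/q_\ell)$-neighbourhood of $l^{(n_\ell)}_{\alpha,t}$ (up to a universal constant). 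Symmetrically, for $\beta\neq\alpha^{(n_\ell)}_{1,b}$ the bottom singularity $l^{(n_\ell)}_{\beta,b}$ — a discontinuity of $(T^{(n_\ell)})^{-1}$ — is obtained, within $q_\ell$ steps and through a rigid translation, from a discontinuity of $T^{-1}$, i.e.\ from a one-sided $T$-image of a discontinuity of $T$. The excluded letters $\alpha^{(n_\ell)}_{1,t}$ and $\alpha^{(n_\ell)}_{1,b}$ are precisely those for which the corresponding singularity degenerates to the interval endpoint $0$, which is not a discontinuity of $T$ (resp.\ of $T^{-1}$); this is why their exclusion is needed.

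\emph{Step 2 (quantitative separation).} If $\alpha=\alpha^{(n_\ell)}_{1,t}$, i.e.\ $l^{(n_\ell)}_{\alpha,t}=0$, then $\bigl|l^{(n_\ell)}_{\alpha,t}-l^{(n_\ell)}_{\beta,b}\bigr|=l^{(n_\ell)}_{\beta,b}\ge\lambda^{(n_\ell)}_{\alpha^{(n_\ell)}_{1,b}}$, and by the $\nu$-balance of $n_\ell$ (Remark~\ref{balancedcomparisons}) together with the decay of the lengths along the positive block $[n_\ell,n_{\ell+\overline{\ell}}]$ (Remark~\ref{positvegrowth}) this exceeds $\tfrac1\nu\lambda^{(n_{\ell+\overline{\ell}})}$; so we may assume $\alpha\neq\alpha^{(n_\ell)}_{1,t}$ and $\beta\neq\alpha^{(n_\ell)}_{1,b}$. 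Then, by Step~1, $l^{(n_\ell)}_{\alpha,t}$ and $l^{(n_\ell)}_{\beta,b}$ lie on the infinite, pairwise disjoint $T$-orbits of two discontinuities of $T$, and are in particular distinct by the Keane condition; it remains to quantify this. I would argue by contradiction: were these two points closer than $\tfrac1\nu\lambda^{(m)}$ (which, for $\ell$ large, is much smaller than the $(1/q_\ell)$-neighbourhood on which $T^k$ is rigid), applying $T^k$ to both would produce a point of the forward $T$-orbit of a discontinuity of $T$, reached in at most $2q_\ell$ steps, lying within $\tfrac1\nu\lambda^{(m)}$ of the discontinuity $l_\gamma$ of $T$. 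This contradicts the quantitative form of the Keane condition used in \cite{HMU:lag} — equivalently, Boshernitzan's bound on the length of the shortest atom of the partition of $I$ generated by the $T$-orbit segments of length $\le 2q_\ell$ of the discontinuities of $T$ — since, by the $\nu$-balance of the induction step at which the Rohlin towers have minimal height comparable to $q_\ell$, that shortest atom has length bounded below by a quantity of the order of $\lambda^{(m)}$; choosing the constants with care (here one uses that, possibly after enlarging $\overline{\ell}$ by a bounded factor, the block $[n_\ell,n_{\ell+\overline{\ell}}]$ is positive, so that $q_{\ell+\overline{\ell}}$ dominates $q_\ell$) yields the claimed bound $\tfrac1\nu\lambda^{(n_{\ell+\overline{\ell}})}$.

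\emph{Conclusion and main difficulty.} Inequality \eqref{lemmaHMU2} follows from the same two steps with the roles of ``top'' and ``bottom'' and of the endpoints $0$ and $|\lambda^{(n_\ell)}|$ (hence of the letters $\alpha^{(n_\ell)}_{1,\ast}$ and $\alpha^{(n_\ell)}_{d,\ast}$) interchanged — equivalently, by applying \eqref{lemmaHMU1} to the time-reversed IET $T^{-1}$, whose Rauzy--Veech expansion and balance constants coincide with those of $T$ after swapping $\pi_t$ and $\pi_b$. The main obstacle is the bookkeeping in Step~2: keeping track of one-sided limits, checking that the translations involved remain rigid on neighbourhoods of the prescribed size, and pinning down the constants so that the lower bound comes out exactly at the scale $\lambda^{(n_{\ell+\overline{\ell}})}$. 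This is the content of \cite{HMU:lag} (Lemma~C.1 and Corollary~C.2), which we reproduce, with the minor modifications needed here, in Appendix~\ref{appendix:IETs}.
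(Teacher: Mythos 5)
Your proof takes a genuinely different route from the one in the paper's Appendix~\ref{appendix:IETs} (and from Lemma~C.1/Corollary~C.2 of \cite{HMU:lag}, which you say you are reproducing). The paper argues directly inside the Rauzy--Veech tower picture: the discontinuities of $T^{(n_\ell)}$ and of its inverse remain discontinuities of $T^{(n)}$, $(T^{(n)})^{-1}$ for $n\ge n_\ell$ as long as they lie in $I^{(n)}$; one looks at the first $n$ at which one of the two given points is cut off (it is then the right endpoint of $I^{(n)}$ and the distance to the other point is at least some $\lambda^{(n-1)}_\chi$), and the bound $\tfrac1\nu\lambda^{(n_{\ell+\overline{\ell}})}$ then falls out of the monotonicity of the lengths $\lambda^{(n)}_\chi$ together with $\nu$-balance. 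Your argument instead lifts the discontinuities of $T^{(n_\ell)}$ back to $T$-orbits of discontinuities of $T$ and then invokes a Boshernitzan-type lower bound on the minimal atom of the partition generated by orbit segments of length $\lesssim q_\ell$.

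The gap is precisely at that last step. You assert that this minimal atom is bounded below by a quantity ``of the order of $\lambda^{(m)}$'' and that ``choosing the constants with care'' yields $\tfrac1\nu\lambda^{(n_{\ell+\overline{\ell}})}$, but this is not a consequence of Boshernitzan's estimate as stated: Boshernitzan gives a lower bound in terms of the orbit-length parameter $n$, while what you need is a comparison between that bound and the tower widths at a \emph{future} balanced positive Rauzy--Veech step $n_{\ell+\overline{\ell}}$. Establishing that comparison requires an argument in the Rauzy--Veech coordinates of essentially the same nature as the one the paper carries out directly, so the detour through orbit lifting does not actually save any work, and as written the key inequality is merely claimed. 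Two further issues: (i) you suggest ``possibly after enlarging $\overline{\ell}$ by a bounded factor,'' but $\overline{\ell}$ is fixed in the hypotheses (it is part of the data, and the same $\overline{\ell}$ reappears in the definition of $L$ in Proposition~\ref{forbac}), so the proof must give the stated bound with the given $\overline{\ell}$; (ii) the claim that your Step~2 is ``the content of \cite{HMU:lag}'' is inaccurate — HMU argue via the Rauzy--Veech cutting sequence, not via Boshernitzan, and the precision of the constant $1/\nu$ comes from $\nu$-balance at the step $n_{\ell+\overline{\ell}}$, not from universal constants absorbed into a density argument.
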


\noindent Using  Lemma \ref{lemmaHMU}, we can then prove the following result.
\begin{lemma}\label{backforw}
Suppose that $\{n_\ell\}_{\ell\in\N}$ is a sequence of $\nu$-balanced acceleration times for $T$ such that $\{n_{\overline{\ell}k}\}_{k\in\N}$ is positive for some $\ov{\ell}\in\N$. Set $ L:\overline{\ell}(1+ \left[ \log_d (2 \nu^2) \right])$. Then, for each $\ell$ sufficiently large,
\begin{align}\label{backforw-a}
&\bigcup_{0\leq i< 2q_\ell}{T^i  \left( \left[ l_\alpha, l_\alpha + \frac{1}{3\nu q_{\ell+L}} \right] \right) } \cap 
\{ l_\beta, r_\beta : \beta\in\cA\} = \emptyset, \ \ \text{for  all} \ l_\alpha \ \text{s.t.} \ T(l_\alpha)\neq 0, \ {\text i.e.\ s.t.\ } \alpha \neq \alpha_b^1;\\
\label{backforw-b}
&\bigcup_{0\leq i<2 q_\ell}{T^i  \left( \left[ r_\alpha - \frac{1}{3\nu q_{\ell+L}}, r_\alpha \right) \right) } \cap 
\{ l_\beta, r_\beta : \beta\in\cA\} = \emptyset, \ \  \text{for all}\ r_\alpha \neq 1, \ {\text i.e.\ s.t. }\ \alpha \neq \alpha_t^d.
\end{align}
\end{lemma}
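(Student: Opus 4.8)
The plan is to derive Lemma~\ref{backforw} from Lemma~\ref{lemmaHMU} together with the Rauzy--Veech tower structure at step $n_\ell$ (or a bounded number of steps later). I will treat \eqref{backforw-a}; the proof of \eqref{backforw-b} is the mirror image, with left endpoints, left neighbourhoods and \eqref{lemmaHMU1} replaced by right endpoints, right neighbourhoods and \eqref{lemmaHMU2}. Write $\epsilon:=\tfrac{1}{3\nu q_{\ell+L}}$ and list the points of $\{l_\beta,r_\beta:\beta\in\cA\}$ as $0=x_0<\dots<x_d=1$. First I would reformulate: each $x_j$ is a discontinuity of $T$ only from the left, so the interval $[l_\alpha,l_\alpha+\epsilon)$, which for $\ell$ large lies inside a single continuity interval $I_\beta$ of $T$ (because $\epsilon\to0$ while the $\lambda_\beta$ are fixed), is never split by $T$ through hitting an $x_j$ from the right; hence \eqref{backforw-a} is equivalent to the statement that every point $T^il_\alpha$, $0\le i<2q_\ell$, stays at distance $\ge\epsilon$ from the nearest $x_j$ to its right, i.e.\ the orbit avoids $\bigcup_j(x_j-\epsilon,x_j)$.

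Next I would pass to the tower decomposition at step $n_\ell$. Since $T$ acts by a translation on the interior of each floor $T^{j}I^{(n_\ell)}_\delta$, the set $\{x_j\}$ is contained in the floor-boundary set $\partial\phi^{(n_\ell)}$, so $l_\alpha=T^{j_0}l^{(n_\ell)}_{\beta_0,t}$ is the left endpoint of a level-$n_\ell$ floor. Following the orbit: it climbs the tower over $\beta_0$ (so $T^il_\alpha$ is then the left endpoint of a higher floor), returns to $I^{(n_\ell)}$ at the bottom endpoint $l^{(n_\ell)}_{\beta_0,b}=T^{(n_\ell)}l^{(n_\ell)}_{\beta_0,t}$, climbs the next tower, and so on, the successive return points being $(T^{(n_\ell)})^k l^{(n_\ell)}_{\beta_0,t}$, $k=0,1,\dots$. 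Because each return time is $\ge\min_\delta h^{(n_\ell)}_\delta\ge q_\ell/\nu^2$ (by $\nu$-balance), the orbit makes at most $\lesssim 2\nu^2$ complete returns within $2q_\ell$ iterates; this is exactly the quantity that forces the value $L=\overline{\ell}(1+[\log_d(2\nu^2)])$, since $L$ positive induction steps beyond $n_\ell$ contract the base interval by a factor $>2\nu^2$, so that $q_{\ell+L}$ dominates $2\nu^2 q_\ell$ and $\epsilon=\tfrac1{3\nu q_{\ell+L}}$ is accordingly smaller than the separation $\tfrac1\nu\lambda^{(n_{\ell+\overline{\ell}})}$ supplied by Lemma~\ref{lemmaHMU}.

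Then I would carry out the distance estimate. Along each excursion the orbit sits at a \emph{fixed offset} above the left endpoint of its current floor. For the first excursion that offset is $0$, and a floor left endpoint is at distance $\ge\tfrac1{d\nu}\lambda^{(n_\ell)}$ from every floor boundary on its right, hence from every $x_j$. For a later excursion the offset equals the distance from the return point $(T^{(n_\ell)})^k l^{(n_\ell)}_{\beta_0,t}$ to the left endpoint of the interval $I^{(n_\ell)}_{\gamma_k}$ containing it, which by Lemma~\ref{lemmaHMU}\,\eqref{lemmaHMU1} — applied to the two top endpoints bounding $I^{(n_\ell)}_{\gamma_k}$ — is $\ge\tfrac1\nu\lambda^{(n_{\ell+\overline{\ell}})}$, and symmetrically the distance to the floor's right endpoint is $\ge\tfrac1\nu\lambda^{(n_{\ell+\overline{\ell}})}$; so the orbit remains $\ge\tfrac1\nu\lambda^{(n_{\ell+\overline{\ell}})}$ from both ends of every floor it visits, a fortiori from every $x_j$. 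Since by $\nu$-balance $\tfrac1{q_{\ell+L}}$ is comparable to $\lambda^{(n_{\ell+L})}\le\lambda^{(n_{\ell+\overline{\ell}})}$, the constant $\tfrac1{3\nu}$ is chosen precisely so that $\epsilon<\min\{\tfrac1{d\nu}\lambda^{(n_\ell)},\tfrac1\nu\lambda^{(n_{\ell+\overline{\ell}})}\}$ for all large $\ell$, which yields $T^il_\alpha\notin\bigcup_j(x_j-\epsilon,x_j)$ for $0\le i<2q_\ell$, as needed. The degenerate case $\beta_0=\alpha^{(n_\ell)}_{1,b}$, where the return lands at $0=x_0$, is immediate since $0$ lies in no left neighbourhood $(x_j-\epsilon,x_j)$.

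The hard part is controlling the return points of the \emph{later} excursions: Lemma~\ref{lemmaHMU} directly bounds only the position of the first return $l^{(n_\ell)}_{\beta_0,b}$ relative to the level-$n_\ell$ partition, and one must propagate this control through the boundedly many subsequent applications of $T^{(n_\ell)}$ — possibly by invoking Lemma~\ref{lemmaHMU} at several induction levels. This, together with the precise calibration of $L$ and of the constant $\tfrac1{3\nu}$ so that one choice of $\epsilon$ works simultaneously for every sufficiently large $\ell$ (including those which do not begin a block of $\overline{\ell}$ positive induction steps), is the technical heart of the proof; everything else is a routine dictionary between the suspension-tower picture and the first-return dynamics of $T$.
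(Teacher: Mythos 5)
Your proposal assembles the right ingredients --- the tower picture, Lemma~\ref{lemmaHMU}, $\nu$-balance, and the count of roughly $2\nu^2$ returns to $I^{(n_\ell)}$ in $2q_\ell$ iterates, which you correctly tie to the size of $L$ --- but the gap you flag in your final paragraph is genuine and is not a ``technical heart'' that routine effort will close. Working at level $n_\ell$ as you propose, one must track the orbit of $l_\alpha$ through \emph{several} returns to $I^{(n_\ell)}$, i.e.\ through the iterates $(T^{(n_\ell)})^k l^{(n_\ell)}_{\beta_0,t}$ for $k=1,\dots,K$ with $K\approx 2\nu^2$. Lemma~\ref{lemmaHMU} controls only the position of a \emph{single} application of $T^{(n_\ell)}$ to a discontinuity relative to the level-$n_\ell$ floor boundaries; it says nothing directly about higher iterates $(T^{(n_\ell)})^k$ for $k\ge2$, and you supply no mechanism to propagate the separation estimate through these boundedly many but \emph{uncontrolled} further returns. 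As written, the argument does not close.

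The paper avoids the difficulty by an organizational change that makes the later excursions disappear rather than trying to control them: the width-$\frac{1}{3\nu q_{\ell+L}}$ subtowers are built at level $n_{\ell+\ell_0}$ with $\ell_0 := \overline{\ell}\left[\log_d(2\nu^2)\right]$, not at level $n_\ell$. Positivity along blocks of length $\overline{\ell}$ (Remark~\ref{positvegrowth}) together with $\nu$-balance gives $\min_\alpha h^{(n_{\ell+\ell_0})}_\alpha \geq 2q_\ell$, so an orbit segment of length $2q_\ell$ makes at most one full ascent of a level-$(n_{\ell+\ell_0})$ subtower, lands on $I^{(n_{\ell+\ell_0})}$ at most once, and climbs part way up the next subtower. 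Only that single return needs to be located, and Lemma~\ref{lemmaHMU} applied at level $n_{\ell+\ell_0}$ supplies the separation $\geq \frac{1}{\nu}\lambda^{(n_{\ell+\ell_0+\overline{\ell}})} = \frac{1}{\nu}\lambda^{(n_{\ell+L})}$, which after a balance comparison with $1/q_{\ell+L}$ gives pairwise disjointness of the subtowers over the discontinuities of $T^{(n_{\ell+\ell_0})}$ and over their images. This explains both the $+\overline{\ell}$ in $L=\ell_0+\overline{\ell}$ and the constant $\frac{1}{3\nu}$. Once disjointness holds, the combinatorics of where the singularities $\{l_\beta,r_\beta\}$ sit inside the level-$(n_{\ell+\ell_0})$ towers shows that the two subtowers traversed by the orbit of $[l_\alpha,\,l_\alpha+\frac{1}{3\nu q_{\ell+L}}]$ contain no other singularity, which is \eqref{backforw-a}; the mirror argument gives \eqref{backforw-b}. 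Your instinct to raise the level so as to ``contract by $2\nu^2$'' is the right one, but that factor must enlarge the tower heights rather than shrink the base interval, and then the multi-return bookkeeping is no longer needed.
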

We recall that the characterization of $0,1$ follows from Remark~\ref{singularitiesT}.

\begin{proof}
Recall first some basic properties of the towers $Z_\alpha^{(n_\ell)}$, $\alpha\in\cA$, $\ell\geq 1$ in the Rauzy-Veech induction (defined by \eqref{def:towers} in Section~\ref{Se:RV}). Recall that the discontinuities of $\mathcal{R}^{n_\ell}(T)$ are determined by the first visit via $T^{-1}$ to $I^{(n_\ell)}$ of the discontinuities for $T$. Therefore (see Figure~\ref{fig1}):
\begin{enumerate}[(a)]
\item
for each $\alpha\in\cA$, in the tower $Z_\alpha^{(n_\ell)}$ there is exactly one interval $T^{i_\alpha}I_\alpha^{(n_\ell)}$, for $0\leq i_\alpha\leq h_\alpha^{(n_\ell)}-1$, whose left endpoint belongs to $\{l_\beta : \beta\in\cA\}$; more precisely,  this left endpoint is $l_\alpha$;
\item
for each $\alpha\in\cA$, with the exclusion of $\alpha^{(n_{\ell+\ell_0})}_{1,t}$ (see \eqref{firstlastdef}), there is  one interval in the tower $Z_\alpha^{(n_\ell)}$, that we will denote by $T^{j_\alpha}I_\alpha^{(n_\ell)}$, $0\leq j_\alpha\leq h_\alpha^{(n_\ell)}-1$, whose right endpoint belongs to $\{r_\beta : \beta\in\cA \}$; more precisely,  this right endpoint is $r_{\alpha'}$, where $\alpha'$ is such that 
$r_{\alpha'} = l_\alpha$ (see Remark \ref{singularitiesT}, which gives that  explicitly $\alpha'\doteqdot  {\pi_t^{-1}(\pi_t(\alpha)-1)}$); 

\item the endpoint $1$ of the original interval (which is also equal to 
$r_{\alpha_{d,t}}$, see just before the  Remark \ref{singularitiesT}), is the image of $r_{ \alpha_{d,b}}$ and hence,  by (b), is the right endpoint of a floor $T^{i}I^{(n_\ell)}_\beta$ of $Z^{(n_\ell)}_\beta$, for some $0\leq i \leq h_\beta^{(n_\ell)}-1$,  then $1=r_{\alpha_d^t} $ is the right endpoint of the floor $T^{i+1}I^{(n_\ell)}_\beta$. In particular,  $Z^{(n_\ell)}_\beta$ contains two elements of  $\{r_\alpha : \alpha\in\cA\}$ one above the other (see Figure~\ref{fig1});
\item the point $T^{-1}(0)$ is the left endpoint of the top floor of the tower $Z_{\alpha^{(n)}_{1,b}}^{(n_\ell)}$.
\end{enumerate}

\begin{figure}[h!]
  \subfigure[Towers at level $n_\ell$.  \label{fig1} Here, $d=4$ and $\pi_b(\alpha_3')=d$.]{
  \includegraphics[width=0.39\textwidth]{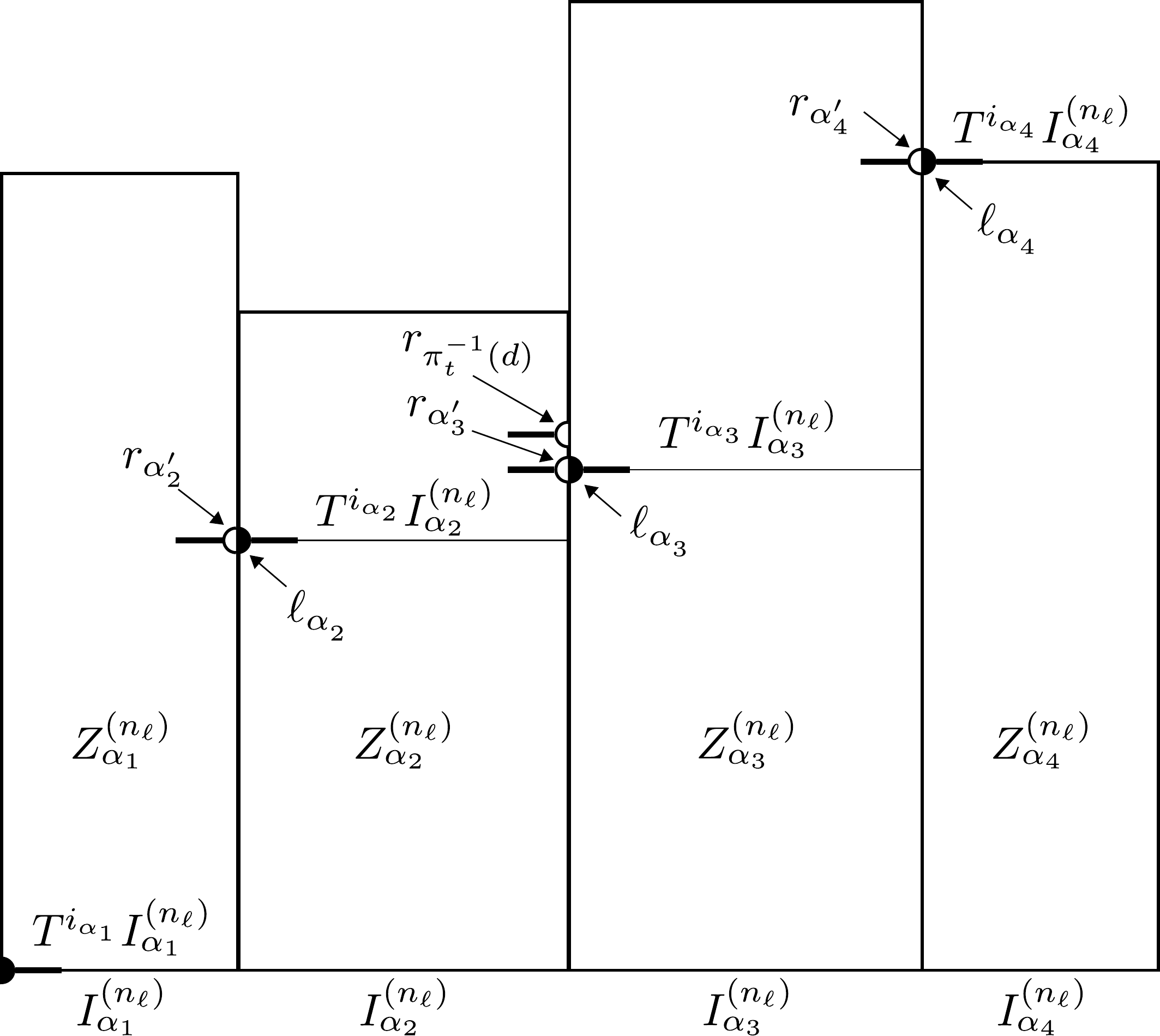}}
		\hspace{1.3cm}
\subfigure[Subtowers in \eqref{subtowers1a}, \eqref{subtowers1b} (lighter shade) and \eqref{subtowers2a}, \eqref{subtowers2b} (darker shade).\label{fig3}]{
    \includegraphics[width=0.39\textwidth]{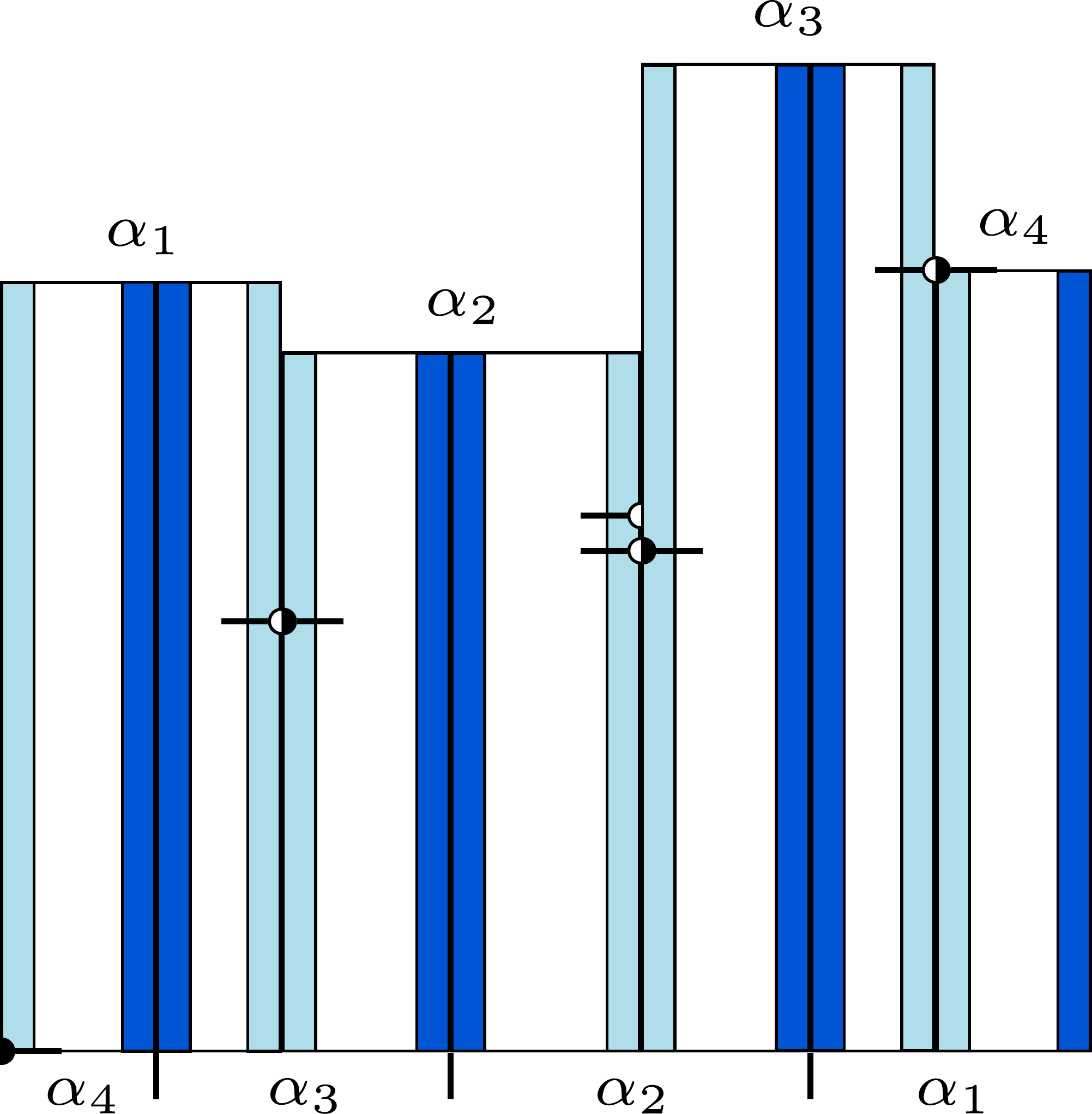}}
 \caption{The subtowers used in the proof of Lemma \ref{backforw}: the towers in \eqref{subtowers1a}, \eqref{subtowers1b} are in lighter shade, the ones given by \eqref{subtowers2a}, \eqref{subtowers2b} are in darker shade.}
\end{figure}


Let us choose a step $n_{\ell+\ell_0}$ whose towers are all taller than twice the shortest tower of step $n_{\ell}$. We claim that we have that
\begin{equation}\label{heightsrel}
 \min_{\alpha \in \mathcal{A}}  h^{(n_{\ell+\ell_0})}_\alpha \geq 2 q_\ell\doteqdot  2 \max_{\alpha \in \mathcal{A}}  h^{(n_{\ell+{\ell_0}})}_\alpha, \qquad \text{where} \ \ell_0\doteqdot  \left[ \log_d (2 \nu^2) \right] \overline{\ell} .
\end{equation}
Indeed, since $(n_\ell)_{\ell}$ is by assumption $\nu$-balanced (see Remark \ref{balancedcomparisons}), $(n_{k\overline{\ell}})_{k}$ is a positive sequence of induction times  and by positivity (see Remark \ref{positvegrowth}) it follows that for any $k\geq 1$, we have
\begin{equation*}
 \min_{\alpha \in \mathcal{A}}  h^{(n_{\ell+ k\overline{\ell}})}_\alpha \geq  \frac{1}{\nu\lambda^{(n_{\ell+k \overline{\ell}})}} \geq \frac{d^k}{\nu \lambda^{(n_{\ell})}} 
 \geq \frac{d^k}{\nu^2}  \max_{\alpha \in \mathcal{A}} h^{(n_\ell)}_\alpha, 
\end{equation*}
so for $k=\ell_0/\overline{\ell}$, we have that $d^k  >2  \nu^2 $ and hence  get \eqref{heightsrel} as desired.

Set $L\doteqdot  \ell_0 + \overline{\ell} = \overline{\ell} (1+ \left[ \log_d (2 \nu^2) \right])$,  where $\nu$ is the balance constant and $\overline{\ell}$ such that $(n_{k \overline{\ell}})_k$ is a positive sequence of induction times.    
Consider the following full height subtowers of the towers $Z_\alpha^{(n_{\ell+\ell_0})}$ (shown in Figure~\ref{fig3} in lighter shade), which have widths $1/(3\nu q_{\ell+L})$ and whose bases contain as endpoints the discontinuities of  $T^{(n_{\ell+\ell_0})}$:
\begin{align}\label{subtowers1a}
& \bigcup_{0\leq i< h^{(n_{\ell+\ell_0})}_\alpha}  T^i  \left( \left[ l_\alpha^{ n_{\ell+\ell_0},t} , l_\alpha^{\ n_{\ell+\ell_0},t} + \frac{1}{3\nu q_{\ell+L}} \right] \right), &   \alpha \in \mathcal{A}; \\
& \bigcup_{0\leq i< h^{(n_{\ell+\ell_0})}_\alpha}  T^i \left( \left[ r_\alpha^{\ n_{\ell+\ell_0},t} - \frac{1}{3\nu q_{\ell+L}}, r_\alpha^{\ n_{\ell+\ell_0},t}  \right)\right) , &     \alpha \in \mathcal{A}\backslash \{ \alpha^{(n_{\ell+\ell_0})}_{d,t}\}. 
\label{subtowers1b}
\end{align}


Consider also the following full height subtowers of the same width (shown in darker shade in Figure~\ref{fig3}), whose bases contain as endpoints the images of the discontinuities of $T^{(n_{\ell+\ell_0})}$:
\begin{align}\label{subtowers2a}
& \bigcup_{0\leq i< h^{(n_{\ell+\ell_0})}_{\beta(\alpha)}}   T^i \left(  \left[ l_\alpha^{\ n_{\ell+\ell_0},b} , l_\alpha^{\ n_{\ell+\ell_0},b} + \frac{1}{3\nu q_{\ell+L}} \right] \right)   , &  \alpha \in \mathcal{A}\backslash \{ \alpha^{(n_{\ell+\ell_0})}_{1,b}\},
  &\quad  \beta(\alpha) \ \text{s.t.}\ l_\alpha^{\ n_{\ell+\ell_0},t} \in I^{(n_{\ell+\ell_0})}_{\beta(\alpha)} ;\\
& \bigcup_{0\leq i< h^{(n_{\ell+\ell_0})}_{\beta(\alpha)}} T^i  \left( \left[ r_\alpha^{\ n_{\ell+\ell_0},b} - \frac{1}{3\nu q_{\ell+L}}, r_\alpha^{\ n_{\ell+\ell_0},b}  \right)  \right) , &  \alpha \in \mathcal{A}, & \quad \beta(\alpha) \ \text{s.t.}\ r_\alpha^{\ n_{\ell+\ell_0},t} \in I^{(n_{\ell+\ell_0})}_{\beta(\alpha)} . \label{subtowers2b}
\end{align}
The key remark that follows from Lemma \ref{lemmaHMU} is that the subtowers in \eqref{subtowers1a}, \eqref{subtowers1b}, \eqref{subtowers2a} and \eqref{subtowers2b} are all pairwise disjoint, since their width ${1}/3\nu{q_{\ell+L}}$ is less than half the distance  between the endpoints in their base floors (which is ${1}/(\nu{q_{\ell+L}})$ by Lemma \ref{lemmaHMU}). Thus, since by properties (a), (b) and (c) recalled at the beginning all elements of $\{l_\alpha,r_\alpha : \alpha\in\cA \}$ are endpoints of floors of the subtowers in \eqref{subtowers1a} and \eqref{subtowers1b},  the closure of the subtowers in \eqref{subtowers2a} and \eqref{subtowers2b} does not intersect $\{ l_\alpha, r_\alpha : \alpha\in\cA \}$.

Let us now  prove \eqref{backforw-a}. 
Take any $\alpha \neq \alpha_b^1$, so that $T(l_\alpha) \neq 0$. By Property (a) recalled at the beginning, $\left[ l_\alpha, l_\alpha + \frac{1}{(3\nu q_{\ell+L})} \right] $ is a floor of   one of the subtower in \eqref{subtowers1a}, the one indexed by the same $\alpha$. Moreover, from the assumption  $T(l_\alpha) \neq 0$ and property (d), it follows that $\pi_b^{(n_{\ell+\ell_0})}(\alpha)\neq 1$ and that the image of the last floor of the $\alpha$ subtower is the base of a subtower in \eqref{subtowers2a}. Thus,  since by \eqref{heightsrel} $2q_\ell \leq  \min_{\alpha \in \mathcal{A}}  h^{(n_{\ell+\ell_0})}_\alpha$, the images of the interval $\left[ l_\alpha, l_\alpha + \frac{1}{(3\nu q_{\ell+L})} \right] $ under $T^i$ for $0\leq i< 2q_\ell$ are contained in the union of the  $\alpha$ subtower in \eqref{subtowers1a} and of a subtower in \eqref{subtowers2a} (more precisely, the subtower which has as endpoint $l_{\alpha,n}^{(n_{\ell+\ell_0})}$, which is the image of $l_{\alpha,t}^{( n_{\ell+\ell_0})}$ under $T^{(n_{\ell+\ell_0})}$). Thus, to prove \eqref{backforw-a} holds it is enough to show that neither of these two subtowers contain other elements of $\{ l_\alpha, r_\alpha : \alpha\in\cA\}$. This is the case since by the properties (a), (b) and (c), the $\alpha$ subtower in \eqref{subtowers1a}  does not contain any other element of $\{ l_\alpha, r_\alpha : \alpha\in\cA\}$ apart $l_\alpha$ as left endpoint and, as remarked above, the closure of the subtowers in \eqref{subtowers2a}  does not intersect $\{ l_\alpha, r_\alpha : \alpha\in\cA\}$. Thus,  \eqref{backforw-a} holds.
 
Let us now  prove \eqref{backforw-b}. 
Take any $\alpha \neq \alpha_d^t$, so that $r_\alpha \neq 1$. By Property (b) recalled at the beginning, $\left[ r_\alpha - \frac{1}{(3\nu q_{\ell+L})} , r_\alpha \right) $ is a floor  of one of the subtowers \eqref{subtowers1a}. By all properties (a)-(d), since  $r_\alpha \neq 1$, the floors of this subtower \emph{above} the floor which contains $r_\alpha$ as right endpoint do not contain any other discontinuity in $\{ l_\alpha, r_\alpha : \alpha\in\cA\}$ in their closure. As before, by  \eqref{heightsrel}  the images of this interval under $T^i$ for $0\leq i< 2q_\ell$ are contained in the original subtower of \eqref{subtowers2a} and one of the subtower of \eqref{subtowers2b} (the one  which has as endpoint $r_{\alpha'}^{\ n_{\ell+\ell_0},b}$, which is the image of $r_{\alpha'}^{\ n_{\ell+\ell_0},t}$ under $T^{(n_{\ell+\ell_0})}$). Thus, since the closure of the subtowers in \eqref{subtowers2b}  also does not intersect $\{ l_\alpha, r_\alpha : \alpha\in\cA\}$,  \eqref{backforw-b} holds.
\end{proof}

\begin{proof}[Proof of \cref{forbac}] Let $L=\overline{\ell} (1+ \left[ \log_d (2 \nu^2) \right])$ and set $c\doteqdot  1/(6 \nu)$. Given $\vep>0$, let  $\ell'=\ell'(\vep)\geq 1$ such that $\lambda^{(n_{\ell'})}\leq \vep/8$. Fix $\ell\geq \ell'$. 
Assume that neither \eqref{j41} nor \eqref{j42} hold. Then there exists two continuity intervals endpoints $e_1, e_2 \in \{l_\alpha,r_\alpha : \alpha\in\cA \}$ and  $i_1, i_2 \in \mathbb{N}$ with 
\begin{equation}\label{times}
-q_\ell  \leq -i_1 < 0, \qquad 
0\leq i_2 < q_{\ell},  
\end{equation}
such that

\begin{equation}\label{distances}
|T^{-i_1} (x) - e_1| < \frac{c}{ q_{\ell + L}}, \qquad  |T^{i_2} (x) - e_2| < \frac{c}{ q_{\ell + L}}
\end{equation}
(see Figure \ref{configurations}  for a schematic picture).  
Without loss of generality, we can assume that $i_1, i_2$ are the smallest natural numbers which satisfy this property.  
Let us consider first the case in which $e_1 \leq  T^{-(i_1+i_2)} e_2 $ (two such configurations of points are shown in Figure \ref{a} and \ref{b}). By Remark \ref{singularitiesT}, we can assume that $e_1= l_\alpha$ for some $\alpha \in \mathcal{A}$. Let us show that the assumption that $x \notin [0, \varepsilon/8)$ guarantees that 
 $\alpha \neq \alpha_{1,b}$.  Indeed, if $\alpha = \alpha_{1,b}$, then $T(l_\alpha) =0$. In this case, since $0$ also belongs to $ \{l_\alpha,r_\alpha : \alpha\in\cA \}$, we must have $-i_1=-1, i_2=0$ and $e_2=0$. Thus, $|x|= |T^{0} x - 0| \leq c/({ q_{\ell + L}})$, which, since $c<1$ and $\ell+L \geq \ell'$, by Remark \ref{balancedcomparisons} gives $|x| \leq \lambda_{\ell' }$.  This,  by definition of $ \ell'$,  implies that $x \in [0, \varepsilon/8)$, which we are excluding by assumption.

 \begin{figure}[h!]
  \subfigure[ \label{a}]{
  \includegraphics[width=0.23\textwidth]{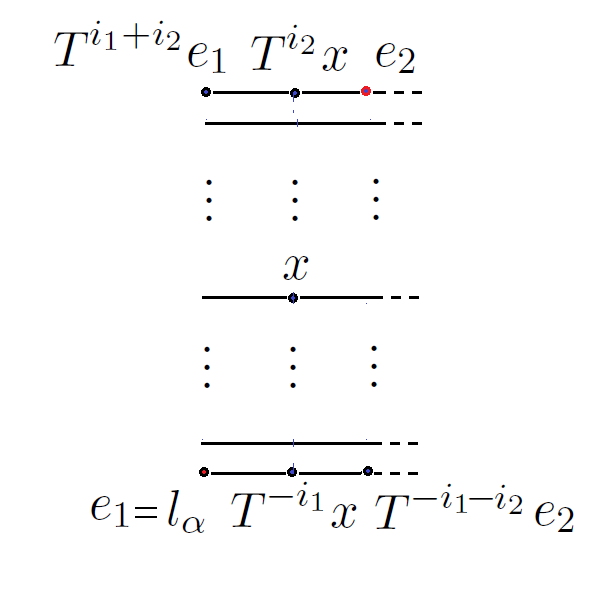} 	}  \subfigure[ \label{b}]{ \includegraphics[width=0.23\textwidth]{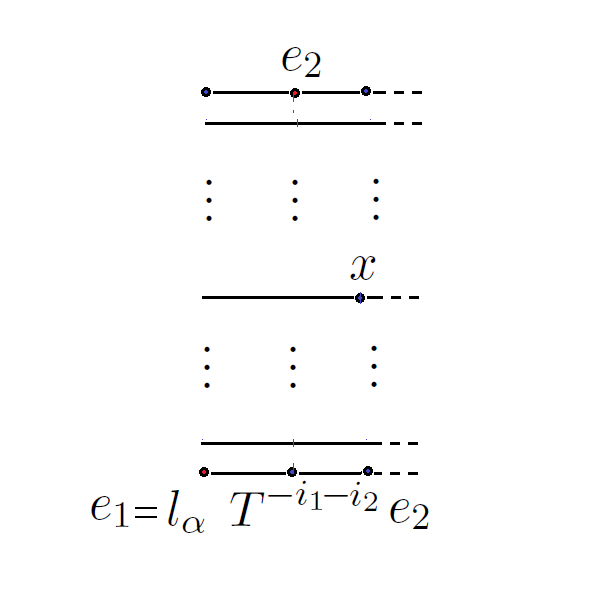}}
\subfigure[\label{c}]{
\includegraphics[width=0.23\textwidth]{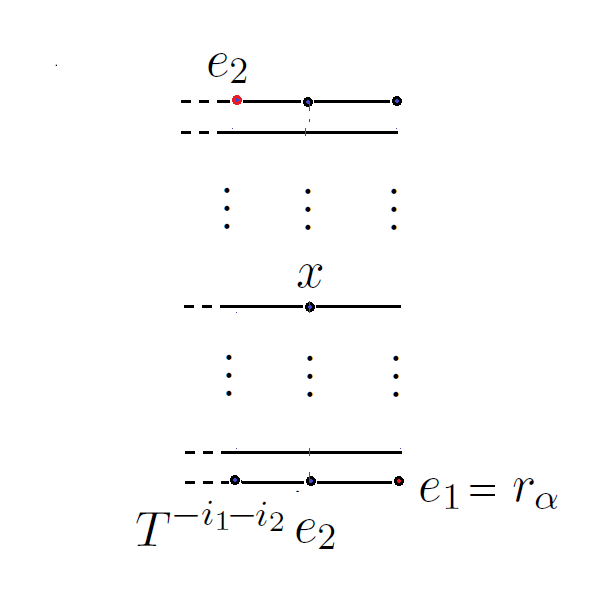} }	  \subfigure[ \label{d}]{	\includegraphics[width=0.23\textwidth]{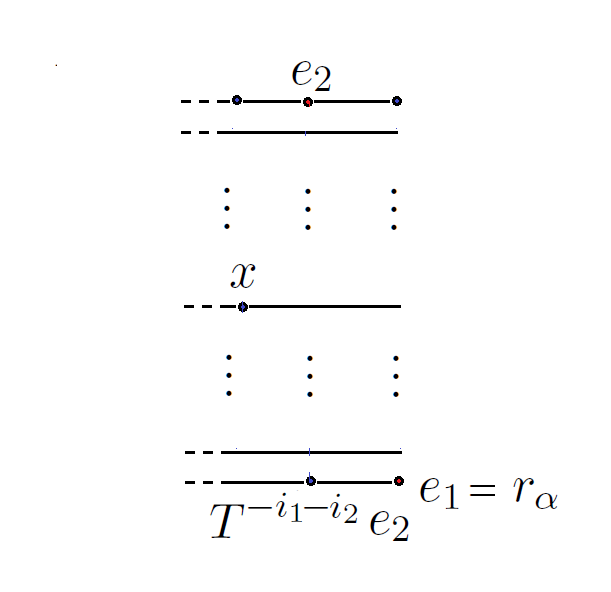}}
 \caption{
 in (a) and (b)  $e_1 \leq  T^{-(i_1+i_2)} e_2 $ and $e_1=l_\alpha$; in (c) and (d)  $e_1 \geq  T^{-(i_1+i_2)} e_2 $ and $e_1=r_\alpha$.\label{configurations}}
\end{figure}

We remark that by the choice of $i_1, i_2$, $T^j$ acts continuously on $[l_\alpha, l_\alpha + 1/(3\nu q_{\ell+L})]$ for any $0\leq j \leq i_1+i_2$ and hence it is an isometry. Thus,  using that $T^{i_1+i_2 }$ is an isometry and  \eqref{distances} twice and recalling the definition of $c$, we have that (see Figure \ref{a} for reference):
$$
| e_2 - T^{i_1+i_2 }( e_1)  | = | e_2 - T^{i_2 }( x)  | + | T^{i_2 }( x) - T^{i_1+i_2 }( e_1)  | \leq \frac{c}{ q_{\ell + L}} + | T^{-i_1 }( x) -  e_1  | < \frac{2c}{   q_{\ell + L}}= \frac{1}{  3\nu q_{\ell + L}}.
$$
Thus, by \eqref{times}, this shows that 
$$
e_2 \in T^{i_1+i_2}  \left( \left[ l_\alpha, l_\alpha + \frac{1}{3\nu q_{\ell+L}} \right] \right) ,\qquad \text{where} \ 0\leq i_1+i_2 < 2q_\ell,
$$
which contradicts  \eqref{backforw-a} in Lemma \ref{backforw}.

Similarly, if $e_1 \geq T^{-(i_1+i_2)} e_2 $ (two such configurations of points are shown in Figure \ref{c} and \ref{d}),  by Remark \ref{singularitiesT}, we can assume that $e_1= r_\alpha$ for some $\alpha \in \mathcal{A}$. Moreover, the assumption $x \notin (1- \varepsilon/8,1)$ guarantees that  $\alpha \neq \alpha_{t,d} $, since otherwise, since  $r_{\alpha_{t,d}}=1$ and $T(r_{\alpha_{t,d},t}) = r_{\alpha_{t,d}}$, we must have $e_2=r_{\alpha_{t,d}}$, $e_1=r_{\alpha_{t,d}}$ and hence $i_1=0, i_2=-1$. Thus, reasoning as before this yields that $|x-1|= |T^{0} x - r_{\alpha_{t,d}}| \leq 1/({3 \nu q_{\ell + L}})\leq \lambda_{\ell' } \leq \varepsilon/8$ and hence $x \in (1- \varepsilon/8,1)$, which we are excluding.  Reasoning in a similar way as in the previous case, we conclude that 
$$
e_2 \in T^{i_1+i_2}  \left( \left[ r_\alpha - \frac{1}{3\nu q_{\ell+L}}, r_\alpha \right) \right), \qquad \text{where} \ 0\leq i_1+i_2 < 2q_\ell,
$$
which this times  contradicts \eqref{backforw-b} in Lemma \ref{backforw}.

\end{proof}


\subsection{Proof that the Ratner DC implies the SR-property}\label{sec:deducingRatner}
In this section we will prove that if an IET is such that  the Ratner Summability Condition holds, one can prove the SR-property for special flows over $T$ with asymmetric logarithmic singularities. More precisely, we will prove the following. 


\begin{prop}\label{prop:Ratproof}
Let $T\colon I \to I$ be an IET and $f\colon I \to \mathbb{R}^+$  a roof function $f\in AsymLogSing(T)$ (see Definition \ref{def_LogAsym}). If $T$ satisfies the Ratner Summability Condition with exponents $(\tau, \eta, \xi)$ such that 
\begin{equation}\label{para}
 \tau\in (1,\frac{16}{15}),\;\;\;    \tau'\in (\frac{15}{16},1),\;\;\; 
	 \eta\in (3/4,2\tau '-\tau),\;\;\;
	 \xi\in(\max(99/100,\tau ' \eta),\tau ').
\end{equation}
  the special flow $(\varphi_t)_{t\in\mathbb{R}}$ over $T$ and under $f$ has the SR-property.
\end{prop}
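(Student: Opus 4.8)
The strategy is to verify the hypotheses of Proposition~\ref{cocy}, namely to produce, for every $\vep>0$ and $N\in\N$, constants $\kappa(\vep)>0$, $\delta(\vep,N)>0$ and a set $X'$ of measure $>1-\vep$ such that any two points $x<y$ in $X'$ with $0<y-x<\delta$ satisfy \eqref{posi} or \eqref{nega}. By Lemma~\ref{njz}, it suffices to arrange, for suitable $M$ and $L$ with $L/M\geq\kappa$, either (a) that $[x,y]$ together with its first $M+L$ forward iterates avoids all discontinuities $\{\ell_\alpha\}$, that the sign of $S_n(f')$ is constant on $[M,M+L]\times[x,y]$, and that $|S_n(f')(\theta)|$ is within a factor $1\pm\vep$ of $(y-x)^{-1}$ — or (b) the same three conditions for the backward iterates. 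The whole game is thus reduced to a statement about Birkhoff sums of $f'$ along short intervals, and the sign/size control of those sums is exactly what Lemma~\ref{prty} provides (with main term $(C^--C^+)r\log r$), provided we stay outside the bad sets $\Sigma_\ell^+(T)$.

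\textbf{Key steps.} First I would fix $\vep$ and $N$, and choose $\ell$ large (to be taken $\geq$ some $\ell_1(\vep)$ from Lemma~\ref{prty} and $\geq\ell'(\vep)$ from Proposition~\ref{forbac}); the pair of nearby points will be at scale $\delta\sim(y-x)$ comparable to $1/\bigl(q_\ell(\log q_\ell)^\xi\bigr)$, so that for $q_\ell\le r<q_{\ell+1}$ the quantity $(y-x)|S_r(f')(\theta)|$ is of order $r\log r\cdot (y-x)$, which ranges across the value $1$; one then \emph{selects} $M=M(x,y)$ to be the (essentially unique, up to bounded error) integer in $[q_\ell,q_{\ell+1})$ for which $(y-x)S_M(f')(\theta)\approx 1$, and takes $L=L(x,y)$ comparable to $M$ so that $L/M\ge\kappa$ for a uniform $\kappa(\vep)$ — here one uses that the increments $S_{n+1}(f')-S_n(f')=f'(T^n\theta)$ are controlled (and small relative to $S_n(f')\sim M\log M$) precisely because the orbit stays away from singularities, so the whole block $[M,M+L]$ stays in the $1\pm\vep$ window. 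Second, to guarantee we \emph{can} stay away from singularities, I invoke Proposition~\ref{forbac}: for $x\notin[0,\vep/8)\cup(1-\vep/8,1)$, either the forward orbit of length $q_\ell$ or the backward orbit of length $q_\ell$ keeps distance $>c/q_{\ell+L}$ from all $\{l_\alpha,r_\alpha\}$ — and this dichotomy is exactly what the \emph{switchable} property allows us to exploit: in the first case we aim for \eqref{posi}, in the second for \eqref{nega}. Third, I must turn ``distance $>c/q_{\ell+L}$ for the orbit of length $q_\ell$'' into the hypothesis \eqref{fra} (resp.\ \eqref{fra1}) of Lemma~\ref{prty}, i.e.\ $U(q_{\ell+1},x),V(q_{\ell+1},x)\le 2q_\ell(\log q_\ell)^\xi$; this is where the Summability Condition enters. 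For $\ell\in K_T$ (equivalently, as \eqref{sumovercomparision} shows, for $\ell\notin\widetilde K_T$ fails only rarely), one has $q_{\ell+L}\le q_\ell/\sigma_\ell^{\xi}$, hence $c/q_{\ell+L}\ge c\sigma_\ell^\xi/q_\ell$, which by \eqref{nr1} of Lemma~\ref{techest} ($\sigma_\ell(\log q_\ell)^\xi\to0$, and $\xi<\tau'$ so $\sigma_\ell^\xi$ beats $(\log q_\ell)^{-\xi}$ — more carefully one compares $\sigma_\ell^\xi$ against $1/(2q_\ell(\log q_\ell)^\xi)$ after multiplying through by $q_\ell$) dominates $1/(2q_\ell(\log q_\ell)^\xi)$; thus for $\ell\in K_T$ the orbit of length $q_\ell$ avoiding the $c/q_{\ell+L}$-neighbourhood of singularities forces it to avoid the $\sigma_\ell I^{(n_\ell)}$-neighbourhood needed, in particular $x$ (resp.\ $T^{-q_{\ell+1}}x$) lies outside $\Sigma_\ell^+(T)$ as in the proof of Lemma~\ref{prty}. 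Fourth, for the remaining indices $\ell\notin K_T$, one simply \emph{removes} the corresponding bad sets: define $X'=X'(\vep,N)$ by deleting $[0,\vep/8)\cup(1-\vep/8,1)$, all $\Sigma_\ell^+(T)$ and their backward images for $\ell\notin K_T$ with $\ell$ larger than some threshold depending on $N$, together with a tail of the orbit-hits-near-singularity sets; by Corollary~\ref{summabilitySigmas} the series $\sum_{\ell\notin K_T}\lambda(\Sigma_\ell^+(T))<\infty$, so the tail has measure $<\vep$, giving $\mu(X')>1-\vep$. Then for $x,y\in X'$ at scale $\delta$ tuned to the relevant $\ell$ (chosen $\ge$ all thresholds), the hypotheses of Lemma~\ref{prty}(A) or (B) hold, so \eqref{est.ex} or \eqref{est.ex2} holds for all $r\in[q_\ell,q_{\ell+1})$ (resp.\ $[h^{(n_\ell)},h^{(n_{\ell+1})})$), delivering the sign constancy and the $1\pm\vep$ size control required by Lemma~\ref{njz}, hence \eqref{posi} or \eqref{nega}. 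Finally one checks that the $\kappa$, $M$, $L$ produced satisfy $L,M\ge N$ (automatic by taking $\ell$ large) and $L/M\ge\kappa(\vep)$, and applies Proposition~\ref{cocy} to conclude the SR-property; Proposition~\ref{prop:Ratproof} follows.

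\textbf{Main obstacle.} The delicate point is the simultaneous bookkeeping in the third and fourth steps: one must choose the length scale $\delta$ (and hence the index $\ell=\ell(x,y)$) so that \emph{both} the forward/backward avoidance from Proposition~\ref{forbac} \emph{and} the requirement $\ell\in K_T$ (or else $x$ avoided the finitely many removed $\Sigma_\ell^+$) hold for the same $\ell$, while keeping $\delta$ independent of the individual pair $(x,y)$ given $\vep,N$. The subtlety is that $K_T$ depends on $T$ and is only co-finite ``in density,'' not co-finite; so a given pair at scale $\delta$ may land at an $\ell\notin K_T$, and one must then rely on having already thrown away $\Sigma_\ell^+(T)$ for that $\ell$ — which is affordable precisely because Corollary~\ref{summabilitySigmas} makes $\sum_{\ell\notin K_T}\lambda(\Sigma_\ell^+(T))$ finite. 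Making the quantifiers come out in the right order (the set $X'$ must work for \emph{all} pairs at scale $\le\delta$, but $\ell$ and hence which conditions are invoked vary with the pair) is the part that requires care; the Diophantine input has been arranged in Section~\ref{sec:summability} precisely to make this possible, so the argument here is assembling those pieces rather than proving anything new about the cocycle.
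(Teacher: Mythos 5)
Your overall strategy is the one the paper uses: reduce to Proposition~\ref{cocy} via Lemma~\ref{njz}, produce the forward/backward dichotomy from Proposition~\ref{forbac}, split on whether $\ell\in K_T$, and control the measure of removed sets via the Summability Condition and Corollary~\ref{summabilitySigmas}. Your treatment of the case $\ell\in K_T$ is essentially correct, modulo one confused justification: to show $c/q_{\ell+L}\ge 2/(q_\ell(\log q_\ell)^\xi)$ from $q_{\ell+L}\le q_\ell/\sigma_\ell^\xi$, you need a \emph{lower} bound on $\sigma_\ell^\xi$, while \eqref{nr1} gives an \emph{upper} bound. The relevant fact is instead that $\log\|A_\ell\|$ is uniformly bounded below (by $\log d$, say), so $\sigma_\ell^\xi\ge C/(\log q_\ell)^{\tau'\xi}$, and $\tau'\xi<\xi$ because $\tau'<1$. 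Your parenthetical ``$\xi<\tau'$'' is not what makes this work.

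The genuine gap is in the case $\ell\notin K_T$. You claim that after removing the sets $\Sigma_\ell^+(T)$ (plus some vaguely described ``orbit-hits-near-singularity sets''), ``the hypotheses of Lemma~\ref{prty}(A) or (B) hold.'' This is false and cannot be repaired by just enlarging the removed sets uniformly. Lemma~\ref{prty}(A) requires the bound $U(q_{\ell+1},x),V(q_{\ell+1},x)\le 2q_\ell(\log q_\ell)^\xi$ over the whole forward orbit of length $q_{\ell+1}$; the set of $x$ violating this has measure of order $(q_{\ell+1}/q_\ell)/(\log q_\ell)^\xi\sim\|A_\ell\|/(\log q_\ell)^\xi$, which is not summable over $\ell\notin K_T$. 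Moreover $\Sigma_\ell^+$ only constrains the first $[\sigma_\ell q_{\ell+1}]$ iterates, so removing it does not control $U(r,x),V(r,x)$ for $r$ in the full range $[q_\ell,q_{\ell+1})$ and does not tame the term $M(U+V)$ in the upper bound of Proposition~\ref{mpd}. The paper instead removes a second family of sets $J_\ell=\bigcup_k J_\ell^k$ (see \eqref{def.jlk} and \eqref{jlk}) in which the width of the excluded neighbourhood of $l_\alpha$ \emph{shrinks} as $1/\bigl((k+1)q_\ell(\log (k+1)q_\ell)^\xi\bigr)$ with the block index $k$; this telescoping turns the prospective factor $q_{\ell+1}/q_\ell$ into a harmless $\log\|A_\ell\|$ and yields $\lambda(J_\ell)\le\sigma_\ell^\eta$ (see \eqref{estimatejlk}), hence summability over $\ell\notin K_T$. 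Then one applies Proposition~\ref{mpd} \emph{directly} (not Lemma~\ref{prty}), using $\theta\notin\Sigma_\ell^+$ for the lower bound and the $J_\ell$-removal bound $U(m,\theta),V(m,\theta)\le 2m(\log m)^\xi$ for the upper bound; note also this establishes only \eqref{posi}, not a dichotomy. Without the $J_\ell$ construction your Case~2 does not close.
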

We will assume for the rest of the section that the parameters $\tau, \tau', \xi , \eta$ are chosen as in \eqref{para}, that $T$ satisfies the assumptions of Proposition \ref{prop:Ratproof} and that $f\in AsymLogSing(T)$. Let us denote by $C^-$, $C^+$ the constants in the Definition \ref{def_LogAsym} of asymmetric logarithmic singularities. We will also  assume without loss of generality that $C^--C^+>0$. 
Let us first give an outline of the structure of the proof.

\smallskip
{\bf Outline of the proof.} We will prove the SR-property using Lemma \ref{njz} on Birkhoff sums to verifying  the assumptions of Proposition \ref{cocy}. 
Consider $x<y$ close. To use Lemma \ref{njz} we need two verify that the following two properties hold:
\begin{enumerate}
   \item[(i)] there is no discontinuity in $[T^ix,T^iy]$ for $i\in [M,M+L]$;
   \item[(ii)] we have good control of $S_n(f')(\theta)$ for $n\in[M,M+L]$ and $\theta\in[x,y]$.
\end{enumerate}  
In order to verify (ii), we use Proposition \ref{mpd} which guarantees that the term $n\log n$ in $S_n(f')(\theta)$ is well controlled. The problematic terms are $U(n,\theta)$ and $V(n,\theta)$, which depend on the distance of $T^i\theta$ from the singularities.  We define $\ell$ to be such that $\frac{1}{q_{\ell+1}\log q_{\ell+1}}\leq |x-y|<\frac{1}{q_\ell\log q_\ell}$ and consider two cases: $\ell\in K_T$ or $\ell\notin K_T$ (see Definition \ref{RatDC2}). When $\ell\in K_T$, we  use Lemma \ref{prty}, but to do so, we need good estimates on $U(q_{\ell+1},x),V(q_{\ell+1},x)$. This control is given by Proposition \ref{forbac}, which tells us that either going forward or backward in time, one can control the distance from singularities and hence that (i) will be satisfied either for positive or negative iterates. Moreover, by the same reasons, \eqref{fra} or \eqref{fra1} will be satisfied and by Lemma \ref{prty} and hence we will have good control of Birkhoff sums of the derivative, thus showing (ii). In the second case, namely when $\ell\notin K_T$,  both (i) and (ii) in Proposition \ref{cocy} will hold for most of the points, i.e. outside the set of points which go too close to some  of the singularities (which is defined in (\eqref{def.jlk} and \eqref{jlk} below). For $r\in [q_\ell,q_{\ell+1}]$ we want the main term in $S_r(f')$ (which is $r\log r$) to dominate the terms $U(r,x), V(r,x)$. Notice that if $r$ gets larger the main term is also larger, so the danger zones (in which $U(r,x), V(r,x)$ are too large) are getting smaller, so that one can control the measure of the set of points which are removed (see  \eqref{estimatejlk}). The assumption  that the Summability Condition is satisfied for $T$ implies that the set of $\ell\notin K_T$ is small and hence that we can throw away the union over $\ell$ of the sets of bad points (see \eqref{z2}) and still end up with a set of arbitrarly large measure, whose points stay sufficiently far from all the singularities and hence satisfy (i) and (ii). Thus, we can apply Lemma \ref{njz} and conclude the proof. 

\begin{proof}[Proof  of Proposition \ref{prop:Ratproof}]
We remark that $T$ by definition of Ratner DC also satisfies the Mixing DC, which implies in particular that $T$ is ergodic. Thus, in order to prove that $(\varphi_t)_{t\in\mathbb{R}}$ has SR-property it is enough to verify the assumptions of  Proposition \ref{cocy}. We will do this by using Lemma  \ref{njz} and Proposition \ref{forbac} on  backward or forward control of distance of orbits from singularities.
 
 Fix $\vep>0$ (small) and $N\in\N$. To verify Proposition \ref{cocy}, we need to define a $\kappa=\kappa(\vep)$ that we set to be $\kappa\doteqdot \vep^5$, a  $\delta=\delta(\vep,N)$ and a set of ``good'' points $X'$ with  $\lambda(X')>1-\vep$. Since $T$ satisfies the Ratner Summability Condition  with $\tau,\xi,\eta$ satisfying \eqref{para}, by  the Definition \ref{RatDC2} of Summability Condition and by   Corollary \ref{summabilitySigmas},     we have that the following two series are summable:
\begin{equation}\label{sersum2}
\sum_{\ell\notin K_T}\sigma_\ell^\eta<\infty, \qquad \sum_{\ell \notin K_T}\lambda(\Sigma_\ell(T)) <\infty, 
\end{equation}
where $K_T=\{\ell\in\N :  q_{\ell+L}\leq \frac{q_\ell}{\sigma_\ell^{\xi'}}\}$ (where $\frac{\xi}{\tau'}+10^{-3}>\xi'>\frac{\xi}{\tau'}$ is a small). 
Hence,   there exists $l_0(\vep)$ such that if we set 
\begin{equation}\label{lees}
Z_1: =\bigcup_{l\notin K_T, \ell\geq \ell_0}2\Sigma_\ell(T), \text{ then } \qquad  \lambda(Z_1)<  \frac{\vep}{3}, \quad \textrm{and} \quad 
\sum_{l\notin K_T,\;\ell \geq \ell_0 }\sigma_\ell^\eta<\frac{\vep}{6\nu^2|\cA|}.
\end{equation}
 
Fix $l\notin K_T, l\geq l_0$ and $k\in\{0,\ldots, \left[\frac{q_{l+1}}{q_\ell}\right]\}$. Define the set 
\begin{equation}\label{def.jlk}
J_\ell^k\doteqdot \bigcup_{\alpha\in\cA}\bigcup_{i=kq_\ell}^{(k+1)q_\ell-1}
T^{-i}[-\frac{1}{(k+1)q_\ell(\log (k+1)q_\ell)^{\xi}}
+l_\alpha,l_\alpha+\frac{1}{(k+1)q_\ell (\log (k+1) q_\ell)^{\xi}}].
\end{equation}
Notice that 
\begin{equation}\label{pot}\lambda(J_\ell^k)\leq  \frac{2|\cA|}{(k+1) (\log q_\ell )^{\xi}}.
\end{equation}
Moreover by \eqref{nr3} for $\ell$ sufficiently large we have 
\begin{equation}\label{nvc}
\frac{6|\cA|}{(\log q_\ell)^{\xi}}\log\|A_\ell \|\leq 
\sigma_l^{\eta}.
\end{equation}
Now define
\begin{equation}\label{jlk}
J_\ell\doteqdot \bigcup_{k=0}^{\left[\frac{q_{\ell+1}}{q_\ell }\right]+1}
J_\ell^k.
\end{equation}
We have by \eqref{pot}
\begin{equation}\label{estimatejlk}
\lambda(J_\ell)\leq \frac{2|\cA|}{(\log q_\ell)^{\xi}}\sum_{k=0}^
{\left[\frac{q_{\ell+1}}{q_\ell}\right]+1}\frac{1}{k}
\leq \frac{2|\cA|}{(\log q_\ell)^{\xi}}2\log(\left[\frac{q_{\ell+1}}{q_\ell}\right]+1)\leq
\frac{6|\cA|}{(\log q_\ell)^{\xi}}\log\|A_\ell\|\stackrel{\eqref{nvc}}{\leq} 
\sigma_\ell^{\eta}
\end{equation}
We define
\begin{equation}\label{z2}
Z_2\doteqdot \bigcup_{l\notin K_T,l\geq l_0}J_\ell. 
\end{equation}
Notice that by the above computations and \eqref{lees} we have $\lambda(Z_2)\leq \frac{\vep}{3}$. Finally, we define

$$X'\doteqdot Z_1^c\cap Z_2^c\cap (\frac{\vep}{8},1-\frac{\vep}{8}).
$$
Notice that since $\lambda(Z_1),\lambda(Z_2)<\vep/3$, we have $\lambda(X')>1-\vep$. We will prove Ratner (see Proposition \ref{cocy}) for pairs of points from $X'$.
Let now 
\begin{equation}\label{ela}\ell_a=\max(\frac{N^2+1}{\vep^4},1/\vep, l_0,\ell_1+1,\ell') ,
\end{equation}
where $\ell_1(\vep^2)$ is such that the estimates in Proposition \ref{mpd} and Lemma \ref{prty} hold for $\ell\geq \ell_1$ and $\ell'$ comes from Proposition \ref{forbac}. 
Define 
\begin{equation}\label{sdel}\delta\doteqdot \min(\frac{1}{\ell_a^2},\vep^2).
\end{equation}
 We will show that any $x,y\in X'$ with $|x-y|<\delta$ satisfy \eqref{posi} or \eqref{nega} of Proposition \ref{cocy}.\\
 \indent Let $r\in \N$ be the unique number such that 
\begin{equation}\label{dist}
\frac{1}{(C^--C^+)(r+1)\log (r+1)}<y-x\leq \frac{1}{(C^--C^+)r \log r}.
\end{equation}
Let now $\ell\in \N$ be the unique number such that $q_\ell\leq 
r<q_{\ell+1}$ (note that by \eqref{sdel} $\ell_a<\ell$).

 We will consider the following two cases:\\

\textbf{Case 1.} $\ell\in K_T$ (in particular $q_{\ell+L}\leq \frac{c}{10}q_\ell (\log q_\ell)^\xi$ where $c$ is comming from Proposition \ref{forbac}). \\
In this case, since $x\in X'$, we can use Proposition \ref{forbac} (with $\ell+1$) to get (WLOG we can assume that $L\geq 1$) 
\begin{equation}\label{op1}
d(\{l_\alpha,r_\alpha : \alpha\in\cA\}, \{T^i x : 0\leq i<q_{\ell+1}\})>\frac{c}{q_{\ell+L}}\geq
\frac{2}{q_\ell(\log q_\ell)^\xi},
\end{equation}
or
\begin{equation}\label{op2}
d(\{l_\alpha,r_\alpha : \alpha\in\cA\}, \{T^i x : -q_{\ell+1}\leq i< 0 \})>\frac{c}{q_{\ell+L}}\geq
\frac{2}{q_\ell(\log q_\ell)^\xi}.
\end{equation}
If \eqref{op1} holds, we show \eqref{posi}, if \eqref{op2} holds we show \eqref{nega}.  Since the proofs in both cases are analogous, we will conduct the proof assuming \eqref{op1} holds.\\

Let 
\begin{equation}\label{soundof}M\doteqdot \min (r,(1-\vep^4)q_{\ell+1})\text{ and }L=[\vep^5 M]+1,
\end{equation}
 (so that $L/M\geq \kappa$ and $M+L<q_{\ell+1}$). Notice that $\|x-y\|<\delta\stackrel{\eqref{sdel}}{<}\vep$. Moreover $$M\geq L>\vep^4 M\geq \vep^4 q_\ell>\vep^4\ell>\vep^4\ell_a>N,$$ (the last inequality by \eqref{ela}). Therefore, the assumptions of Lemma \ref{njz} are satisfied for $x,y,M,L$. Hence, to show \eqref{posi} in Proposition \ref{cocy}, it is enough to verify that \eqref{betai},\eqref{cont},\eqref{der} in Lemma \ref{njz} are satisfied.  

To show \eqref{betai} notice first that by \eqref{dist} and $r\geq q_\ell$,  $|y-x|<\frac{1}{(C^--C^+)q_\ell\log q_\ell}$. We have for every $\theta\in[x,y]$
\begin{multline}\label{betd} d\left(\{l_\alpha : \alpha\in\cA\},\{T^i \theta : 0\leq i<q_{\ell+1}\}\right)\geq\\
d\left(\{l_\alpha : \alpha\in\cA\},\{T^i x : 0\leq i<q_{\ell+1}\}\right)- |\theta-x|\stackrel{\eqref{op1}}{>}
\frac{2}{q_\ell(\log q_\ell)^\xi}-\frac{1}{(C^--C^+)q_\ell\log q_\ell}>\frac{1}{q_\ell(\log q_\ell)^\xi}
\end{multline}
the last inequality follows since $\xi<1$ and $\ell$ is large. This and $M+L<q_{\ell+1}$ gives \eqref{betai}.


Notice that by \eqref{betd}, for every $\theta\in[x,y]$ \eqref{fra} holds. So using \eqref{est.ex} ($[M,M+L]\subset [q_\ell,q_{\ell+1})$), we get that for every $(r,\theta)\in[M,M+L]\times[x,y]$
\begin{equation}\label{czo}
 0<(C^--C^+-\vep^2)r\log r\leq S_r(f')(\theta)\leq (C^--C^++\vep^2)r\log r,
\end{equation}
so the left  hand side automatically gives \eqref{cont}.

Now we show \eqref{der}. By \eqref{czo} we get for every $(r,\theta)\in[M,M+L]\times [x,y]$
\begin{equation}\label{sodo}
(C^--C^+-\vep^2)M\log M\leq S_r(f')(\theta)\leq (C^--C^++\vep^2)(M+L)\log (M+L).
\end{equation}
But by \eqref{soundof} and \eqref{dist} we get
$$
 M\geq (1-\vep^3)(r+1)\text{ and } M+L\leq (1+\vep^4)r.
$$
Pluging this into \eqref{sodo} and using \eqref{dist} gives
\eqref{der}. So by Lemma \ref{njz} (i) is satisfied. This finishes the proof in \textbf{Case 1.}\\

\textbf{Case 2.} $\ell\notin K_T$.\\
Notice first that for every $j\in\{0,...,[\sigma_\ell q_\ell+1]\}$, we have
\begin{equation}\label{sfr} 
l_\alpha\notin [T^jx,T^jy] \text{ for every } \alpha\in\cA. 
\end{equation}
 Indeed, notice that $\sigma_\ell\log q_\ell\to 0$ as $\ell\to+\infty$. This, $n_\ell$ being a balanced time and \eqref{dist} give
\begin{equation}\label{cra}
\sigma_\ell I^{(n_\ell)}\geq \frac{\nu\sigma_\ell}{q_\ell}\geq \frac{100}{q_\ell\log q_\ell}>2|y-x|.
\end{equation}
But $x\in X'\subset Z_1^c\subset (2\Sigma_\ell^+(T))^c$, so 
 for every $j\in\{0,...,[\sigma_\ell q_\ell+1]\}$ and every $\alpha\in\cA$, 
\begin{equation}\label{hgf}
d(T^jx,l_\alpha)\geq 2\sigma_\ell I^{(n_\ell)}>|y-x|,
\end{equation}
which gives \eqref{sfr}. We claim that for every $\theta\in [x,y]$
\begin{equation}\label{kt1}
\theta\notin \Sigma_\ell^+(T).
\end{equation}
Let us prove \eqref{kt1} by contradiction: by the definition of $\Sigma_\ell^+(T)$, if  \eqref{kt1} fails, it would mean that there exist $i\in \{0,...,[\sigma_\ell q_\ell+1]\}$ and $\alpha\in \cA$, such that $d(T^i\theta,l_\alpha)\leq \sigma_\ell I^{(n_\ell)}$ (choose $i$ to be the smallest one with this property).  So
$$
d(T^ix,l_\alpha)\leq d(T^i\theta,l_\alpha)+d(T^ix,T^i\theta)\leq \sigma_\ell I^{(n_\ell)}+ |y-x|\stackrel{\eqref{cra}}{<}2\sigma_\ell I^{(n_\ell)},
$$
a contradiction with \eqref{hgf}. So \eqref{kt1} holds.
Moreover, the following holds, for every $\theta\in[x,y]$
\begin{equation}\label{uvm}
U(m,\theta),V(m,\theta)\leq 2m(\log m)^{\xi},
\end{equation}
for every $m\in[q_\ell, (1+\vep)r]$. Let us show that for every $m\in[q_\ell,(1+\vep)r]$, 
$U(m,\theta)> 2 m(\log m)^{\xi}$ (the proof of $V(m,\theta)>2 m(\log m)^{\xi}$ is analogous). 

This follows by the fact that $x\in X'\subset Z_2^c\stackrel{\eqref{z2}}{\subset} J_\ell$. Indeed, 
let $1 \leq k\in \leq \left[\frac{q_{\ell+1}}{q_\ell}\right] +1$ be such that $kq_\ell\leq m <(k+1)q_\ell$. Then, by \eqref{jlk}, \eqref{def.jlk}, the fact that $x\in J_\ell^k$, 
we get
$$
\min_{\alpha\in\cA} d(\{x,...,T^mx\},l_\alpha)\geq \frac{1}{(k+1)q_\ell(\log(k+1)q_\ell)^\xi}\geq \frac{1}{2m(\log 2m)^\xi}.
$$
Therefore, and since $m\leq (1+\vep)r$ and $\xi<1$, we have 

$$
\min_{\alpha\in\cA}d(\{\theta,...,T^m\theta\},l_\alpha)
\stackrel{\eqref{dist}}{\geq} 
\min_{\alpha\in\cA}d(\{x,...,T^mx\},l_\alpha)- \frac{1}{(C^--C^+)r \log r}\geq
\frac{1}{2m (\log m)^\xi},
$$
so \eqref{uvm} holds.
 
Now define $M\doteqdot \max(r,(1-\vep^4)q_{\ell+1})$, $L\doteqdot [\vep^5 M]+1$.
From this point the proof is analogous to the proof of \textbf{Case 1.} : We verify assumptions \eqref{betai}, \eqref{cont} and \eqref{der} in Lemma \ref{njz}. 
By \eqref{uvm} we get that \eqref{betai} holds.

 Moreover, by \eqref{kt1} and \eqref{uvm} and Proposition \ref{mpd} it follows that for every $r,\theta\in [M,M+L]\times[x,y]$
 $$
(C^--C^+-\vep^2)r\log r\leq S_r(f')(\theta)\leq (C^--C^++\vep^2)r\log r.
 $$
Now since $M, L$ are the same as in \textbf{Case 1.} and the above estimate is the same as \eqref{czo} we verify \eqref{cont} and \eqref{der} repeating the rest of the proof of \textbf{Case 1.} This finishes the proof in \textbf{Case 2.}

\end{proof}

\subsection{Conclusions}\label{sec:conclusions} 
In this final section we conclude by giving the proofs of all the results stated in the introduction. 
The results proved so far immediately give  the proof of Theorem \ref{thm:Ratner_special_flows}, namely show that special flows under functions with logarithmic asymmetric singularities have the SR-property for a.e. IET:
\begin{proof}[Proof of Theorem \ref{thm:Ratner_special_flows}]
 Let $\tau\in (1,16/15)$, $\tau'\in(15/16,1)$,  $\xi'>99/100$, $\eta'>3/4$.  For each irreducible combinatorial datum $\pi$ and for Lebesgue a.e.\  $\underline{\lambda} \in  \Delta_{d}$, the corresponding IET $T= (\underline{\lambda}, \pi) $ satisfies the Summability Condition with exponents $(\tau',\xi',\eta')$ by Corollary \ref{RatvsPar}.  
Hence, by Proposition \ref{prop:Ratproof},  the  special flow $(\varphi_t)_{t\in\mathbb{R}}$ over $T$ and under $f\in AsymLogSing(T)$  has the SR-property. 
\end{proof} 

Let us now prove the corresponding result on the switchable Ratner property in the context of locally Hamiltonian flows, namely Corollary \ref{cor:genFK}.
\begin{proof}[Proof of Corollary \ref{cor:genFK}]
For any fixed genus $g\geq 1$, consider the open set $\mathcal{U}_{\neg min}$ of locally Hamiltonian flows on a surface $S$ of genus $g$ with  non-degenerate fixed points which have a saddle loop homologous to zero. Equivalently, these are locally Hamiltonian flows with  non-degenerate fixed points which have at least one periodic component. As explained by Ravotti in \cite{Ra:mix} (see Section~2 and Section~3), there exists an open and dense set 
$\mathcal{U}_{\neg min}' \subset \mathcal{U}_{\neg min}$ (this is denoted by $\mathcal{A}_{s,l}'$ in \cite{Ra:mix}, see Notation 3.3) such that any minimal component of a locally Hamiltonian flow $(\varphi_t)_{t \in \mathbb{R}}$ in $\mathcal{U}_{\neg min}'$ can be represented as a special flow over an IET $T=(\lambda, \pi)$ under a roof $f\in AsymLogSing(T)$ where  $\pi$ is irreducible. Furthermore, by Remark~\ref{rk:coordinates},  a property which holds for a full measure set of IETs on any number of intervals also holds for the special flow representation of \emph{each} minimal component of a full measure of flows in $\mathcal{U}_{\neg min}'$. Thus,  by  Theorem \ref{thm:Ratner_special_flows},  each minimal component of a typical flow in $\mathcal{U}_{\neg min}'$ admits a representation as a special flow which has the SR-property. Since the special flow representations are metrically isomorphic to the restrictions of $(\varphi_t)_{t \in \mathbb{R}}$ to the corresponding minimal component and the SR-property is an isomorphism invariant (see Lemma \ref{lemma:iso_inv} in Appendix \ref{appendix:Ratner}), it follows for a full measure set of flows in $\mathcal{U}_{\neg min}'$, each restriction of the flow to a minimal component has the SR-Ratner property.
\end{proof}
%

From the SR-property, we an now deduce the results on mixing of all orders in the set up of special flows  (Theorem \ref{prop:main_special_flows}) and finally locally Hamiltonian flows (Theorem \ref{thm:main}).

\begin{proof}[Proof of Theorem~\ref{prop:main_special_flows}]
Fix any irreducible permutation $\pi$. Consider the special flow $(\varphi_t)_{t\in\mathbb{R}}$ over $T= (\underline{\lambda}, \pi) $ and under a roof $f \in AsymLogSing(T)$. 
By Theorem \ref{thm:Ratner_special_flows}, for Lebesgue a.e.\  $\underline{\lambda} \in  \Delta_{d}$,  $(\varphi_t)_{t\in\mathbb{R}}$  has the SR-property and hence in particular also the SWR-property (which is weaker, recall the Definitions \ref{def:SWR} and \ref{def:SR}).  
 On the other hand, by Proposition \ref{existencebalancedtimes} proved in \cite{Ul:mix} and Theorem \ref{DCmixing} (see \cite{Ul:mix, Ra:mix}),  for Lebesgue a.e.\  $\underline{\lambda} \in  \Delta_{d}$ we also have that $(\varphi_t)_{t\in\mathbb{R}}$ is mixing. Thus, for a full measure set of $\underline{\lambda} \in  \Delta_{d}$, $(\varphi_t)_{t\in\mathbb{R}}$ is mixing and has the SWR-property, which, by Theorem \ref{thm:Ratmix}, implies that $(\varphi_t)_{t\in\mathbb{R}}$ is also mixing of all orders.
\end{proof} 



\begin{proof}[Proof of Theorem~\ref{thm:main}]
By Corollary  \ref{cor:genFK}, for any $g\geq 1$ there exists an open and dense set $\mathcal{U}_{\neg min}'$  in the open set  $\mathcal{U}_{\neg min}$ of locally Hamiltonian flows on a surface $S$ of genus $g\geq 1$ (which we recall consists of locally Hamiltonian with  non-degenerate fixed points which have a saddle loop homologous to zero) such that  any minimal component of a  typical locally Hamiltonian flow $(\varphi_t)_{t \in \mathbb{R}}$ in $\mathcal{U}_{\neg min}'$ has the SR-property. Furthermore, by Proposition \ref{existencebalancedtimes}, Theorem \ref{DCmixing}  and Remark \ref{rk:coordinates}, one can also assume by the same arguments in the proof of Corollary  \ref{cor:genFK} that for typical $(\varphi_t)_{t \in \mathbb{R}}$ in $\mathcal{U}_{\neg min}'$ the restriction to each minimal component is also mixing. Thus, by Theorem \ref{thm:Ratmix}, $(\varphi_t)_{t\in\mathbb{R}}$ is  mixing of all orders on each minimal component.
\end{proof}

We conclude by proving Corollary \ref{cor:genFK} which is a streghthening of the main result by Fayad and the first author \cite{FK:mul}. The proof  is based on the  observation that we can add the  singularities of the function as marked points, so that we get a function with asymmetric logarithmic singularities at the discontinuities of a IET  with fake singularities and on a Fubini argument.  
\begin{proof}[Proof of Corollary \ref{cor:genFK}]
Assume by contradiction that the conclusion is false; then there exists a set of positive measure $A \subset [0,1]$ and  a set of positive measure $X \subset [0,1]^d$ such that the special flow over $R_\alpha$ with $\alpha \in A$ under a roof $f$ with singularities (in the sense of Definition \ref{def_LogAsym}) at $x_0\doteqdot 0< x_1< \dots < x_d<x_{d+1}\doteqdot 1$ given by $\underline{x}\doteqdot (x_1, \dots, x_d) \in X$   does not satisfy the SR-property. We can choose  $(\alpha^0$, $\underline{x}^0) \in A \times X$ to be a Lebesgue density point such that $\alpha \neq x_i$ for any $0\leq i \leq d+1$, since both are full measure conditions.  Say that $x^0_{i-1} < 1-\alpha^0 < x_{i}^0$ for  $1\leq i \leq d+1$. This relation will also be true for $(\alpha, \underline{x})$ sufficiently close to $(\alpha^0, \underline{x}^0)$.  
For all these parameters $(\alpha, \underline{x})$, 
 by thinking of the singularities $x_i$ as as maked points, we can think of the rotations  $R_\alpha$ as  IETs on $d+1$ intervals, whose lengths and combinatorial data are explicitly
 given by 
 \begin{equation}\label{IETlenghts}
\underline{\lambda}\doteqdot  \left( x_1, x_2-x_1, \dots, x_{i-1}-x_{i-2}, 1-\alpha - x_{i-1}, x_i-(1-\alpha), x_{i+1}-x_{i}, \dots, 1-x_d\right),
\end{equation}
\begin{align*}  \pi^{rot,i}_t & = (1,2, \dots,i, i+1, \dots, d+1), \\ \pi^{rot,i}_b &= (i, \dots, d+1, 1, 2, \dots, i -1).
\end{align*}
We remark that $\pi^{rot,i} = (\pi^{rot,i}_t, \pi^{rot,i}_b) $ is irreducible.
 By Lebesgue density Theorem, one can find an set $E \subset A \times X \subset [0,1]^{d+1}$ of positive measure such that \eqref{IETlenghts} for all $(\alpha, \underline{x}) $ in $E$. 
Since the map $(\alpha, \underline{x}) \to \underline{\lambda}$ given by \eqref{IETlenghts} is linear, this gives a positive measure set of $\lambda \in [0,1]^{d+1}$ such that 
 the special flow over $T = (\lambda, \pi_{rot,i})$ with $f \in AsymLog (T)$   does not satisfy the SR-property, hence contradicting  Theorem \ref{thm:Ratner_special_flows}. 
This concludes the proof. \end{proof}

\appendix

\section{}
In this Appendix we include, for convenience of the reader, the proofs of two results used in the previous sections, namely the proof that the Switchable Ratner property is an isomorphism invariant (in Section~\ref{appendix:Ratner}) and the proof of the
Lemma from \cite{HMU:lag} which allows to control distances among discontinuities of an IET in terms of the lenght of the inducing interval of the next balanced Rauzy-Veech time (in Section~\ref{appendix:IETs}).

\subsection{Ratner properties are an isomorphism invariant}\label{appendix:Ratner}
In this Appendix we include for completeness the proof that the SR-property is an isomorphism invariant (the same holds for other Ratner properties with the set $P$ being finite).


\begin{lemma}\label{lemma:iso_inv}
Let $(X, (T_t), \mathscr{A}, \mu, d_T)$ and  $(Y, (S_t), \mathscr{B}, \nu,d_S)$ two  {measurably isomorphic}  measure preserving flows. Then, if  $(T_t)$  has the SR-property, also $(S_t)$ does.
\end{lemma}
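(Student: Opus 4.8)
The plan is to prove that for each fixed $t_0>0$, if $(T_t)$ has the $sR(t_0,P)$-property then so does $(S_t)$; since the argument is symmetric in the two flows, the set of good $t_0$ is the same for both, so uncountability transfers, and with $P=\{1,-1\}$ this is exactly the SR-property. Fix a measurable isomorphism $\Phi\colon X\to Y$ with $\Phi_*\mu=\nu$; after discarding a null set we may assume $\Phi$ is a bijection from a conull $T$-invariant set $X_0$ onto a conull $S$-invariant set with $\Phi\circ T_t=S_t\circ\Phi$ on $X_0$ for all $t$ simultaneously (standard, using Fubini in $X\times\mathbb{R}$ together with the flow property). In particular $\Phi$ carries $T$-orbits to $S$-orbits, so the clause ``$x$ not in the orbit of $y$'' passes through $\Phi$. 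Now fix $\varepsilon>0$ and $N\in\mathbb{N}$; I must produce $\kappa',\delta'$ and a set $Z'\subset Y$ with $\nu(Z')>1-\varepsilon$ witnessing $sR(t_0,P)$ for $(S_t)$.

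First I choose a small parameter $\varepsilon_1=\varepsilon_1(\varepsilon)$ (only $\varepsilon_1<\varepsilon/10$ will be needed). By Lusin's theorem and inner regularity, choose compact sets $K\subset\tilde K\subset X$, with $\tilde K$ a compact neighborhood of $K$ and $\mu(K),\mu(\tilde K)>1-\varepsilon_1^2$, on which $\Phi$ is uniformly continuous, and a compact $K'\subset Y$ with $\nu(K')>1-\varepsilon_1^2$ on which $\Phi^{-1}$ is uniformly continuous; let $\delta_1>0$ be a modulus for $\Phi|_{\tilde K}$ producing oscillation $<\varepsilon$, and set $\varepsilon_2$ to be smaller than $\varepsilon_1$, than $\delta_1$, and than the distance from $K$ to the complement of $\tilde K$. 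Let $\kappa:=\kappa(\varepsilon_2)$ denote the constant the $T$-property will return for this $\varepsilon_2$ (we may assume $\kappa\le1$). The key step is controlling orbit segments: applying the pointwise ergodic theorem to the map $T_{t_0}$ (which need not be ergodic) and the indicator of $X\setminus K$, together with Egorov's theorem applied with the \emph{fine} parameter $\varepsilon_1\kappa$, yields a set $D$ with $\mu(D)>1-\varepsilon_1$ and an integer $N_1$ such that for all $x\in D$ and all $M,L\ge N_1$ with $L/M\ge\kappa$ one has $\tfrac1L\,\bigl|\{n\in[M,M+L]:T_{nt_0}x\notin K\}\bigr|\le 6\varepsilon_1$; using the Egorov parameter $\varepsilon_1\kappa$ rather than $\varepsilon_1$ is exactly what absorbs the factor $M/L\le\kappa^{-1}$ coming from comparing Birkhoff averages over $[0,M]$ and $[0,M+L]$.

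Now apply the $sR(t_0,P)$-property of $(T_t)$ with parameters $(\varepsilon_2,N')$, $N':=\max(N,N_1)$, obtaining $\kappa=\kappa(\varepsilon_2)$, $\delta=\delta(\varepsilon_2,N')$, and $Z\subset X$ with $\mu(Z)>1-\varepsilon_2$. Let $\delta'$ be a modulus for $\Phi^{-1}|_{K'}$ producing oscillation $<\delta$, put $Z_0:=X_0\cap Z\cap K\cap D\cap\Phi^{-1}(K')$ (so $\mu(Z_0)>1-10\varepsilon_1$), $Z':=\Phi(Z_0)$ and $\kappa':=\kappa$. Given $x',y'\in Z'$ with $d_S(x',y')<\delta'$ and $x'$ not on the $S$-orbit of $y'$, write $x'=\Phi x$, $y'=\Phi y$ with $x,y\in Z_0$; since $\Phi x,\Phi y\in K'$ we get $d_T(x,y)<\delta$, so the $T$-property yields $p\in P$ and $M,L\ge N'\ge N$ with $L/M\ge\kappa=\kappa'$ such that, say, case (i) for $(T_t)$ holds (case (ii) is identical). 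For every $n\in[M,M+L]$ lying in the intersection of $\{n:d_T(T_{nt_0}x,T_{nt_0+p}y)<\varepsilon_2\}$ (proportion $>1-\varepsilon_2$) and $\{n:T_{nt_0}x\in K\}$ (proportion $\ge1-6\varepsilon_1$), the points $T_{nt_0}x$ and $T_{nt_0+p}y$ both lie in $\tilde K$ at $d_T$-distance $<\delta_1$, hence $d_S(S_{nt_0}x',S_{nt_0+p}y')=d_S(\Phi T_{nt_0}x,\Phi T_{nt_0+p}y)<\varepsilon$. Since this intersection has proportion $\ge1-7\varepsilon_1>1-\varepsilon$, case (i) of $sR(t_0,P)$ holds for $(S_t)$ with the chosen $\kappa',\delta',Z'$, completing the proof.

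The main obstacle is the orbit-control step: the SR-property only constrains membership of the base points $x,y$ in a large set, whereas pushing a closeness estimate through the merely measurable $\Phi$ forces the whole segment $\{T_{nt_0}x\}_{n\in[M,M+L]}$ to stay, for a proportion of times close to $1$, inside the compact set on which $\Phi$ is uniformly continuous. This is supplied by the ergodic theorem, but it must be quantified with care, because the window is only known to satisfy $L/M\ge\kappa$ with $\kappa$ possibly small; applying Egorov with the sharper parameter $\varepsilon_1\kappa$ is what keeps the estimate from degrading. The remainder --- the nesting of Lusin sets and the ordering of the auxiliary small parameters --- is routine bookkeeping.
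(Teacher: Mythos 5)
Your proposal is correct and follows essentially the same route as the paper's proof: use Lusin/Egorov to get a large set where the isomorphism is (uniformly) continuous, use Birkhoff plus Egorov to get a large set of points whose orbit segments $\{T_{nt_0}x\}_{n\in[M,M+L]}$ spend a proportion close to $1$ of the time inside that good set, then apply the $sR(t_0,P)$-property of $(T_t)$ and push the conclusion through the isomorphism. Your version is in fact slightly more careful on the quantitative bookkeeping (the explicit Egorov parameter $\varepsilon_1\kappa$ that absorbs the factor $M/L\le\kappa^{-1}$, and the observation that ergodicity of $T_{t_0}$ is not actually needed), but the decomposition and the sequence of reductions are the same.
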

\begin{proof}
Let us denote by $\psi: X \to Y$ the measurable isomorphism.  Since $(T_t)$ and $(S_t)$ are isomorphic we have $\psi T_t=S_t \psi$ for $t\in\mathbb{R}$. Let $t_0\in \mathbb{R}$ be such that $(T_t)$ has the $sR(t_0,\{-1,1\})$ property and such that $T_{t_0}$ and $S_{t_0}$ are ergodic. We will show that $(S_t)$ also has the $sR(t_0,\{-1,1\})$ property. For simplicity of notation assume that $t_0=1$ (we have $\psi T=S \psi$). Fix $\epsilon>0$ and $N\in \N$. By Egorov's theorem there exists a set $B_\psi\subset X$  $\mu(B_\psi)>1-\epsilon^3$ and $\epsilon'=\epsilon'(\epsilon)>0$ such that 
\begin{equation}\label{yeg1}
\text{ for every } x,y\in B_\psi, d_T(x,y)<\epsilon' \text{ we have } d_S(\psi x,\psi y)<\epsilon^2.
\end{equation}

  Let $\kappa=\kappa(S)=\kappa(T)(\epsilon')$ ($\kappa(T)$  coming from
SR-property for $T$ with  $\epsilon'$). 
By Luzin's lemma, there exists $N_0\in \mathbb{N}$ and a set $C_T\in X$, $\mu(C_T)\geq 1-\epsilon^2$ such that for every $x\in C_T$ and $M,L\geq N_0$, $\frac{L}{M}\geq \kappa$
\begin{equation}\label{luz}
\frac{1}{L}\sum_{i=M}^{M+L}\chi_{B_\psi}(T^ix)\geq 1-\epsilon^2.
\end{equation}
Denote $\tilde{N}=\max(N,N_0)$.
Let $Z_T=Z_T(\epsilon',\tilde{N})$, $\mu(Z_T)\geq 1-\epsilon'$ and $\delta_T=\delta_T(\epsilon',\tilde{N})$ be the SR-parameters for $\epsilon'$ and $\tilde{N}$. 
Using Egorov's theorem, there exists a set $V_\psi$, $\nu(V_\psi)\geq 1-\epsilon^3$ and $\delta'=\delta'(\delta_T)$ such that  
\begin{equation}\label{yeg2}
\text{ for every } x,y\in V_\psi, d_S(x, y)<\delta' \text{ we have } d_S(\psi^{-1}x,\psi^{-1}y)<\delta_T.
\end{equation}
Define $\delta_S=\delta_S(\epsilon,N)=\delta'$ and 
$$
Z_S=Z_S(\epsilon,N)\doteqdot \psi(Z_T\cap C_T)\cap V_\psi.
$$
Notice that by the definition of $Z_T,C_T$ and $V_\psi$ we get that $\nu(Z_S)\geq 1-\epsilon$. We will show that $Z_S$ and $\delta_S$ satisfy the assumptions of SR-property. For this aim let's take $x,y\in Z_S$ such that $d_S(x,y)<\delta_S$. Then by the definition of $Z_S$ and \eqref{yeg2} it follows that $\psi^{-1}x,\psi^{-1}y \in Z_T\cap C_T$ and $d_T(\psi^{-1}x,\psi^{-1}y)<\delta_T$. By the SR-property for $T$ it follows that there exist $M,L\geq \tilde{N}$, $L/M\geq \kappa$ and $p\in\{-1,1\}$ 
such that
\begin{equation}\label{rat.large}
\frac{1}{L}\{i\in[M,M+L] :  d_T(T^i(\psi^{-1}x),T^{i+p}(\psi^{-1}y))<\epsilon'\}>1-\epsilon^2.
\end{equation}
But since $\psi^{-1}x,\psi^{-1}y\in C_T$ it follows that 
$$
\frac{1}{L}\{i\in[M,M+L] : T^i(\psi^{-1}x),T^{i+p}(\psi^{-1}y)\in B_\psi\}>1-\epsilon^2.
$$
Therefore, by \eqref{yeg1}
\begin{equation}\label{rear}
\frac{1}{L}\{i\in[M,M+L] :  d_S(\psi(T^i(\psi^{-1}x)),\psi(T^{i+p}(\psi^{-1}y)))<\epsilon\}>1-\epsilon^2.
\end{equation}
By \eqref{rat.large}, \eqref{rear} and the fact that $\psi T^i\psi^{-1}=S^i$ for $i\in \mathbb{Z}$ we get 
$$
\frac{1}{L}\{i\in[M,M+L] : d_S(S^ix,S^{i+p}y)<\epsilon\}>1-\epsilon.
$$
Therefore $(S_t)$ indeed has the $sR(1,\{1,-1\})$ property. This finishes the proof.


\end{proof}

\subsection{Singularities distances control by positive Rauzy-Veech times}\label{appendix:IETs}
In this section we include the proof of Lemma \ref{lemmaHMU} used in Section~\ref{sec:backward_forward}. The proof is a minor modification of the proof of Lemma C.1 and Corollary C.2 in the paper \cite{HMU:lag} by Hubert, Marchese and the third author, rewritten with the notation used in this paper for convenience of the reader. 

For the rest of the section, we will assume that $T$ is an IET which satisfies the Keane condition and  that  $\{n_\ell\}_{\ell\in\N}$ is a sequence of $\nu$-balanced  induction times for $T$ such that $\{n_{\overline{\ell}k}\}_{k\in\N}$ is a positive sequence of times for some $\ov{\ell}\in\N$ (as in the assumptions of Proposition \ref{forbac} and Lemma \ref{lemmaHMU}).  
Let us remark that, using the notation introduced in \eqref{disc_Tn} and \eqref{firstlastdef}, the sets  
\begin{align*}
D_\ell & \doteqdot  \left\{ l_{\alpha,t}^{( n_\ell)}, \quad \alpha \in \mathcal{A}\backslash \{\alpha^{(n_\ell)}_{1,t} \} \right\} =  \left\{ r_{\alpha,t}^{( n_\ell)}, \quad \alpha \in \mathcal{A}\backslash \{\alpha^{(n_\ell)}_{d,t} \} \right\}, \\  D_{\ell}^{-1} & \doteqdot   \left\{ l_{\alpha,b}^{( n_\ell)}, \quad \alpha \in \mathcal{A}\backslash \{\alpha^{(n_\ell)}_{1,b} \} \right\}=\left\{ r_{\alpha,b}^{( n_\ell)} , \quad \alpha \in \mathcal{A}\backslash \{\alpha^{(n_\ell)}_{d,b} \} \right\}
\end{align*}
consist  respectively of  the discontinuties of $T^{(n_\ell)}$ and  its inverse $(T^{(n_{\ell})})^{-1}$. Recall that we write $B>0$ if all the entries of the matrix $B$ are strictly positive.

To understand the details of the proof, it is useful to keep in mind the main idea behind it, which is based on the analysis of the effect of Rauzy-Veech induction:   the  $(n+1)^{th}$ steps of Rauzy-Veech induction $T^{(n+1)}$ is obtained  by inducing $T^{(n)}$ on an interval $I^{(n+1)}$ whose right endpoint is the discontinuity of either  $T^{(n)}$ or its inverse which is closest to the right endpoint of $I^{(n)}$. In particular, this implies that the distance between two discontinuities (the  endpoint  of $I^{(n)}$, which is a discontinuity of $T^{(n-1)}$,  and the closest discontinuity of $T^{(n)}$) is bounded below by the lenght of an interval exchanged by $T^{(n+1)}$. 
 Thus, starting from $T^{(n_\ell)}$, since discontinuities of $T^{(n_\ell)}$ (and its inverse) appear as discontinuities of $T^{(n)}$ (and its inverse) for $n\geq n_\ell$,  considering the induction steps up to the next balanced step  $T^{(n_{\ell+1})}$ guarantees that the distance between all pairs of discontinuities in $D_\ell$ and  $D_{\ell}^{-1}$ can be controlled by lenghts of an interval of some $T^{(n)}$ with $n_\ell \leq n \leq n_{\ell+1}$, and hence (by monotonicity and balance) in terms of the lenght of $I^{(n_{\ell+1})}$.

\begin{proof}[Proof of Lemma \ref{lemmaHMU}]
Let us first show that, since $B^{(n_\ell, n_{\ell+1})}>0$,  
\begin{equation}\label{auxlemma} \left[0, \lambda^{(n_{\ell+1})}\right) \cap \left( D_{\ell} \cup D_{\ell}^{-1}\right) = \emptyset
\end{equation}
 (compare with Lemma C.1 in \cite{HMU:lag}). Recall (see \eqref{lengthsrelation} in Section~\ref{Se:RV} and the notation thereafter) that we have
$
\lambda^{(n_\ell)}_\alpha=
\sum_{\chi\in\cA}B^{(n_\ell, n_{\ell+1})}_{\alpha\chi}\lambda^{(n_{\ell+1})}_\chi
$
for any letter $\alpha\in\cA$. Therefore, since all the entries of the matrix $B^{(n_\ell, n_{\ell+1})}$ are positive and hence, being integers, are greater than $1$, 
 we have that
$
\min_{\alpha\in\cA}\lambda^{(n_{\ell})}_\alpha 
\geq  \sum_{\chi\in\cA}\lambda^{(n_{\ell+1})}_\chi = \lambda^{(n_{\ell+1})}  
$. 
Thus, \eqref{auxlemma} follows since the elements of $D_\ell$ or $D_{\ell}^{-1}$ are all right endpoints of union of intervals whose lengths all belong to the set $\{\lambda^{(n_{\ell})}_\chi;\chi\in\cA\}$ and hence each of them is greater than a non trivial sum of these lenghts.

We can now finish the proof of Lemma \ref{lemmaHMU}. Assume first that $\alpha = \alpha^{(n_{\ell})}_{t,1}$, so that $l_{\alpha,t}^{(n_\ell)}=0$. In this case, since we are assuming that $\beta \neq \alpha^{(n_\ell)}_{1,b}$ and hence $l_{\beta,b}^{(n_\ell)} \neq 0$, using that  $n_{\ell}$ is $\nu$-balanced by assumption (recall Remark~\ref{balancedcomparisons}), we have that  $|l_{\beta,b}^{(n_\ell)}| \geq \min_\chi \lambda^{(n_{\ell})}_\chi \geq \frac{1}{\nu} \lambda^{(n_{\ell})}  \geq\frac{1}{\nu}   \lambda^{(n_{\ell+\overline{\ell}})}$ and hence  \eqref{lemmaHMU1} holds trivially in this case. Assume next that  $\alpha \neq \alpha^{(n_{\ell})}_{1,t}$ and $\beta \neq \alpha^{(n_\ell)}_{1,b}$, so that $\alpha \in D_\ell$ and $\beta \in D_{\ell}^{-1}$.   Consider the minimum $n\geq n_\ell$ such that both $l_{\alpha,t}^{(n_\ell)}$ and  $l_{\beta,b}^{(n_\ell)}$ do not belong to the interior of $I^{(n)}$. By \eqref{auxlemma}, $ n \leq n_{\ell+1}$. By definition of Rauzy-Veech induction and $n$,  if  $l_{\alpha,t}^{(n_\ell)} > l_{\beta,b}^{(n_\ell)}$, $l_{\alpha,t}^{(n_\ell)}$ is the closest discontinuity of  $T^{(n-1)}$  to the right endpoint of $I^{(n-1)} $ and $I^{(n)} = [0, l_{\alpha,t}^{(n_\ell)})$, or,  if  $l_{\alpha,t}^{(n_\ell)} < l_{\beta,b}^{(n_\ell)}$, then $l_{\beta,b}^{(n_\ell)}$ is the closest discontinuity of the inverse of $T^{(n-1)}$  to the right endpoint of $I^{(n-1)} $ and $I^{(n)} = [0, l_{\beta,b}^{(n_\ell)})$. In the first case, $I^{(n)}$ is obtained by removing from $I^{(n-1)}$ an interval of lenght $\lambda^{(n-1)}_\alpha$ (since in this case $\alpha = \alpha^{(n-1)}_{t,d}$), while in the second of length $\lambda^{(n-1)}_\beta$ (since in that case  $\beta = \alpha^{(n-1)}_{b,d}$). 
In both cases, using that for any $\chi \in \mathcal{A}$ the sequence of lenghts $(\lambda^{(k)}_\alpha)_k$ is non-increasing in $k$ and recalling that  the step $n_{\ell+1}$ is $\nu$-balanced by assumption (recall Remark~\ref{balancedcomparisons}), we have that
$$
|l_{\alpha,t}^{(n_\ell)} - l_{\beta,b}^{(n_\ell)}|  \geq 
\min_{\chi \in \mathcal{A}}\lambda^{(n-1)}_\chi \geq \min_{\chi \in \mathcal{A}}\lambda^{(n_{\ell+1})}_\chi \geq \frac{1}{\nu }\lambda^{(n_{\ell+1})}.
$$
 This concludes the proof of \eqref{lemmaHMU1}. To prove \eqref{lemmaHMU2}, it is enough to remark that if $\beta \neq \alpha^{(n_\ell)}_{d,b}$, since by assumption $\alpha  \neq \alpha^{(n_{\ell+1})}_{d,t}$, \eqref{lemmaHMU2} reduces to \eqref{lemmaHMU1} by Remark \ref{singularitiesT}. On the other hand, if $\beta = \beta^{(n_\ell)}_{d,b}$ we have  that $r_{\beta,b}^{(n_\ell)}=1$, and,   by the  assumption that $\alpha  \neq \alpha^{(n_{\ell+1})}_{d,t}$, $r_{\alpha,t}^{(n_\ell)}$ is not the endpoint of the last interval exchanged by $T^{(n_\ell)}$. Thus, using  again $\nu$-balance of $n_\ell$, we have that
 $ |r_{\alpha,t}^{(n_\ell)} -1| \geq \min_\chi  \lambda^{(n_{\ell})}_\chi  \geq  \frac{1}{\nu }   \lambda^{(n_{\ell})} \geq \frac{1}{\nu } \lambda^{(n_{\ell+\overline{\ell}})}.$
This concludes the proof of  \eqref{lemmaHMU2} and hence of the Lemma.
  \end{proof}


\section*{Acknowledgments}

We would like to thank M.~Lema{\'{n}}czyk and J.P.~Thouvenot for their interest in the questions here addressed. J.P.~Thouvenot asked C.U. several years ago whether the flows she proved to be mixing in \cite{Ul:mix} are mixing of all orders and suggested to try to prove the Ratner property for them; M.~Lema{\'{n}}czyk has inspired and motivated the authors, in particular A.K., to look for suitable variations of the Ratner property.  We also thank him for useful discussions. We are thankful to J.~Chaika for his comments on the first version of this paper and to the referee of the paper for his/her careful reading and corrections. The collaboration that led to this paper was started in occasion of the {\it Ergodic Theory and Dynamical Systems} conference held in Toru{\'n} in May 2014; we thank the organizers and the funding bodies for providing us the opportunity to begin this work. 
J. K.-P. is supported by Narodowe Centrum Nauki grant 
UMO-2014/15/B/ST1/03736; C. U. is supported by the ERC grant \emph{ChaParDyn} and by the {\it Leverhulm Trust} through a Leverhulme Prize.  The research leading
to these results has received funding from the European Research Council under the European
Union Seventh Framework Programme (FP/2007-2013) / ERC Grant Agreement n.
335989.

\small
\bibliography{biblio_RatnerLogAsym}

\bigskip
\footnotesize

\noindent
Adam Kanigowski\\
\textsc{Penn State University Mathematics Department,  University Park, \\ State College, PA 16802, USA}\\
\noindent
\texttt{adkanigowski@gmail.com}

\medskip

\noindent
Joanna Ku\l aga-Przymus\\
\textsc{Institute of Mathematics, Polish Acadamy of Sciences, \'{S}niadeckich 8, 00-956 Warszawa, Poland}\\
\textsc{Faculty of Mathematics and Computer Science, Nicolaus Copernicus University, Chopina 12/18, 87-100 Toru\'{n}, Poland}\par\nopagebreak
\noindent
\texttt{joanna.kulaga@gmail.com}

\medskip

\noindent
Corinna Ulcigrai\\
\textsc{School of Mathematics, University of Bristol, Howard House, Queens Ave, BS8 1SN \\ Bristol, United Kingdom}\\
\noindent
\texttt{corinna.ulcigrai@bristol.ac.uk}

\end{document}